\numberwithin{thm}{subsection}
\newcommand{\newabstract}[1]{%
  \par\bigskip
  \csname otherlanguage*\endcsname{#1}%
  \csname captions#1\endcsname
  \item[\hskip\labelsep\scshape\abstractname.]
}
\begin{document}

\title{Anticyclotomic $p$-adic $L$-functions and Ichino's formula}
\author{Dan J. Collins}
\address{Department of Mathematics \\ 
	University of Milan \\ 
	Milan, Italy} 
\email{daniel.collins@unimi.it} 

\keywords{$p$-adic $L$-functions, triple-product $L$-functions, Hida families}
\subjclass{Primary 11F67}

\begin{abstract}
We give a new construction of a $p$-adic $L$-function $\CL(f,\Xi)$, for $f$ a
holomorphic newform and $\Xi$ an anticyclotomic family of Hecke characters of
$\Q(\rt{-d})$. The construction uses Ichino's triple product formula to express
the central values of $L(f,\xi,s)$ in terms of Petersson inner products, and
then uses results of Hida to interpolate them. The resulting construction is
well-suited for studying what happens when $f$ is replaced by a modular form
congruent to it modulo $p$, and has future applications in the case where $f$
is residually reducible.

\newabstract{french}
Nous donnons une nouvelle construction d'une fonction $L$ $p$-adique
$\CL(f,\Xi)$, pour $f$ une forme primitive holomorphe et $\Xi$ une famille
anticyclotomique de caract\`eres de Hecke de $\Q(\rt{-d})$. La construction
utilise la formule du produit triple d'Ichino pour exprimer les valeurs
centrales de $L(f,\xi,s)$ en terme de produits scalaires de Petersson, et
certains r\'esultats de Hida pour les interpoler $p$-adiquement. La construction qui en d\'ecoule
permet d'\'etudier ce qui se passe quand $f$ est remplac\'ee par
une forme modulaire qui lui est congruente modulo $p$, et a des applications
futures dans le cas o\`u $f$ est r\'esiduellement r\'eductible.
\end{abstract}

\selectlanguage{english}

\maketitle 

\section{Introduction}

Given a classical holomorphic newform $f$ and a Hecke character $\xi$ of an
imaginary quadratic field, we can consider the classical Rankin-Selberg
$L$-function $L(f\x\xi,s)$ and in particular study its special values. If we
vary the character $\xi$ in a $p$-adic family $\Xi$, we obtain a collection of
special values which (one hopes) can be assembled into a $p$-adic analytic
function $\CL_p(f,\Xi)$. This paper gives a new construction of a certain type
of \E{anticyclotomic $p$-adic $L$-function} of this form. Such $p$-adic
$L$-functions have been studied fruitfully in recent years: their special
values have been related to algebraic cycles on certain varieties
(\cite{BertoliniDarmonPrasanna2013}, \cite{Brooks2015},
\cite{CastellaHsieh2018}, \cite{LiuZhangZhang2018}), and a Main Conjecture of
Iwasawa theory was proven for them (\cite{Wan2015}). In the case where $f$ is a
weight-2 newform corresponding to an elliptic curve $E$, these results have
been used by Skinner and others to obtain progress towards the Birch and
Swinnerton-Dyer conjecture for curves that have algebraic rank 1
(\cite{Skinner2014}, \cite{JetchevSkinnerWan2017}). 
\par
Even though $p$-adic $L$-functions are characterized uniquely by an
interpolation property for their special values, merely knowing their
\E{existence} is not enough to be able to prove many theorems involving them.
Instead one usually needs to work with the underlying formulas and methods used
to construct them. Prior papers (e.g. \cite{BertoliniDarmonPrasanna2013}, 
\cite{Brakocevic2011}, \cite{Brooks2015}, \cite{Hsieh2014},
\cite{LiuZhangZhang2018}) have usually realized $\CL_p(f,\Xi)$ by using
formulas of Waldspurger that realize special values of $L(f\x\xi,s)$ as toric
integrals. The purpose of this paper is to give a construction instead using a
triple-product formula due to Ichino \cite{Ichino2008}. We remark that the
paper of Hsieh \cite{Hsieh2017} which appeared after this work performs similar
computations in a more general setting. 
\par
Having a different construction for $\CL_p(f,\Xi)$ will lead to new results
about this $p$-adic $L$-function. In particular, in future work we will study
the case where $f$ is congruent to an Eisenstein series $E$ modulo $p$, and
show that we get a congruence between $\CL_p(f,\Xi)$ and a product of simpler
$p$-adic $L$-functions (arising just from Hecke characters). Having this
congruence information will allow us to obtain Diophantine consequences for
certain families of elliptic curves. It will also allow us to work with Iwasawa
theory in the ``residually reducible'' case, providing complementary results to
the work of \cite{Wan2015} in the residually irreducible case. 

\subsubsection*{Notations and conventions.} Before stating our results more
precisely, we start with some basic conventions for this paper. Throughout, we
will be fixing a prime number $p > 2$ and then studying $p$-adic families (of
modular forms, $L$-values, etc.). Thus we will fix once and for all embeddings
$\io_\oo : \L\Q \into \C$ and $\io_p : \L\Q \into \L\Q_p$. 
\par
We will also be working with an imaginary quadratic field $K = \Q(\rt{-d})$,
which again we will fix subject to some hypotheses specified later. We will
treat $K$ as being a subfield of $\L\Q$, and thus having distinguished
embeddings into $\C$ and into $\L\Q_p$ via $\io_\oo$ and $\io_p$, respectively.
The embedding $\io_p : K \into \L\Q_p$ is a place of $K$, and thus corresponds
to a prime ideal $\Fp$ lying over $p$. We will always be working in the
situation where $p$ splits in $K$; thus we can always take $\Fp$ to denote the
prime ideal lying over $p$ that corresponds to $\io_p$, and $\L\Fp$ the other
one. We let $\oo$ denote the unique infinite place of $K$, coming from
composing the embedding $\io_\oo : K \into \C$ with the complex absolute
value. Our conventions on Hecke characters for $K$ (in their many guises) are
spelled out in Section \ref{sec:HeckeCharConventions}. 
\par
We will work heavily with the theory of classical modular forms and newforms,
as developed in e.g. \cite{MiyakeModForms} or \cite{ShimuraAutForms}. If $\ch$
is a Dirichlet character modulo $N$ we let $M_k(N,\ch) = M_k(\Ga_0(N),\ch)$ be
the $\C$-vector space of modular forms that transform under $\Ga_0(N)$ with
weight $k$ and character $\ch$. We let $S_k(N,\ch)$ denote the subspace of cusp
forms; on this space we have the Petersson inner product, which we always take
to be normalized by the volume of the corresponding modular curve: 
\[ \g{f,g} = \f{1}{\vol(\DH\q\Ga_0(N))} 
		\S_{\DH\q\Ga_0(N)} f(z) \L{g(z)} \Im(z)^k \f{dx\ dy}{y^2}. \]

\subsubsection*{The anticyclotomic $p$-adic $L$-function.} Given this setup, we will
fix a newform $f \in S_k(N)$ with trivial central character. We will also want
to fix an ``anticyclotomic family'' of Hecke characters $\xi_m$ for our
imaginary quadratic field $K$. The precise meaning of this is defined in
Section \ref{sec:FamiliesOfHeckeCharacters}, but it amounts to starting with a
fixed character $\xi_a$ (normalized to have infinity-type $(a+1,-a+k-1)$) and
then constructing closely-related characters $\xi_m$ of infinity-type
$(m+1,-m+k-1)$ for each integer $m\ee a\md{p-1}$. With these normalizations,
the Rankin-Selberg $L$-function $L(f\x\xi_m\1,s)$ (which we also denote as
$L(f,\xi_m\1,s)$, thinking of it as ``twisting the $L$-function of $f$ by
$\xi_m\1$'') has central point $0$. 
\par
The anticyclotomic $p$-adic $L$-function $\CL_p(f,\Xi\1)$ that we want to
construct is then essentially a $p$-adic analytic function $L_p : \Z_p \to
\C_p$ such that for $m > k$ satisfying $m\ee a\md{p-1}$, the value of $L_p$ at
$s = m$ is the central $L$-value $L(f,\xi_m\1,0)$. Of course, this doesn't make
sense as-is because $L(f,\xi_m\1,0)$ is a (likely transcendental) complex
number. So we need to define an ``algebraic part'' $L_\alg(f,\xi_m\1,0) \in
\L\Q \se \C$, which we move to $\L\Q \se \L\Q_p$ via our embeddings $i_\oo$ and
$i_p$, and then modify to a ``$p$-adic part'' $L_p(f,\xi_m\1,0)$. The basic 
algebraicity result is due to Shimura, and the exact choices we make to define
these values are specified in Section \ref{sec:NormalizationsOfLFunctions}
\par
Also, rather than literally take $\CL_p(f,\Xi\1)$ an analytic function on
$\Z_p$, we instead use an algebraic analogue of this: we construct
$\CL_p(f,\Xi\1)$ as an element of a power series ring $\CI \iso \Z_p^\uR\bb{X}$.
Certain continuous functions $P_m : \CI \to \Z_p^\uR$ serve as ``evaluation at
$m$''; this is defined in Section \ref{sec:BDPLfunction}. With all
of these definitions made, we can precisely specify what $\CL_p(f,\Xi\1)$
should be: 

\begin{defn}
	The anticyclotomic $p$-adic $L$-function $\CL_p(f,\Xi\1)$ is the unique
	element of $\CI$ such that, for integers $m > k$ satisfying $m\ee
	a\md{p-1}$, we have $P_m(\CL_p(f,\Xi\1)) = L_p(f,\xi_m\1,0)$. 
\end{defn}

\subsubsection*{Ichino's formula, classically.} The definition of $L$-values does
not lend itself to $p$-adic interpolation. Instead, $p$-adic $L$-functions are
constructed by relating $L$-values to something else that is more readily
interpolated. This often comes from the theory of automorphic representations,
where there are many formulas relating $L$-values to integrals of automorphic
forms. Our approach is to use \E{Ichino's triple product formula}
\cite{Ichino2008}, which relates a certain global integral (for three
automorphic representations $\pi_1,\pi_2,\pi_3$ on $\GL_2$) to a product of
local integrals. The constant relating them is the central value of a
triple-product $L$-function $L(\pi_1\x\pi_2\x\pi_3,s)$. 
\par
We will apply this by taking $\pi_1$ to correspond to the modular form $f$ in
question, and letting $\pi_2$ and $\pi_3$ be representations induced from Hecke
characters $\ps$ and $\ph$ on $K$. Translating Ichino's formula into classical
language gives an equation of the form 
\[ |\g{f(z)g_\ph(z),g_\ps(c z)}|^2 
		= C \dt L(f,\ph\ps\1,0) L(f,\ps\1\ph\1 N^{m-k-1},0), \]
where $g_\ph$ and $g_\ps$ are the classical CM newforms associated to the Hecke
characters $\ph,\ps$. Our goal will be to set up $\ph,\ps$ to vary
simultaneously in $p$-adic families, so that one of our two $L$-values is a
constant and the other realizes $L(f,\xi_m\1,0)$. The $p$-adic theory we
develop will allow us to interpolate the Petersson inner product on the left,
and the formula will tell us that this realizes $\CL(f,\Xi\1)$ times some
constants.
\par
The fact that we use two characters $\ph$ and $\ps$ gives us quite a bit of
flexibility in our calculations. This flexibility allows us to appeal to
theorems in the literature of the form ``for all but finitely many Hecke
characters, a certain $L$-value is a unit mod $p$'' because we can avoid the
finitely many bad characters. For instance, we can use results like Theorem C
of \cite{Hsieh2014} to arrange the auxiliary $L$-value $L_\alg(f,\ph\ps\1,0)$
is a $p$-adic unit and thus doesn't interfere with integrality or congruence
statements for the rest of our formula.
\par
Obtaining the constant $C$ in Ichino's formula explicitly is carried out in
Chapter \ref{sec:ExplicitIchino}. The main difficulty is that the formula
involves local integrals at each bad prime $q$. Specifically, the integrals
are over a product of three matrix coefficients, one for the newvector in each
of the local representations of $\GL_2(\Q_p)$ coming from $f$, $g_\ph$, and
$g_\ps$. Evaluating these integrals seems to be a hard problem in general,
though several cases have been worked out in the literature (for instance in
\cite{Woodbury2012}, \cite{NelsonPitaleSaha2014}, \cite{Hu2017}). 
\par
By choosing our hypotheses on $f$, $g_\ph$, and $g_\ps$ carefully, we place
ourselves in a situation where most of the local integrals we need are already
calculated in the literature. However, we cannot avoid having to compute one
new case: when one local representation is spherical and the other two are
ramified principal series of conductor 1. We carry out this computation
following ideas from \cite{NelsonPitaleSaha2014}; the result (Proposition
\ref{prop:LocalIntegralsTwoHalfRamified}) may be of interest to others who want
to apply Ichino's formula. 
\par
We have performed numerical computations as a check of the correctness of all
of the local Ichino integral computations we use, as well as the overall form
of the explicit formula; this is carried out in our paper \cite{Collins2018}.

\subsubsection*{The $\La$-adic theory.} With an explicit version of Ichino's formula
established, we next need to establish that we can $p$-adically interpolate
Petersson inner products of the form $\g{f(z)g_\ph(z),g_\ps(c z)}$. In Chapter
\ref{chap:HidaTheory} we recall the basics of Hida's theory of $\La$-adic
modular forms in the form we will need to use them. Chapter
\ref{sec:FamiliesOfCharsCMForms} gives an explicit construction of $\La$-adic
families of Hecke characters and then of the associated $\La$-adic CM forms,
allowing us to construct forms $f\Bg_\Ph$ and $\Bg_\Ps$ that interpolate the
modular forms $fg_\ph$ and $g_\ps$ that appear in our version of Ichino's
formula when $\ph,\ps$ vary in a suitable family. 
\par
Chapter \ref{sec:PeterssonInterpolate} constructs an element
$\g{f\Bg_\Ph,\Bg_\Ps}$ that interpolates the Petersson inner products
$\g{fg_\ph,g_\ps}$. The construction is due to \cite{Hida1988b}, and is based
on the fact that if $h$ is a newform and $1_h$ is the associated projector in
the Hecke algebra, then $1_h g$ is $\g{g,h}/\g{h,h}$ times $g$. Using this 
idea, we need to make it completely explicit how to start with the complex
number $\g{fg_\ph,g_\ps}$, associate to it an algebraic part
$\g{fg_\ph,g_\ps}_\alg$, and finally a $p$-adic part $\g{fg_\ph,g_\ps}_p$. This
is carried out throughout Chapter \ref{sec:PeterssonInterpolate} and is
summarized in Section \ref{sec:SummarizingPeterssonInterpolation}. 
\par
The most involved part of the calculation is relating $\g{fg_\ph,g_\ps}_\alg$
(which is directly defined in terms of $\g{fg_\ph,g_\ps}$) to
$\g{fg_\ph,g_\ps}_p$, which arises as a specialization of
$\g{f\Bg_\Ph,\Bg_\Ps}$. The difficulty here is that $\g{fg_\ph,g_\ps}_p$ is 
actually defined using $\g{fg_\ph^\sh,g_\ps^\na}$, where $g_\ph^\sh$ and
$g_\ps^\na$ are modifications of $g_\ph$ and $g_\ps$ related to the process of
$p$-stabilization. Relating $\g{fg_\ph^\sh,g_\ps^\na}$ to $\g{fg_\ph,g_\ps}$ 
is carried out in Section \ref{sec:EulerFactorsStabilization} and involves some
delicate manipulations of Petersson inner products. The result is that the two
values differ by a \E{removed Euler factor} that is expected to appear in the  
construction of $p$-adic $L$-functions. 

\subsubsection*{Our results.} We can now state our main theorem. Our hypotheses are
that we begin with: 
\begin{itemize}
	\item A holomorphic newform $f$ of some weight $k$, level $N = N_0
		p^{r_0}$, and trivial central character.
	\item An imaginary quadratic field $K = \Q(\rt{-d})$ with odd fundamental
		discriminant $d$.
	\item A Hecke character $\Uxi_a$ of weight $(a-1,-a+k+1)$ for some
		integer $a$ satisfying $a-1\ee k\md{w_K}$, with trivial central character
		and with conductor $(c)$ for an integer $c$ coprime to $dN$. (Here $w_K =
		|\CO_K^\x|$ is $6$ if $d = 3$ and $2$ in all other cases).
	\item An odd prime $p$ coprime to $2dcN$. 
\end{itemize}
We also have the following auxiliary data we can freely choose: 
\begin{itemize}
	\item A prime $\l \nd 2pdcN$ inert in $K$, and a power $\l^{c_\l}$ of it.
	\item A character $\nu$ of $(\CO_K/\l^{c_\l}\CO_K)^\x$ that's trivial on
		$(\Z/\l^{c_\l}\Z)^\x$ and $\CO_K^\x$.
\end{itemize}
Given this data we can construct: 
\begin{itemize}
	\item An anticyclotomic family of Hecke characters $\Xi$ that goes through
		$\Uxi_a$ and specializes to the characters $\xi_m$ mentioned earlier.
		(Lemma \ref{lem:AnticyclotomicHeckeFamily})
	\item Two families $\Ph,\Ps$ of Hecke characters giving rise to families
		of CM newforms $\Bg_\Ph,\Bg_\Ps$. These families are such that $\Ph\Ps
		N^{-(m-k-1)} = \Xi$ and that $\Ph\1\Ps$ is a constant family for a 
		Hecke character $\et$ with its local behavior at $\l$ corresponding to
		$\nu^2$. (Lemma \ref{lem:ConstructPhiPsi})
\end{itemize}

Our main theorem is then the following construction of $\CL(f,\Xi\1)$ as an 
element of $\CI^\uR$, where $\CI$ is a certain extension of $\La$ (discussed in
detail in Sections \ref{sec:BDPLfunction},
\ref{sec:CIadicModularForms}, and \ref{sec:FamiliesOfHeckeCharacters}). 

\begin{thm} \label{thm:MainTheorem}
	Under the above hypotheses and notation, the element $\CL(f,\Xi\1)$ is equal
	to a product 
	\[ \f{1}{(*)_{f,\l} \dt L_p^*(f,\et\1,0)} \dt \CC 
			\dt \g{f\Bg_\Ph,\Bg_\Ps} \g{f^\rh\Bg_{\Ph^\rh},\Bg_{\Ps^\rh}}. \]
	Here $\g{f\Bg_\Ph,\Bg_\Ps}$ and $\g{f^\rh\Bg_{\Ph^\rh},\Bg_{\Ps^\rh}}$ are
	the elements of Chapter \ref{sec:PeterssonInterpolate} discussed above, 
	$\CC \in \La^\x$ is a unit satisfying 
	\[ P_m(\CC) = \et(\L\Fp)^r \f{2^2 \l^{2c_\l(k+5)}}{N_0^{m+3}}, \]
	the term $L_p^*(f,\et\1,0)$ is essentially the algebraic part of the
	$L$-value $L(f,\et\1,0)$ (see Section \ref{sec:NormalizationsOfLFunctions}),
	and
	\[ (*)_{f,\l} = \b( \s_{i=0}^{c_\l} \pf{\al}{\l^{(k-1)/2}}^{2i-c_\l} 
			- \f{1}{\l} \s_{i=i}^{c_\l-1} \pf{\al}{\l^{(k-1)/2}}^{2i-c_\l} \e). \]
	with $\al_{f,\l}$ one of the roots of the Hecke polynomial for $f$ at $\l$. 
\end{thm}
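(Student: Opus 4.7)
The plan is to verify the claimed identity after applying the specialization map $P_m$ for each integer $m > k$ with $m \ee a \md{p-1}$. Since the set of such $m$ is Zariski-dense in $\CI$, matching at all such $m$ will imply the identity in $\CI^\uR$. Fix such an $m$ and let $\xi = \xi_m$, $\ph = \ph_m$, $\ps = \ps_m$ denote the specializations of $\Xi,\Ph,\Ps$. I would begin by applying the explicit version of Ichino's triple product formula from Chapter \ref{sec:ExplicitIchino} to the triple $(f, g_\ph, g_\ps)$, yielding
\[ |\g{f(z) g_\ph(z), g_\ps(cz)}|^2 = C_m \dt L(f,\ph\ps\1,0) \dt L(f, \ph\1\ps\1 N^{m-k-1}, 0), \]
where $C_m$ is the fully explicit constant assembled from the local integrals at ramified places (including the new case at $\l$ supplied by Proposition \ref{prop:LocalIntegralsTwoHalfRamified}). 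By Lemma \ref{lem:ConstructPhiPsi}, $\ph\ps\1 = \et$ is independent of $m$, while $\ph\1\ps\1 N^{m-k-1} = \xi\1$, so the second $L$-value is precisely the central value $L(f,\xi\1,0)$ whose interpolation defines $\CL(f,\Xi\1)$.

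Next I would convert complex quantities into $p$-adic ones. The absolute value squared on the left is what produces the two Petersson factors in the main theorem: classically, complex conjugation on the relevant space corresponds to sending $f,g_\ph,g_\ps$ to $f^\rh, g_{\ph^\rh}, g_{\ps^\rh}$, so $|\g{fg_\ph,g_\ps(cz)}|^2$ equals $\g{fg_\ph,g_\ps(cz)}$ times its $\rh$-twin. Using Shimura's algebraicity together with the period conventions of Section \ref{sec:NormalizationsOfLFunctions}, both sides can be divided by their respective periods to land in $\L\Q$ and then transported into $\C_p$ via $\io_p \circ \io_\oo\1$. The right-hand $L$-values become $L_p(f,\et\1,0)$ (constant in $m$, agreeing with $L_p^*(f,\et\1,0)$ up to the fixed factor hidden in the $*$) and $L_p(f,\xi\1,0) = P_m(\CL(f,\Xi\1))$; the left-hand Petersson products become their $p$-adic algebraic parts.

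The bridge to the $\Lambda$-adic side comes from Chapter \ref{sec:PeterssonInterpolate}. The specialization $P_m(\g{f\Bg_\Ph,\Bg_\Ps})$ is not $\g{fg_\ph,g_\ps}_p$ itself but rather the $p$-stabilized variant $\g{fg_\ph^\sh,g_\ps^\na}_p$, and Section \ref{sec:EulerFactorsStabilization} computes the ratio between them as an explicit removed Euler factor at $p$. Tracking this factor through both the straight and the $\rh$-twisted Petersson sides produces precisely the $(*)_{f,\l}$ appearing in the theorem. Solving for $P_m(\CL(f,\Xi\1))$ gives an identity
\[ P_m(\CL(f,\Xi\1)) = \f{P_m(\CC)}{(*)_{f,\l} \dt L_p^*(f,\et\1,0)} \dt P_m(\g{f\Bg_\Ph,\Bg_\Ps}) \dt P_m(\g{f^\rh\Bg_{\Ph^\rh},\Bg_{\Ps^\rh}}), \]
where $P_m(\CC)$ absorbs the Ichino constant $C_m$ together with all the period ratios; it remains to recognize this quantity as $\et(\L\Fp)^r \dt 2^2 \l^{2c_\l(k+5)}/N_0^{m+3}$ and to see that these values assemble into a genuine unit $\CC \in \La^\x$.

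The main obstacle is the bookkeeping of $\CC$. Every normalization contributes: the local Ichino integrals at primes dividing $N$ should supply the $N_0^{-(m+3)}$, the new local integral at $\l$ should supply the $\l^{2c_\l(k+5)}$, the period ratios built into $L_\alg$ and into the algebraic/$p$-adic Petersson products must be paired so as to cancel the apparent $m$-dependence coming from Ichino's formula, and the character factor $\et(\L\Fp)^r$ should emerge from the way $p$-stabilization interacts with the $\ph \mapsto \ph^\rh$ symmetry on the CM forms. Each piece is a finite calculation, but assembling them all so that the dependence on $m$ collapses to the clean closed form in the statement, and checking that this closed form indeed defines a unit in $\La$, is where the bulk of the technical work lies.
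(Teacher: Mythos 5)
Your overall strategy is exactly the paper's: specialize at the infinitely many $m>k$ with $m\ee a\md{p-1}$, apply the explicit Ichino formula of Theorem \ref{thm:ExplicitIchinoCM}, pass to algebraic and then $p$-adic normalizations on both sides, and conclude by the density argument of Lemma \ref{lem:EqualityOfSpecializations}. Most of your bookkeeping plan for $\CC$ is also in the right place.

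There is, however, one concrete misattribution that would derail the computation if carried out as written. You claim that tracking the removed Euler factor coming from $p$-stabilization (Section \ref{sec:EulerFactorsStabilization}) through the two Petersson factors ``produces precisely the $(*)_{f,\l}$ appearing in the theorem.'' It does not: $(*)_{f,\l}$ is a factor at the auxiliary prime $\l$, not at $p$, and it enters already at the complex-analytic stage as the local Ichino integral at $q=\l$, where $\pi_{\ph,\l}$ and $\Tpi_{\ps,\l}$ are isomorphic Type~1 supercuspidals (Proposition \ref{prop:LocalIntegralTwoSupercuspidals}); it is visible in Theorem \ref{thm:ExplicitIchinoCM} before any $p$-adic manipulation, and it simply rides along as a constant thereafter. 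The removed Euler factors at $p$ from Propositions \ref{prop:EulerFactorDenominator}, \ref{prop:EulerFactorNumeratorPrimeToP}, \ref{prop:EulerFactorNumeratorPPart1}, and \ref{prop:EulerFactorNumeratorPPartGe2} instead combine, over the straight and $\rh$-twisted sides, to reproduce exactly the interpolation factor $e_p(f,\xi\1)$ built into the definition of $L_p(f,\xi_m\1,0)$, together with the power $\et(\L\Fp)^{-r}$ absorbed into $\CC$; that matching (done case-by-case in $r_0$) is the content of Proposition \ref{prop:PAdicPartIchino}. If you follow your plan literally, you will find nothing at $p$ resembling $(*)_{f,\l}$, and you will be left with both an uncancelled $e_p(f,\xi\1)$ on the $L$-value side and an unexplained $(*)_{f,\l}$ in the statement. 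The fix is to leave $(*)_{f,\l}$ where Ichino's formula puts it and to pair the stabilization factors against $e_p(f,\xi\1)$ instead.
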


Strictly speaking, this is only an equality in $\CI^\uR[1/p]$ (and could
potentially be undefined if $L_p^*(f,\et\1,0)$ and $(*)_{f,\l}$ are zero).
However, the point is that both $L_p^*(f,\et\1,0)$ and $(*)_{f,\l}$ are terms
involving our auxiliary choice of data $\l$, $c_\l$, and $\nu$, and by choosing
this data carefully we can arrange for them to be $p$-adic units (or otherwise
suitably controlled) in the situations we want to study. 

\subsubsection*{Acknowledgements.} I would like to thank my advisor Christopher
Skinner for suggesting this project to me and for offering insights and
encouragement, and to thank Peter Humphries and Vinayak Vatsal for helpful
conversations. Much of this research was completed while the author was
supported by an NSF Graduate Research Fellowship (DGE-1148900).
 
\section{An explicit version of Ichino's formula}
\label{sec:ExplicitIchino}

In this section we obtain an explicit version of Ichino's triple-product
formula \cite{Ichino2008} for classical holomorphic newforms. Ichino's formula
is stated abstractly in terms of automorphic representations; we will use the
case where the quaternion algebra is $\GL_2$ and the \etale cubic algebra is
$\Q\x\Q\x\Q$ over $\Q$. In this case the formula can be written: 

\begin{thm}[Ichino's Formula] \label{thm:IchinosFormulaAutomorphic}
	Let $\pi_1,\pi_2,\pi_3$ be irreducible unitary cuspidal automorphic
	representations of $\GL_2(\Q)$ with the product of their central characters
	trivial, If we set $\Pi = \pi_1\ox\pi_2\ox\pi_3$, we have an equality of
	$\GL_2^\x(\A_\Q)$-linear functionals $\Pi\ox\TPi \to \C$ of the form 
	\[ \f{I(\ph,\Tph)}{\g{\ph,\Tph}} = \f{(6/\pi^2)}{8} 
			\f{\ze^*(2)^2 L^*(\pi_1\x\pi_2\x\pi_3,1/2)}
				{L^*(\ad\pi_1,1)L^*(\ad\pi_2,1)L^*(\ad\pi_3,1)} 
			\p_v \f{I_v^*(\ph_v,\Tph_v)}{\g{\ph_v,\Tph_v}_v}. \]
\end{thm}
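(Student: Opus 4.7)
My plan is to follow Ichino's original proof in \cite{Ichino2008}, which combines Garrett's integral representation of the triple product $L$-function with the doubling method and the Siegel--Weil formula. The ratio $I(\ph,\Tph)/\g{\ph,\Tph}$ is a global trilinear functional on $\Pi\ox\TPi$; by local uniqueness of such functionals on each $\pi_{i,v}\ox\Tpi_{i,v}$, both sides of the claimed identity automatically factor as Euler products, and the content of the theorem reduces to matching the global constant and normalizing the local pieces correctly.

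The first step is to represent $L(\pi_1\x\pi_2\x\pi_3,1/2)$ as a global integral. Garrett, Piatetski-Shapiro--Rallis, and Harris--Kudla showed that the pullback of a Siegel Eisenstein series $E(g,s,\Phi)$ on $\mathrm{Sp}_6(\A)$ along the diagonal $\mathrm{Sp}_2^3 \hookrightarrow \mathrm{Sp}_6$, integrated against $\ph_1\ox\ph_2\ox\ph_3$, unfolds via the Fourier--Whittaker expansion into an Euler product of local zeta integrals times $L(s+1/2,\pi_1\x\pi_2\x\pi_3)$. This is the natural integral representation that places the central value on the same footing as a trilinear period.

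The second step is to identify this Garrett integral at $s=1/2$ with the left-hand side $I(\ph,\Tph)$. Using the see-saw identity for the dual pairs $(\mathrm{O}(V),\mathrm{Sp}_2^3)$ and $(\mathrm{O}(V_i),\mathrm{Sp}_2)$ for an appropriate quaternary quadratic space $V=V_1\oplus V_2$, combined with the regularized Siegel--Weil formula on $\mathrm{Sp}_6$ and the Shimizu theta correspondence realising each $\pi_i$ as a theta lift from $\mathrm{O}(V_i)$, the Garrett integral becomes the pairing of the trilinear period of $\ph$ with that of $\Tph$, i.e.\ exactly $I(\ph,\Tph)$.

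Finally, the identity descends to a product of local identities because the Whittaker models factor over places. The local zeta integrals at each $v$ are normalized by the adjoint $L$-factors $L^*(\ad\pi_{i,v},1)$ to yield the ratios $I_v^*(\ph_v,\Tph_v)/\g{\ph_v,\Tph_v}_v$, while the global constants $\ze^*(2)^2$ and the prefactor $(6/\pi^2)/8 = 1/(8\ze(2))$ arise from the residue of the degenerate Eisenstein series, the Siegel--Weil constant, and the Tamagawa measure on $\mathrm{PGL}_2$. The main obstacle, which occupies the bulk of \cite{Ichino2008}, is the bookkeeping at this last step: the Eisenstein section $\Phi$, the local Haar measures at every place, and the normalization of the theta kernel in the Shimizu lift must all be coordinated so that the global constant separates from the local ratios exactly as stated, rather than being absorbed into the local terms.
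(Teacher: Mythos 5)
The paper does not actually prove this theorem: it is quoted verbatim from Ichino \cite{Ichino2008}, and the only genuine content the paper adds in stating it is the measure bookkeeping — checking that $\prod_v dx_v$ has total volume $\pi^2/3$ while the Tamagawa measure has volume $2$, so $dx_T = (6/\pi^2)\prod_v dx_v$, which is exactly where the $6/\pi^2$ in the displayed constant comes from, with the remaining $1/8 = 1/2^3$ being Ichino's constant for the split \'etale cubic algebra $\Q\times\Q\times\Q$. Your outline is a faithful summary of Ichino's actual argument (the Garrett/Piatetski-Shapiro--Rallis integral representation, the regularized Siegel--Weil formula on $\mathrm{Sp}_6$, and the see-saw with the Shimizu theta lift), so it is consistent with the source the paper relies on; the only corrections I would make are that the relevant quadratic space is the quaternion algebra equipped with its reduced norm form (with $\mathrm{GO}(D)$ acted on by $D^\times\times D^\times$) rather than a direct sum $V_1\oplus V_2$ of spaces attached to the separate $\pi_i$, and that the $6/\pi^2$ should be attributed purely to the Tamagawa-versus-product-of-local-measures normalization rather than folded in with the Siegel--Weil and Eisenstein constants.
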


The notation used in this theorem is as follows. The left hand side involves
global integrals on the quotient set $[\GL_2^\x(\A)] = \A_\Q^\x \GL_2^\x(\Q)
\q \GL_2^\x(\A_\Q)$. In particular, $I$ is the global integration functional
given on simple tensors $\ph = \ph_1\ox\ph_2\ox\ph_3$ and $\Tph =
\Tph_1\ox\Tph_2\ox\Tph_3$ by 
\[ I(\ph,\Tph) 
	= \b( \S_{[\GL_2^\x(A)]} \ph_1(g)\ph_2(g)\ph_3(g) dx_T \e) 
			\b( \S_{[\GL_2^\x(A)]} \Tph_1(g)\Tph_2(g)\Tph_3(g) dx_T \e). \]
The global pairing $\g{\ph,\Tph}$ is given on simple tensors by 
\[ \g{\ph,\Tph} = \p_{i=1}^3 \b( \S_{[\GL_2^\x(A)]}\ph_i(g) \Tph_i(g)dx_T\e).\]
\par
All of the $L$-functions (and the $\ze$-value) are written as $L^*$ and $\ze^*$
to denote that these are taken to include their $\Ga$-factors at infinity. The
triple-product $L$-function is the one studied by Garrett \cite{Garrett1987}
and Piatetski-Shapiro and Rallis \cite{PSR1987}, and $L(\ad\pi,s)$ is the
trace-zero adjoint $L$-function associated to $\pi$ (originally constructed by
Gelbart and Jacquet in \cite{GelbartJacquet1978}, and closely related to
symmetric square $L$-functions).
\par
The right-hand side involves a product of local functionals $I_v^*$, each of
which is a $\GL_2^\x(\Q_v)$-invariant functional on $\Pi_v\ox\TPi_v$. To set
this up we fix an invariant bilinear local pairing $\g{\ ,\ }_{i,v}$ on
$\pi_{v,i}\ox\Tpi_{v,i}$ for each place $v$ and each $i=1,2,3$, and use this to
define a pairing $\g{\ ,\ }_v$ on $\Pi_v\ox\TPi_v$ determined on simple tensors
$\ph_v = \ph_{1,v}\ox\ph_{2,v}\ox\ph_{3,v}$ and $\Tph_v =
\Tph_{1,v}\ox\Tph_{2,v}\ox\Tph_{3,v}$ by 
\[ \g{\ph_v,\Tph_v}_v = \g{\ph_{1,v},\Tph_{1,v}}_{1,v} 
		\g{\ph_{2,v},\Tph_{2,v}}_{2,v} \g{\ph_{3,v},\Tph_{3,v}}_{3,v}. \]
We then define a functional $I_v$ on $\Pi_v\ox\TPi_v$ by 
\[ I_v(\ph_v,\Tph_v) 
		= \S_{\Q_v^\x\q\GL_2(\Q_v)} \g{\pi(g_v)\ph_v,\Tph_v}_v dx_v \] 
We then normalize this functional with (the reciprocal of) the local factors of
the $L$-functions that show up in the global equation to get $I_v^*$: 
\[ I_v^*(\ph_v,\Tph_v) 
		= \f{L_v(\ad\pi_1,1)L_v(\ad\pi_2,1)L_v(\ad\pi_3,1)}
				{\ze(2)^2 L_v(\pi_1\x\pi_2\x\pi_3,1/2)} I_v(\ph_v,\Tph_v). \]
\par
Finally, the global measure $dx_T$ is taken to be the Tamagawa measure for
$\PGL_2(\A)$, which has volume 2 (see e.g. Theorem 3.2.1 of
\cite{WeilAdelesAlgGrps}). The local Haar measures $dx_p$ on $\PGL_2(\Q_p)$ are
chosen so that $\PGL_2(\Z_p)$ has volume 1, and the local Haar measure $dx_\oo$
on $\PGL_2(\R)$ is chosen as the quotient of the measure 
\[ x_\oo = \M{cc}{\al&\be\\\ga&\de} \qq\qq 
		dx_\oo = \f{d\al\ d\be\ d\ga\ d\de}{|\det(x_\oo)|^2}. \]
on $\GL_2(\R)$ by the usual multiplicative Haar measure $dx_{Lebesgue}/|x|$ on
the center $\Z(\GL_2(\R)) \iso \R^\x$. With these normalizations we can check
$\p dx_v$ has volume $\pi^2/3$, so $dx_T = (6/\pi^2) \p dx_v$, hence the factor
of $6/\pi^2$ in our formula.
\par
With this setup, Ichino showed that $I_v^*(\ph_v,\Tph_v) / \g{\ph_v,\Tph_v}_v =
1$ whenever $v$ is a place such that all of the $\pi_v$'s are unramified,
$\ph_v$ and $\Tph_v$ are spherical vectors, and $\PGL_2(\CO_v)$ has volume 1.
Because of this, the product over all places in Ichino's formula is actually a
finite product. 

\subsection{Ichino's formula in the classical case} 

The situation we want to apply Ichino's formula in is the following: We will
fix integers $m > k > 0$ and take classical newforms $f,g,h$ of weights $k$,
$m-k$, and $m$, respectively. We set notation that $N_f$ and $\ch_f$ denote the
level and character of $f$, and similarly for $g$ and $h$; let $N_{fgh} =
\lcm(N_f,N_g,N_h)$. We ultimately will want to use Ichino's formula to relate
the triple product $L$-value a classical Petersson inner product pairing $h$
with a product of $f$ and $g$.
\par
The naive idea is to work with the Petersson inner product
$\g{f(z)g(z),h(z)}$. However, this may not quite work if the levels of the
newforms don't match up - if the LCM of $N_f$ and $N_g$ is a
proper divisor of $N_h$, for instance, then certainly $f(z)g(z)$ is old at
level $N_h$ and thus $\g{f(z)g(z),h(z)} = 0$. We can fix this issue by
replacing the newforms with oldforms of higher level associated to them. We
will consider a pairing $\g{f(M_f z) g(M_g z),h(M_h z)}$ where these integers
are chosen so that, for each prime $q$, $q$ divides at most one of
$M_f,M_g,M_h$ and the largest power of $q$ to divide any of the products $M_f
N_f$, $M_g N_g$, and $M_h N_h$ actually divides two of them. 
\par
To apply Ichino's formula, we let $\pi_f$ be the unitary automorphic
representation associated to $f$. The classical newforms $f,g,h$ correspond to
specific vectors in the automorphic representation, namely $F$ given by 
\[ F(x) = \6( (y^{k/2} f)|[x_\oo]_k\6)(i) \Tch_f(k_0), \]
Here we decompose $x = \ga x_\oo k_0$ with $\ga \in \GL_2(\Q)$, $x_\oo \in
\GL_2^+(\R)$, and $k_0 \in K_0(N_f)$, and we let $\Tch_f$ be the character of
$K_0(N_f)$ given by applying $\ch_f$ to the lower-right entry. 
\par 
Of course, since this vector $F$ corresponds to the newform $f(z)$, we need to
suitably modify it to get something that will correspond to $f(M_f z)$ instead.
To do this we take the notation that if $M$ is an integer we let  
\[ \de_v(M) = \M{cc}{ M & 0 \\ 0 & 1 } \in \GL_2(\Z_v) 
		\qq\qq \de(M) = (\de_v(M)) \in \GL_2(\A_\Q). \]
Moreover, if $v$ is a finite place and we've fixed a uniformizer $\vp_v$ of
$\Q_v$ we let $\de^0_v(M) = \de_v(\vp^{v(M)})$, and we let $\de^0(M) \in
\GL_2(\A^\fin)$ have coordinates $\de^0_v(M)$. Then, the adelic lift of
$f_{M_f}(z) = f(M_f z)$ is given by 
\[ x \mt \6( (y^{k/2} f_{M_f})|[x_\oo]_k\6)(i) \Tch_f(k_0)
		= M_f^{-k} \6( (y^{k/2} f)|[\de_\oo(M_f) x_\oo]_k\6)(i) \Tch_f(k_0) \]
for a decomposition $x = \ga x_\oo k_0 \in \GL_2(\Q) \GL_2^+(\R) K_0(M_f N_f)$.
A straightforward computation shows that if we take $y = \de^0(M_f\1) \in
\GL_2(\A^\fin)$, then the vector $F_{M_f} = \pi(y)F \in \pi_f$ is a multiple of
the adelic lift of $f_{M_f}$.
\par
Similarly, we can take shifts of adelic lifts of $g$ and $h$, and come up with
an input vector 
\[ \ph = \de^0(M_f) F \ox \de^0(M_g) G \ox \de^0(M_h) \LH
		= F_{M_f} \ox G_{M_g} \ox \LH_{M_h} \]
in $\pi_f\ox\pi_g\ox\Tpi_h$. We let $\Tph$ to be the vector of complex
conjugates of these in the contragredients. The automorphic condition for the
central characters being trivial is equivalent to asking $\ch_f \ch_g = \ch_h$
as an equality of Dirichlet characters; assuming this Ichino's formula states
\[ \f{I(\ph,\Tph)}{\g{\ph,\Tph}} = \f{(6/\pi^2)}{8} 
			\f{\ze_F^*(2)^2 L^*(\pi_f\x\pi_g\x\Tpi_h,1/2)}
				{L^*(\ad\pi_f,1)L^*(\ad\pi_g,1)L^*(\ad\Tpi_h,1)} 
			\p_v \f{I_v^*(\ph_v,\Tph_v)}{\g{\ph_v,\Tph_v}_v}. \]
Next, we want to interpret the global integrals $I(\ph,\Tph)$ and
$\g{\ph,\Tph}$ in terms of Petersson inner products. In general, if $\Ps,\Ps'$
are adelic lifts of modular forms $\ps,\ps'$ then computing $\S \Ps\Ps' dx_T$
on $\A^\x\GL_2(\Q)\q\GL_2(\A)$ may be done by passing to a fundamental domain
of the form $D_\oo K_0(N)$ for $D_\oo$ a fundamental domain of
$\Ga_0(N)\q\PGL_2^+(\R)$. This can then be reinterpreted as an integral over
$\Ga_0(N)\q\DH$; keeping track of all of our normalizations (including that
Petersson inner products are normalized by the volume of $\Ga_0(N)\q\DH$) we
obtain
\[ \S_{\PGL_2(\Q)\q\PGL_2(\A)} \Ps(g) \LPs'(x) dx_T = 2\g{\ps,\ps'}. \]
Since $F_{M_f}$ is $M_f^k$ times the adelic lift of $f_{M_f}$ and likewise 
similarly for $G_{M_g}$ and $H_{M_h}$; the left-hand side of Ichino's
formula becomes 
\[ \f{I(\ph,\Tph)}{\g{\ph,\Tph}} 
	= \f{2^2 M_f^{2k} M_g^{2(m-k)} M_h^{2m}}{2^3 M_f^{2k} M_g^{2(m-k)} M_h^{2m}}
			\f{|\g{f_{M_f}g_{M_g},h_{M_h}}|^2}
			{\g{f_{M_f},f_{M_f}}\g{g_{M_g},g_{M_g}}\g{h_{M_h},h_{M_h}}}. \]
We can further see $\g{f_{M_f},f_{M_f}} = M_f^{-k} \g{f,f}$ by a simple
change-of-variables (similar to Lemma \ref{lem:ScalingPetersson}), this becomes 
\[ \f{|\g{f_{M_f}g_{M_g},h_{M_h}}|^2}
		{2 M_f^{-k} M_g^{k-m} M_h^{-m} \g{f,f}\g{g,g}\g{h,h}}. \]
Also, the value of $\ze^*(2)$ is $\pi^{-2/2} \Ga(2/2) \ze(2) = \pi/6$, so at
this point we've simplified Ichino's formula to 
\[ \f{|\g{f_{M_f}g_{M_g},h_{M_h}}|^2}{\g{f,f}\g{g,g}\g{h,h}} = 
			\f{ M_f^{-k} M_g^{k-m} M_h^{-m} \dt L^*(\pi_f\x\pi_g\x\Tpi_h,1/2)}
				{2^3 \dt 3 \dt L^*(\ad\pi_f,1)L^*(\ad\pi_g,1)L^*(\ad\Tpi_h,1)} 
			\p_v \f{I_v^*(\ph_v,\Tph_v)}{\g{\ph_v,\Tph_v}_v}. \]
There are a few more simplifications to make as well. First of all, a formula
of Shimura and Hida (see \cite{Shimura1976}, Section 5 of \cite{Hida1981}, and
Section 10 of \cite{Hida1986}) tells us that the Petersson inner product
$\g{f,f}$ is equal to $L^*(\ad\pi_f,1)$ up to an explicit factor, so we can can
remove those terms from our formula. Specifically, we can formulate the result
as follows, where we define a modified version of the adjoint $L$-value to
absorb some factors at bad places (where we'll deal with them on a
prime-by-prime basis later). 

\begin{thm} \label{thm:AdjointVsPetersson}
	Let $\ps \in S_\ka(N,\ch)$ be a newform, and let $N_\ch$ be the conductor of
	the Dirichlet character $\ch$ (which we take to be primitive). Then we have
	an equality 
	\[ L^H(\ad\ps,1)
			= \f{\pi^2}{6} \f{(4\pi)^\ka}{(\ka-1)!} \g{\ps,\ps}. \]
	Here, $L^H(\ad\ps,1)$ is defined by starting from a shift of the
	``naive'' twisted symmetric square $L$-function:
	\[ L^\naive_q(\ad\ps,s)\1
			= \b(1 - \f{\Lch(q) \al_q^2}{q^{\ka-1}} q^{-s}\e) 
				\b(1 - \f{\Lch(q) \al_q\be_q}{q^{\ka-1}} q^{-s}\e) 
				\b(1 - \f{\Lch(q) \be_q^2}{q^{\ka-1}} q^{-s}\e). \]
	where $L_q(\ps,s)\1 = (1 - \al_q q^{-s})(1- \be_q q^{-s})$, and then setting
	\[ L^H(\ad\ps,1) 
			= \pw{ L^\naive_q(\ad\ps,1) & q\nd N \\ 
						(1 - q^{-2})\1 (1+q\1)\1 & q\| N, q\nd N_\ch \\ 
						(1 - q^{-2})\1 & q | N, q \nd (N/N_\ch) \\ 
						(1 + q\1)\1 & \tx{otherwise} }. \]
\end{thm}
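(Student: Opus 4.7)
The plan is to derive this statement from the classical Rankin--Selberg method for the adjoint/symmetric-square $L$-function, as originally carried out by Shimura and later refined by Hida in the cited references. Rather than reprove everything from scratch, I would set up the Rankin--Selberg integral with normalizations matching those in use here and then check that the bad-prime corrections match the case-by-case definition of $L^H(\ad\ps,1)$.

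First I would consider the Rankin--Selberg integral
\[ D(s) = \S_{\Ga_0(N)\q\DH} |\ps(z)|^2 y^\ka E(z,s) \f{dx\ dy}{y^2}, \]
where $E(z,s)$ is a real-analytic Eisenstein series of suitable level and character. Unfolding against the Fourier expansion of $\ps$ and using Hecke multiplicativity expresses $D(s)$ as a product of a $\Ga$-factor, a power of $\pi$, and the Dirichlet series $\sum |a_n|^2 n^{-s}$, which in turn factors as $\ze(2s-2\ka+2)\1$ times a Rankin convolution that is essentially $\ze(s-\ka+1) L^\naive(\ad\ps, s-\ka+1)$. Computing $\Ga$-factors explicitly from the Mellin transform of the Whittaker function for a weight-$\ka$ form yields the constant $(4\pi)^\ka/(\ka-1)!$.

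Next, evaluating at the distinguished point $s = \ka$ (corresponding to $s=1$ on the adjoint side), the Eisenstein series has a simple pole whose residue is proportional to the constant function, so the left side becomes $\g{\ps,\ps}$ times the hyperbolic volume of $\Ga_0(N)\q\DH$. Since we have normalized Petersson inner products by this same volume, that factor cancels cleanly; the residual $\ze(2)$ from $\ze(2s - 2\ka + 2)$ at $s=\ka$ supplies the $\pi^2/6$. Putting the pieces together gives the claimed identity away from primes dividing $N$.

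The main obstacle, and the one that justifies the complicated piecewise definition of $L^H$, is the local analysis at primes $q \mid N$. The naive Euler product $L^\naive_q(\ad\ps,s)$ is the one dictated by the Satake parameters $\al_q,\be_q$ read off from the Hecke polynomial, but this differs from the ``correct'' local $L$-factor coming from the underlying automorphic representation whenever $\pi_{f,q}$ is special, supercuspidal, or a ramified principal series. For each of the three cases in the statement---$q\|N$ with $q\nd N_\ch$ (Steinberg), $q\mid N$ with $q\nd(N/N_\ch)$ (ramified principal series with one unramified character), and the remaining case---I would compare the local factor produced by the unfolded Rankin--Selberg integral with both $L^\naive_q$ and the genuine $L_q(\ad\pi_f,1)$, and verify that the listed ratio is the required adjustment. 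This case-by-case verification is exactly what is carried out in Section 5 of \cite{Hida1981} and Section 10 of \cite{Hida1986}, and the definition of $L^H$ is rigged so that, after these corrections, the global formula holds with no further adjustment.
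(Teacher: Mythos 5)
Your proposal is correct and follows essentially the same route as the paper, which states Theorem \ref{thm:AdjointVsPetersson} without proof and attributes it precisely to the Rankin--Selberg computation of Shimura and Hida (\cite{Shimura1976}, Section 5 of \cite{Hida1981}, Section 10 of \cite{Hida1986}) that you outline. Your identification of the sources of the constants (the $\Gamma$-factor giving $(4\pi)^\ka/(\ka-1)!$, the $\ze(2s-2\ka+2)$ denominator giving $\pi^2/6$, the volume cancelling against the normalized Petersson product) and your deferral of the bad-prime corrections to Hida's case-by-case analysis match what the paper relies on.
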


We note that $L(\ad\ps,1)$ equals $L(\ad\pi_\ps,1)$ without a shift, and also
equals $L(\ad\Tpi_\ps,1)$ by self-duality. Also, newforms have discrete series
representations at infinity, so the archimedean $L$-factor is worked out
directly to be 
\[ L_\oo(\ad\pi_\ps,s) = 2 (2\pi)^{-(s+\ka-1)} \Ga(s+\ka-1) \pi^{-(s+1)/2} 
		\Ga\pf{s+1}{2}; \]
ultimately we conclude 
\[ L^*(\ad\pi_\ps,1) = \f{2^\ka \pi}{3} \g{f,f}
					\p_{q|N} \f{L_q(\ad\ps,1)}{L_q^H(\ad\ps,1)}. \]
In the context of Ichino's formula we see we can write
\[ \f{1}{L^*(\ad\pi_f,1)L^*(\ad\pi_g,1)L^*(\ad\Tpi_h,1)}
	= \f{1}{\g{f,f}\g{g,g}\g{h,h}} 
			\f{3^3}{\pi^3 2^k 2^{m-k} 2^m} \p_q \CE_q \]
where 
\[ \CE_q = \f{L^H_q(\ad f,1)}{L_q(\ad f,1)} 
	\f{L^H_q(\ad g,1)}{L_q(\ad g,1)} 
	\f{L^H_q(\ad h,1)}{L_q(\ad h,1)}. \]
\par
We can also look at the $L$-factor $L^*(\pi_f\x\pi_g\x\Tpi_h,1/2)$. The
archimedean factor can be computed to be 
\[ L_\oo(1/2,\pi_f\ox\pi_g\ox\Tpi_h) 
		= 2^4 (2\pi)^{-2m} (m-2)! (k-1)! (m-k-1)!, \]
and we can also check that our normalizations are such that the non-complete
central $L$-value $L(\pi_f\x\pi_g\x\Tpi_h,1/2)$ equals $L(f\x g\x\Lh,m-1)$ when
written classically. Finally, the local integral $I_\oo^*$ at the archimedean
place is known by results of Ichino-Ikeda (\cite{IchinoIkeda2010} Proposition
7.2) or Woodbury (\cite{Woodbury2012} Proposition 4.6), and is $2\pi$ with our
normalizations. So we conclude: 

\begin{thm}[Ichino's formula, classical version] \label{thm:ClassicalIchino}
	Fix integers $m > k > 0$, and let $f \in S_k(N_f,\ch_f)$, $g \in
	S_{m-k}(N_g,\ch_g)$, and $h \in S_m(N_h,\ch_h)$ be classical newforms such
	that the characters satisfy $\ch_f \ch_g = \ch_h$. Take $N_{fgh} =
	\lcm(N_f,N_g,N_h)$ and choose positive integers $M_f,M_g,M_h$ such that the
	three numbers $M_f N_f$, $M_g N_g$, $M_h N_h$ divide $N_{fgh}$ and moreover
	none of the three is divisible by a larger power of any prime $q$ than both
	of the others. Then we have 
	\[ |\g{f_{M_f}g_{M_g},h_{M_h}}|^2 
		= \f{3^2 (m-2)! (k-1)! (m-k-1)!}
					{\pi^{2m+2} 2^{4m-2} M_f^k M_g^{m-k} M_h^m} 
			L(f\x g\x\Lh,m-1) \p_{q|N_{fgh}} \CE_q I_q^*, \]
	where $I_q^*$ are the Ichino local integrals and $\CE_q$ is the term coming
	from our modified adjoint $L$-value.
\end{thm}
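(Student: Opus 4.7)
The plan is to simply assemble the pieces already laid out in the exposition: the adelic identity of Theorem \ref{thm:IchinosFormulaAutomorphic} applied to $\pi_f,\pi_g,\tilde\pi_h$ with input vector $\phi=F_{M_f}\otimes G_{M_g}\otimes \bar H_{M_h}$ and $\tilde\phi$ its complex conjugate, followed by translating each adelic quantity into classical language. The central character compatibility $\chi_f\chi_g=\chi_h$ is exactly the hypothesis needed for $\pi_f\otimes\pi_g\otimes\tilde\pi_h$ to have trivial central character, which is what lets us apply Ichino in this setup.

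First, I would rewrite the left-hand side classically. The identity $\int_{\PGL_2(\Q)\backslash\PGL_2(\A)}\Psi(g)\bar{\Psi'}(g)\,dx_T = 2\langle\psi,\psi'\rangle$ (already noted above) together with the scaling $F_{M_f}=M_f^k$ times the adelic lift of $f_{M_f}$ (and analogously for $g,h$), plus the change of variables $\langle f_{M_f},f_{M_f}\rangle=M_f^{-k}\langle f,f\rangle$, collapses $I(\phi,\tilde\phi)/\langle\phi,\tilde\phi\rangle$ into
\[ \frac{|\langle f_{M_f} g_{M_g}, h_{M_h}\rangle|^2}{2 M_f^{-k} M_g^{k-m} M_h^{-m}\,\langle f,f\rangle\langle g,g\rangle\langle h,h\rangle}. \]
Second, I would eliminate the three adjoint $L$-factors on the right using Theorem \ref{thm:AdjointVsPetersson}, which gives $L^*(\mathrm{ad}\,\pi_\psi,1)=\tfrac{2^\kappa\pi}{3}\langle\psi,\psi\rangle \prod_{q\mid N}L_q(\mathrm{ad}\,\psi,1)/L_q^H(\mathrm{ad}\,\psi,1)$; this introduces exactly the bad-prime factors $\mathcal{E}_q$ and produces a $\langle f,f\rangle\langle g,g\rangle\langle h,h\rangle$ in the denominator that cancels against the Petersson norms carried over from step one.

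Third, I would substitute the archimedean inputs: $\zeta^*(2)=\pi/6$, the discrete-series computation $L_\infty(\pi_f\otimes\pi_g\otimes\tilde\pi_h,1/2) = 2^4(2\pi)^{-2m}(m-2)!(k-1)!(m-k-1)!$, the Ichino--Ikeda / Woodbury value $I_\infty^*(\phi_\infty,\tilde\phi_\infty)/\langle\phi_\infty,\tilde\phi_\infty\rangle_\infty = 2\pi$, and the identification of the non-archimedean central value $L(\pi_f\times\pi_g\times\tilde\pi_h,1/2)$ with the classical $L(f\times g\times\bar h, m-1)$. Unramified places $q\nmid N_{fgh}$ (where $\phi_q$ is spherical after the $\delta^0(M)$-translate, since the condition on $M_f,M_g,M_h$ ensures each shift by $\delta^0(M_\ast)$ only acts nontrivially at primes already dividing the corresponding level) contribute $I_q^*/\langle\phi_q,\tilde\phi_q\rangle_q=1$ by Ichino's unramified calculation, so the product reduces to $\prod_{q\mid N_{fgh}}I_q^*$.

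The main obstacle is not conceptual but bookkeeping: carefully tracking the factors of $2$, $3$, $\pi$, and $M_\ast^{\text{wt}}$ through the reduction. In particular one has to verify that the $2^k\cdot 2^{m-k}\cdot 2^m$ produced by Shimura--Hida, the $2^4$ from $L_\infty$, the $2\pi$ from $I_\infty^*$, and the $2^3=8$ and $6/\pi^2$ from the Tamagawa volume all combine to yield the stated denominator $\pi^{2m+2}2^{4m-2}$ (and numerator $3^2$). Once one has trusted the local integrals at $\infty$ and verified the unramified-place cancellation, the result follows by direct substitution.
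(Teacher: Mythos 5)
Your proposal is correct and follows essentially the same route as the paper: apply the adelic formula to $\pi_f\otimes\pi_g\otimes\Tpi_h$ with the $\de^0(M_*)$-translated newvectors, classicalize the global integrals via the factor $2\g{\ps,\ps'}$ and the scaling $\g{f_{M_f},f_{M_f}}=M_f^{-k}\g{f,f}$, remove the adjoint $L$-values with Theorem \ref{thm:AdjointVsPetersson} (which is what introduces $\CE_q$), and substitute $\ze^*(2)=\pi/6$, the archimedean $L$-factor, and $I_\oo^*=2\pi$, with unramified places contributing $1$. The only remaining content is the arithmetic bookkeeping of powers of $2$, $3$, and $\pi$, which is exactly how the paper closes the argument as well.
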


Here we use that $I_q^*$ is known to be $1$ at unramified primes, and that
$\CE_q$ is trivially $1$ at such primes as well. 

\subsection{Known results on the local integrals}  
\label{sec:LocalIntegralResults}

The difficult part of making Ichino's formula completely explicit is evaluating
the local integrals $I_q$ at each ramified prime, which has to be done on a
case-by-case basis. Upon decomposing $\pi_f$ as a product of local
representations $\OxP \pi_{f,v}$, a result of Casselman \cite{Casselman1973}
tells us that the vector $F$ corresponds to a simple tensor $\ox F_v$ where
each $F_v$ is a ``newvector'' in $\pi_{f,v}$. Thus $F_{M_f} = \de^0(M_f) F$ has
local components $\de^0_v(M_f) F$. Similarly the local components of $G_{M_g}$
and $H_{M_h}$ are newvectors shifted by an appropriate matrix $\de^0_v(M)$.
So $I_q^*$ only depends on the isomorphism types of $\pi_{f,v}$, $\pi_{g,v}$,
and $\Tpi_{h,v}$, plus perhaps a choice of which newvector to apply a matrix
$\de_v(\vp_q^m)$ to.
\par
To deal with these integrals abstractly, let $\pi_1,\pi_2,\pi_3$ be local
representations of $G = \GL_2(\Q_q)$, always assumed to have the product of
their central characters trivial. We let $c_i$ denote the conductor of $\pi_i$
and let $x_i$ be a \E{newvector}: a vector in the one-dimensional invariant
subspace for the group 
\[ K_2(\Fa) = \b\{ \M{cc}{a&b\\c&d} 
		: c \in \Fa, d\in 1+\Fa, a\in\Z_q^\x, b \in \Z_q \e\} \]
where $\Fa = (p^{c_i})$ (so if $\pi_i$ is unramified then $x_i$ is a spherical
vector). We note that we look at newvectors invariant under $K_2$ rather than
$K_1$ (as in \cite{Casselman1973}) in accordance with our convention that we
extend $\ch_f$ to a character $\Tch_f$ of $K_0$ by applying $\ch_f$ to the
lower-right entry rather than the upper-left. 
\par
We assume without loss of generality that $\pi_3$ has the largest conductor,
i.e. $c_3 \ge c_1,c_2$. Then we set 
\[ I(\pi_1,\pi_2,\pi_3) =
		\S_{Z\q G} \f{\g{gx_1,x_1}}{\g{x_1,x_1}} \f{\g{gy_2,y_2}}{\g{y_2,y_2}}
			\f{\g{gx_3,x_3}}{\g{x_3,x_3}} dg, \]
where $y_2$ is the translate $\de_v(\vp^{c_3-c_2}) x_2$ of our newvector,
and we normalize by setting
\[ I^*(\pi_1,\pi_2,\pi_3) 
		= \f{L(\ad\pi_1,1) L(\ad\pi_2,1) L(\ad\pi_3,1)}
			{L(\pi_1\x\pi_2\x\pi_3,1/2) \ze_q(2)^2} I(\pi_1,\pi_2,\pi_3). \]
Then every local integral $I_q^*$ from Ichino's formula is of the form
$I(\pi_1,\pi_2,\pi_3)$. 
\par 
The values of the local integrals $I^*(\pi_1,\pi_2,\pi_3)$ are not known in
general. Instead they have been computed in various special cases, as needed
for various applications of Ichino's formula. We will quote some of these
special cases that we need, and then make a computation in one new case, in
order to deal with the choices of newforms $f,g,h$ we will need for this paper. 
We start by stating the following easy lemma, which is useful for simplifying
computations (for instance, letting us assume our unramified principal series
are of the form $\pi(\ch,\ch\1)$). 

\begin{lem} \label{lem:UnramifiedCharacterTwist}
	Suppose $\pi_1,\pi_2,\pi_3$ are as above and $\ch_1,\ch_2,\ch_3$ are
	unramified characters satisfying $\ch_1\ch_2\ch_3 = 1$. Let $\ch_i\pi_i =
	\ch_i\ox\pi_i$ be the associated twists. Then we have
	$I(\ch_1\pi_1,\ch_2\pi_2,\ch_3\pi_3) = I(\pi_1,\pi_2,\pi_3)$ and similarly
	for $I^*$.
\end{lem}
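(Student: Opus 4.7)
The plan is to reduce the lemma to the basic identity that an unramified twist multiplies matrix coefficients by a character of the determinant, and then use the hypothesis $\ch_1\ch_2\ch_3 = 1$ to make these character factors cancel inside the integral defining $I$.

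First I would verify that the newvectors used to define $I$ are unchanged when we replace $\pi_i$ by $\ch_i\pi_i$. Since $\ch_i$ is unramified and every element of $K_2(\Fa)$ has determinant in $\Z_q^\x$, we have $\ch_i(\det k) = 1$ for all $k \in K_2(\Fa)$. Hence the $K_2(\Fa)$-fixed subspaces of $\ch_i\pi_i$ and $\pi_i$ coincide, the conductors $c_i$ are the same, and the same newvectors $x_i$ together with the same shift $y_2 = \de_v(\vp^{c_3-c_2}) x_2$ appear in both integrands. I would then apply the identity
\[ \g{(\ch_i\pi_i)(g) x_i, x_i} = \ch_i(\det g) \g{\pi_i(g) x_i, x_i}, \]
which is valid for any $\GL_2(\Q_q)$-invariant bilinear pairing (and in which $\g{x_i, x_i}$, being this expression at $g = 1$, is unchanged). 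Multiplying the three such relations, the integrand of $I(\ch_1\pi_1,\ch_2\pi_2,\ch_3\pi_3)$ equals that of $I(\pi_1,\pi_2,\pi_3)$ times $(\ch_1\ch_2\ch_3)(\det g)$, which is identically $1$ by hypothesis; integrating then gives the equality for $I$.

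For the normalized version $I^*$, I would additionally check that each of the local $L$-factors in the normalization is unaffected by the twist. The adjoint factor satisfies $L(\ad(\ch_i\pi_i),1) = L(\ad\pi_i,1)$ because the adjoint representation factors through $\PGL_2$ and is thus insensitive to character twists. For the triple-product factor, the Langlands parameter of the twisted triple is $(\ch_1\ch_2\ch_3)$ times the original, and the hypothesis forces this to agree with the untwisted parameter, so $L(\ch_1\pi_1\x\ch_2\pi_2\x\ch_3\pi_3,1/2) = L(\pi_1\x\pi_2\x\pi_3,1/2)$. Hence the normalizing factor is the same on both sides, and the invariance of $I^*$ follows from that of $I$. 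There is no real obstacle here: the whole argument is formal, the only point needing a bit of care being the bookkeeping of the character factor on the matrix coefficients.
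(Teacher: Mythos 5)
Your proof is correct, and it is exactly the standard argument the paper has in mind (the paper states this as an "easy lemma" without writing out a proof): unramified twists leave the newvectors and conductors unchanged, multiply each normalized matrix coefficient by $\ch_i(\det g)$, and the hypothesis $\ch_1\ch_2\ch_3=1$ makes these factors cancel pointwise in the integrand, while the adjoint and triple-product local $L$-factors in the normalization of $I^*$ are likewise unchanged. The one small point worth noting is that the shifted vector $y_2$ in the twisted representation is only a scalar multiple of the original $\de_v(\vp^{c_3-c_2})x_2$ (since $\det\de_v(\vp^{c_3-c_2})$ need not be a unit), but this is harmless because the integrand is normalized by $\g{y_2,y_2}$.
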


The simplest case is when $\pi_1,\pi_2,\pi_3$ have conductors $c_i \le 1$ (i.e.
all of the original modular forms have squarefree level at $q$). In the case of
trivial central characters (and thus for unramified central characters via the
above lemma), this is worked out explicitly by Woodbury \cite{Woodbury2012},
and is implicit in the computations of Watson \cite{Watson2002}. In particular
this covers the case where two of the representations are unramified and the
third is special, which we will need. 
\par
Another case where $I(\pi_1,\pi_2,\pi_3)$ can be computed in a fairly uniform
way is when $\pi_3$ has a much larger conductor than $\pi_1$ or $\pi_2$; this
is carried out by Hu \cite{Hu2017}. In particular it applies to the case where
two of the representations are unramified and the third has conductor at least
$2$. Including the factor $\CE_q$ that appears in our formula, we have the
following uniform result.

\begin{cor}[\cite{Woodbury2012}, \cite{Hu2017}]
	\label{cor:LocalIntegralsTwoUnramified}
	Suppose that $\pi_1,\pi_2$ are unramified and $\pi_3$ is any ramified
	representation (necessarily having an unramified central character). Then we
	have 
	\[ \CE_q I^*(\pi_1,\pi_2,\pi_3) = q^{-c_3} (1+q\1)^{-2}. \]
\end{cor}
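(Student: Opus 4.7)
The plan is to dispose of $\pi_1$ and $\pi_2$ first, noting that since both are unramified, their contributions to $\CE_q$ are trivial: the ratio $L_q^H(\ad\pi_i,1)/L_q(\ad\pi_i,1)$ equals $1$ at an unramified prime, since the Hida-normalized adjoint factor coincides with the genuine local $L$-factor there. The problem thus reduces to proving
\[ \f{L_q^H(\ad\pi_3,1)}{L_q(\ad\pi_3,1)} \cdot I^*(\pi_1,\pi_2,\pi_3) = q^{-c_3}(1+q\1)^{-2}. \]

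I would then split on the conductor $c_3$. When $c_3 = 1$, the assumption that $\pi_3$ has unramified central character forces $\pi_3$ to be an unramified twist of the Steinberg representation: any ramified principal series $\pi(\ch_1,\ch_2)$ of conductor $1$ would have to combine one unramified and one conductor-$1$ character, producing a ramified central character, and supercuspidals of $\GL_2(\Q_q)$ have conductor at least $2$. By Lemma \ref{lem:UnramifiedCharacterTwist} this reduces to the Steinberg case itself, where $I^*(\pi_1,\pi_2,\pi_3)$ is supplied by Woodbury's calculation in \cite{Woodbury2012} (essentially the squarefree-level case of Watson's triple product computation). A direct evaluation of $L_q^H/L_q$ using the ``$q\|N_h$, $q\nd N_{\ch_h}$'' branch of Theorem \ref{thm:AdjointVsPetersson} then produces the predicted combined factor.

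For $c_3 \ge 2$, I would instead invoke Hu's general formula in \cite{Hu2017} for the local integral when two representations are spherical and the third is more deeply ramified. Hu treats uniformly all the relevant types for $\pi_3$ (ramified principal series of higher conductor, supercuspidals, and so on), so a single formula suffices. Packaging the output together with the appropriate branch of $L_q^H/L_q$ (the ``$q|N$, $q\nd(N/N_\ch)$'' case or the ``otherwise'' case, depending on whether $\pi_3$ is principal series or not) again simplifies to $q^{-c_3}(1+q\1)^{-2}$.

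The main obstacle is strictly bookkeeping: Hu and Woodbury work with slightly different normalizations than ours (different Haar measures on $\GL_2(\Q_q)$ and its center, a different choice of $L$-factor for $\ad\pi$, and a different convention for the direction of the newvector shift $\de^0_v$), so each factor of $q$ must be tracked with care. No genuinely new input beyond the cited references is required; the content of the corollary is that the uniform right-hand side $q^{-c_3}(1+q\1)^{-2}$ emerges after the various $\CE_q$ corrections are folded in.
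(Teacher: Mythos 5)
Your proposal matches the paper's treatment: the corollary is obtained exactly by combining the $c_3=1$ case (where the unramified central character forces $\pi_3$ to be an unramified twist of Steinberg, handled by Woodbury/Watson) with Hu's computation for $c_3\ge 2$, and then folding in the $\CE_q$ normalization, with $\pi_1,\pi_2$ contributing trivially. One small bookkeeping correction for the $c_3\ge 2$ case: because the central character of $\pi_3$ is unramified, a ramified principal series arising here must have both inducing characters ramified, so $q$ always divides $N/N_\ch$ and the relevant branch of $L_q^H$ is the ``otherwise'' one, $(1+q\1)^{-1}$, in every subcase --- the ``$q|N$, $q\nd (N/N_\ch)$'' branch never occurs.
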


If two of the three representations are ramified, the formulas become more
complicated, and can start to involve factors that more heavily depend on the
parameters of the local representations being studied. These have been computed
in the literature in some cases; in particular, \cite{NelsonPitaleSaha2014}
computes $I^*(\pi_1,\pi_2,\pi_3)$ in the cases where all three representations
have trivial central character, $\pi_1$ is unramified, and $\pi_2 \iso \pi_3$.
We discuss their method in the next section, where we use it to prove one new
identity; it applies whenever $\pi_1$ is unramified, though it's unclear
whether the computations would be tractable for all choices of $\pi_2$ and
$\pi_3$. 
\par
For our purposes we only need such computations in two cases. The first one is
the case where $\pi_2,\pi_3$ are ramified principal series of conductor 1.
These representations have nontrivial central character and this computation
does not seem to have been done in the literature; we carry it out in Section
\ref{sec:NewLocalIntegralCalc}. 

\begin{prop} \label{prop:LocalIntegralsTwoHalfRamified}
	Suppose $\pi_1$ an unramified principal series, and $\pi_2,\pi_3$ both
	principal series of conductor $1$ (so both are of the form
	$\pi(\ch_1,\ch_2)$ with $\ch_1$ having conductor $1$ and $\ch_2$ unramified,
	or vice-versa), such that the product of central characters
	$\om_1\om_2\om_3$ is trivial. Then we have
	\[ I^*(\pi_1,\pi_2,\pi_3) = q\1 \qq\qq \CE_q = (1+q\1)^{-2}. \]
\end{prop}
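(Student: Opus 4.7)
The plan is to compute the local integral $I(\pi_1,\pi_2,\pi_3)$ directly and the factor $\CE_q$ independently, and then apply the $L$-factor normalization to obtain $I^*$. Since $c_2 = c_3 = 1$, the shift $\de_v(\vp^{c_3-c_2})$ in the definition of $y_2$ is trivial, so the integral reduces to
\[ I(\pi_1,\pi_2,\pi_3) = \S_{Z\q G} \f{\g{gx_1,x_1}}{\g{x_1,x_1}} \f{\g{gx_2,x_2}}{\g{x_2,x_2}} \f{\g{gx_3,x_3}}{\g{x_3,x_3}}\,dg. \]
By Lemma~\ref{lem:UnramifiedCharacterTwist} I can twist by a triple of unramified characters with trivial product; this lets me normalize $\pi_1$ as $\pi(\mu,\mu\1)$ and arrange the unramified factor of $\pi_2$ to be trivial. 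The constraint $\om_1\om_2\om_3 = 1$ then forces the ramified characters of $\pi_2$ and $\pi_3$ to be mutually inverse up to an unramified twist, which is exactly what allows the final answer to be independent of the character data.

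The core of the argument follows the strategy of \cite{NelsonPitaleSaha2014}. First I would decompose $Z\q G$ into $K_0(\vp)$-double cosets, which can be parametrized via the Bruhat--Tits tree by representatives $g_n$ built from $\mathrm{diag}(\vp^n,1)$ together with the long Weyl element. The three matrix coefficients can then be evaluated explicitly on these representatives: $\Phi_1$ by Macdonald's formula (since $x_1$ is bi-$K$-invariant), and $\Phi_2,\Phi_3$ by direct calculation in the induced-model realization of a conductor-one principal series, where the newvector can be taken as the function supported on $B w_1 K_2(\vp)$. Inserting these expressions into the integral and weighting each double coset by its measure gives a finite sum of geometric series in $q\1$ whose closed form, combined with the cancellation of the ramified characters, yields $I(\pi_1,\pi_2,\pi_3)$.

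The normalization to $I^*$ then requires the local $L$-factors. For each ramified $\pi_i = \pi(\ch_1,\ch_2)$ with $\ch_1$ ramified of conductor $1$ and $\ch_2$ unramified, the adjoint representation has Langlands parameters $\ch_1/\ch_2$, $1$, $\ch_2/\ch_1$, of which only the trivial piece contributes an Euler factor, so $L_q(\ad\pi_i,1) = (1-q\1)\1$ for $i=2,3$. Combining this with $L_q(\ad\pi_1,1)$ from the Satake parameters and with the local triple-product factor $L_q(\pi_1\x\pi_2\x\pi_3,1/2)$ (which is computable once the ramified characters are known to cancel) produces the claimed value $I^*(\pi_1,\pi_2,\pi_3) = q\1$. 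For $\CE_q$, the unramified place contributes $1$; for each of $\pi_2,\pi_3$ the central character has conductor exactly $q$, so $q|N$ and $q\nd N/N_\ch$, whence $L_q^H(\ad,1) = (1-q^{-2})\1$ by Theorem~\ref{thm:AdjointVsPetersson}, and dividing by $L_q(\ad,1) = (1-q\1)\1$ yields a factor of $(1+q\1)\1$ per representation, hence $\CE_q = (1+q\1)^{-2}$.

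The main obstacle is the explicit local integration in the second step. Unlike the setting of \cite{NelsonPitaleSaha2014}, in which the central characters are trivial and $\pi_2 \iso \pi_3$, here both $\pi_2$ and $\pi_3$ carry nontrivial central characters and are not assumed isomorphic, so the transformation behavior of each newvector under $K_0(\vp)/K_2(\vp) \iso (\Z_q/q\Z_q)^\x$ has to be tracked separately. Consequently the character sums arising in each double coset have to be combined correctly with the Macdonald contribution from $\Phi_1$, and the resulting expression must be simplified by using the mutual-inverse relation between the ramified characters of $\pi_2$ and $\pi_3$ before the clean value $q\1$ becomes visible.
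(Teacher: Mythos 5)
Your peripheral steps check out: the reduction via Lemma \ref{lem:UnramifiedCharacterTwist}, the observation that the ramified inducing characters of $\pi_2$ and $\pi_3$ must be mutually inverse up to an unramified twist, the adjoint factors $L_q(\ad\pi_i,1)=(1-q^{-1})^{-1}$ for $i=2,3$, and the derivation of $\CE_q=(1+q^{-1})^{-2}$ from Theorem \ref{thm:AdjointVsPetersson} are all correct. But the heart of the proposition is the value $q^{-1}$, and your proposal never actually produces it. You reduce everything to ``a finite sum of geometric series whose closed form \dots yields $I(\pi_1,\pi_2,\pi_3)$'' and then flag exactly this step as the main obstacle. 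That step is the entire content of the result. Without explicit formulas for the matrix coefficients $\g{gx_i,x_i}$ of the conductor-one newvectors on a full set of representatives for $ZK_0(\vp)\q G/K_0(\vp)$ (these vectors are only $K_2(\vp)$-invariant, so the coefficients depend on two tree parameters rather than a single Cartan parameter), and without carrying out the summation and the cancellation against the normalizing factor $\ze_q(2)^2 L(\pi_1\x\pi_2\x\pi_3,1/2)^{-1}\prod_i L(\ad\pi_i,1)$, there is no verification that the answer is $q^{-1}$ rather than some other rational function of $q$ and the Satake parameter of $\pi_1$. Note that in the superficially similar case of Proposition \ref{prop:LocalIntegralTwoSupercuspidals} a nontrivial factor $(*)$ depending on $\ch(q)$ \emph{does} survive, so the independence of the answer from the character data cannot be assumed; it has to emerge from the computation.

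I would also point out that the route you describe is not the method of \cite{NelsonPitaleSaha2014}, despite the attribution. Their method --- and the one the paper uses --- is the Michel--Venkatesh factorization (Lemma 3.4.2 of \cite{MichelVenkatesh2010}), which for $\pi_1$ unramified with parameter $q^s$ expresses $I^*(\pi_1,\pi_2,\pi_3)$ as $(1+q^{-1})^2 L(\ad\pi_2,1)L(\ad\pi_3,1)J^*(\pi_2,\pi_3;s)J^*(\Tpi_2,\Tpi_3;-s)$, where $J$ is a local Rankin--Selberg integral over $NZ\q G$ of two Whittaker newvectors against a spherical section. This replaces the three-fold matrix-coefficient integral by two torus integrals: after computing the Whittaker newvector of a conductor-one principal series explicitly (via the intertwining integral from the induced model) and using the decomposition $G=B\ga_0 K_2(\vp)\cup B\ga_1 K_2(\vp)$, each $J$ is a sum of two genuine geometric series and the $L$-factors cancel cleanly, giving $J^*(\pi_2,\pi_3;s)=(1+q^{-1})^{-1}(1-q^{-1})q^{-1/2}(\xi^{-1}\mu\nu)(\vp)$ and hence $I^*=q^{-1}$. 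If you want to salvage the direct approach you must carry out the double-coset computation in full; switching to the Michel--Venkatesh route is substantially less work and is what the paper actually does.
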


The second is when $\pi_2,\pi_3$ are both supercuspidal representations, in
particular ones of ``type 1'' in the notation of \cite{NelsonPitaleSaha2014}:
these are invariant under twisting by the nontrivial unramified quadratic
character of $\Q_q^\x$. For simplicity we state the result in the case where
the supercuspidal $\pi$ has the same conductor as $\pi\x\pi$. (A type 1
representation $\pi$ must be dihedral corresponding to a character $\xi$ of the
unramified quadratic extension of $\Q_q$, and the conductors of $\pi$ and
$\pi\x\pi$ are two times the conductors of $\xi$ and $\xi^2$, respectively. So
if $q$ is odd these conductors are automatically equal as long as $\xi$ is not
a quadratic character.)

\begin{prop} \label{prop:LocalIntegralTwoSupercuspidals}
	Suppose $\pi_1 = \pi(\ch,\ch\1)$ is an unramified principal series (for
	$\ch$ an unramified unitary character) and $\pi_2 \iso \pi_3 \iso \pi$ is
	supercuspidal of Type 1 with conductor $n$ (necessarily even) and trivial
	central character. If we assume that $\pi\x\pi$ also has conductor $n$, then
	\[ \CE_q I^*(\pi_1,\pi_2,\pi_3) = q^{-n} (1+q\1)^{-2} \dt (*) \]
	where we set $\al = \ch(q)$ and define
	\[ (*) = \pf{(\al^{n/2+1}-\al^{-n/2-1}) 
			- q\1 (\al^{n/2-1}-\al^{-n/2+1})}{\al-\al\1}^2. \]
\end{prop}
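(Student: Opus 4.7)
The plan is to adapt the Nelson--Pitale--Saha method used in Proposition \ref{prop:LocalIntegralsTwoHalfRamified}, which applies whenever $\pi_1$ is unramified. By Lemma \ref{lem:UnramifiedCharacterTwist} I may assume $\pi_1 = \pi(\chi,\chi^{-1})$ for an unramified unitary $\chi$, with Satake parameter $\alpha = \chi(\varpi)$. Since $c_1 = 0$ and $c_2 = c_3 = n$ we have $y_2 = x_2$, and since $\pi_2 \cong \pi_3 = \pi$ is self-dual with trivial central character, the integrand collapses to $\Phi_1(g)\,|\Phi_\pi(g)|^2$, where $\Phi_\pi$ denotes the normalized matrix coefficient of the newvector $x_\pi$.

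Next I would realize $\pi$ by compact induction: a Type 1 supercuspidal with trivial central character is dihedral, so $\pi \cong \mathrm{ind}_J^G \widetilde\xi$ where $J = E^{\times}K$, with $E/\mathbb{Q}_q$ the unramified quadratic extension embedded in $M_2(\mathbb{Q}_q)$ and $K = \GL_2(\mathbb{Z}_q)$. The newvector corresponds (up to normalization) to the characteristic vector of the identity coset, so $\Phi_\pi$ is supported on $J$ and can be evaluated explicitly via Mackey theory in terms of values of $\widetilde\xi$ on the intersections $g^{-1} J g \cap J$. Substituting into $\int \Phi_1 |\Phi_\pi|^2\,dg$ expands the integral as a Mackey sum indexed by double cosets.

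I would then evaluate each Mackey term using the Cartan decomposition $Z\backslash G = \bigsqcup_{m \geq 0} K a(\varpi^m) K$, on which $\Phi_1$ is given by Macdonald's spherical function formula. The key simplification is that the hypothesis $c(\pi \times \pi) = n$, equivalently that $\widetilde\xi^2$ has the same conductor as $\widetilde\xi$, forces all lower-order Mackey terms to vanish by orthogonality of $\widetilde\xi^2$ against smaller-conductor characters, leaving only the ``top'' term at Cartan coordinate $m = n/2$. The resulting integral is then $\Phi_1(a(\varpi^{n/2}))^2$ times an explicit volume and formal-degree factor, and Macdonald's formula gives $\Phi_1(a(\varpi^{n/2}))$ as precisely the ratio appearing under the square in $(*)$, scaled by $q^{-n/4}/(1+q^{-1})$.

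Finally, squaring and substituting, then passing from $I$ to $I^*$ by dividing by $\zeta_q(2)^2$ and by the triple-product and adjoint $L$-factors (for Type 1 supercuspidal $\pi$ with trivial central character, $L_q(\mathrm{ad}\,\pi,1) = (1+q^{-1})^{-1}$, and $L_q(\pi_1\times\pi\times\pi,1/2)$ factors through $(1-\alpha^{\pm 2}q^{-1})$), all $\alpha$-dependent $L$-factors cancel between the adjoint of $\pi_1$ and the triple product, and the remaining $q$-powers combine with $\mathcal{E}_q$ to give the stated $q^{-n}(1+q^{-1})^{-2}\cdot(*)$. The main obstacle will be the explicit Mackey-theoretic evaluation of $\Phi_\pi$ on $J$ and the careful tracking of Haar-measure normalizations across the nested double-coset decompositions; the conductor hypothesis $c(\pi \times \pi) = n$ is precisely what ensures the Mackey sum collapses to a single term, producing the clean closed form $(*)$ rather than a sum of Macdonald evaluations at several $m$.
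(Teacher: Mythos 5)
First, a point of comparison: the paper does not prove this proposition at all — it is quoted from Nelson--Pitale--Saha, whose argument (sketched in Section \ref{sec:NewLocalIntegralCalc} and carried out there for the case of Proposition \ref{prop:LocalIntegralsTwoHalfRamified}) runs through the Michel--Venkatesh factorization $I^* \propto J^*(\pi,\pi;s)\,J^*(\tilde\pi,\tilde\pi;-s)$ together with an explicit formula for the Whittaker newvector of $\pi$ on the cosets $B\gamma_i K_2(\varpi^n)$. You announce that method in your first sentence but then abandon it for a direct matrix-coefficient computation, and the direct route as you describe it cannot work.

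The structural obstruction is this. The only $\alpha$-dependence of $I(\pi_1,\pi,\pi)$ enters through the spherical matrix coefficient $\Phi_1$, which is constant on Cartan cells, so $I=\sum_{m\ge 0}\Phi_1(a(\varpi^m))\,c_m$ with $c_m$ the integral of $|\Phi_\pi|^2$ over the $m$-th cell — a \emph{linear} combination of Macdonald values with coefficients depending only on $\pi$ (and, as you correctly note, the normalizing $L$-factors contribute no further $\alpha$-dependence). Since $\Phi_\pi(1)=1$, the cell $m=0$ contributes, so the sum does not collapse to $m=n/2$; and even if it did, a single cell would yield a constant times $\Phi_1(a(\varpi^{n/2}))$ to the \emph{first} power, whereas Macdonald's formula gives $(*)=(1+q^{-1})^2\,q^{n/2}\,\Phi_1(a(\varpi^{n/2}))^2$. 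The square cannot arise from localization; in the actual proof it comes from the two factors $J^*(s)$ and $J^*(-s)$, each contributing one copy of the bracketed expression. A matrix-coefficient proof would instead require all the $c_m$ for $0\le m\le n$ (essentially Hu's computation) followed by a nontrivial verification that the resulting linear combination of Macdonald values factors as the stated square. Separately, your compact-induction model is wrong: even for $n=2$, cuspidality of the inducing representation means it has no vectors fixed by the reduction of $K_2(\varpi^2)$ modulo $\varpi$ (no nonzero vectors fixed by the unipotent radical over the residue field), so the newvector is not supported on the identity coset and $\Phi_\pi$ is not supported on $E^\times\GL_2(\Z_q)$; for $n>2$ the inducing datum is not even a character. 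The hypothesis $c(\pi\times\pi)=n$ does play a role in the genuine proof, but through the Gauss sums appearing in the Whittaker newvector computation, not by killing Mackey terms.
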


In Section \ref{sec:IchinoForCMForms}, we will use these local integral
computations to give a totally explicit version of Ichino's formula in certain
cases where $f$ is a newform and $g,h$ are CM newforms. Also, we remark that we
have performed numerical computations to provide evidence for the correctness
of all of the factors in the various cases of local integral computations
described above (as well as the global constant in our explicit Ichino's
formula); this is described in detail in \cite{Collins2018}.

\subsection{A new local integral computation} \label{sec:NewLocalIntegralCalc}

The method of \cite{NelsonPitaleSaha2014} is based on the following result,
which is a key lemma from \cite{MichelVenkatesh2010}.

\begin{prop}[Michel-Venkatesh, \cite{MichelVenkatesh2010} Lemma 3.4.2] 
	If $\pi_1,\pi_2,\pi_3$ are tempered smooth representations of $\GL_2(\Q_q)$,
	with $\pi_1 \iso \pi(\ch,\ch\1)$ unramified and satisfying $\ch(q) = q^s$,
	satisfy $\om_1\om_2\om_3 = 1$, then we have 
	\[ I^*(\pi_2,\pi_3;s) = (1+q\1)^2 L(\ad\pi_2,1) L(\ad\pi_3,1) 
			J^*(\pi_2,\pi_3;s) J^*(\Tpi_2,\Tpi_3;-s). \]
\end{prop}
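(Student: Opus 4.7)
The strategy is to exploit the unramifiedness of $\pi_1 \iso \pi(\ch,\ch\1)$ to unfold the triple-product integral $I(\pi_1,\pi_2,\pi_3)$ into a product of two Rankin--Selberg-type zeta integrals. The heuristic is that the spherical matrix coefficient of an unramified principal series decomposes (via the Macdonald formula) into two pieces indexed by the Weyl group, one carrying the parameter $s$ and the other $-s$. Each piece, when paired against the product $\Phi_2\Phi_3$ of matrix coefficients of $\pi_2,\pi_3$, should yield a local zeta integral for $\pi_2 \x \pi_3$ twisted by a character of parameter $\pm s$; this is precisely what predicts the product structure $J^*(\pi_2,\pi_3;s) J^*(\Tpi_2,\Tpi_3;-s)$ in the conclusion.

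Concretely, I would realize $\pi_1$ as $\mathrm{Ind}_B^G(\ch\boxtimes\ch\1)$ with $x_1$ the $K$-fixed section, and express $\Phi_1(g) = \g{gx_1,x_1}/\g{x_1,x_1}$ inside that induced model. After substituting into $I(\pi_1,\pi_2,\pi_3) = \int_{Z\q G}\Phi_1(g)\Phi_2(g)\Phi_3(g)\,dg$ and invoking the Iwasawa decomposition $G = NAK$ with $K = \GL_2(\Z_q)$, one can integrate out the compact $K$-direction using the $K$-invariance of $x_1$, reducing the integral to one over $Z\q A \cong \Q_q^\x$. The Weyl-symmetric structure of the spherical function then causes this torus integral to factor as $J(\pi_2,\pi_3;s) J(\Tpi_2,\Tpi_3;-s)$, up to an explicit constant. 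Temperedness of $\pi_2,\pi_3$ enters here to ensure convergence of the unfolded zeta integrals in the relevant strip.

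The final step is to pass from this unnormalized identity to the normalized one. Because $\pi_1$ is unramified with parameter $q^s$, the triple-product factor $L(\pi_1\x\pi_2\x\pi_3,1/2)$ in the denominator of $I^*$ factors as $L(\pi_2\x\pi_3,1/2+s) L(\pi_2\x\pi_3,1/2-s)$, matching exactly the normalizations hidden in the two $J^*$ factors. The remaining $L(\ad\pi_1,1)$ and $\ze_q(2)^2$ factors appearing in the $I^*$ normalization are explicit rational functions of $q^s$ whose combined effect should yield the prefactor $(1+q\1)^2$ in the lemma; the factors $L(\ad\pi_2,1)$ and $L(\ad\pi_3,1)$ remain on the right-hand side because they are not absorbed into the $J^*$'s (which normalize only by the $\pi_2\x\pi_3$-$L$-factor).

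The main obstacle will be numerical bookkeeping: pinning down the exact definition of $J^*$ used in \cite{MichelVenkatesh2010}, matching its Haar-measure normalization with that of $I^*$, and verifying that the various $L$-factor cancellations leave precisely $(1+q\1)^2$ with no stray powers of $(1-q^{-2})^{\pm 1}$. The constants in the Macdonald formula, together with the explicit form of $L(\ad\pi_1,1)$ at an unramified $\pi_1$, are the most plausible source of the $(1+q\1)^2$ prefactor.
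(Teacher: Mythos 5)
This proposition is not proved in the paper at all: it is imported verbatim as Lemma 3.4.2 of Michel--Venkatesh, so there is no internal argument to compare yours against. Your sketch is, however, a faithful outline of how Michel and Venkatesh actually prove it: write the spherical matrix coefficient of $\pi_1 = \pi(\ch,\ch\1)$ via the induced model (equivalently, the Macdonald/Weyl-group decomposition), substitute into $I(\pi_1,\pi_2,\pi_3)$, integrate out $K$, and watch the torus integral split into the two Rankin--Selberg integrals at parameters $s$ and $-s$; temperedness is indeed what guarantees absolute convergence of the unfolded integrals. Your identification of the $L$-factor cancellation $L(\pi_1\x\pi_2\x\pi_3,1/2) = L(\pi_2\x\pi_3,1/2+s)\,L(\pi_2\x\pi_3,1/2-s)$ is also correct.

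The one place where your "bookkeeping" warning is genuinely load-bearing: with the normalizations in this paper, the ratio of the $I^*$-normalization to the product of the two $J^*$-normalizations is
\[ \f{L(\ad\pi_1,1)}{\ze_q(2)^2\,\ze_q(1+2s)\,\ze_q(1-2s)} = (1-q\1)(1+q\1)^2, \]
since $L(\ad\pi_1,1) = \ze_q(1)\ze_q(1+2s)\ze_q(1-2s)$ for unramified $\pi_1$. So the unfolding itself (the Iwasawa-decomposition measure constant relating $dg$ on $Z\q G$ to $d^\x y$ on the torus, i.e.\ a factor $\ze_q(1)$) must contribute exactly one factor of $(1-q\1)^{-1}$ for the final answer to be $(1+q\1)^2$ with no stray $(1-q\1)$. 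That is the single constant you must verify to turn the sketch into a proof; everything else in your outline is structurally sound.
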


Here $J^*$ comes from a certain \E{local Rankin-Selberg integral} associated to
the two representations, namely 
\[ J(\pi_2,\pi_3;s) = \S_{NZ\q G} f^\o_s(g) W_2^\ps(g) W_3^\Lps(g) dg \]
which is then normalized by
\[ J^*(\pi_2,\pi_3;s) 
		= \f{\ze_q(1+2s)}{L(\pi_2\x\pi_3,1/2+s)} J(\pi_2,\pi_3;s). \]
Here $f^\o_s$ is the normalized spherical vector of $\pi(\ch,\ch\1)$ given by
\[ f^\o_s\b( \M{cc}{a & b \\ 0 & d} k \e) = \b|\f{a}{d}\e|^{s+1/2}, \]
and $W^\ps$ denotes the Whittaker newvector (in the Whittaker model
$W(\pi,\ps)$ of $\pi$) normalized by requiring $W^\ps(1) > 0$ and
$\g{W^\ps,W^\ps} = 1$ under the natural pairing 
\[ \g{W_1,W_2} = \S_{\Q_q^\x} W_1\b( \M{cc}{y & 0 \\ 0 & 1} \e) 
		\L{ W_2\b( \M{cc}{y & 0 \\ 0 & 1} \e) } d^\x y. \]
Our integral is over a quotient of $G = \GL_2(\Q_q)$ by a product of the center
$Z \iso \Q_q^\x$ and the standard unipotent radical $N \iso (\Q_q,+)$.

\subsubsection*{A decomposition for our integral.} To use this result, we
need to evaluate the integrals $J(\pi_2,\pi_3;s)$. The first step is to expand
out our integral over domains we understand how to integrate over. We start by
setting up a bit of notation (following \cite{NelsonPitaleSaha2014}); we set 
\[ w = \M{cc}{0 & 1 \\ -1 & 0} \qu a(y) = \M{cc}{y & 0 \\ 0 & 1} \qu
	z(t) = \M{cc}{t & 0 \\ 0 & t} \qu n(x) = \M{cc}{1 & x \\ 0 & 1} \]
for $y,t\in\Q_q^\x$ and $x \in \Q_q$, and accordingly we set $A = \{ a(y) :
y\in\Q_q^\x \}$, $Z = \{ z(t) : t\in\Q_q^\x\}$ and $N = \{ n(x) : x\in\Q_q \}$. 
With this notation, the usual upper-triangular Borel subgroup is $B = ZNA$. The
normalized Haar measures on $\Q_q$ and $\Q_q^\x$ (giving $\Z_q$ and $\Z_q^\x$
volumes 1, respectively) pass to Haar measures on $Z$, $N$, and $A$. 
\par
We then use the following decomposition of our group $G$, extending the Iwasawa
decomposition. We first decompose
\[ K = \dU_{i=0}^n (B\i K) \ga_i K_2(\vp^n) 
		\qq\qq \ga_i = \M{cc}{1 & 0 \\ \vp^i & 1 } \]
and then conclude $G = \dU_{i=0}^n B\ga_i K$. Note in particular in the extreme
cases of $i = 0$ and $i = n$ we have $B\ga_0 K_2(\vp^n) = Bw K_2(\Fp^n)$ and $B
\ga_n K_2(\vp^n) = B K_2(\vp^n)$. This decomposition is discussed in Section
2.1 of \cite{Schmidt2002} and in Section 2.1 of \cite{Hu2016}. For a function
$g$ invariant by $K_2(\vp^n)$ on the right, it leads to us being able to write
an integral over $G$ as 
\[ \S_G f(g) dg = \s_{0\le i\le n} v_i \S_B f(b\ga_i) db 
		\qq v_i = \pw{ \f{1}{(1+q\1)} & i = 0 \\
						\f{(1-q\1)}{(1+q\1)} q^{-i} & 0 < i < n \\
								\f{1}{(1+q\1)} q^{-n} & i = n }, \]
where $db$ is the usual Haar measure on the Borel subgroup $B = ZNA$, given by
$|a|\1 d^\x\!z\, dn\, d^\x\!a$ on this decomposition. We have a similar
expression for integrals over $Z\q G$ or $ZN\q G$. Thus, if $\pi_2$ and $\pi_3$
have conductor $\vp^n$ (so their Whittaker model is right
$K_2(\vp^n)$-invariant) we can write 
\[ J(\pi_2,\pi_3;s) = \s_{0\le i\le n} v_i \S_{\Q_q^\x} 
			|y|^{s-1/2} W_2^\ps(a(y)\ga_i) W_3^\Lps(a(y)\ga_i) d^\x y, \]
using that $f_s^\o(a(y)\ga_i) = |y|^{s+1/2}$ by definition. So, if we can come
up with an explicit enough expression for these values of the Whittaker
function, we can compute this integral directly via this decomposition. 

\subsubsection*{Whittaker newvectors of principal series.} We now want to
compute the Whittaker newvector $W^\ps$ for a principal series representation
of conductor 1 and with central character satisfying $\ch_\pi(\vp) = 1$, so
$\pi = \pi(\mu\ch,\mu\1)$, where $\mu$ is unramified and $\ch$ has conductor 1
and $\ch(\vp) = 1$. We note that a general version of this computation is also
carried out in Section 4 of \cite{Templier2014}. 
\par
We start by recalling that $\pi(\mu\ch,\mu\1)$ can first be realized in its
\E{induced model}
consisting of all smooth functions $f : G\to \C$ satisfying
\[ f\b( \M{cc}{a&b\\0&d} g \e) = |a/d|^{1/2} (\mu\ch)(a) \mu\1(d) f(g). \]
In the induced model, the computation of the newvector is straightforward (see
section 2.1 of \cite{Schmidt2002}, for instance). It is the following function
$f$ defined in terms of the decomposition $G = B\ga_0 K_2(\vp) \du B\ga_1
K_2(\vp)$: 
\[ f\b( \M{cc}{ a & b \\ 0 & d} \ga_i k \e) =
			\pw{ \ch(a) \mu(ad\1) |ad\1|^{1/2} & i = 1 \\ 
						0 & i = 0 }. \]
Next, we need to transfer this to the Whittaker model $W(\pi,\ps)$. The
isomorphism from the induced model is given by $h \mt \S_{\Q_q} \ps(-x) h(wn(x)
g) dx$. Thus the Whittaker newvector $W^\ps \in W(\pi,\ps)$ is the function
$G\to\C$ determined by 
\[ W^\ps(g) = \S_{\Q_q} \ps(-x) f(w n(x) g) dx \]
for our induced model newvector $f(x)$ written above. 
\par
To evaluate this integral for $g = a(y)\ga_i$, we need to compute $f(w
n(z)\ga_i)$ for all $z$ and all $i = 0,1$. To do this, we start by writing
explicitly that if $z\in\Z_q$ then 
\[ w n(z) \ga_i = \M{cc}{0 & 1 \\ -1 & -z} \M{cc}{1 & 0 \\ \vp^i & 1 } 
		= \M{cc}{ \vp^i & 1 \\ -1-z\vp^i & -z } \in K, \]
and we can then see that this lies in $B\ga_0 K$ unless $i = 0$ and $z \in
-1+\vp\Z_q$, and in that case the resulting matrix lies in $K_2$ so $f(w n(z)
\ga_i) = 1$. If $z\nin \Z_q$ we compute 
\[ w n(z) \ga_i = \M{cc}{-z\1 & 1 \\ 0 & -z} \M{cc}{1 & 0 \\ \vp^i+z\1 & 1 } 
		\in B \dt K, \]
and we find this decomposition lies in $B\ga_0 K_2$ if $i = 0$ and in $B\ga_1
K_2$ if $i = 1$. In fact, in the $i = 1$ case the second matrix is in $K_2$
already, so $f(w n(z) \ga_i) = \ch(-z\1) \mu(z^{-2}) |z|\1$. Combining these
facts we conclude 
\[ f(w n(z) \ga_i) = \pw{ 1 & i = 0, z \in -1+\vp\Z_q \\ 
								\ch(-z\1) \mu(z^{-2}) |z|\1 & i = 1, z \nin \Z_q \\
								0 & \tx{otherwise}}. \]
\par
We can then go back to the integral $\S \ps(-x) f(w n(x/y) \ga_i) dx$ we
needed to evaluate to compute $W^\ps(a(y)\ga_i)$. If $i = 0$ we know that the
integrand is nonzero only when $x/y \in -1+\vp\Z_q$, and the integral becomes
the integral of $\ps(-x)$ over $x \in -y + y\vp\Z_q = y + \vp^{v+1}\Z_q$ for $v
= v(y)$. Taking the substitution $x' = -x-y$ we conclude the integral is 
\[ \ps(y) \S_{\vp^{v+1}\Z_q} \ps(x') dx' 
		= \ps(y) \pw{ q^{-v-1} & v+1 \ge 0 \\ 0 & v+1 < 0 }. \]
Noting that $|y| = q^{-v}$ by definition, we conclude that we have 
\[ W^\ps ( a(y) \ga_0 ) 
	= \pw{ \mu(y)\1 |y|^{1/2} \ps(y) q\1 & v(y) \ge -1 \\ 
				0 & v(y) < -1 }. \]
\par
Similarly, for $i = 1$ our computations tell us that $f(wn(x/y)\ga_1)$ is
nonzero exactly when $x/y \nin \Z_q$, i.e. $v(x) < v = v(y)$. For $x$
satisfying $v(x) = u < v$ we have 
\[ f(w n(x/y)\ga_1) = \ch(-y/x) \mu(\vp)^{2v-2u} q^{u-v} \]
and thus we have that $\S \ps(-x) f(w n(x/y)\ga_1) dx$ expands as
\[ \s_{u=-\oo}^{v-1} \ch(y) \mu(\vp)^{2v-2u} q^{u-v}
					\S_{\vp^i\Z_q^\x} \ps(-x) \ch\1(-x) dx. \]
Now, the integral in the sum is zero except for the case $u = -1$, when it
gives the $\ep$-factor $q^{1/2} \ep(1/2,\ch\1,\ps)$. Thus we find 
\[ W^\ps ( a(y) \ga_1 ) 
		= \pw{ \ch(y) \mu(y\vp^2) |y|^{1/2} q^{-1/2} \ep(1/2,\ch\1,\ps)
						& v(y) \ge 0 \\ 
			0 & v(y) < 0 }. \]
\par
So we have a formula for a newvector $W^\ps$; recall that we want to normalize
it by requiring $\g{W^\ps,W^\ps} = 1$ and $W^\ps(1) > 0$. First we note that
\[ W^\ps(1) = W^\ps(a(1)\ga_1) = \mu(\vp^2) \ep(1/2,\ch\1,\ps) q^{-1/2} \]
so we can multiply by $\mu(\vp)^{-2} \ep(1/2,\ch,\Lps)$ to guarantee that this
is positive. Then since $W^\ps(a(y)\ga_1) = W^\ps(a(y))$ we compute 
\[ \g{W^\ps,W^\ps} = \S |W^\ps(a(y))|^2 d^\x y = \S_{v(y)\ge 0} |y| q\1 d^\x y 
		= (1-q\1)\1 \S_{v(y)\ge 0} q\1 dy = (1-q\1)\1 q\1, \]
so we need to multiply by $(1-q\1)^{1/2} q^{1/2}$ to normalize the absolute
value. We conclude that the normalized Whittaker newvector is given by: 
\[ W^\ps(a(y)\ga_i) = \pw{ 
		\ch(y) \mu(y) |y|^{1/2} (1-q\1)^{1/2} & v(y) \ge 0 , i = 1 \\ 
		\mu\1(y\vp^2) |y|^{1/2} (1-q\1)^{1/2} q^{-1/2} \ps(y) \ep(1/2,\ch,\Lps) 
						& v(y) \ge -1 , i = 0 \\
				0 & \tx{otherwise}}. \]

\subsubsection*{The local integral for two representations of this type.} Now,
we want to compute $J(\pi_2,\pi_3;s)$ for $\pi_2 = \pi(\mu\ch,\mu\1)$ and
$\pi_3 = \pi(\nu\ch\1,\nu\1)$ are two representations of the type just
considered (with $\mu,\nu$ unramified and $\ch$ of conductor 1). For
convenience we let $\xi$ be the unramified representation $\xi = |\dt|^s$
(since ultimately our parameter $s$ corresponds to the spherical representation
$\pi(\xi,\xi\1)$). 
\par
Applying our computation of the Whittaker newvectors in the previous section we
get the following formula: 
\[ W_2^\ps(a(y)\ga_i) W_3^\Lps(a(y)\ga_i) = \pw{ 
				(\mu\nu)(y) |y| (1-q\1) & v(y) \ge 0 , i = 1 \\ 
				(\mu\1\nu\1)(y\vp^2) |y| q\1 (1-q\1) & v(y) \ge -1 , i = 0 \\
				0 & \tx{otherwise}}. \]
\par
Using our expression for $J(\pi_2,\pi_3;s)$ from the decomposition in terms of
double cosets $B\ga_i K_2$ we can write 
\[ J(\pi_2,\pi_3;s) = (1+q\1)\1 \s_{i=0}^1 q^{-i} \S_{\Q_q^\x} 
			\xi(y) |y|^{-1/2} W_2^\ps(a(y)\ga_i) W_3^\Lps(a(y)\ga_i) d^\x y. \]
Then the $i=1$ term is 
\[ q\1 (1-q\1) \S_{v(y) \ge 0} (\xi\mu\nu)(y) |y|^{1/2} d^\x y 
		= q\1 (1-q\1) \s_{i=0}^\oo (\xi\mu\nu)(\vp^i) q^{-i/2}, \]
which is a geometric series summing to $q\1(1- (\xi\mu\nu)(\vp) q^{-1/2})\1$.
Similarly, the $i = 0$ term becomes 
\[ q\1 (1-q\1) (\mu\1\nu\1)(\vp)^2 
		\s_{i=-1}^\oo (\xi\mu\1\nu\1)(\vp^i) q^{-i/2}. \]
which sums to 
\[ q\1 (1-q\1) \f{(\mu\1\nu\1)(\vp)^2 \dt (\xi\mu\1\nu\1)(\vp\1) q^{1/2}}
				{1 - (\xi\mu\1\nu\1)(\vp) q^{-1/2}}. \] 
So, we conclude 
\[ J(\pi_2,\pi_3;s) = \f{q\1(1-q\1)}{(1+q\1)}
		\b( \f{1}{ 1 - (\xi\mu\nu)(\vp) q^{-1/2} } 
	+ \f{ (\xi\mu\nu)(\vp\1) q^{1/2} }{1 - (\xi\mu\1\nu\1)(\vp) q^{-1/2}} \e).\] 
Collecting terms we find we get 
\[ J(\pi_2,\pi_3;s) = (1+q\1)\1 (1-q\1) q\1
		\f{ (\xi\mu\nu)(\vp\1) q^{1/2} \dt (1 - \xi^2(\vp)q\1) }
		{ (1-(\xi\mu\1\nu\1)(\vp) q^{-1/2}) (1-(\xi\mu\nu)(\vp) q^{-1/2}) }. \]
\par
Next, we recall that we get $J^*(\pi_2,\pi_3;s)$ by multiplying this quantity
by $\ze_q(1+2s)/L(\pi_2\x\pi_3,1/2+s)$. But 
\[ \ze_q(1+2s) = (1 - q^{-1-2s})\1 = (1 - \xi^2(\vp) q\1)\1 \]
cancels a term on the top of our expression above, and similarly
\[ L(\pi_2\x\pi_3,1/2+s) = (1 - (\mu\nu)(\vp) q^{-1/2-s})\1
		(1 - (\mu\1\nu\1)(\vp) q^{-1/2-s})\1 \]
cancels the bottom. So we conclude 
\[ J^*(\pi_2,\pi_3;s) = (1+q\1)\1 (1-q\1) q^{-1/2} (\xi\1\mu\nu)(\vp). \]
\par
Finally, we recall that the ultimate local integral we want is given by
\[ I^*(\pi_1,\pi_2,\pi_3) = (1+q\1)^2 L(\ad\pi_2,1) L(\ad\pi_3,1) 
			J^*(\pi_2,\pi_3;s) J^*(\Tpi_2,\Tpi_3;-s). \]
Since $\Tpi_2 = \pi(\mu\1,\mu\ch)$ and $\Tpi_3 = \pi(\nu\1,\nu\ch)$ our 
computation above gives us 
\[ J^*(\Tpi_2,\Tpi_3;-s) = (1+q\1)\1 (1-q\1) q^{-1/2} (\xi\mu\1\nu\1)(\vp). \]
Thus we have
\[ J^*(\pi_2,\pi_3;s) J^*(\Tpi_2,\Tpi_3;-s) = (1+q\1)^{-2} (1-q\1)^2 q\1; \]
since we can easily check $L(\ad\pi_2,1) = L(\ad\pi_3,1) = (1-q\1)\1$ we
conclude: 

\begin{prop}
	Let $\pi_1 = \pi(\xi,\xi\1)$, $\pi_2 = \pi(\mu\ch,\mu\1)$, and
	$\pi_3(\nu\ch\1,\nu\1)$ be three principal series representations, with
	$\xi,\mu,\nu$ unramified characters and $\ch$ a ramified character of
	conductor 1 satisfying $\ch(\vp) = 1$. Then we have 
	\[ I^*(\pi_1,\pi_2,\pi_3) = q\1. \]
\end{prop}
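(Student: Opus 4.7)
The plan is to apply the Michel–Venkatesh reduction (the proposition quoted from \cite{MichelVenkatesh2010}), which expresses $I^*(\pi_1,\pi_2,\pi_3)$ as the product
\[ (1+q\1)^2 L(\ad\pi_2,1) L(\ad\pi_3,1) J^*(\pi_2,\pi_3;s) J^*(\Tpi_2,\Tpi_3;-s), \]
where $s$ is determined by $\xi(\vp)=q^s$. Since $\pi_2$ and $\pi_3$ are principal series of conductor $1$, their adjoint $L$-factors are easy: $L(\ad\pi_i,1) = (1-q\1)\1$. So the whole problem boils down to evaluating the two Rankin–Selberg integrals $J^*(\pi_2,\pi_3;\pm s)$.

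To evaluate $J(\pi_2,\pi_3;s)$, I would first compute explicit Whittaker newvectors $W_2^\ps$ and $W_3^\Lps$. Since both representations have conductor $1$, one starts in the induced model, where the newvector is supported on the ``big'' cell $B\ga_1 K_2(\vp)$ and given by an explicit formula in terms of $\mu$, $\nu$, and $\ch$. Transferring to the Whittaker model via $h \mt \int_{\Q_q} \ps(-x) h(wn(x)g)\,dx$ requires a careful case analysis of $wn(z)\ga_i$: one finds the integral is supported on $z\in -1+\vp\Z_q$ when $i=0$ and $z\nin\Z_q$ when $i=1$, leading to expressions for $W^\ps(a(y)\ga_i)$ in terms of the additive character $\ps$ (in the $i=0$ case) and an $\ep$-factor $\ep(1/2,\ch\1,\ps)$ (in the $i=1$ case). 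The normalization conditions $W^\ps(1)>0$ and $\g{W^\ps,W^\ps}=1$ then pin down the overall constant.

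Next I would use the Bruhat-style decomposition $G = B\ga_0 K_2(\vp) \du B\ga_1 K_2(\vp)$ together with the explicit weights $v_0,v_1$ on these double cosets to write
\[ J(\pi_2,\pi_3;s) = (1+q\1)\1 \s_{i=0}^{1} q^{-i}
	\S_{\Q_q^\x} \xi(y)|y|^{-1/2} W_2^\ps(a(y)\ga_i) W_3^\Lps(a(y)\ga_i)\, d^\x y. \]
With the Whittaker formulas in hand, the character $\ch$ cancels in the product $W_2^\ps W_3^\Lps$ and each summand becomes a geometric series in $q^{-1/2}$. I expect the $i=0$ and $i=1$ contributions to combine into a single rational function whose numerator factors a term $(1-\xi^2(\vp)q\1)$ and whose denominator factors as $(1-(\xi\mu\nu)(\vp)q^{-1/2})(1-(\xi\mu\1\nu\1)(\vp)q^{-1/2})$.

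Finally, the normalization $J^* = J \cdot \ze_q(1+2s)/L(\pi_2\x\pi_3,1/2+s)$ is exactly designed to kill both of these factors, leaving an explicit monomial expression for $J^*(\pi_2,\pi_3;s)$. The analogous computation for the contragredients $\Tpi_2 = \pi(\mu\1,\mu\ch)$ and $\Tpi_3 = \pi(\nu\1,\nu\ch\1)$ at $-s$ gives a mirror-image expression, and when one multiplies $J^*(\pi_2,\pi_3;s) J^*(\Tpi_2,\Tpi_3;-s)$ the unramified characters $\xi,\mu,\nu$ all cancel, producing a clean constant. Combining with the factor $(1+q\1)^2 (1-q\1)^{-2}$ from the adjoint $L$-values yields $I^*(\pi_1,\pi_2,\pi_3) = q\1$. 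The main technical obstacle is the careful double-coset analysis of $wn(z)\ga_i$ and tracking the $\ep$-factor and the normalization through the Whittaker computation; once the explicit Whittaker formula is in hand, the remaining steps are geometric-series bookkeeping.
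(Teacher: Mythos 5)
Your proposal follows essentially the same route as the paper: the Michel--Venkatesh reduction to local Rankin--Selberg integrals, explicit Whittaker newvectors computed via the induced model and the case analysis of $wn(z)\ga_i$ on the two cosets $B\ga_i K_2(\vp)$, geometric-series evaluation of $J(\pi_2,\pi_3;s)$, and cancellation of the $\ze_q(1+2s)$ and $L(\pi_2\x\pi_3,1/2+s)$ factors followed by the mirror computation at $-s$. The structure you predict for each intermediate expression (the $(1-\xi^2(\vp)q\1)$ numerator, the two-factor denominator, the final cancellation of $\xi,\mu,\nu$ against the adjoint $L$-values) matches the paper's computation exactly.
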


To deduce Proposition \ref{prop:LocalIntegralsTwoHalfRamified} from this, we
can use Lemma \ref{lem:UnramifiedCharacterTwist} to twist each principal series
so that the central character has value $1$ at $\vp$ (note the product of
twists is trivial because of the initial assumption that the product of central
characters is trivial!). Then we can write $\pi_1$ and $\pi_2$ in the desired
form, and note that their central characters are $1$ and $\ch$, respectively;
this forces the central character of $\pi_3$ to be $\ch\1$ and thus $\pi_3$ to
have the desired form as well.

\subsection{Specialization to the case of CM forms}
\label{sec:IchinoForCMForms}

Finally, we want to use the local integral computations in Section
\ref{sec:LocalIntegralResults} to obtain a completely explicit version of
Ichino's formula (Theorem \ref{thm:ClassicalIchino}) for certain choices of
modular forms $f,g,h$. In particular, we will assume that $g,h$ are both \E{CM
forms}: they come from Hecke characters $\ps$ of imaginary quadratic field.
Given such a Hecke character, the associated CM form $g_\ps$ should be defined
by 
\[ g_\ps(z) = \s_{\Fa\se\CO_K} \ps(\Fa) e^{2\pi iN(\Fa)z} 
		= \s_{n=1}^\oo \b( \s_{\Fa:N(\Fa)=n} \ps(\Fa) \e) q^{2\pi inz}, \]
to guarantee $L(\ps,s) = L(g_\ps,s)$. If $\ps$ has infinity-type $(m,0)$ or
$(0,m)$ then this does indeed define a newform (see e.g. Section 4.8 of
\cite{MiyakeModForms}). 

\begin{prop} \label{prop:CMForm}
	Let $\ps$ be an algebraic Hecke character of infinity-type $(m,0)$ (for an
	integer $m\ge 0$) for an imaginary quadratic field $K = \Q(\rt{-d})$. Then
	the function $g_\ps$ defined above is a newform of weight $m+1$, level $d
	\dt N(\Fm_\ps)$, and character $\ch_K \dt \ch_\ps$. 
\end{prop}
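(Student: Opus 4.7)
The plan is to reduce this to the classical theory of theta series associated to Hecke characters, as systematically developed in e.g. Chapter 4 of \cite{MiyakeModForms}. The first step is to rewrite the $q$-expansion of $g_\ps$ in a form where standard theta-series techniques apply. Concretely, I would partition the sum over ideals $\Fa$ coprime to the conductor $\Fm_\ps$ according to the ideal class of $\Fa$. Fixing a representative $\Fb$ for each class and writing $\Fa = \al\Fb\1$ for $\al$ ranging over $\Fb$ modulo units, we get
\[ g_\ps(z) = \f{1}{w_K} \s_{[\Fb]} \ps(\Fb)\1 \s_{\al \in \Fb, (\al,\Fm_\ps) = 1}
        \ps_\fin(\al) \al^m e^{2\pi i N(\al) z / N(\Fb)}, \]
using that on principal ideals the infinity-type $(m,0)$ gives $\ps(\al\CO_K) = \ps_\fin(\al) \al^m$. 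Each inner sum is a classical theta series attached to a lattice $\Fb$ with a harmonic polynomial (here $\al^m$) and a finite-order character $\ps_\fin$.

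The second step is to apply the transformation laws for such theta series. Poisson summation, together with the presence of the discriminant $-d$ in the quadratic form on $\Fb$, shows that each inner sum is a holomorphic modular form of weight $m+1$ on $\Ga_0(d\cdot N(\Fm_\ps))$ with nebentypus $\ch_K \cdot \ch_\ps$. The factor $\ch_K$ arises from the functional equation under $S = \sm{0&-1\\1&0}$ of theta series for a lattice of discriminant $-d$, while $\ch_\ps$ (the Dirichlet character attached to the restriction of $\ps$ to $\A_\Q^\x$, i.e., the central character) arises from the transformation under $\Ga_0(N(\Fm_\ps))$. The weight $m+1$ is forced by the degree $m$ of the harmonic polynomial $\al^m$. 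Summing over ideal classes preserves all these properties, so $g_\ps \in M_{m+1}(d\cdot N(\Fm_\ps), \ch_K \ch_\ps)$, and a direct check of the constant term shows it is a cusp form.

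The third step is to upgrade ``modular form at this level'' to ``newform at exactly this level.'' For this I would invoke the automorphic-induction / converse-theorem picture: the idele-class character $\ps$ of $K^\x \q \DI_K$ induces up to an automorphic representation $\pi_\ps$ of $\GL_2/\Q$, which is cuspidal because an infinity-type $(m,0)$ with $m\ge 0$ is never invariant under the nontrivial Galois automorphism of $K/\Q$. A local computation at each finite place $q$, using local Langlands for $\GL_2(\Q_q)$, shows the conductor of the local component $\pi_{\ps,q}$ equals the $q$-part of $d\cdot N(\Fm_\ps)$: away from $d$ this is the norm of the $\Fm_\ps$-part, while at primes dividing $d$ the ramification in $K/\Q$ forces the extra factor of $d$. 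Matching these conductors globally shows $\pi_\ps$ has conductor exactly $d\cdot N(\Fm_\ps)$, so its newvector corresponds to a newform $g$ at that level. Since $g_\ps$ has the same Hecke eigenvalues as $\pi_\ps$ at every prime (by construction $L(g_\ps,s) = L(\ps,s) = L(\pi_\ps,s)$), and lies in $S_{m+1}$ of the predicted level and character, strong multiplicity one identifies $g_\ps$ with (a scalar multiple of) $g$, proving it is the newform.

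The main obstacle I expect is the last step: carefully matching the local conductor of $\pi_{\ps,q}$ to the $q$-part of $d \cdot N(\Fm_\ps)$ at the ramified prime $q \mid d$, since this requires a slightly delicate local Langlands computation (the local representation at such $q$ is typically a ramified principal series or special representation depending on $\ps$). The rest of the argument is routine once one has the theta-series presentation and believes the automorphic-induction formalism.
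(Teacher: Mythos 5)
Your write-up should first be set against what the paper actually does here: it gives no proof of Proposition \ref{prop:CMForm} at all, deferring to Section 4.8 of \cite{MiyakeModForms}. The theta-series argument you sketch --- decomposing over ideal classes, writing each piece as a binary theta series with the degree-$m$ spherical harmonic $\al^m$, and applying Poisson summation to get weight $m+1$, level $d\dt N(\Fm_\ps)$, and nebentypus $\ch_K\ch_\ps$ --- is precisely the classical argument underlying that citation, and your third step (automorphic induction, the conductor--discriminant computation at $q\mid d$, and strong multiplicity one) correctly supplies the refinement that $g_\ps$ is new at exactly that level. For $m\ge 1$ the plan is sound and complete in outline.

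There is, however, one genuine gap, at $m=0$. You assert that an infinity-type $(m,0)$ with $m\ge 0$ is never invariant under the nontrivial automorphism of $K/\Q$; this fails for $m=0$, where the infinity-type $(0,0)$ is symmetric and a finite-order $\ps$ may satisfy $\ps = \ps^c$, equivalently factor through the norm $N_{K/\Q}$. In that case the induced representation is reducible, $\pi_\ps$ is not cuspidal, and $g_\ps$ is a weight-one Eisenstein series rather than a cuspidal newform, so the statement you are proving is false without the extra hypothesis $\ps\ne\ps^c$. Relatedly, ``a direct check of the constant term'' at the cusp $\oo$ does not establish cuspidality; for $m\ge 1$ cuspidality is automatic because the harmonic polynomial $\al^m$ is nonconstant, but for $m=0$ it genuinely requires $\ps\ne\ps^c$. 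This caveat is present in Miyake's theorem and is implicitly assumed by the paper (its applications force the relevant weight-zero character to be supercuspidal at the auxiliary inert prime $\l$, which rules out factoring through the norm), but your proof should either add the hypothesis $\ps\ne\ps^c$ explicitly or restrict the Galois-invariance argument to $m\ge 1$ and treat the weight-one case separately.
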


We then take the following setup to guarantee we get an instance of Ichino's
formula where we know all of the local integrals. 

\begin{itemize}
	\item $f$ is a newform of some weight $k$, level $N$, and with trivial 
		character. 
	\item $K = \Q(\rt{-d})$ is an imaginary quadratic field of odd fundamental
		discriminant $-d$, such that $d$ is coprime to $N$.
	\item $\ph,\ps$ are Hecke characters of $K$ of weights $(m-k-1,0)$ and
		$(m-1,0)$, respectively, for some integer $m > k$.
	\item The central characters $\ch_\ph,\ch_\ps$ (the finite-type parts of
		$\ph$ and $\ps$, restricted to $\Z$) are trivial. This forces the
		conductors of $\ph$ and $\ps$ to be ideals generated by integers in
		$\Z$.
	\item The conductors of $\ph$ and $\ps$ are coprime to $N$ and $d$.
		Moreover, they are given by $c\l^{c_\l}$ and $\l^{c_\l}$, respectively,
		and we have
	\begin{itemize}
		\item $c$ is coprime to $Nd$. 
		\item $\l \nd 2Ndc$ is a prime inert in $K$, and the local components of
			$\ph$ and $\ps$ at $\l$ are inverse to each other and not quadratic
			characters. 
	\end{itemize}
\end{itemize}

Ichino's formula can then be written 
\[ |\g{f(z)g_\ph(z),g_\ps(c^2 N z)}|^2 
		= \f{3^2 (m-2)! (k-1)! (m-k-1)!}
					{\pi^{2m+2} 2^{4m-2} (c^2 N)^m}
			L(f\x g_\ph\x\Lg_\ps,m-1) \p_{q|dNc\l} \CE_q I_q. \]
We also note that the triple-product $L$-value $L(f\x g_\ph\x\Lg_\ps,m-1)$
factors as a product of $L(f,\ph\ps\1,0)$ and $L(f,\ps\1\ph\1 N^{m-k-1},0)$ due
to a decomposition of the corresponding Weil-Deligne representations: 
\[ (\Ind_{W_K}^{W_\Q} \ph) \ox (\Ind_{W_K}^{W_\Q} \ps) 
		\iso \6(\Ind_{W_K}^{W_\Q} \ps\ph\6) 
			\op \6( \Ind_{W_K}^{W_\Q} \ps\ph^c \6). \]
We then consider the factors $\CE_q I_q$ at the primes dividing $dNc\l$: 
\begin{enumerate}
	\item $q|N$: In this case, $\pi_{f,q}$ is ramified (and we can't say much
		else about it since we aren't putting many assumptions on $f$) and the
		other two local representations are unramified. Thus we're in the 
		situation of Corollary \ref{cor:LocalIntegralsTwoUnramified}, so 
		$\CE_q I_q^* = q^{-n_q} (1+q\1)^{-2}$ where $q^{n_q}$ is the power of $q$
		dividing $N$. 
	\item $q|c$: In this case only $\pi_{\ph,q}$ is ramified (either a ramified
		principal series or supercuspidal) and the other two local
		representations are unramified. So again we're in the situation of
		Corollary \ref{cor:LocalIntegralsTwoUnramified} and $\CE_q I_q^* =
		q^{-2n_q} (1+q\1)^{-2}$ where $q^{n_q}$ is the power of $q$ dividing $c$
		(and thus $q^{2n_q}$ is the power of $q$ dividing the conductor of
		$\pi_{\ph,q}$).
	\item $q|d$: Here $q$ is odd, $\pi_{f,q}$ is unramified, and the local 
		representations $\pi_{\ph,q}$ and $\Tpi_{\ps,q}$ are each principal
		series associated to a pair of an unramified character and a character of
		conductor $q$. By Proposition \ref{prop:LocalIntegralsTwoHalfRamified},
		we have $\CE_q I_q^* = q\1$.
	\item $q = \l$: In this case $\pi_{f,q}$ is an unramified principal series,
		and $\pi_{\ph,q}$ and $\Tpi_{\ps,q}$ are supercuspidal representations of
		``Type 1''. More specifically, since $\ps$ and $\ph$ have inverse local
		components at $\l$ (including their values on $\vp_\l$, which are 
		$\ch_K(\l) = -1$), these local components $\pi_{\ph,q}$ and
		$\Tpi_{\ps,q}$ are isomorphic. By Lemma
		\ref{lem:UnramifiedCharacterTwist} we can twist both to have trivial
		central character (without twisting $\pi_{f,q}$), and thus we're in
		the situation of Proposition \ref{prop:LocalIntegralTwoSupercuspidals}
		and we get $\CE_q I_q^* = q^{-n_q} (1+q\1)^{-2} (*)$ where $n_q = 2c_\l$
		and the term $(*)$ has parameter $\al = \al_\l(f)/\l^{(k-1)/2}$. Note
		that our hypothesis that the local representations are not quadratic
		characters is exactly what we need to guarantee the condition on
		conductors stated in that proposition.
\end{enumerate}

Putting this together we conclude:

\begin{thm}[Explicit Ichino's formula, CM case] \label{thm:ExplicitIchinoCM}
	Let $f$, $g = g_\ph$, and $h = g_\ps$ satisfy the hypotheses listed earlier
	in this section. Then we have 
	\begin{multline*}
		|\g{f(z)g_\ph(z),g_\ps(c^2 N z)}|^2 \\ 
				= \f{3^2 (m-2)! (k-1)! (m-k-1)!}
							{\pi^{2m+2} 2^{4m-2} d \l^{2c_\l} (c^2 N)^{m+1}}
			\dt \p_{q|c N d\l} (1+q\1)^{-2} \dt (*)_{f,\l} 
			\dt L(f,\ph\ps\1,0) L(f,\ps\1\ph\1 N^{m-k-1},0)
	\end{multline*}
	where $(*)_{f,\l}$ is determined in terms of the root $\al = \al_\l(f)$ of
	the Hecke polynomial for $f$ at $\l$ and is given by 
	\[ (*)_{f,\l} = \b( \s_{i=0}^{c_\l} \pf{\al}{\l^{(k-1)/2}}^{2i-c_\l} 
			- \f{1}{\l} \s_{i=i}^{c_\l-1} \pf{\al}{\l^{(k-1)/2}}^{2i-c_\l} \e). \]
\end{thm}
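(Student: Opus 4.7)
\medskip

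\noindent\textbf{Proof proposal.} The plan is to apply the classical Ichino formula (Theorem \ref{thm:ClassicalIchino}) directly to the triple $(f, g_\ph, g_\ps)$ and then substitute in the known local integrals from Section \ref{sec:LocalIntegralResults}, together with the factorization of the triple-product $L$-function into two Rankin–Selberg $L$-functions.

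First I would fix the data for Theorem \ref{thm:ClassicalIchino}. By Proposition \ref{prop:CMForm}, $g_\ph$ is a newform of level $dc^2\l^{2c_\l}$ and $g_\ps$ of level $d\l^{2c_\l}$, and their characters multiply to $\ch_K^2=1$ as required. Because $N$, $d$, $c$, and $\l$ are pairwise coprime, $N_{fgh}=Ndc^2\l^{2c_\l}$, and the shifts $M_f=1$, $M_g=1$, $M_h=c^2N$ satisfy the required compatibility: for each prime $q\mid N_{fgh}$ one checks directly that the $q$-part of $M_hN_h=c^2Nd\l^{2c_\l}$ equals the $q$-part of at least one of $M_fN_f=N$ or $M_gN_g=dc^2\l^{2c_\l}$, and the third one has a strictly smaller $q$-part. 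With these choices the Petersson inner product on the left-hand side of Theorem \ref{thm:ClassicalIchino} becomes exactly $\langle f(z)g_\ph(z),g_\ps(c^2Nz)\rangle$, and the prefactor becomes
\[ \frac{3^2(m-2)!(k-1)!(m-k-1)!}{\pi^{2m+2}2^{4m-2}(c^2N)^m}. \]

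Next I would deal with the $L$-function side. Since $\pi_f$ is self-dual (trivial central character) and $g_\ps$ corresponds to the induction of $\ps$, the Weil–Deligne decomposition
\[ (\mathrm{Ind}_{W_K}^{W_\Q}\ph)\otimes(\mathrm{Ind}_{W_K}^{W_\Q}\ps) \iso \mathrm{Ind}_{W_K}^{W_\Q}(\ps\ph) \oplus \mathrm{Ind}_{W_K}^{W_\Q}(\ps\ph^c) \]
together with the shift from the unitary normalization and the weight $(m-k-1,0)+(m-1,0)$ of $\ph\ps$ gives the factorization $L(f\times g_\ph\times\bar g_\ps,m-1) = L(f,\ph\ps\1,0)\cdot L(f,\ps\1\ph\1 N^{m-k-1},0)$ stated in the theorem.

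Then I would run through the bad primes $q\mid dNc\l$ one at a time to evaluate $\CE_qI_q^*$, matching each to one of the local computations collected in Section \ref{sec:LocalIntegralResults}. For $q\mid N$ only $\pi_{f,q}$ is ramified, so Corollary \ref{cor:LocalIntegralsTwoUnramified} gives $\CE_qI_q^*=q^{-n_q}(1+q\1)^{-2}$; for $q\mid c$ only $\pi_{\ph,q}$ is ramified, again giving $q^{-2n_q}(1+q\1)^{-2}$. For $q\mid d$ the field $K$ ramifies and both $\pi_{\ph,q}$ and $\bar\pi_{\ps,q}$ are induced principal series of conductor $1$ with nontrivial central characters whose product is trivial; this is exactly the situation of Proposition \ref{prop:LocalIntegralsTwoHalfRamified}, giving $\CE_qI_q^*=q\1(1+q\1)^{-2}$. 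Finally at $q=\l$ inertness in $K$ forces $\pi_{\ph,\l}$ and $\bar\pi_{\ps,\l}$ to be dihedral supercuspidals of Type~1; our hypothesis that $\ph$ and $\ps$ have inverse local components at $\l$ makes these two representations isomorphic after a central twist, and the assumption that the local characters are not quadratic ensures the conductor hypothesis of Proposition \ref{prop:LocalIntegralTwoSupercuspidals} holds, yielding $\CE_\l I_\l^* = \l^{-2c_\l}(1+\l\1)^{-2}(*)_{f,\l}$.

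Multiplying all these contributions together collapses the local $q^{-\bullet}$ terms into a single factor $1/(Nc^2d\l^{2c_\l})$, which combines with the $(c^2N)^{-m}$ in the prefactor to produce $1/(d\l^{2c_\l}(c^2N)^{m+1})$ as in the statement; the product $\prod_{q\mid cNd\l}(1+q\1)^{-2}$ collects the remaining factors, and $(*)_{f,\l}$ from the prime $\l$ appears once. Assembling all pieces yields the claimed equality. The only subtle step is the conductor hypothesis for Proposition \ref{prop:LocalIntegralTwoSupercuspidals}, which is precisely why we imposed the ``not quadratic'' condition on the local components of $\ph,\ps$ at $\l$; everything else is bookkeeping of indices and prime-by-prime comparisons.

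\end{document}
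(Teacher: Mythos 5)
Your proposal is correct and follows essentially the same route as the paper: apply Theorem \ref{thm:ClassicalIchino} with $M_f = M_g = 1$ and $M_h = c^2N$, factor the triple-product $L$-value via the Weil--Deligne decomposition, and evaluate $\CE_q I_q^*$ prime by prime using Corollary \ref{cor:LocalIntegralsTwoUnramified}, Proposition \ref{prop:LocalIntegralsTwoHalfRamified}, and Proposition \ref{prop:LocalIntegralTwoSupercuspidals}. Your bookkeeping at $q\mid d$, which retains the factor $(1+q\1)^{-2}$ coming from $\CE_q$, is the correct reading and is what makes the product $\prod_{q\mid cNd\l}(1+q\1)^{-2}$ in the final formula come out right.
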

 
\section{Background from Hida theory} \label{chap:HidaTheory}

In this section we collect the results from Hida's theory of $\La$-adic modular
forms that we'll need. Ultimately, we want to establish that if $f$ is a fixed
modular form and $\ph,\ps$ are Hecke characters varying suitably in families
$\Ph,\Ps$, we can construct a $p$-adic analytic function $\g{f\Bg_\Ph,\Bg_\Ps}$
that explicitly interpolates the family of Petersson inner products
$\g{fg_\ph,g_\ps}$. After recalling the basic setup of Hida theory in this
section we will proceed to constructing the $\La$-adic families $\Bg_\Ph$ and
$\Bg_\Ps$ in Chapter \ref{sec:FamiliesOfCharsCMForms}, and then to working with
$\g{f\Bg_\Ph,\Bg_\Ps}$ in Chapter \ref{sec:PeterssonInterpolate} (and in
particular finding the removed Euler factors at the prime $p$).
\par
Throughout this section, we will always assume $p$ is an odd prime. In some
cases this is for simplicity, but in others it's because important parts of the
theory have not been worked out for the case $p=2$. We largely follow Hida's
writings, especially \cite{Hida1988b} and \cite{HidaEisSeries}, but also
\cite{Hida1985}, \cite{Hida1986} and \cite{Hida1986a}, as well as Wiles' paper
\cite{Wiles1988}. However we remark that many of these papers predate the
formalism of $\La$-adic forms that we use, so results need to be translated
over; unfortunately we do not know of any comprehensive references for this
theory written in the more modern language we use. 

\subsection{$p$-adic modular forms}

Consider the spaces of classical modular forms $M_k(\Ga,\ch) = M_k(\Ga,\ch;\C)$
or cusp forms $S_k(\Ga,\ch) = S_k(\Ga,\ch;\C)$ for a weight $k$, a congruence
subgroup $\Ga$, and character $\ch$. For any subalgebra $A \se \C$  we can
define $A$-submodules $M_k(\Ga,\ch;A)$ consisting of the forms with Fourier
coefficients lying in $A$, and similarly $S_k(\Ga,\ch;A)$ for cusp forms; we
view both as subspaces of a formal power series ring $A\bb{q^{1/M}}$. Standard
results on integrality of newform coefficients tell us that for any ring $A$
containing the image of $\ch$ we have bases of $M_k$ and $S_k$ with
coefficients in $A$ and thus 
\[ M_k(\Ga,\ch;A)\ox_A \C \iso M_k(\Ga,\ch;\C) 
	\qq S_k(\Ga,\ch;A)\ox_A \C \iso S_k(\Ga,\ch;\C). \]
Applying this, we can change our scalars to (the valuation ring of) a $p$-adic
field $F$: 

\begin{defn}
	Fix a weight $k$, a congruence subgroup $\Ga$, and a character $\ch$. If
	$F/\Q_p$ is a $p$-adic field containing the image of $\ch$ and $F_0 \se F$
	is a number field with $F$ as its completion (which also contains the image
	of $\ch$), and we let $\CO_F$ and $\CO_{F_0}$ be the integer rings of $F$
	and $F_0$, respectively, then we can define 
	\[ M_k(\Ga,\ch;\CO_F) = M_k(\Ga,\ch;\CO_{F_0})\ox_{\CO_{F_0}} \CO_F \]
	and similarly for $S_k$. 
\end{defn}

One can then check that this space is independent of the choice of field $F_0$.
The ring $\CO_F\bb{q^{1/M}}$ is naturally equipped with a norm $|\s a_n
q^{n/M}| = \sup\{ |a_n|_p \}$, making it into a $p$-adic Banach space. We will
define the space of $p$-adic modular forms (over $F$) as a certain closed
subspace of $\CO_F\bb{q^{1/M}}$, which will thus be a $p$-adic Banach space.
Any individual space $M_k(\Ga,\ch;\CO_F)$ is finite-rank and thus already
closed, but we can define
\[ M(\Ga,\ch;\CO_F) 
			= M_{\le\oo}(\Ga,\ch;\CO_F) = \Op_{j=0}^\oo M_j(\Ga,\ch;\CO_F) \]
and take its closure: 

\begin{defn}
	Fix a congruence subgroup $\Ga$, a character, and a $p$-adic field $F/\Q_p$
	containing the values of $\ch$. We define the spaces $\LM(\Ga,\ch;\CO_F)$ of
	\E{$p$-adic modular forms} and $\LS(\Ga,\ch;\CO_F)$ of \E{$p$-adic cusp
	forms} with coefficients in $\CO_F$ as the closures of the space
	$M(\Ga,\ch;\CO_F)$ or $S(\Ga,\ch;\CO_F)$, respectively, in
	$\CO_F\bb{q^{1/M}}$ with the Banach space topology given above.
\end{defn}

Equivalently, $\LM(\Ga,\ch;\CO_F)$ is the completion of $M(\Ga,\ch;\CO_F)$ with
respect to the given norm on $q$-expansions. We are most interested in the case
of $\Ga = \Ga_1(N)$, and we write $\LM(N;\CO_F)$ to denote
$\LM(\Ga_1(N);\CO_F)$ and similarly for $\LS$. We will also occasionally need
to work with the larger space $\LM(\Ga_1(N,M);\CO_F)$. We also recall that by a
theorem of Katz \cite{Katz1976} (using the theory of geometric modular forms),
the spaces we've defined actually have ``$p^\oo$-level'': 

\begin{thm}
	As subspaces of $\CO_F\bb{q^{1/M}}$, we have 
	\[ \LS(\Ga\i\Ga_1(p^r),\CO_F) = \LS(\Ga,\CO_F) 
			\qq\qq \LM(\Ga\i\Ga_1(p^r),\CO_F) = \LM(\Ga,\CO_F) \]
	for $\Ga = \Ga(N_0), \Ga_1(N_0)$, or $\Ga_1(N_0,M_0)$ with $N_0,M_0$ prime
	to $p$. In particular
	\[ M_k(N_0 p^\oo;\CO_F) 
			= M_k(\Ga_1(N_0 p^\oo);\CO_F) = \U_r M_k(\Ga_1(N_0 p^r);\CO_F) \]
 	is a subspace of $\LM(N_0;\CO_F)$.
\end{thm}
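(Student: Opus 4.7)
The plan is to establish the equality of subspaces of $\CO_F\bb{q^{1/M}}$ in both directions. The containment $\LS(\Ga,\CO_F)\subseteq \LS(\Ga\cap\Ga_1(p^r),\CO_F)$ (and similarly for $\LM$) is immediate: at each weight $k$ we have $M_k(\Ga,\CO_F)\subseteq M_k(\Ga\cap\Ga_1(p^r),\CO_F)$, so the inclusion already holds between the dense subspaces of $q$-expansions and therefore passes to their $p$-adic closures. The substantive direction is the reverse containment, which reduces to showing that any classical form $f \in M_k(\Ga\cap\Ga_1(p^r),\CO_F)$ lies in the $p$-adic closure of $M(\Ga,\CO_F) = \Op_{j\ge 0} M_j(\Ga,\CO_F)$.

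The standard reduction uses the normalized Eisenstein series $E = E_{p-1}$ of weight $p-1$ for the full modular group (valid for $p\ge 5$, with appropriate substitutes of higher weight for $p=3$), whose $q$-expansion satisfies $E \equiv 1 \pmod{p}$. Iterating via the Frobenius-style lift yields $E^{p^n} \equiv 1 \pmod{p^{n+1}}$ as $q$-expansions. Multiplication by $E^{p^n}$ therefore changes the weight by $(p-1)p^n$ while altering the $q$-expansion only modulo $p^{n+1}$; so it suffices, for each $n$, to exhibit a form $g_n \in M_{k+(p-1)p^n}(\Ga,\CO_F)$ whose $q$-expansion agrees with that of $f$ modulo $p^{n+1}$. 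The sequence $(g_n)$ then converges $p$-adically to $f$ in $\CO_F\bb{q^{1/M}}$, placing $f$ in $\LM(\Ga,\CO_F)$.

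The remaining task, and the main obstacle, is producing such $g_n$ at level $\Ga$ starting from $f$ at the larger level $\Ga\cap\Ga_1(p^r)$. This is the heart of Katz's theorem \cite{Katz1976}, which is proven using the geometric theory of modular forms: the $p$-adic modular forms of level $\Ga$ are identified with global sections of an appropriate line bundle over the Igusa tower, the formal scheme parametrizing ordinary (generalized) elliptic curves with $\Ga$-structure together with a trivialization of the formal group. The key geometric input is that such a trivialization canonically supplies a $\Ga_1(p^r)$-structure for every $r\ge 1$ (through the canonical subgroup and its iterates), so the Igusa towers for $\Ga$ and for $\Ga\cap\Ga_1(p^r)$ coincide and the corresponding spaces of $p$-adic modular forms are equal. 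Translating this equality back into $q$-expansions via the usual comparison between geometric and classical modular forms yields the claimed equality of subspaces of $\CO_F\bb{q^{1/M}}$, from which the final assertion about $M_k(N_0 p^\oo;\CO_F)$ follows as the union over $r$ of the increasing chain $M_k(\Ga_1(N_0 p^r);\CO_F)$.
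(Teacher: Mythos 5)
The paper offers no proof of this statement; it is quoted directly as a theorem of Katz \cite{Katz1976}. Your outline — the trivial containment at each weight, followed by the reduction of the substantive reverse containment to Katz's geometric identification of $p$-adic modular forms with sections over the Igusa tower, where the trivialization of the formal group canonically supplies the $\Ga_1(p^r)$-structures and hence forces the two towers to coincide — is an accurate account of the argument being cited, so your approach agrees with the paper's (and correctly flags that the $E_{p-1}$ congruence trick alone does not lower the level at $p$; that step genuinely requires the geometric input).
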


Now that we've defined the spaces $\LM(N_0;\CO_F)$ and $\LS(N_0;\CO_F)$ we want
to put a Hecke action on them. To do this we actually first need to define an
action by a profinite group
\[ \HZ_{N_0} = \ilim_r (\Z/N_0 p^r\Z)^\x \iso (\Z/N_0\Z)^\x \x \Z_p^\x 
		\iso (\Z/N_0 p\Z)^\x \x (1+p)^{\Z_p}. \]
We first define an action on $M_k(\Ga_1(N_0 p^r);\CO_F)$ for any $k$ and any $r
\ge 0$ by 
\[ \g{z}f = z_p^k f|_k[\si_z] \qq\qq \si_z\in\SL_2(\Z),
		\si_z \ee\M{cc}{z\1 & 0 \\ 0 & z} \md{N_0 p^r}, \]
where $z\mt z_p$ under the projection $\HZ_{N_0} \to \Z_p^\x$; so this is a
slightly modified version of the classical action of $(\Z/N_0 p^r\Z)^\x$ by
diamond operators (hence the notation). We can then check that these actions
are all compatible: 

\begin{prop}
	The action of $\HZ_{N_0}$ on the spaces $M_k(\Ga_1(N_0 p^r);\CO_F)$ are
	compatible, i.e. they extend to a unique action on $\s_{k,r}
	M_k(\Ga_1(N_0 p^r);\CO_F)$. Moreover, this extends to a continuous action of
	$\HZ_{N_0}$ on $\LM(N_0;\CO_F)$, and $\LS(N_0;\CO_F)$ is invariant under
	this action.
\end{prop}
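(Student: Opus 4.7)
The plan is to verify three things in turn: well-definedness of each $\g{z}$ on a single $M_k(\Ga_1(N_0 p^r);\CO_F)$, compatibility of these operators as the pair $(k,r)$ varies, and continuity of the resulting action on the Banach completion $\LM(N_0;\CO_F)$. Invariance of $\LS(N_0;\CO_F)$ will then follow once invariance is checked level by level.

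For fixed $(k,r)$: two lifts $\si_z,\si_z'\in\SL_2(\Z)$ of the target diagonal matrix modulo $N_0 p^r$ differ by an element of $\Ga(N_0 p^r)\se\Ga_1(N_0 p^r)$, so $f|_k[\si_z]$ is independent of the choice of lift for $f\in M_k(\Ga_1(N_0 p^r),\ch;\CO_F)$. Moreover $\si_z$ represents an element of $\Ga_0(N_0 p^r)/\Ga_1(N_0 p^r)$, and $\Ga_0$ normalizes $\Ga_1$, so $\g{z}$ preserves $M_k(\Ga_1(N_0 p^r),\ch;\CO_F)$, acting on it as the scalar $z_p^k\ch(z)\in\CO_F^\x$; in particular it is an isometry of the sup-norm on $q$-expansions. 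Compatibility as $r$ grows is automatic, because a $\si_z$ chosen to reduce correctly modulo $N_0 p^{r'}$ also reduces correctly modulo $N_0 p^r$ for any $r'\ge r$, and across weights $k$ there is nothing to check since distinct weight spaces live in linearly independent subspaces of $\CO_F\bb{q^{1/M}}$. This produces a unique $\HZ_{N_0}$-action on $M(N_0;\CO_F)=\Op_{k,r} M_k(\Ga_1(N_0 p^r);\CO_F)$.

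To pass to the completion: since each $\g{z}$ is an isometry on the dense subspace it extends uniquely by continuity to an isometric operator on $\LM(N_0;\CO_F)$. For joint continuity of $(z,f)\mt\g{z}f$, I would first use that on each character eigenspace the action is multiplication by the continuous function $z\mt z_p^k\ch(z)$, which gives continuity in $z$ pointwise on the dense subspace; combined with the uniform operator bound $\|\g{z}\|=1$, a standard $\ep/3$ argument then upgrades this to continuity in $z$ for every $f$ in the completion. Finally, $\si_z$ normalizes $\Ga_1(N_0 p^r)$ and thus permutes its cusps, so $\g{z}$ preserves vanishing at cusps at every finite level, and taking the closure shows $\LS(N_0;\CO_F)$ is stable. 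The only genuine subtlety is the continuity-in-$z$ step; but because the action is scalar on each finite character eigenspace this reduces to a routine density and equicontinuity argument.
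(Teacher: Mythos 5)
The paper states this proposition as recalled background from Hida's work and gives no proof, so there is nothing to compare against; your argument is the standard one and is essentially correct. One point deserves shoring up: your claim that $\g{z}$ is an \emph{isometry} of the sup-norm on $q$-expansions. The eigenvalue computation $\g{z}f = z_p^k\ch(z)f$ shows this on each character eigenspace $M_k(\Ga_1(N_0 p^r),\ch;\CO_F)$, but the integral lattice $M_k(\Ga_1(N_0 p^r);\CO_F)$ need not be the direct sum of its intersections with the eigenspaces (there can be congruences between forms of different nebentypus), so unit eigenvalues alone only give boundedness over $F$, not norm exactly $1$ on the lattice. What you actually need is that $\g{z}$ preserves $\CO_F$-integrality of $q$-expansions; this follows either from the geometric definition of the diamond operators or, as the paper itself notes later, from the identity expressing the action of a prime $\l\nd N_0 p$ as $\l(T(\l)^2 - T(\l^2))$ together with Dirichlet's theorem on primes in arithmetic progressions (the Hecke operators visibly preserve integral $q$-expansions, and $\l\in\CO_F^\x$). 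Once integrality is known for $\g{z}$ and for its inverse $\g{z\1}$, the isometry claim and hence the extension to the completion go through exactly as you describe. The remaining steps --- independence of the lift $\si_z$ modulo $\Ga(N_0 p^r)\se\Ga_1(N_0 p^r)$, compatibility in $r$ and directness of the sum over weights, the $\ep/3$ argument for continuity in $z$, and preservation of cuspidality via normalization of $\Ga_1(N_0 p^r)$ --- are all correct.
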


Since $\LM(N_0;\CO_F)$ is a $\CO_F$-module, this group action naturally gives it
a $\CO_F[\HZ_{N_0}]$-module structure, which in fact extends to a
$\CO_F\bb{\HZ_{N_0}}$-module structure. By just considering the direct factor
$(1+p)^{\Z_p}$ of $\HZ_{N_0}$, we get that $\LM(N_0;F)$ has a $\La$-module
structure for 
\[ \La = \CO_F\bb{(1+p)^{\Z_p}} \iso \CO_F\bb{\Z_p} \iso \CO_F\bb{X}. \]
This module structure will be fundamental for the definition we will give of
families of $p$-adic modular forms! We note that any integer $d$ coprime to
$N_0 p$ maps into $\La$ via the inclusion $(\Z/N_0 p\Z)^\x \into \HZ_{N_0}$ and
then the projection $\HZ_{N_0} \onto (1+p)^{\Z_p}$. We denote the image of such
an operator as $\g{d}_\La$; note that this is \E{not} the same as $\g{d} \in
\HZ_{N_0}$. In fact, if $\Tom : \HZ_{N_0} \onto (\Z/N_0 p^r\Z)^\x \into
\CO_F^\x$ is the natural character (serving the same purpose as the
Teichm\"uller character for $N_0 = 1$), we have $\g{d} = \Tom(d) \g{d}_\La$. 
\par
One thing that the action of $\HZ_{N_0}$ does is lets us recover the character
and weight of the modular form. In particular, if $\ch$ is a character of
$(\Z/p^r N_0 \Z)^\x$ for some $r\ge 1$, then modular form $f \in M_k(N_0
p^r,\ch)$ satisfies 
\[ \g{z}f = z_p^k (\Tom^{-k} \ch)(z) f \]
for all $z\in\HZ_{N_0}$. Accordingly, we say that a $p$-adic modular form $f
\in \LM(N_0;\CO_F)$ has \E{weight $k$ and character $\ch$} if it satisfies this
identify for all $z$. Clearly this is a necessary condition for the form to
actually lie in $M_k(N_0 p^r,\ch)$, but in general it is not sufficient - a
$p$-adic modular form of weight $k$ and character $\ch$ need not be classical
of weight $k$ and character $\ch$. If $\ch$ is a character of $(\Z/N_0 p\Z)^\x$
we can define the space $\LM(N_0;\CO_F)[\ch]$ as the subspace of
$\LM(N_0;\CO_F)$ of forms that have tame-at-$p$ character $\ch$, i.e. such that
$\g{z} f = \ch(z) f$ for all $z\in (\Z/pN_0\Z)^\x$.
\par
Finally, we define Hecke operators and their associated Hecke algebras for
$\LM(N_0;\CO_F)$ and its subspaces. The Hecke operators themselves can be
defined from the usual ones of classical modular forms $M_k(\Ga_1(N_0
p);\CO_F)$, or equivalently by using the usual formula on $q$-expansions:

\begin{prop}
	Fix an integer $n$. Then we can define a Hecke operator $T(n)$ on
	$\LM(N_0;\CO_F)$ as the unique continuous extension of the usual Hecke
	operator $T(n)$ on the sum of all subspaces $M_k(\Ga_1(N_0 p^r),F)$ for $r >
	0$. This can be equivalently described in terms of its Fourier coefficients
	by 
	\[ a(m,f|T(n)) = \s_{d|(m,n), (d,N_0 p) = 1} d\1 a(mn/d^2,\g{d}_\La f). \]
	where $f|d$ denotes the action of $d \in \Z_p^\x \se \HZ_{N_0}$ discussed in
	the previous section. The subspaces $\LS(N_0;\CO_F)$, $\LM(N_0;\CO_F)$, and
	$\LS(N_0;\CO_F)$ are invariant under this action.
\end{prop}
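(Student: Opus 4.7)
The plan is to define $T(n)$ first on the dense subspace $M(N_0;\CO_F) = \s_{k,r} M_k(\Ga_1(N_0 p^r);\CO_F)$ via the classical Hecke operators, verify that the stated $q$-expansion formula reproduces them in a weight- and character-independent form, deduce boundedness from that formula, and then extend by continuity to the closure $\LM(N_0;\CO_F)$.

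The core calculation is checking that on each $M_k(\Ga_1(N_0 p^r), \ch)$ the formula
\[ a(m, f|T(n)) = \s_{d|(m,n),\,(d,N_0 p)=1} d\1 a(mn/d^2, \g{d}_\La f) \]
reproduces the classical Hecke identity $a(m, f|T(n)) = \s \ch(d) d^{k-1} a(mn/d^2, f)$. For $d$ coprime to $N_0 p$, $\g{d}_\La$ acts on such an $f$ by a scalar in $\CO_F^\x$ computed from the relations $\g{z}f = z_p^k (\Tom^{-k}\ch)(z) f$ and $\g{d} = \Tom(d) \g{d}_\La$ recorded just above; the point is to verify that $d\1$ times this scalar equals $\ch(d) d^{k-1}$. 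Once this is done, the right-hand side of the new formula no longer mentions $k$, $r$, or $\ch$, so the classical operators on the various $M_k(\Ga_1(N_0 p^r);\CO_F)$ automatically agree on overlaps and assemble into a single $\CO_F$-linear operator on the sum $M(N_0;\CO_F)$.

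Continuity is then essentially immediate from the formula. Each summand $f \mt d\1 a(mn/d^2, \g{d}_\La f)$ is a bounded $\CO_F$-linear functional of operator norm at most $1$: $d\1$ is a unit in $\CO_F$, each coefficient map has norm $\le 1$, and $\g{d}_\La$ preserves the sup-norm on $\CO_F\bb{q^{1/M}}$ by the continuity of the $\HZ_{N_0}$-action established earlier. Therefore $T(n)$ is bounded of norm $\le 1$ on $M(N_0;\CO_F)$ and extends uniquely to a bounded $\CO_F$-linear endomorphism of $\LM(N_0;\CO_F)$, and the $q$-expansion formula persists under this extension by continuity of the coefficient functionals $f\mt a(m,f)$.

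Finally, invariance of $\LS(N_0;\CO_F)$ follows directly from the formula: the constant term $a(0, f|T(n))$ is a combination of $a(0, \g{d}_\La f)$, each of which vanishes on cusp forms since $\g{d}_\La$ preserves $\LS(N_0;\CO_F)$ by the previous proposition. The one genuinely subtle point in the argument is the scalar identity in the second paragraph, where the Teichm\"uller factor $\Tom(d)$ and the weight contribution $d_p^k$ must be combined correctly with $\ch(d)$ to reproduce the classical Hecke eigenvalue $\ch(d) d^{k-1}$; everything after that is routine density-and-continuity machinery.
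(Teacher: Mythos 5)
The paper states this proposition as quoted background from Hida's theory and supplies no proof, so your argument has to stand on its own. Your architecture --- define $T(n)$ via the classical operators on the dense subspace $M(N_0;\CO_F)$, check a weight- and character-independent $q$-expansion formula, observe the operator has norm $\le 1$, and extend by continuity --- is the right (and essentially the only) approach, and the boundedness and extension steps are fine. But the one computation you defer is exactly where the proof lives, and as you have set it up it does not close. From the paper's relations $\g{d} = \Tom(d)\g{d}_\La$ and $\g{z}f = z_p^k(\Tom^{-k}\ch)(z)f$ one gets
\[ d^{-1}\,\g{d}_\La f = \Tom(d)^{-(k+1)}\,d^{k-1}\ch(d)\,f, \]
which differs from the classical Hecke coefficient $\ch(d)d^{k-1}$ by the nontrivial root of unity $\Tom(d)^{-(k+1)}$. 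Put differently, $\g{d}_\La$ is the $(1+p)^{\Z_p}$-component of $d$, so on a form of tame level it acts by $\langle d\rangle^k = d^k\om(d)^{-k}$, and $d^{-1}\g{d}_\La f = d^{k-1}\om(d)^{-k}f$ rather than $d^{k-1}\ch_{\mathrm{neb}}(d)f$. The identity you need is true only for the full operator $\g{d}$ (the action of $d\in\HZ_{N_0}$, which contributes exactly $d^k\ch_{\mathrm{neb}}(d)$), or after carefully reconciling the convention that ``character $\ch$'' in the $p$-adic sense means classical nebentypus $\ch\om^{-k}$. You must actually carry out this reconciliation; asserting that ``the point is to verify'' the identity is not a proof, and the version you state as the target is false as written.

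The second gap is your argument that $T(n)$ preserves $\LS(N_0;\CO_F)$. Vanishing of the constant term $a(0,\cdot)$ concerns only the cusp at infinity and does not characterize cusp forms (subtracting the constant term from an Eisenstein series does not produce a cusp form), so showing $a(0,f|T(n))=0$ establishes nothing. The correct argument is structural: the classical $T(n)$ preserves each $S_k(\Ga_1(N_0 p^r);\CO_F)$, hence preserves the sum $S(N_0;\CO_F)$, and a bounded operator carries the closure of an invariant subspace into itself; invariance of $\LS(N_0;\CO_F)$ follows.
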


We then define the Hecke algebra $\DT(\LM(N_0;\CO_F)) \se
\End_{F\~\cont}(\LM(N_0;\CO_F))$ as the $F$-subalgebra generated by the Hecke
operators. Evidently the restriction map from $\LM(N_0;\CO_F)$ to any space
$M_k(\Ga_1(N_0p^r);\CO_F)$ induces a surjection of $\DT(\LM(N_0;\CO_F))$ onto
$\DT(M_k(\Ga_1(N_0p^r);\CO_F))$ taking $T(n)$ to $T(n)$ for all $n$, and we can
in fact check that $\DT(\LM(N_0;\CO_F))$ is the inverse limit of
finite-dimensional algebras $\DT(M_{\le k}(\Ga_1(N_0p^r);\CO_F))$ (varying $k$
and/or $r$). The same statements hold for Hecke algebras of $\LS(N_0;\CO_F)$,
and also for replacing $\CO_F$ with $F$ in each case. 
\par
Next, we recall that for finite-dimensional spaces we have a perfect pairing
\[ M_k(\Ga_1(N_0 p^r);\CO_F) \x \DT(M_k(\Ga_1(N_0 p^r),\CO_F)) \to \CO_F \]
given by $(f,t) \mt a(1,f|t)$. This formula induces a perfect pairing
$\LM(N_0;\CO_F) \x \DT(\LM(N_0;\CO_F)) \to \CO_F$, and thus we have
isomorphisms 
\[ \DT(\LM(N_0;\CO_F)) \iso \Hom_{\CO_F}(\LM(N_0;\CO_F),\CO_F), \]
\[ \LM(N_0;\CO_F) \iso \Hom_{\CO_F}(\DT(\LM(N_0;\CO_F)),\CO_F); \]
and similarly for cusp forms; see Theorem 1.3 of \cite{Hida1988b}. We will use
this duality repeatedly in the next section.
\par
Finally, we note that the automorphisms of $\LM(N_0;\CO_F)$ arising from the
action of $\HZ_{N_0}$ defined in the previous section all lie in
$\DT(\LM(N_0;\CO_F))$; this can be deduced from checking that if $\l\nd N_0 p$
is a prime then our formula for Hecke operators gives that the action of
$\l\in\HZ_{N_0}$ is given by the Hecke operator $\l(T(\l)^2-T(\l^2))$. Thus we
have a natural map $\HZ_{N_0} \to \DT(\LM(N_0;\CO_F))$, which extends to a
homomorphism $\CO_F\bb{\HZ_{N_0}} \to \DT(\LM(N_0;\CO_F))$. In particular this
makes $\DT(\LM(N_0;\CO_F))$ into a $\La$-algebra.

\subsection{$\La$-adic families of modular forms} 

Now that we've set up the basic theory of $p$-adic modular forms, we develop
the theory of $\La$-adic modular forms, which are ``$p$-adic families of
$p$-adic of modular forms.'' Recall that, given a finite extension $F/\Q_p$
we're working over, $\La$ was defined as $\CO_F\bb{\Ga}$ for $\Ga =
(1+p)^{\Z_p} \se \Z_p^\x$ abstractly isomorphic to $\Z_p$. Moreover we know
$\La$ is abstractly isomorphic to the formal power series ring $\CO_F\bb{X}$;
if we pick a topological generator $\ga$ of $\Ga$ (usually $\ga = 1+p$) then
the isomorphism is determined by $\ga \lq 1+X$. 
\par
Using the description of $\La$ as a power series ring, we know that the set of
continuous $\CO_F$-algebra homomorphisms $\Hom(\La,\CO_F)$ is in bijection with
elements of the maximal ideal $\Fm_F \se \CO_F$, by associating $x \in \Fm_F$
to the homomorphism $\La \to \CO_F$ characterized by $X \mt x$. In the
framework of rigid geometry, this means that $\La$ is the coordinate ring of
the open unit disc, and its $F$-valued points are the homomorphisms $\La \to
\CO_F$. Motivated by this, we think of of an element $f\in\La$ as an analytic
function on the open unit disc, which we can evaluate at a point $P \in
\Hom(\La,\CO_F)$ by taking $P(f)$. 
\par
Given this setup, for an integer $k$ we define a distinguished point $P_k \in
\Hom(\La,\CO_F)$ by $P_k(X) = (1+p)^k - 1$, or equivalently $P_k(\ga) =
(1+p)^k$. Then, following the formalism of Wiles \cite{Wiles1988}, we define a
$\La$-adic modular form to be a formal $q$-expansion with coefficients in
$\La$, such that evaluating it at a point $P_k$ gives a classical modular form
of weight $k$.

\begin{defn}
	A \E{$\La$-adic modular form} of level $N_0 p^r$ (for $p\nd N_0$ and $r\ge
	1$) and tame character $\ch$ (a Dirichlet character modulo $N_0 p$) is a
	formal power series $\Bf = \s A_n q^n \in \La\bb{q}$ such that, for all but
	finitely many $k \ge 2$, the following is satisfied: 
	\begin{itemize}
		\item The formal power series $\Bf_k = P_k(\Bf) = \s P_k(A_n) q^n$ is in
			fact a classical modular form lying in the space
			$M_k(N_0 p^r,\ch\om^{-k};\CO_F)$.
	\end{itemize}
	If all but finitely many $\Bf_k$'s are actually cusp forms, we say $\Bf$ is
	a $\La$-adic cusp form. We let $\BM(N_0 p^r,\ch;\La)$ denote the set of all
	$\La$-adic modular forms of level $N_0 p^r$ and character $\ch$, and
	$\BS(N_0 p^r,\ch;\La)$ the set of $\La$-adic cusp forms; these are evidently
	sub-$\La$-modules of $\La\bb{q}$. 
\end{defn}

An alternative way to formalize this concept is through the idea of
\E{measures}. This requires a bit of setup:

\begin{defn}
	Let $X$ be a (compact) topological space, and let $C(X;\CO_F)$ be the
	compact $p$-adic Banach space of all continuous functions $X \to \CO_F$ with
	the sup-norm. If $M$ is a $\CO_F$-Banach space, we define the space of
	\E{$M$-valued measures on $X$} as the space 
	\[ \r{Meas}(X;\CO_F) = \Hom_{\CO_F\~\cont}(C(X;\CO_F),M). \]
\end{defn}

This definition is by formal analogy with real-valued measure theory; a measure
(in the classical sense) on a compact space is equivalently determined by the
continuous $\R$-linear integration functional $C(X,\R) \to \R$. In the
literature this analogy is sometimes emphasized by writing measures (in our
sense) as $f \mt \S f d\mu$, but we'll just use $f \mt \mu(f)$ to denote the
continuous homomorphism we're calling a ``measure''. 
\par
The reason that measures come up naturally in our context is that the ring
$\La = \CO_F\bb{\Ga}$ itself can be viewed as a space of them. We let $\log_\Ga
: (1+p)^{\Z_p} \to \Z_p$ be the isomorphism $(1+p)^x \mt x$; this is not equal
to the usual $p$-adic logarithm but is a scalar multiple of it. Then, the
following result is an easy consequence of Mahler's theorem (which says that
all continuous functions $\Z_p \to \Z_p$ can be written as a series $x \mt \s
a_k \bi{x}{k}$). 

\begin{lem}
	We have $\r{Meas}(\Ga,\CO_F) \iso \La$, via the map sending a power series
	$A = \s a_n X^n \in \CO_F\bb{X} \iso \La$ to the measure $\mu_A$ that takes
	the function $x \mt \log_\Ga(x) \mt \bi{\log_\Ga(x)}{n}$ to the value $a_n$.
	Under this isomorphism, the action of $\ga\in\Ga$ by multiplication on $\La$
	corresponds to the action of $\Ga$ on $\r{Meas}(\Ga,\CO_F)$ described by
	$(\ga \dt \mu)(f) = \mu(x \mt f(\ga x))$.
\end{lem}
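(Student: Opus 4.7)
The plan is to reduce the whole statement to Mahler's theorem applied to $\Z_p$. First, use the isomorphism $\log_\Ga : \Ga \to \Z_p$ to identify $C(\Ga,\CO_F)$ with $C(\Z_p,\CO_F)$. Mahler's theorem (which extends to continuous functions valued in any complete discretely valued extension of $\Q_p$) states that the binomial coefficient functions $x \mt \bi{x}{n}$ form an orthonormal Banach basis of $C(\Z_p,\CO_F)$: every continuous $f : \Z_p \to \CO_F$ admits a unique expansion $f = \s_{n\ge 0} c_n(f) \bi{\dt}{n}$ with $c_n(f) \in \CO_F$ tending to zero, and $\|f\|_\oo = \sup_n |c_n(f)|$. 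Transporting through $\log_\Ga$ shows the functions $x \mt \bi{\log_\Ga(x)}{n}$ form such a basis of $C(\Ga,\CO_F)$.

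Given this basis, a continuous $\CO_F$-linear functional $\mu : C(\Ga,\CO_F) \to \CO_F$ is determined by, and may freely prescribe, the values $a_n := \mu(\bi{\log_\Ga(\dt)}{n}) \in \CO_F$: conversely, any sequence $(a_n)$ in $\CO_F$ defines $\mu(f) := \s_n c_n(f) a_n$, which converges since $c_n(f) \to 0$ and $(a_n)$ is bounded, and is manifestly continuous. This gives a $\CO_F$-linear bijection between $\r{Meas}(\Ga,\CO_F)$ and the set of $\CO_F$-valued sequences. Composing with the standard identification of sequences with $\CO_F\bb{X} \iso \La$ via $(a_n) \mt \s a_n X^n$ yields precisely the map described in the statement.

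To verify compatibility with the $\Ga$-action, fix a topological generator $\ga_0$ of $\Ga$ with $\log_\Ga(\ga_0) = 1$, so that $\ga_0 \in \La = \CO_F\bb{\Ga}$ corresponds to $1+X \in \CO_F\bb{X}$. For arbitrary $\ga = \ga_0^a$ with $a\in\Z_p$ and a measure $\mu$ associated to $A = \s a_n X^n$, the translate $(\ga\dt\mu)(f) := \mu(x\mt f(\ga x))$ sends $\bi{\log_\Ga(x)}{n}$ to $\mu(\bi{a + \log_\Ga(x)}{n})$. Expanding via the Vandermonde identity $\bi{a+y}{n} = \s_{k=0}^n \bi{a}{k}\bi{y}{n-k}$, this evaluates to $\s_k \bi{a}{k} a_{n-k}$, which is exactly the $n$-th coefficient of $(1+X)^a \dt A = \ga \dt A$. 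Since both sides are continuous in $A$ and in $a \in \Z_p$, it suffices to check this for $a\in\Z$, which is immediate.

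There is no genuinely hard obstacle here: the proof is ultimately a direct application of Mahler's theorem together with an expansion of a binomial coefficient. The main subtlety is bookkeeping — one must track the $\Ga$-action through the chain of identifications (group ring, power series ring, measures) and verify they all agree — but the Vandermonde identity handles this cleanly once one reduces to checking on a topological generator.
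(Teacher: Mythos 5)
Your proof is correct and follows exactly the route the paper intends: the paper states this lemma as an ``easy consequence of Mahler's theorem'' and gives no further details, and your argument (Mahler basis transported via $\log_\Ga$, duality with bounded coefficient sequences, Vandermonde for the $\Ga$-action) is the standard way to fill in those details.
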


Using this isomorphism, we can then see that if $A$ is an element of $\La$,
taking the specialization $P_m(A)$ is the same as evaluating the measure
$\mu_A$ under the continuous function $\Ga \to \CO_F$ given by $x \mt x^m$.
Furthermore, we can also check that the above isomorphism extends to an 
isomorphism 
\[ \r{Meas}(\Ga,\CO_F\bb{q}) \iso \La\bb{q}, \]
again such that if $A \lq \mu_A$ then the specialization $P_k(M) \in
\CO_F\bb{q}$ is equal to $\mu_A(x\mt x^m)$. 
\par
Thus, a $\La$-adic modular form $\Bf$ (which is by definition an element of
$\La\bb{q}$) naturally corresponds to a $\La\bb{q}$-valued measure $\mu_\Bf$ on
$\Ga$. Moreover, we know that the specializations $\mu_\Bf(x\mt x^k)$ actually
lie in $\LM(N_0;\CO_F)$ for all $k\gg 0$. Using the following ``density'' lemma
(for which we omit the elementary proof) we can check that this means the whole
domain maps into $\LM(N_0;\CO_F)$:

\begin{lem}
	Let $M$ be a Banach space over $\CO_F$, and $M' \se M$ a closed subspace
	that's saturated in the sense that if $m\in M$ satisfies $pm \in M'$, then
	we have $m\in M'$. If $\mu \in \r{Meas}(\Ga,M)$ is a measure such that for
	all $k\ge k_0$, we have $\mu(x \mt x^k) \in M'$, then in fact the image of
	$\mu$ lies in $M'$ and thus $\mu \in \r{Meas}(\Ga,M')$. 
\end{lem}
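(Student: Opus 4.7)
The plan is to use the Mahler expansion to represent the measure by its ``moments'' and then inductively extract these moments from the hypothesis using the saturation condition. Writing $y = \log_\Ga$, Mahler's theorem says that the functions $\binom{y}{n}$ form an orthonormal basis of $C(\Ga,\CO_F)$, so any continuous measure $\mu \in \mathrm{Meas}(\Ga,M)$ is determined by its bounded sequence of moments $m_n := \mu\bigl(\binom{y}{n}\bigr) \in M$, and $\mu\bigl(\sum c_n \binom{y}{n}\bigr) = \sum c_n m_n$ whenever $c_n \to 0$. It therefore suffices to show that $m_n \in M'$ for every $n \geq 0$: any value $\mu(f)$ is then a limit of finite $\CO_F$-linear combinations of the $m_n$'s, and lies in $M'$ by closedness.

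The next step is to relate the hypothesis on $\mu(x \mapsto x^k)$ to these moments. Since $x = (1+p)^y$ on $\Ga$, if we set $\alpha_k := (1+p)^k - 1 \in p\Z_p$ then
\[ x^k = (1+\alpha_k)^y = \sum_{n\geq 0} \alpha_k^n \binom{y}{n}, \]
so $\mu(x^k) = \sum_n \alpha_k^n m_n$ lies in $M'$ for all $k \geq k_0$. The key arithmetic fact is that for $p > 2$ one has $v_p(\alpha_k) = v_p(k) + 1$, so along the sequence $k_r := p^r$ (which eventually lies in the range $k \geq k_0$) we can write $\alpha_{p^r} = p^{r+1} u_r$ with $u_r \in \Z_p^\times$ and $|\alpha_{p^r}|_p \to 0$.

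I would then prove $m_n \in M'$ by induction on $n$. Suppose inductively that $m_0,\ldots,m_{n-1} \in M'$ (vacuous for $n = 0$). Then for each large $r$, the element
\[ \mu(x^{p^r}) - \sum_{j=0}^{n-1} \alpha_{p^r}^j m_j = \alpha_{p^r}^n \bigl(m_n + \alpha_{p^r} m_{n+1} + \alpha_{p^r}^2 m_{n+2} + \cdots\bigr) \]
lies in $M'$ since each summand does. Writing $\alpha_{p^r}^n = p^{n(r+1)} u_r^n$ and invoking the saturation hypothesis $n(r+1)$ times removes the factor of $p^{n(r+1)}$; multiplying by the unit $u_r^{-n} \in \CO_F^\times$ then yields $m_n + \alpha_{p^r} m_{n+1} + \alpha_{p^r}^2 m_{n+2} + \cdots \in M'$. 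The tail is bounded in norm by $|\alpha_{p^r}|_p \sup_n |m_n|$, which tends to $0$, so letting $r \to \infty$ and using closedness of $M'$ gives $m_n \in M'$.

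The main obstacle is really just the careful bookkeeping in this induction. The saturation hypothesis is the essential new ingredient: without it one could only conclude that the moments $m_n$ lie in $M' \otimes \Q_p$ inside $M \otimes \Q_p$, which would not suffice to recover $\mu(f) \in M'$ for all $f$.
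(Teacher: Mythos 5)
Your argument is correct and complete; the paper explicitly omits its proof of this lemma (calling it elementary), so there is nothing to compare against, but your Mahler-moment induction is exactly the kind of argument intended. The two facts you rely on are both sound: $v_p((1+p)^{p^r}-1)=r+1$ for odd $p$ (and the paper assumes $p>2$ throughout this chapter), and the Mahler expansion $x^k=\sum_n \alpha_k^n\binom{y}{n}$ converging uniformly so that continuity of $\mu$ gives $\mu(x\mapsto x^k)=\sum_n\alpha_k^n m_n$. The only point worth making explicit is that ``closed subspace'' must be read as closed $\CO_F$-submodule (as it is in the application, $M'=\LS(N_0;\CO_F)$ or $\LM(N_0;\CO_F)$ inside $\CO_F\bb{q}$), since you use stability of $M'$ under multiplication by $u_r^{-n}\in\CO_F^\x$ and under finite $\CO_F$-linear combinations; your closing remark correctly identifies saturation as the ingredient that upgrades the conclusion from $M'\ox\Q_p$ to $M'$ itself.
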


We can then apply this with $M = \CO_F\bb{q}$ and $M' = \LM(N_0;\CO_F)$; we note
that this space of $p$-adic modular forms is saturated because we can realize
it as an intersection of a $F$-vector space $\LM(N_0;F)$ with $M$. So if $\Bf$
is a $\La$-adic form, we conclude that $\mu_\Bf$ is actually a
$\LM(N_0;\CO_F)$-valued measure on $\Ga$ by the lemma. We can further analyze
it by defining a $(\La\x\La)$-module structure on the module 
\[ \r{Meas}(\Ga,\LM(N_0;\CO_F)) \iso 
		\Hom_{\CO_F\~\cont}(C(\Ga;\CO_F),\LM(N_0;\CO_F)) \]
induced by our $\La$-actions on the spaces $C(\Ga;\CO_F)$ and $\LM(N_0;\CO_F)$; 
in particular for $(\ga_1,\ga_2) \in \Ga\x\Ga$ and a measure $\mu$ we define 
\[ ((\ga_1,\ga_2)\dt\mu)(x\mt f(x)) = \g{\ga_2} \dt \mu(x\mt f(\ga_1 x)) 
		\in \LM(N_0;\CO_F). \]
Then, if $\mu = \mu_\Bf$ for a $\La$-adic modular form $\Bf$, we claim that the
action of an element $(\ga,\ga\1)$ is trivial; to check this, note that
evaluating $\Bf$ at $x \mt x^k$ gives us a classical modular form $\Bf_k$ for
$k\gg 0$, and then evaluating $(\ga,\ga\1)\dt \Bf$ gives us 
\[ ((\ga,\ga\1)\dt \Bf)(x\mt x^k) = \g{\ga\1} \Bf(x\mt \ga^k x^k) 
		= \ga^k \g{\ga\1}\Bf_k = \Bf_k \]
using linearity of both $\Bf$ and $\g{\ga\1}$. So $(\ga,\ga\1)\dt \Bf = \Bf$
when evaluated at $x \mt x^k$ for $k\gg 0$, and because such functions span a
dense subspace we can conclude $(\ga,\ga\1) \dt \Bf = \Bf$ as measures and thus
$\La$-adic modular forms. Since this is true for all $\ga\in\Ga$, we conclude
that such an $\Bf$ is invariant under the antidiagonal copy of $\La$ in
$\La\x\La$; we say it's ``$\La$-invariant'' for short. Summing up: 

\begin{prop}
	If $\Bf \in \BM(N_0 p^r,\ch;\La)$ is a $\La$-adic modular form, then the
	associated measure $\mu_\Bf$ is valued in $\LM(N_0;\CO_F)$ and is
	$\La$-invariant. Thus we could equivalently define $\La$-adic modular forms
	of this level and character as being $\La$-invariant $\LM(N_0;\CO_F)$-valued
	measures $\mu$ such that the specializations $\mu(x\mt x^k)$ lie in
	$M_k(N_0 p^r,\ch\om^{-k};\CO_F)$ for all but finitely many $k\ge 2$. 
\end{prop}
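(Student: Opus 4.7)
The plan is to verify the two claims about $\mu_\Bf$ in turn, and then observe that the converse direction (from an invariant measure back to a $\La$-adic form) is essentially formal given the $\La \cong \r{Meas}(\Ga, \CO_F)$ dictionary already established.

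First I would show that $\mu_\Bf$ is actually valued in the closed subspace $\LM(N_0;\CO_F) \subseteq \CO_F\bb{q}$. By definition, $\mu_\Bf(x \mt x^k) = P_k(\Bf) = \Bf_k$, and for all but finitely many $k \ge 2$ this lies in $M_k(N_0 p^r,\ch\om^{-k};\CO_F) \subseteq \LM(N_0;\CO_F)$. To conclude that the whole image of $\mu_\Bf$ lies in $\LM(N_0;\CO_F)$, I would invoke the density lemma stated just above. This requires checking that $\LM(N_0;\CO_F)$ is saturated inside $\CO_F\bb{q}$: if $h \in \CO_F\bb{q}$ satisfies $ph \in \LM(N_0;\CO_F)$, then dividing coefficient-wise in $F$ shows $h \in \LM(N_0;F) \cap \CO_F\bb{q}$, which equals $\LM(N_0;\CO_F)$ because the latter was defined via integral coefficients. (Equivalently, saturation follows because $\LM(N_0;\CO_F)$ is the intersection of its $F$-span with the integral power series ring.) With saturation in hand, the density lemma promotes $\mu_\Bf$ to a measure valued in $\LM(N_0;\CO_F)$.

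Next I would verify $\La$-invariance, meaning that $(\ga,\ga^{-1}) \cdot \mu_\Bf = \mu_\Bf$ for every $\ga \in \Ga$. The computation is essentially the one already sketched: evaluate the two sides on the continuous function $x \mt x^k$ for $k \gg 0$. On the left one obtains $\g{\ga^{-1}} \mu_\Bf(x \mt \ga^k x^k) = \ga^k \cdot \g{\ga^{-1}} \Bf_k$, and since $\Bf_k \in M_k(N_0 p^r,\ch\om^{-k};\CO_F)$ the diamond action satisfies $\g{\ga^{-1}}\Bf_k = \ga^{-k}\Bf_k$ (because $\ga \in (1+p)^{\Z_p}$ has $\Tom(\ga)=1$, so the definition $\g{z}f = z_p^k f|_k[\si_z]$ reduces to scaling by $\ga^{-k}$ on a form of weight $k$ with trivial character-twist contribution). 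Hence the left side equals $\Bf_k$, matching the right side. The functions $\{x \mt x^k : k \gg 0\}$ are dense in $C(\Ga,\CO_F)$ (this is a standard consequence of Mahler's theorem applied after a change of variables, since finitely many low $k$ cannot change the topological span), so two continuous $\LM(N_0;\CO_F)$-valued functionals agreeing on this set agree everywhere. I expect the main technical point of the proof to live right here: carefully justifying that the restricted collection $\{x \mt x^k\}_{k \gg 0}$ — not the full collection — is dense, so that finite exceptional loci in the definition of $\La$-adic forms cause no trouble.

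Finally, for the equivalence of definitions, I would go the other direction. Given a $\La$-invariant measure $\mu$ valued in $\LM(N_0;\CO_F)$ with the stated specialization property, the isomorphism $\La\bb{q} \cong \r{Meas}(\Ga, \CO_F\bb{q})$ produces a formal power series $\Bf \in \La\bb{q}$ with $\mu = \mu_\Bf$; the specializations $P_k(\Bf) = \mu(x \mt x^k)$ lie in $M_k(N_0p^r,\ch\om^{-k};\CO_F)$ for all but finitely many $k$ by hypothesis, so $\Bf$ is a $\La$-adic modular form in the original sense. The $\La$-invariance plays no role in identifying $\Bf$ but is automatic from the first half of the proof, so the two definitions yield exactly the same set of objects.
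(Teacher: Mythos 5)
Your proposal is correct and follows essentially the same route as the paper: apply the density lemma (with saturation of $\LM(N_0;\CO_F)$ coming from realizing it as $\LM(N_0;F)\cap\CO_F\bb{q}$) to land the measure in the space of $p$-adic modular forms, verify $(\ga,\ga\1)$-invariance on the functions $x\mt x^k$ for $k\gg 0$ using the tameness of $\ch\om^{-k}$ on $(1+p)^{\Z_p}$, and conclude by density of (the span of) these functions, with the converse being formal from the $\r{Meas}(\Ga,\CO_F\bb{q})\iso\La\bb{q}$ dictionary. The only place you go beyond the paper is in flagging and sketching why the restricted family $\{x\mt x^k\}_{k\gg 0}$ still suffices, which is a reasonable point to make explicit.
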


The space of $\La$-invariant measures still naturally has an action of $\La$
(coming from the quotient of $\La\x\La$ by the antidiagonal $\La$ where the
action is invariant); this is equivalently described by 
\[ (\ga\dt\mu)(x \mt f(x)) = \g{\ga} \mu(x\mt f(x)) = \mu(x\mt f(\ga x)). \]
This resulting $\La$-action on $\La$-invariant measures corresponds to the
natural $\La$-action on $\La$-adic modular forms coming from scalar
multiplication. 
\par
The point of view of measures makes it clear that if $\Bf$ is a $\La$-adic
form, all of the specializations $\Bf_k = P_k(\Bf) = \mu_\Bf(x \mt x^k)$
satisfy the appropriate transformation property to be $p$-adic modular forms of
weight $k$ and character $\ch\om^{-k}$. However they may not be classical
forms!
\par
Finally, we note that using measures makes it easy to define Hecke algebras for
$\La$-adic forms. In fact, the Hecke algebra $\DT(\LM(N_0;\CO_F))$ naturally
acts on $\BM(N_0 p^r,\ch;\La)$! We can define a pairing 
\[ \DT(\LM(N_0;\CO_F)) \x \BM(N_0 p^r,\ch;\La) \to \La \]
by mapping $(T,\Bf)$ to $T\Bf$ where $\mu_{T\Bf}$ is determined in terms of
$\mu_\Bf$ by $\mu_{T\Bf}(f) = T \dt \mu_\Bf(f)$. This is evidently a
$\La$-invariant pairing and thus induces a map $\DT(\LM(N_0 ;\CO_F)) \to
\End_\La(\BM(N_0 p^r,\ch;\La))$. We define the image of this map to be
$\DT(\BM(N_0 p^r,\ch;\La))$; it's generated by the operators $T(n)$ which we
can check act on the $q$-expansions in $\La\bb{q}$ by
\[ a(m,\Bf|T(n)) = \s_{d|(m,n), (d,N_0 p) = 1} \g{d}_\La d\1 a(mn/d^2,\Bf), \]
where now $\g{d}_\La$ is just treated as a scalar in $\La$. 

\subsection{$\CI$-adic modular forms} \label{sec:CIadicModularForms}

We now want to expand our discussion of $\La$-adic forms by allowing
coefficients to lie in certain extensions $\CI \ce \La$ (which will ultimately
be needed for what we want to construct). To do this, we start by setting up a
slightly more sophisticated notation for dealing with specializations of
$\La$-adic forms. Recall that if we fix a topological generator $\ga$ of $\Ga$,
homomorphisms $\La \to \CO_F$ are in bijection with elements of $\Fm_F$ by
having an element $x \in \Fm_F$ correspond to the unique homomorphism $\La \to
\CO_F$ given by $\ga \mt 1+x$. We set up some notation:

\begin{defn}
	We let $\CX(\La,\CO_F)$ denote the set of all homomorphisms $P : \La \to
	\CO_F$, which is naturally in bijection with $\Fm_F$ as above. Actually, any
	such point $P$ can be thought of in three different ways that we'll pass
	between freely: 
	\begin{itemize}
		\item As a $\CO_F$-linear homomorphism $\La \to \CO_F$, characterized by
			$\ga \mt x$ 
		\item As the kernel of such a homomorphism, which is a height-one prime
			ideal of $\La$. 
		\item As a generator of such a kernel, namely $\ga - x$ (where $x$ is the
			image of $\ga$ under the homomorphism). 
	\end{itemize}
\end{defn}

With this set up, we can define a distinguished subset of $\CX(\La,\CO_F)$.

\begin{defn}
	We define $\CX_\alg(\La,\CO_F)$ as the subset of $\CX(\La,\CO_F)$ consisting
	of all points $P_{k,\ep}$ specified by $P_{k,\ep}(\ga) = \ep(\ga) \dt
	(1+p)^k $ for $k\ge 2$ and $\ep : \Ga \to \CO_F^\x$ a finite-order
	character. Given such a point $P = P_{k,\ep}$, we write $k(P) = k$, $\ep_P =
	\ep$, and $r(P) = r$ where $r$ is the conductor of $\ep$ (i.e. the kernel of
	$\ep$ is $\ga^{p^{r-1}} \Ga$). 
\end{defn}

Our definition of $\La$-adic modular forms only mentioned the specializations
at $P_k = P_{k,\ep_0}$, for the trivial character $\ep_0$. The $\La$-invariance
of the associated measure tells us that specializations at other points
$P_{k,\ep}$ would have the appropriate transformation property to be a $p$-adic
modular form of the weight and character we'd expect, but it wouldn't let us
conclude that these forms are classical. One could give a similar definition
requiring classical behavior at some larger subset of the algebraic points;
this more restrictive definition would give subspace of $\La$-adic forms, and
one can analyze its relation to the original space. However, this point is not
particularly important for out purposes, so we will continue working with our
original definition (only requiring classicality at all but finitely many of
the points $P_k$). Moreover, for \E{ordinary} forms the two definitions are
known to agree.
\par
Next, we consider enlarging our base ring $\La$. One way is to expand from
$\La_F = \CO_F\bb{\Ga}$ to $\La_L = \CO_L\bb{\Ga}$ for $L/F$ a finite
extension; this is not particularly interesting since everything we've done
before works just as well over a different base field from $F$. More
interesting is considering other sorts of extensions of $\La$; the most general
case one could reasonably work with would be to take finite flat extensions
$\CI/\La$. For our purposes, it's sufficient to consider extensions $\CI =
\CO_F\bb{\Ga'}$ where $\Ga'$ is a group containing $\Ga$ with finite index;
throughout this paper we consider only extensions $\CI$ of this type.

\begin{defn}
	We define $\CX_\alg(\CI;\CO_F)$ as the set of points $P \in
	\CX(\CI;\CO_F) = \Hom(\CI,\CO_F)$ such that the restriction $P|_\La$ lies in
	$\CX(\La,\CO_F)$. We often abuse notation and let $P_{k,\ep}$ denote any
	point of $\CX(\CI;\CO_F)$ lying over the point $P_{k,\ep}$ in
	$\CX_\alg(\La,\CO_F)$. 
\end{defn}

For the type of extensions $\CI$ we're considering, if the field $F$ is large
enough (e.g. contains all $e$-th roots of unity and $e$-th roots of $(1+p)$,
where $e$ is the exponent of the finite abelian group $\Ga'/\Ga$), then there
are $[\Ga':\Ga]$ points in $\CX_\alg(\CI;\CO_F)$ lying over each point
$P_{k,\ep}$, which differ from each other by the characters of $\Ga/\Ga'$.
Therefore, such extensions satisfy conditions (3.1a) and (3.1b) of
\cite{Hida1988b}, so the results of that paper apply directly to our context.

\begin{defn}
	A \E{$\CI$-adic modular form} of level $N_0 p^r$ (for $p\nd N_0$ and $r\ge
	1$) and tame character $\ch$ (a Dirichlet character modulo $N_0 p^r$) is a
	formal power series $\Bf = \s A_n q^n \in \CI\bb{q}$ such that, for all but
	finitely many $k \ge 1$, the following is satisfied: 
	\begin{itemize}
		\item For any point $P'_k \in \CX_\alg(\CI,\CO_F)$ lying over $P_k \in
			\CX_\alg(\La,\CO_F)$, the formal power series $P_k(\Bf) = \s P_k(A_n)
			q^n$ is in fact a classical modular form lying in the space
			$M_k(N_0 p^r,\ch\om^{-k};\CO_F)$. 
	\end{itemize}
	If all but finitely many $\Bf_k$'s are actually cusp forms, we say $\Bf$ is
	a $\CI$-adic cusp form. We let $\BM(N_0 p^r,\ch;\CI)$ denote the set of all
	$\CI$-adic modular forms of level $N_0 p^r$ and character $\ch$, and
	$\BS(N_0 p^r,\ch;\CI)$ the set of $\CI$-adic cusp forms.
\end{defn}

By similar arguments as in the previous section, we could equivalently view
$\CI$-adic modular forms as $\CI$-invariant $\LM(N_0;\CO_F)$-valued measures on
$\Ga'$ (such that all but finitely many of the specializations $\mu_\Bf(P_k)$
are classical of the appropriate weight and character). Also as in the last
section, we can define a Hecke algebra $\DT(\BM(N_0 p^r,\ch;\CI))$ arising from
the action of $\DT(\LM(N_0;\CO_F))$ on measures, or equivalently from the
formula for $T(n)$ on formal $q$-expansions written there.

\subsection{Ordinary $\CI$-adic newforms}

Hida's theory of $p$-adic modular forms and their families largely focuses on
\E{ordinary} forms. If $f$ is a classical modular form which is an eigenform of
the $T(p)$ operator with eigenvalue $\la_p$, we say $f$ is \E{ordinary} (or
\E{$p$-ordinary} to emphasize the prime) if $\la_p$ is a $p$-adic unit. We work
with this idea by using the \E{ordinary idempotent} operator.

\begin{defn}
	If $\DT = \DT(M)$ is the Hecke algebra associated to a space of modular
	forms $M$ over $\CO_F$ (for $F$ a $p$-adic field), we define its \E{ordinary
	idempotent} $e$ as the unique idempotent $e \in \DT$ such that $e T(p)$ is a
	unit in $e \DT$ and $(1-e) T(p)$ is topologically nilpotent in $(1-e) T(p)$.
	We define the ordinary Hecke algebra $\DT^\ord$ to be the direct factor
	$e\DT$, and the ordinary subspace $M^\ord$ of $M$ to be the image $e[M]$. 
\end{defn}

For classical spaces $M_k(N_0 p^r,\ch;\CO_F)$, one can construct this $e$ by
noting that the Hecke algebra is finite-dimensional over $F$ and thus
decomposes as a finite product of local rings; thus $e$ can just be the
projection onto those local rings in which $T(p)$ acts as a unit. By taking
inverse limits we can obtain an $e$ for $\LM(N_0;\CO_F)$ and then we can
further get one for $\BM(N_0 p^r,\ch;\CI)$ by using the surjection of Hecke
algebras we have. Alternatively, we can define $e = \lim_{n\to\oo} T(p)^{n!}$
(interpreted in an appropriate way in each of our contexts). However we define
it, we note that the $e$'s are compatible between inclusions of different
spaces of $p$-adic modular forms, and commute with specializations of
$\CI$-adic forms (i.e. $P(e\Bf) = e \dt P(\Bf)$). Also, all of these statements
are the same cusp forms in place of holomorphic modular forms. 
\par
An important type of ordinary $\CI$-adic modular form is one such that the
specializations are classical newforms; we quote Hida's results on such forms
from \cite{Hida1988b} (translated into our setup). Naively, we might want to
say $\Bf$ is a $\CI$-adic newform if all of the specializations
$P_{k,\ep}(\Bf)$ are actual newforms. This is sufficient for the case where all
of the newforms truly do have $p$ dividing their level; however, we will be
interested in $p$-adic families related to newforms of a level $N_0$ prime
to $p$. Here there's evidently a problem: a newform of level $N_0$ is an
eigenform of the $T(p)$ operator for such a prime-to-$p$ level, which differ
from the $T(p)$ operators on $\LM(N_0;\CO_F)$ induced from $M_k(N_0
p^r;\CO_K)$. However, if we have a $p$-ordinary eigenform of level $N_0$, it
turns out there's a canonical way to associate to it a form of level $N_0 p$
that's an eigenform for the Hecke algebra of that level.

\begin{lem} \label{lem:StabilizationOfNewforms}
	Suppose $p\nd N_0$ and $f \in S_k(N_0,\ch)$ is a $p$-ordinary eigenform of
	the prime-to-$p$ Hecke operator $T(p) \in \DT(S_k(N_0,\ch))$, of weight
	$k\ge 2$. Then the polynomial 
	\[ x^2 - a_p(f) x + p^{k-1} \ch(p) \]
	has roots $\al$ and $\be$ with $|\al|_p = 1$ and $|\be|_p < 1$,
	respectively, and the space $U(f) = \C f(z) \op \C f(pz) \se S_k(N_0 p,\ch)$
	contains two forms 
	\[ f^\sh(z) = f(z) - \be f(pz) \qq\qq f^\ft(z) = f(z) - \al f(pz) \]
	which are eigenforms of the $T(p) \in \DT(S_k(N_0 p,\ch))$ with eigenvalues
	$\al$ and $\be$, respectively. 
\end{lem}
In particular, the form $f^\sh$ is $p$-ordinary; we call it the
\E{$p$-stabilization} of the $p$-ordinary eigenform $f$.
\begin{proof}
	By definition of $f$ being an eigenform of the operator $T(p)$ of level 
	$N_0$, we have 
	\[ a_p f(z) = (T(p)f)(z) 
			= p^{k-1} \ch(p) f(pz) + \f{1}{p} \s_{i=0}^{p-1} f\pf{z+i}{p}. \]
	We want to solve for what constants $\ga$ the linear combination $g(z) =
	f(z) - \ga f(pz)$ is an eigenform of the Hecke operator $T(p)$ of level $N_0
	p$ given by $(T(p)g)(z) = \f{1}{p} \s_{i=0}^{p-1} g\pf{z+i}{p}$. Thus we
	want to solve for $\la,\ga$ such that 
	\[ \la \6( f(z) - \al f(pz) \6) 
			= \f{1}{p} \s_{i=0}^{p-1} f\pf{z+i}{p} 
				- \ga \f{1}{p} \s_{i=0}^{p-1} f\b( p\f{z+i}{p} \e). \]
	Note that $f(p(z+b)/p) = f(z+i) = f(z)$, so the latter sum just reduces to
	$p f(z)$. Meanwhile, our original formula resulting from $f$ being an
	eigenform tells us that the former sum is equal to $a_p f(z) - p^{k-1}
	\ch(p) f(pz)$. Thus we conclude $f(z)-\ga f(pz)$ has eigenvalue $\la$ iff
	we have the equality
	\[ \la f(z) - \la \ga f(pz) 
			= a_p f(z) - p^{k-1} \ch(p) f(pz) - \ga f(z). \]
	Since $f(z)$ and $f(pz)$ are linearly independent as functions $\DH \to \C$,
	this holds iff $\la,\al$ satisfy the equations $\la = a_p - \ga$ and $\la\ga
	= p^{k-1} \ch(p)$. The solutions are exactly $\ga$ satisfying $\ga^2 - a_p
	\ga + p^{k-1}\ch(p) = 0$. This quadratic equation in $\ga$ has two
	solutions, and we know one must not be a $p$-adic unit (since the product
	$p^{k-1}\ch(p)$ isn't) but the other one must be (since the sum $a_p$ is);
	let $\be$ denote the non-unit root and $\al$ denote the unit root. Then we
	find that the forms $f^\sh$ and $f^\ft$ we've defined are eigenforms with
	eigenvalues $\al$ and $\be$, respectively, and these are the only possible
	ones (up to scalars) since we've found two distinct eigenvalues for a
	two-dimensional space, as desired.
\end{proof}

We can then quote Hida's characterization of $\CI$-adic newforms, translated
into our setup. 

\begin{thm} \label{thm:LambdaAdicNewform}
	For an ordinary $\CI$-adic eigenform $\Bf \in \BS^\ord(N_0 p,\ch;\CI)$, the
	following are equivalent:
	\begin{itemize}
		\item $P_{k,\ep}(\Bf)$ is a newform of level $N_0 p^{r(P)}$ for any
			element $P_{k,\ep} \in \CX_\alg(\CI;\CO_F)$ such that the $p$-part of
			$\ep_P \ch \om^{-k(P)}$ is nontrivial.
		\item $P_{k,\ep}(\Bf)$ is a newform of level $N_0 p^{r(P)}$ for every
			element $P_{k,\ep} \in \CX_\alg(\CI;\CO_F)$ such that the $p$-part of
			$\ep_P \ch \om^{-k(P)}$ is nontrivial.
	\end{itemize}
	Moreover, if these conditions are satisfied and $P$ is a point such that
	$\ep_P \ch \om^{-k(P)}$ is trivial (which forces $r(P) = 1$), then either
	$P(\Bf)$ is actually a newform and $k(P) = 2$, or $P(\Bf) = f^\sh$ is the
	$p$-stabilization of a $p$-ordinary newform $f$ of level $N_0$. Such a
	$\CI$-adic newform induces a $\CI$-algebra homomorphism 
	\[ \la_\Bf : \DT(\BS^\ord(N_0 p,\ch;\CI)) \to \CI \]
	given by $T \mt a(1,f|T)$. 
\end{thm}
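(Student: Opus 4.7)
The plan is to reduce the equivalence to a new/old decomposition of the ordinary Hida Hecke algebra, and to handle the boundary points via the classical $p$-stabilization dichotomy of Lemma \ref{lem:StabilizationOfNewforms}. Throughout write $\la_\Bf : \DT(\BS^\ord(N_0 p,\ch;\CI)) \to \CI$ for the $\CI$-algebra homomorphism determined by $T \mt a(1,\Bf|T)$; by construction, specialization at $P \in \CX_\alg(\CI;\CO_F)$ yields the Hecke eigenvalue system of the classical $p$-ordinary eigenform $P(\Bf) \in S_{k(P)}(N_0 p^{r(P)}, \ch\ep_P\om^{-k(P)})$.

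First I would establish the equivalence of the two bullet points. When the $p$-part of $\ch\ep_P\om^{-k(P)}$ is nontrivial, an eigenform in $S_{k(P)}(N_0 p^{r(P)},\ch\ep_P\om^{-k(P)})$ is automatically $p$-new (its nebentype at $p$ already has conductor $p^{r(P)}$, so it cannot descend to a smaller $p$-power level), so ``newform of level $N_0 p^{r(P)}$'' reduces to $N_0$-newness. The $N_0$-new part of the ordinary Hecke algebra is cut out by an idempotent $e_{\mathrm{new}} \in \DT^\ord$ constructed from Atkin-Lehner projectors at primes $q | N_0$, which lifts to a $\CI$-adic idempotent by Hida's control theorems. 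Applying $\la_\Bf$ yields an idempotent $\la_\Bf(e_{\mathrm{new}}) \in \CI$; since $\CI$ is an integral domain, this idempotent is either $0$ or $1$, and these two cases correspond respectively to ``$P(\Bf)$ is $N_0$-old at every such $P$'' and ``$P(\Bf)$ is $N_0$-new (hence a newform of level $N_0 p^{r(P)}$) at every such $P$''. This gives the equivalence.

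Next I would address the boundary points $P$ where $\ch\ep_P\om^{-k(P)}$ has trivial $p$-part (which forces $r(P) = 1$, since a nontrivial $\ep_P$ of $p$-power conductor cannot be cancelled by $\ch$ or $\om^{-k(P)}$), working under the assumption $\la_\Bf(e_{\mathrm{new}}) = 1$. Here $P(\Bf) \in S_{k(P)}(N_0 p, \ch)$ is $N_0$-new and $p$-ordinary. Either $P(\Bf)$ is genuinely $p$-new (and hence already a newform of level $N_0 p$), or it arises by $p$-stabilization from a $p$-ordinary newform $f$ of level $N_0$, and the required ordinarity of $P(\Bf)$ forces the stabilization to be the unit-root stabilization $f^\sh$ of Lemma \ref{lem:StabilizationOfNewforms}. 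In the first case, local representation theory at $p$ forces the local component of $P(\Bf)$ at $p$ to be a twist of the Steinberg representation, so $v_p(a_p) = (k(P)-2)/2$; $p$-ordinarity then forces $k(P) = 2$, yielding the trichotomy in the theorem.

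The final clause that $\la_\Bf$ is a $\CI$-algebra homomorphism records what we have been using throughout: for a normalized eigenform $\Bf$, applying $T$ scales by $\la_\Bf(T)$, and extracting the $q^1$-coefficient recovers $\la_\Bf(T)$, so multiplicativity follows from the commutativity of the Hecke algebra. The main obstacle is constructing the $\CI$-adic idempotent $e_{\mathrm{new}}$ and verifying its compatibility with specializations at every arithmetic point; this is the content of the $\La$-adic ``control'' portion of Hida theory \cite{Hida1988b}, which in particular requires knowing that the ordinary Hecke algebra on $\BS^\ord(N_0 p,\ch;\CI)$ is finite and flat over $\CI$ so that the classical new/old decomposition lifts coherently across specializations.
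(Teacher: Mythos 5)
The paper does not prove this theorem at all: it is explicitly ``quoted'' from Hida \cite{Hida1988b} after translation into the paper's conventions, so there is no internal argument to compare against. Judged on its own terms, your sketch gets the peripheral pieces right --- the reduction of ``newform of level $N_0p^{r(P)}$'' to tame ($N_0$-)newness when the $p$-part of the nebentype is nontrivial, the boundary-point trichotomy (a $p$-new form with trivial nebentype at $p$ and level exactly $p$ is an unramified twist of Steinberg at $p$, so $v_p(a_p)=k/2-1$ and ordinarity forces $k=2$; otherwise one lands in $\C f(z)\oplus\C f(pz)$ and the unit $U_p$-eigenvalue singles out $f^\sh$ as in Lemma \ref{lem:StabilizationOfNewforms}), and the fact that $\la_\Bf$ is an algebra homomorphism.

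The gap is that the entire content of the equivalence is concentrated in the idempotent $e_{\mathrm{new}}$, which you assert ``lifts to a $\CI$-adic idempotent by Hida's control theorems.'' The control theorems govern specialization of the full ordinary Hecke algebra and its module of forms; they say nothing about a new/old splitting. Over $\CI$ the tame-new part is a priori only a \emph{quotient} of $\DT^\ord$. One does get an idempotent after tensoring with the total fraction ring $Q(\CI)$ (a product of fields), but then (a) $\la_\Bf(e_{\mathrm{new}})$ is only known to lie in $Q(\CI)$, so the ``an idempotent in a domain is $0$ or $1$'' step does not apply directly, and (b) such a non-integral idempotent need not commute with specialization at arithmetic points lying in the support of the congruence module --- which is exactly where a family could conceivably switch between new and old. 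So the dichotomy ``one specialization new $\Rightarrow$ all specializations new'' is precisely what remains to be proved. Hida's actual argument runs the other way: if some $P(\Bf)$ is tame-old, it is attached to a newform of strictly smaller tame level, which sits in its own ordinary family $\Bg$; the $\CI$-adic oldforms built from $\Bg$ span a $\CI$-submodule of $\BS^\ord(N_0p,\ch;\CI)$ whose eigensystems are the degenerate transfers of $\la_\Bg$, and the congruence $\la_\Bf\equiv\la_\Bg\pmod{P}$ at a single arithmetic height-one prime, combined with multiplicity one over $Q(\CI)$ and the finite flatness of $\DT^\ord$ over $\La$, forces $\la_\Bf$ to coincide with such a transfer identically, making every specialization old. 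If you want a self-contained proof rather than a citation, that is the step you must supply.
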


Given this data, we say that $\Bf$ is a \E{$\CI$-adic newform} of level $N_0$
and character $\ch$. (We say that the level is $N_0$, even if we have
that $\Bf$ is an element of $\BS^\ord(N_0 p,\ch;\CI)$, to emphasize that some
of the specializations actually arise from newforms of level $N_0$). 

\subsection{Modules of congruence} \label{sec:ModuleOfCongruence}

Next, we recall the concept of the \E{module of congruence} of a newform (both
classical and $\CI$-adic), translating the results of Chapter 4 of
\cite{Hida1988b}. We start off with the classical case. Suppose that $f \in
S_k^\ord(N_0 p^r,\ch;\CO_F)$ is a $p$-stabilized newform (i.e. a newform of
level divisible by $p$, or the $p$-stabilization of a newform of level not
divisible by $p$). The classical theory of newforms lets us recover: 

\begin{prop} \label{prop:IdempotentClassicalNewform}
	Given a $p$-stabilized newform $f \in S_k^\ord(N_0 p^r,\ch;\CO_F)$ as above,
	let $\la_f : \DT^\ord(N_0 p^r,\ch;\CO_F) \to \CO_F$ be the associated algebra
	homomorphism. Then we have a $F$-algebra decomposition 
	\[ \DT^\ord_k(N_0 p^r,\ch;\CO_F)\ox F = \DT(f)_F \op \DT(f)^\pr_F \]
	where $\DT(f)_F^\pr$ is the kernel of $\la_f\ox F$, the other direct factor
	satisfies $\DT(f)_F\iso F$, and the projection $\DT^\ord(N_0 p^r,\ch;\CO_F)
	\to \DT(f)$ corresponds to $\la_f\ox F$ under this isomorphism. 
\end{prop}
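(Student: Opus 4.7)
The plan is to view $\mathbb{T}^{\mathrm{ord}}_k(N_0 p^r,\chi;F) := \mathbb{T}^{\mathrm{ord}}_k(N_0 p^r,\chi;\mathcal{O}_F)\otimes F$ as a finite-dimensional commutative $F$-algebra acting faithfully on the finite-dimensional $F$-vector space $S_k^{\mathrm{ord}}(N_0 p^r,\chi;F)$, and to exploit semisimplicity of this action together with multiplicity one for $p$-stabilized newforms.

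First I would note that $\mathbb{T}^{\mathrm{ord}}_k(N_0 p^r,\chi;F)$ is Artinian and commutative, hence a finite product of local Artinian $F$-algebras. Over a splitting field $\overline{F}$, the usual fact that all Hecke operators $T(n)$ (for $n$ coprime to $N_0 p$, together with the $U(p)$ on the ordinary part) act diagonalizably on the space of cusp forms shows that $\mathbb{T}^{\mathrm{ord}}_k(N_0 p^r,\chi;\overline{F})$ is actually a product of copies of $\overline{F}$, one for each system of Hecke eigenvalues appearing in $S_k^{\mathrm{ord}}(N_0 p^r,\chi;\overline{F})$. In particular $\mathbb{T}^{\mathrm{ord}}_k(N_0 p^r,\chi;F)$ is reduced and semisimple, and so decomposes as a product of fields $\prod_i K_i$, each $K_i$ a finite extension of $F$.

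Next I would identify the factor corresponding to $f$. The algebra map $\lambda_f \otimes F$ factors through projection onto some single factor $K_{i_0}$; call this factor $\mathbb{T}(f)_F$ and let $\mathbb{T}(f)_F^{\mathrm{pr}}$ denote the product of the remaining $K_i$. Since the decomposition is a product of fields, $\mathbb{T}(f)_F^{\mathrm{pr}}$ is exactly $\ker(\lambda_f\otimes F)$. To finish, I must show $\mathbb{T}(f)_F = F$. The key input is multiplicity one for $p$-stabilized newforms: any form in $S_k^{\mathrm{ord}}(N_0 p^r,\chi;\overline{F})$ with the same eigenvalues as $f$ must be a scalar multiple of $f$ (this is Atkin--Lehner for newforms of level $N_0 p^r$ with nontrivial $p$-part, and comes from the linear independence of $f(z)$ and $f(pz)$ together with the choice of $U(p)$-eigenvalue in the $p$-stabilized case). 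This forces the residue field $\mathbb{T}(f)_F = K_{i_0}$ to be generated over $F$ by the Hecke eigenvalues of $f$; since by hypothesis $f \in S_k^{\mathrm{ord}}(N_0 p^r,\chi;\mathcal{O}_F)$ its eigenvalues already lie in $F$, so $K_{i_0} = F$.

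The main obstacle is really just invoking multiplicity one in the correct form for $p$-stabilized newforms, since the Hecke algebra in question is the one of level $N_0 p^r$ (not just $N_0$) and one needs to be sure that the $U(p)$-eigenvalue distinguishes $f^\sharp$ from $f^\flat$ in the two-dimensional oldform space from Lemma \ref{lem:StabilizationOfNewforms}. Given that, the rest of the argument is formal: the decomposition of a product of fields into a chosen factor and its complement is automatic, and the final claim that the projection corresponds to $\lambda_f\otimes F$ under the isomorphism $\mathbb{T}(f)_F\cong F$ is just the definition of $\mathbb{T}(f)_F$ as the image of that projection.
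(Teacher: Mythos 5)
The paper itself gives no proof of this proposition; it is stated as a recollection from the classical theory of newforms (following Chapter 4 of Hida's paper), so there is no argument of the paper's to compare yours against step by step. Your sketch is the standard argument and is essentially correct: decompose the Artinian algebra $\DT^\ord_k(N_0p^r,\ch;\CO_F)\ox F$ into local factors, note that $\la_f\ox F$ factors through exactly one of them, and use multiplicity one together with the $F$-rationality of the eigenvalues of $f$ to identify that factor with $F$; the complement is then automatically the kernel.

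One caveat worth flagging. Your claim that $\DT^\ord_k(N_0p^r,\ch;\overline{F})$ is a product of copies of $\overline{F}$ is stronger than what is known: the algebra is generated by \emph{all} $T(n)$, including $U(q)$ for primes $q\mid N_0$, and $U(q)$ is not known to act semisimply on $q$-old forms in general (the ordinary projector only controls $U(p)$). This global semisimplicity is, however, not needed. The proposition only requires that the single local factor through which $\la_f$ factors be one-dimensional over $F$, which by the duality between the Hecke algebra and the cusp forms is the statement that the \emph{generalized} eigenspace attached to $\la_f$ is one-dimensional --- slightly stronger than the eigenspace multiplicity one you invoke. That stronger form does hold here: the prime-to-$N_0p$ operators are normal for the Petersson product, so their generalized eigenvectors are genuine eigenvectors, and Atkin--Lehner theory confines everything to the span of the translates $h(p^iz)$ of the underlying newform $h$ (whose prime-to-$p$ conductor is exactly $N_0$, so there is no $q$-oldness for $q\mid N_0$ to worry about); on that span the ordinary projector cuts out the one-dimensional $\al_h$-generalized eigenspace of $U(p)$, since $\al_h$ is a simple root of the characteristic polynomial. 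With that adjustment your proof is complete.
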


The multiplicative identity in $\DT(f)_F$ is an idempotent in
$\DT^\ord_k(N_0 p^r,\ch;F)$ that we denote $1_f$. We also define: 

\begin{defn}
	Let $f$ be a $p$-stabilized newform as above. Let $\DT(f)_\CO$ and
	$\DT(\Bf)^\pr_\CO$ be the projections of $\DT^\ord_k(N_0 p^r,\ch;\CO_F)$ 
	onto $\DT(f)_F$ and $\DT(f)_F^\pr$, respectively. Then define the \E{module
	of congruences} for $f$ as the quotient $\CO_F$-module 
	\[ C(f) = \f{ \DT(f)_\CO \op \DT(f)^\pr_\CO } 
						{ \DT^\ord_k(N_0 p^r,\ch;\CO_F) }. \]
\end{defn}

We can check that $C(f) \iso \CO_F/H_f\CO_F$ for some element $H_f \in \CO_F$
(unique up to units), which we call the \E{congruence number} of $f$.
\par
Next, we carry out the same process for $\CI$-adic forms. Suppose $\Bf$ is an
ordinary $\CI$-adic newform of level $N_0$ and character $\ch$; for any level
$N_0 p^r$ we can obtain an associated algebra homomorphism $\la_\Bf :
\DT^\ord(N_0 p^r,\ch;\CI) \to \CI$. Let $Q(\CI)$ be the quotient field of
$\CI$, and by abuse of notation also let $\la_\Bf$ denote the extended
homomorphism $\DT^\ord(N_0 p,\ch;\CI)\ox_\CI Q(\CI) \to Q(\CI)$. Hida proves a
direct sum decomposition of this algebra (really of $\DT(\BS^\ord(N_0;\CO_F))
\ox_\La Q(\CI)$, but it descends to the one we want): 

\begin{thm}
	Given a $\CI$-adic newform $\Bf$ and the associated homomorphism $\la_\Bf$
	as above, and for any $r$, we have a $Q(\CI)$-algebra decomposition 
	\[ \DT^\ord(N_0 p^r,\ch;\CI)\ox Q(\CI) 
			= \DT(\Bf)_Q \op \DT(\Bf)^\pr_Q \]
	where $\DT(\Bf)_Q^\pr$ is the kernel of $\la_\Bf$, the other direct factor
	satisfies $\DT(\Bf)_Q\iso Q(\CI)$, and the projection
	$\DT^\ord(N_0 p,\ch;\CI)\ox Q(\CI) \to \DT(\Bf)_Q$ corresponds to $\la_\Bf$
	under this isomorphism. 
\end{thm}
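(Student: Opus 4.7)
The plan is to decompose $\DT\ox Q(\CI)$ as a finite product of fields by showing it is generically étale over $Q(\CI)$, and then identify the factor picked out by $\la_\Bf$ as $Q(\CI)$ itself, in close parallel with the classical case of Proposition \ref{prop:IdempotentClassicalNewform}.

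First I would note that by Hida's control theorem for the ordinary part, $\DT := \DT^\ord(N_0 p^r,\ch;\CI)$ is a finite $\CI$-module, so $\DT\ox_\CI Q(\CI)$ is a finite-dimensional commutative $Q(\CI)$-algebra, and in particular Artinian. Since $\Bf$ is normalized so that $a(1,\Bf)=1$, the map $\la_\Bf$ sends the identity of $\DT$ to $1\in\CI$; being $\CI$-linear, it is then automatically surjective. Tensoring with $Q(\CI)$ yields a surjective $Q(\CI)$-algebra map $\la_\Bf^Q : \DT\ox Q(\CI) \twoheadrightarrow Q(\CI)$ whose kernel $\DT(\Bf)_Q^\pr$ is a maximal ideal. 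Once we know $\DT\ox Q(\CI)$ is reduced, Artinianness together with this surjection onto the field $Q(\CI)$ forces a product decomposition $\DT\ox Q(\CI) = Q(\CI)\x A$ in which $\la_\Bf^Q$ is projection onto the first factor; this gives exactly the asserted decomposition with $\DT(\Bf)_Q \iso Q(\CI)$ and $\DT(\Bf)_Q^\pr = A$.

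The crux is therefore to prove that $\DT\ox Q(\CI)$ is reduced, and the strategy is to leverage the dense collection of arithmetic specializations. By Theorem \ref{thm:LambdaAdicNewform}, for all but finitely many $P\in\CX_\alg(\CI;\CO_F)$ the specialization $P(\Bf)$ is a classical newform of level $N_0 p^{r(P)}$; at each such $P$ the classical ordinary Hecke algebra tensored with $F$ is semisimple (this is the multiplicity-one phenomenon underlying Proposition \ref{prop:IdempotentClassicalNewform}). Hence the specialization $\DT\ox_\CI (\CI/\ker P)\ox F$ is reduced for a Zariski-dense set of $P$, and each such $P$ lies in the (open) étale locus of the finite map $\Spec\DT\to\Spec\CI$.

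The main obstacle will be converting pointwise reducedness at a dense set of arithmetic points into reducedness of the full generic fibre. The cleanest way to carry this out is to observe that the non-étale locus of a finite morphism is closed in $\Spec\CI$; since it misses the Zariski-dense set of arithmetic points established above, it must be a proper closed subscheme, so in particular it does not contain the generic point. Consequently $\DT\ox Q(\CI)$ is étale (hence reduced) over $Q(\CI)$, and decomposes as a finite product of finite separable field extensions of $Q(\CI)$. The factor through which $\la_\Bf^Q$ factors admits both a unit map from $Q(\CI)$ and a surjection back onto $Q(\CI)$, so it equals $Q(\CI)$; assembling the complementary factors into $A = \DT(\Bf)_Q^\pr$ yields the theorem. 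A more hands-on alternative, avoiding explicit geometric language, would be to apply a rigidity argument in the spirit of Lemma \ref{lem:EqualityOfSpecializations} directly to the nilradical of $\DT$, but one must then take some care that the nilradical is torsion-free enough over $\CI$ for the density implication to apply.
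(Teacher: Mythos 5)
The paper does not give a proof of this statement at all: it is quoted from Chapter 4 of \cite{Hida1988b} (``Hida proves a direct sum decomposition of this algebra\dots''), so you are supplying an argument where the paper supplies only a citation. Your overall architecture is the right one: $\DT\ox Q(\CI)$ is a finite-dimensional, hence Artinian, $Q(\CI)$-algebra by Hida's finiteness theorem; $\la_\Bf\ox Q(\CI)$ is surjective because $\la_\Bf(1)=a(1,\Bf)=1$; and the asserted decomposition follows formally from the structure theory of Artinian rings once one knows that the local factor at the maximal ideal $\ker(\la_\Bf\ox Q(\CI))$ is exactly $Q(\CI)$.

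The gap is in how you establish that last point. You claim that the full classical ordinary Hecke algebra $\DT^\ord_k(N_0p^{r(P)},\ch\om^{-k};\CO_F)\ox F$ is semisimple at a dense set of arithmetic $P$, citing ``the multiplicity-one phenomenon underlying Proposition \ref{prop:IdempotentClassicalNewform}.'' Multiplicity one only gives semisimplicity of the local factor attached to a newform of full tame level $N_0$ (or its $p$-stabilization); it says nothing about the components that are old at a tame prime $\ell\mid N_0$. On such a component $T(\ell)$ acts on the two-dimensional oldspace $\C g(z)\op\C g(\ell z)$ with characteristic polynomial $x^2-a_\ell x+\ell^{k-1}\ch_g(\ell)$, and semisimplicity fails precisely when this has a double root; excluding that (even at a single arithmetic point, let alone showing the degeneracy locus is not an entire irreducible component of the family) is a Coleman--Edixhoven-type problem that your argument does not address. (For $\ell=p$ ordinarity saves you, since the unit and non-unit roots are distinct; for $\ell\mid N_0$ it does not.) So global reducedness of $\DT\ox Q(\CI)$ is both unproven here and strictly more than the theorem requires: the complementary factor $\DT(\Bf)^\pr_Q$ is simply defined to be $\ker\la_\Bf$ and need not be reduced.

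The repair is to run your \'etale-locus argument locally at the component of $\Bf$ instead of globally. Let $\Fq=\ker\la_\Bf\se\DT$; since $\la_\Bf$ is $\CI$-linear with $\la_\Bf(1)=1$, $\Fq$ lies over $(0)\se\CI$, and the arithmetic primes $\ker(P\o\la_\Bf)$ are dense in $V(\Fq)\se\mathrm{Spec}(\DT)$ because $\bigcap_P\ker P=(0)$ in $\CI$. At each such prime, Proposition \ref{prop:IdempotentClassicalNewform} says exactly that the localization of the fibre ring is $F=\kappa(P)$, i.e.\ the finite map $\mathrm{Spec}(\DT)\to\mathrm{Spec}(\CI)$ is unramified \E{at that point}. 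The ramified locus (the support of $\Om_{\DT/\CI}$) is closed and misses a dense subset of the irreducible set $V(\Fq)$, hence misses its generic point $\Fq$; therefore $(\DT\ox Q(\CI))_{\Fq}$ is a finite separable field extension of $Q(\CI)$, and since it surjects onto $Q(\CI)$ via $\la_\Bf$ it equals $Q(\CI)$. This localized version is correct and yields the theorem. For comparison, Hida's own proof reaches the same one-dimensionality statement not through the ramification locus but through the perfect duality $\DT^\ord\iso\Hom(\BS^\ord,\La)$ combined with the one-dimensionality of the classical simultaneous eigenspaces furnished by newform theory and $p$-stabilization.
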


As before, we let $1_\Bf$ denote the idempotent of $\DT(\Bf)_F$, and we also
let $\DT(\Bf)_\CI$ and $\DT(\Bf)_\CI^\pr$ denote the images of
$\DT^\ord(N_0 p,\ch;\CI)$ under the projections to $\DT(\Bf)_Q$ and
$\DT(\Bf)^\pr_Q$, respectively. Furthermore, we can check that these
definitions process are compatible with specialization: 

\begin{prop}
	Suppose that $\Bf$ is a $\CI$-adic newform as above, and that we fix a point
	$P \in \CX(\CI;\CO_F)_\alg$. Then the inclusion 
	\[ \DT^\ord(N_0 p^r,\ch;\CI) \into \DT(\Bf)_\CI \op \DT(\Bf)_\CI^\pr \]
	induces an isomorphism when we localize at the prime ideal generated by $P$
	which, when we take a quotient by $P$, passes to the decomposition
	associated to $\Bf_P$ by Proposition \ref{prop:IdempotentClassicalNewform}.
	Thus $1_\Bf$ projects to $1_{\Bf_P}$ under the surjection from
	$\DT^\ord(N_0 p^r,\ch;\CI)$ to the appropriate Hecke algebra for $\Bf_P$. 
\end{prop}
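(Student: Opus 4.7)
The plan is to deduce the proposition from its classical counterpart, Proposition \ref{prop:IdempotentClassicalNewform}, by a Nakayama-style descent from the $\CI$-adic setting to the classical setting at the arithmetic point $P$.

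First I would observe that $\iota : \DT^\ord(N_0 p^r, \ch; \CI) \into \DT(\Bf)_\CI \op \DT(\Bf)^\pr_\CI$ is injective by construction and becomes an isomorphism after tensoring with $Q(\CI)$ by the preceding theorem, so its cokernel is a finitely generated torsion $\CI$-module (using that $\DT^\ord(N_0 p^r,\ch;\CI)$ is finitely generated over $\CI$). Localizing at the height-one prime $P$ produces modules over the discrete valuation ring $\CI_P$, and Nakayama's lemma reduces the vanishing of $\mathrm{coker}(\iota)_P$ to showing that the reduction of $\iota_P$ modulo $P\CI_P$ is surjective. Here I would note that $\CI_P/P\CI_P$ is the fraction field $F$ of $\CO_F = \CI/P$, so passing from the localization to the fiber at $P$ has the effect of first specializing $\CI \to \CI/P = \CO_F$ and then inverting $p$.

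The heart of the argument is to identify this reduced map with the classical decomposition. The control theorem for ordinary $\CI$-adic Hecke algebras gives a compatible isomorphism $\DT^\ord(N_0 p^r,\ch;\CI)/P \iso \DT^\ord_{k(P)}(N_0 p^{r(P)}, \ch\ep_P\om^{-k(P)};\CO_F)$ intertwining the operators $T(n)$, and by the defining property of a $\CI$-adic newform (Theorem \ref{thm:LambdaAdicNewform}) we have $\la_\Bf \bmod P = \la_{\Bf_P}$. It follows that the projections onto $\DT(\Bf)_\CI$ and $\DT(\Bf)^\pr_\CI$ reduce, after inverting $p$, to the projections of Proposition \ref{prop:IdempotentClassicalNewform}, which give an isomorphism over $F$. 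This yields surjectivity of $\iota_P$ modulo $P\CI_P$, and Nakayama then promotes it to an isomorphism $\iota_P$.

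The main obstacle is invoking the control theorem correctly at points $P$ where $\ep_P \ch \om^{-k(P)}$ is trivial at $p$: there the classical specialization $\Bf_P$ may be a $p$-stabilization $f^\sh$ rather than a genuine newform, so one must check that Proposition \ref{prop:IdempotentClassicalNewform} still applies (it does, since $f^\sh$ is an eigenform generating a one-dimensional eigenspace inside the level-$N_0 p$ Hecke algebra). Once $\iota_P$ is known to be an isomorphism, the final assertion that $1_\Bf$ projects to $1_{\Bf_P}$ is formal: the preimage of $(1,0) \in \DT(\Bf)_\CI \op \DT(\Bf)^\pr_\CI$ under $\iota_P$ is the unique idempotent in $\DT^\ord(N_0 p^r,\ch;\CI)_P$ corresponding to the $\la_\Bf$-factor, and its reduction modulo $P$ is by construction the unique idempotent projecting onto the $\la_{\Bf_P}$-factor of the classical decomposition, namely $1_{\Bf_P}$.
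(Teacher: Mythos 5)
The paper itself states this proposition without proof, as a translation of results from Chapter 4 of \cite{Hida1988b}, so there is no in-paper argument to compare against; your strategy (localize at the height-one prime $(P)$, apply Nakayama over the discrete valuation ring $\CI_P$, and use the control theorem together with $\la_\Bf \bmod P = \la_{\Bf_P}$ to compare the reduced map with the classical decomposition) is indeed the standard route. However, there is a genuine gap at the sentence ``It follows that the projections onto $\DT(\Bf)_\CI$ and $\DT(\Bf)^\pr_\CI$ reduce, after inverting $p$, to the projections of Proposition \ref{prop:IdempotentClassicalNewform}.'' This is not a formal consequence of $\la_\Bf \bmod P = \la_{\Bf_P}$; it is essentially equivalent to the statement being proved. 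Writing $A = \DT^\ord(N_0p^r,\ch;\CI)$ and $\kappa(P) = \CI_P/P\CI_P$, the reduced map $A\ox\kappa(P) \to (\DT(\Bf)_\CI\ox\kappa(P))\op(\DT(\Bf)^\pr_\CI\ox\kappa(P))$ is automatically surjective onto each summand separately (the first via $\la_{\Bf_P}$, the second because reduction of a surjection is surjective), but surjectivity onto the \emph{direct sum} requires hitting $(1,0)$, i.e.\ requires $1_\Bf \in A_P$, equivalently $H_\Bf \nin P$. A priori the image of $\mathrm{Ann}_A(\ker\la_\Bf)$ in $A\ox\kappa(P)$ could vanish, in which case $\DT(\Bf)^\pr_\CI\ox\kappa(P)$ would have the same dimension as $A\ox\kappa(P)$ and the reduced map would be injective but \emph{not} surjective; note that the cokernel of the inclusion is by definition the congruence module, a finitely generated torsion $\CI$-module, and such a module can perfectly well be supported at a given height-one prime, so this cannot be waved away.

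The gap is fillable by the standard annihilator argument, which is where the real content lies. Set $\mathfrak{a} = \ker\la_\Bf \se A$ and $\mathfrak{b} = \mathrm{Ann}_A(\mathfrak{a})$. The control theorem plus $\la_\Bf \bmod P = \la_{\Bf_P}$ first gives that $\mathfrak{a}$ reduces exactly onto $\ker\la_{\Bf_P}$: both quotients are $\iso \CO_F$, so the induced surjection between them is an isomorphism. Next, $\mathfrak{b}\ox Q(\CI) = Q(\CI)\dt 1_\Bf$, so $\mathfrak{b}_P$ is a torsion-free rank-one, hence free, $\CI_P$-module; a generator $b$ cannot lie in $P A_P$ (otherwise $b/P$ would still annihilate $\mathfrak{a}_P$ and generate $\mathfrak{b}_P$ properly containing $\CI_P b$), so its reduction is a \emph{nonzero} element of $\mathrm{Ann}(\ker\la_{\Bf_P}\ox F) = F\dt 1_{\Bf_P}$. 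Applying $\la_{\Bf_P}$ shows that $\la_\Bf(b) = H_\Bf$ (up to unit) has nonzero image in $\CO_F$, hence is a unit in $\CI_P$; therefore $1_\Bf = b/H_\Bf \in A_P$ and $1_\Bf \bmod P = 1_{\Bf_P}$. With this in hand your Nakayama step, and the final assertion about idempotents, go through as you describe.
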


We can then define congruence modules for $\Bf$, and state Hida's theorem that
they are compatible with the ones for the specializations $\Bf_P$. For
technical reasons, we introduce the notation that 
\[ \T\DT(\Bf)_\CI^\pr = \I_\Fp (\DT(\Bf)_\CI^\pr)_\Fp \]
where $\Fp$ runs over all prime ideals of height 1 in $\CI$, with this
intersection taken inside of $\DT(\Bf)_Q^\pr$. Clearly $\DT(\Bf)_\CI^\pr \se
\T\DT(\Bf)_\CI^\pr$.

\begin{defn}
	Given a $\CI$-adic newform $\Bf$ as above, define the \E{modules of
	congruences} for $\Bf$ as
	\[ \CC_0(\Bf;\CI) = \f{\DT(\Bf)_\CI \op \DT(\Bf)_\CI^\pr}
				{\DT^\ord(N_0 p,\ch;\CI)} \qq\qq
		\CC(\Bf;\CI) = \f{\DT(\Bf)_\CI \op \T\DT(\Bf)_\CI^\pr}
				{\DT^\ord(N_0 p,\ch;\CI)}. \]
\end{defn}

By the second isomorphism theorem, our modules of congruence (which are defined
in terms of a Hecke algebra that's a quotient of the one Hida uses) are
isomorphic to Hida's. Then, translating \cite{Hida1988b} Theorem 4.6 into our
setup gives: 

\begin{thm} \label{thm:ExistenceOfCongruenceNumber}
	Fix a $\CI$-adic newform $\Bf$ as above, and let $R$ be a local ring of
	$\DT(\LS^\ord(N_0 p;\CO_F))$ through which $\la_\Bf$ factors. Suppose that
	$R$ is Gorenstein, i.e. that $R \iso \Hom_\CI(R,\CI)$ as an $R$-module. Then
	we have $\CC_0(\Bf;\CI) = \CC(\Bf;\CI) \iso \CI/H_\Bf\CI$ for a nonzero
	element $H_\Bf\in\CI$, and for any $P \in \CX_{alg}(\CI,\CO_F)$ with $k(P)
	\ge 2$ we have a canonical isomorphism
	\[ \CC_0(\Bf;\CI) \ox_\CI (\CI/P\CI) \iso C(\Bf_P). \]
\end{thm}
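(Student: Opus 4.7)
The plan is to follow the structure of Hida's original argument in \cite{Hida1988b} Theorem 4.6, translated into the language of $\CI$-adic forms used here. The core idea is to analyze the Gorenstein local factor $R$ and use the duality it provides to construct the congruence element $H_\Bf$ explicitly, then verify both the collapse $\CC_0(\Bf;\CI) = \CC(\Bf;\CI)$ and the specialization compatibility.

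First, I would restrict attention to $R$, observing that since $\la_\Bf$ factors through $R$ we have $R\otimes_\CI Q(\CI) \iso Q(\CI) \oplus R^\pr \otimes_\CI Q(\CI)$, giving the idempotent $1_\Bf \in R\otimes Q(\CI)$. Using the Gorenstein hypothesis $R \iso \Hom_\CI(R,\CI)$, the linear functional $\la_\Bf : R \to \CI$ corresponds to a distinguished element $\theta_\Bf \in R$, and the congruence element $H_\Bf \in \CI$ is defined by the relation $\la_\Bf(\theta_\Bf) = H_\Bf$ (or equivalently as a generator of the ideal $\la_\Bf(\Ann_R \ker(\la_\Bf))$). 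A direct computation along the lines of Hida's Lemma 4.3 then identifies
\[ \CC_0(\Bf;\CI) \iso R / (\ker(\la_\Bf) \oplus \theta_\Bf R) \iso \CI / H_\Bf \CI. \]

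Second, to prove $\CC_0(\Bf;\CI) = \CC(\Bf;\CI)$, I would show that $\T\DT(\Bf)_\CI^\pr = \DT(\Bf)_\CI^\pr$ inside $\DT(\Bf)_Q^\pr$. The Gorenstein hypothesis implies $R$ is a Cohen--Macaulay $\CI$-module of dimension equal to $\dim\CI$, and the projection $R^\pr$ inherits reflexivity from $R$ (being a direct summand after tensoring with $Q(\CI)$). Reflexive modules over a two-dimensional regular local ring equal the intersection of their height-one localizations, which gives exactly the needed equality.

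Third, for the specialization statement, I would combine the previous proposition (that localization at $P$ converts the abstract direct sum inclusion into an isomorphism) with the compatibility $1_\Bf \mapsto 1_{\Bf_P}$. Reducing mod $P$, the exact sequence
\[ 0 \to \DT^\ord(N_0 p,\ch;\CI) \to \DT(\Bf)_\CI \oplus \DT(\Bf)_\CI^\pr \to \CC_0(\Bf;\CI) \to 0 \]
is compatible with base change to $\CI/P\CI$ at algebraic $P$ with $k(P) \ge 2$ because such specializations remain newforms (or $p$-stabilized newforms) with well-defined classical congruence modules by Theorem \ref{thm:LambdaAdicNewform}. The tensored sequence then recovers exactly the defining sequence of $C(\Bf_P)$ from Proposition \ref{prop:IdempotentClassicalNewform}, yielding the canonical isomorphism.

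The main obstacle I expect is the second step: showing reflexivity of $\DT(\Bf)_\CI^\pr$ carefully enough to conclude that localization at all height-one primes captures everything. This is where the Gorenstein hypothesis is truly used; without it, one generally only has the weaker inclusion $\DT(\Bf)_\CI^\pr \subseteq \T\DT(\Bf)_\CI^\pr$, and the two congruence modules can differ. Once that is in hand, the specialization compatibility is essentially formal, provided one is careful that no height-one prime containing $P$ lies in the support of the torsion piece measuring the failure of the decomposition to be integral away from $P$.
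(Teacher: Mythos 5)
The paper does not actually prove this statement: it is quoted from Theorem 4.6 of \cite{Hida1988b}, after the remark that the second isomorphism theorem identifies the congruence modules defined here with Hida's. So your write-up is a reconstruction of Hida's argument rather than an alternative to anything in the text, and its architecture is the right one: Gorenstein duality makes $\r{Ann}_R(\ker\la_\Bf)$ free of rank one over $\CI$, generated by the element $\theta_\Bf$ corresponding to $\la_\Bf$ under $R\iso\Hom_\CI(R,\CI)$, whence $\la_\Bf(\r{Ann}_R(\ker\la_\Bf)) = (H_\Bf)$ is principal and $\CC_0(\Bf;\CI)\iso\CI/H_\Bf\CI$; the equality $\CC_0 = \CC$ reduces to $\T\DT(\Bf)_\CI^\pr = \DT(\Bf)_\CI^\pr$; and the specialization statement follows by reducing the defining exact sequence mod $P$, using the control theorem for the Hecke algebra together with the vanishing of $\r{Tor}_1(\CI/H_\Bf\CI,\CI/P\CI)$ — equivalently $H_\Bf\notin P$, which holds at the relevant $P$ because the classical congruence module of a newform is finite.

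The one step whose justification does not work as written is the reflexivity of $\DT(\Bf)_\CI^\pr$. Being a direct summand after tensoring with $Q(\CI)$ only says the module is torsion-free, and torsion-free images of free modules over the two-dimensional regular local ring $\CI\iso\CO_F\bb{X}$ need not be reflexive: the maximal ideal of $\CI$ is such an image, yet the intersection of its localizations at height-one primes is all of $\CI$. Since $\DT(\Bf)_\CI^\pr$ is exactly a quotient of $R$ (by $\r{Ann}_R(\ker\la_\Bf)$), your stated reason gives nothing, and this is precisely where the theorem can fail without the Gorenstein hypothesis. The repair is to use the Gorenstein pairing a second time: the perfect pairing $R\x R\to\CI$ identifies $R/\r{Ann}_R(\ker\la_\Bf) = \DT(\Bf)_\CI^\pr$ with $\Hom_\CI(\ker\la_\Bf,\CI)$ (surjectivity of the restriction map on functionals uses $\r{Ext}^1_\CI(R/\ker\la_\Bf,\CI) = \r{Ext}^1_\CI(\CI,\CI)=0$), and $\CI$-duals of finitely generated modules over a normal domain are always reflexive, hence equal to the intersection $\bigcap_\Fp$ of their height-one localizations. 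You correctly flagged this as the delicate point; with this substitution the argument goes through, and the remaining bookkeeping in your third step matches what Hida does.
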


So, from now on, if we're given a $\CI$-adic modular form $\Bf$ (for which the
Gorenstein condition above is satisfied), we'll let $H_\Bf$ denote any element
$\CI$ such that $\CC_0(\Bf;\CI) \iso \CI/H_\Bf\CI$ and call it a \E{congruence
number} for $\Bf$. The theorem above says that for any $P$, the specialization
$H_{\Bf,P}$ we get by projecting $H_\Bf$ to $\CI/P\CI$ serves as a congruence
number for $\Bf_P$, i.e. $H_{\Bf,P} = H_{\Bf_P}$. However, there is some
subtlety here: even though Hida's work gives us a way to realize $H_{\Bf_P}$
from a special value of an adjoint $L$-function for any given $P$, it's only
determined up to a unit, and it's not immediately clear how to choose the units
to fit the special values into a $p$-adic family. Instead, we just know that a
family $H_\Bf$ exists and that $H_{\Bf,P}$ is a $p$-adic unit times Hida's
$L$-value formula. To be able to write $H_\Bf$ explicitly in terms of
$L$-values amounts to showing we can construct a $p$-adic $L$-function
interpolating the adjoint $L$-values in question. In general this should be
able to be recovered as a consequence of the modularity lifting apparatus
developed by Wiles. In the special case we'll deal with (where $\Bf$ comes from
a family of CM modular forms) we will show that the main conjecture of Iwasawa
theory for imaginary quadratic fields (proven by Rubin) is enough to write
$H_\Bf$ as an explicit $p$-adic $L$-function associated to a Hecke character.

\section{Families of Hecke characters and CM forms} 
\label{sec:FamiliesOfCharsCMForms}

In this section we discuss how to construct $\CI$-adic families of $p$-adic
Hecke characters we'll be concerned with, as well as the associated $\CI$-adic
CM forms. Here is where we may be forced to actually use an extension $\CI$
rather than $\La$ itself, due to the $p$-part of the class group of our
imaginary quadratic field $K$. 

\subsection{Conventions about Hecke characters} 
\label{sec:HeckeCharConventions}

To start off, we explicitly describe our conventions for Hecke characters (in
all of their guises) associated to the imaginary quadratic field $K =
\Q(\rt{-d})$. If $\Fm$ is a nonzero ideal of $\CO_K$ and $I^{S(\Fm)}$ is the
group of fractional ideals coprime to $\Fm$, a classical Hecke character is a
group homomorphism $\ph : I^{S(\Fm)} \to \C^\x$ satisfying \[ \ph(\al \CO_K) =
\ph_\fin(\al) \al^a \Lal^b \] for all $\al \in \CO_K$ that are coprime to
$\Fm$, where $\ph_\fin : (\CO_K/\Fm)^\x \to \C^\x$ is a character (the
\E{finite part} or \E{finite-type} of $\ph$) and $a,b$ are complex numbers
(with the pair $(a,b)$ called the \E{infinity type}). With this convention, the
norm character determined by $N(\Fp) = |\CO_K/\Fp|$ on prime ideals has trivial
conductor (i.e. $\Fm = \CO_K$), trivial finite part, and infinity-type $(1,1)$.
Also, we sometimes view $\ph$ as being defined on all fractional ideals of
$\CO_K$, implicitly setting $\ph(\Fa) = 0$ if $\Fa$ is not coprime to $\Fm$. We
caution that many places in the literature take the opposite convention of
calling $(-a,-b)$ the infinity-type, so one must be careful when comparing
different papers. In particular, our convention matches up with that of
Bertolini-Darmon-Prasanna \cite{BertoliniDarmonPrasanna2013}, but is opposite
of Hsieh \cite{Hsieh2014} and from most of Hida's papers. 
\par
Next, we know that (primitive) classical Hecke characters are in bijection with
adelic Hecke characters, i.e. continuous homomorphisms $\DI_K/K^\x \to \C^\x$.
A straightforward way to describe this bijection is by letting $\id(\al) = \p
\Fp^{v_\Fp(\al_\Fp)}$ denote the ideal associated to an idele $\al = (\al_v)$;
then a classical Hecke character $\ph : I^{S(\Fm)} \to \C$ corresponds to a
continuous character $\ph_\C : \DI_K/K^\x \to \C^\x$ such that $\ph_\C(\al) =
\ph(id(\al))$ for every $\al \in \DI_K^{S(\Fm),\oo}$ (the set of ideles that
are trivial at infinite places and places in $S$). Under this correspondence we
find that if $\ph$ had infinity-type $(a,b)$ then the local factor
$\ph_{\C,\oo}$ at the infinite place is given by $\ph_{\C,\oo}(z) = z^{-a}
\Lz^{-b}$. Thus our convention for the infinity-type for an adelic Hecke
character is that it corresponds to the \E{negatives} of the exponents of $z$
and $\Lz$ in the local component at the infinite place. In particular the
adelic absolute value $\|\dt\|_\A$ is a character of infinity type $(-1,-1)$,
and it corresponds to the inverse of the norm character. 
\par
A classical or adelic Hecke character of $K$ is called \E{algebraic} if its
infinity-type $(a,b)$ consists of integers. Algebraic adelic Hecke characters
are in bijection with algebraic $p$-adic Hecke characters; a $p$-adic Hecke
character is a continuous homomorphism $\ps : \DI_K/K^\x \to \L\Q_p^\x$, and
$\ps$ is algebraic of weights $(a,b)$ if its local factors $\ps_\Fp$ and
$\ps_{\L\Fp}$ on $K_\Fp^\x \iso \Q_p^\x$ and $K_{\L\Fp}^\x \iso \Q_p^\x$ are
given by $\ps_{\Fp}(x) = x^{-a}$ and $\ps_{\L\Fp}(x) = x^{-b}$ on some
neighborhoods of the identity in these multiplicative groups of local fields.
Then, an algebraic adelic Hecke character $\ph_\C$ of infinity-type $(a,b)$
corresponds to a $p$-adic Hecke character $\ph_{\Q_p}$ of weight $(a,b)$ by the
formula
\[ \ph_{\Q_p}(\al) 
	= (\io_p\o\io_\oo\1)\6( \ph_\C(\al) \al_\oo^a \Lal_\oo^b \6) 
			\al_\Fp^{-a} \al_{\L\Fp}^{-b} \]
for any idele $\al = (\al_v)$. It is straightforward that this defines a
continuous character $\DI_K \to \L\Q_p^\x$, and it's trivial on $K^\x$ because
if $\al\in K^\x$ is treated as a principal idele then $\io_\oo\1(\al_\oo) =
\io_p\1(\al_\Fp)$ and $\io_\oo\1(\Lal_\oo) = \io_p\1(\al_{\L\Fp})$ via how we
set up our embeddings.
\par
So, whenever we have an algebraic Hecke character $\ph$ we can consider any of
the three types of realizations of it discussed above. We will pass between them
fairly freely, only being as explicit as we need to be clear and to make
precise computations. In particular we'll often abuse notation and allow $\ph$
to denote whichever of the three associated Hecke characters that's most
convenient for our purposes at any given time. Also, we'll use the terminology
``weights'' and ``infinity-type'' interchangeably for the pair of parameters
$(a,b)$. 

\subsection{A $p$-adic Hecke character}

To build our $\CI$-adic families of $p$-adic Hecke characters, we start by
constructing a single $p$-adic Hecke character $\al : \DI_K/K^\x \to \L\Q_p^\x$
with certain properties. To start, we want this character to be unramified
outside of $p$; this means $\al$ should be trivial on a large quotient of the
ideles, in particular the one fitting into the short exact sequence 
\[ 1 \to (\CO_\Fp^\x \x \CO_{\L\Fp}^\x)/\CO_K^\x 
		\to \DI_K/C \to \Cl_K \to 1 \]
coming from the usual inclusion of $\CO_\Fp^\x \x \CO_{\L\Fp}^\x$ into $\DI_K$,
where $C$ is some closed subgroup of $\DI_K$ containing $K^\x$. Now, we have 
\[ \f{\CO_\Fp^\x \x \CO_{\L\Fp}^\x}{\CO_K^\x} 
	\iso \f{\Z_p^\x \x \Z_p^\x}{\CO_K^\x} 
	\iso \f{(\Z/p\Z)^\x\x (\Z/p\Z)^\x}{\de[\CO_K^\x]} 
			\x (1+p)^{\Z_p} \x (1+p)^{\Z_p}. \]
So by passing to a larger quotient of $\DI_K$, we get a short exact sequence 
\[ 1 \to (1+p)^{\Z_p} \to \DI_K/C' \to \Cl_K \to 1 \] 
where the factor of $(1+p)^{\Z_p}$ includes into $\CO_\Fp^\x \se \DI_K$ in the
usual way. Thus $\DI_K/C'$ is an abelian group that's an extension of the
finite abelian group $\Cl_K$ by something isomorphic to $\Z_p$; this means
$\DI_K/C'$ is isomorphic to a direct product of a copy of $\Z_p$ (perhaps
properly containing our original one) and some finite quotient of $\Cl_K$.
Taking a further quotient of $\DI_K/C'$ to kill this finite factor, we can
finally get to a short exact sequence
\[ 1 \to (1+p)^{\Z_p} \to \DI_K/C'' \to (\Cl_K)_0 \to 1 \] 
where $(\CL_K)_0$ is a cyclic group $Z_{p^e}$ and we extend it by $(1+p)^{\Z_p}
\iso \Z_p$ to obtain $\DI_K/C'' \iso \Z_p$. 
\par
The inverse map $(1+p)^{\Z_p} \to (1+p)^{\Z_p} \into \L\Q_p^\x$ defines a
continuous character. Using our short exact sequence we can see we can
abstractly extend this to characters $\DI_K/C'' \to \L\Q_p^\x$ in $p^e$
different ways, which differ by the $p^e$ characters of $(\Cl_K)_0$. There's no
``best'' extension to pick, but for our purposes it doesn't matter; we simply
take $\al$ to be any such extension. Now, $\DI_K/C_{K_\oo} \iso \Z_p$ and it
contains the original $(1+p)^{\Z_p} \iso \Z_p$ with index $p^c$; restricted to
this subgroup $\al$ is continuous and injective. Thus $\al$ itself is
continuous (since it's continuous on an open subgroup) and injective (because
the kernel needs to intersect $(1+p)^{\Z_p}$ trivially, and no nontrivial
subgroup does that). Thus we can conclude it maps isomorphically onto a
subgroup $(1+\pi)^{\Z_p} \se \L\Q_p^\x$ for some element $\pi$ such that
$(1+\pi)^{p^e} = (1+p)$. By picking a finite extension $F/\Q_p$ containing this
element $\pi$ (as well as any other things we might want), we can view $\al$
and as a character $\DI_K/K^\x \to \CO_F^\x$. 
\par
So we have a Hecke character $\al : \DI_K/K^\x \to \L\Q_p^\x$ which is
algebraic with weight $(1,0)$ (recall our convention is that weight $(a,b)$
means the local factors at $\Fp$ and $\L\Fp$ are $x\mt x^{-a}$ and $x\mt
x^{-b}$, which is why we needed to use the inverse map $(1+p)^{\Z_p} \to
\L\Q_p^\x$ rather than the inclusion). By definition, if $x = (x_v) \in \p
\CO_K^\x$ we have $\al(x) = x_\Fp\1 \om_\Fp(x_\Fp)$ where $\om_\Fp :
(\CO_K/\Fp)^\x \to \Z_p^\x$ is the canonical Teichm\"uller character for
$\CO_K$ (which we can then compose with $\io_\oo\o\io_p\1$ to get a canonical
Teichm\"uller character $\om_\Fp : (\CO_K/\Fp)^\x \to \C$). It's clear from our
construction that $\al$ should have conductor $\Fp$ and finite-type
$\om_\Fp\1$, and one can trace back through the definitions to fully verify
this. We note that by construction of the Teichm\"uller character, if $\ze$
is any root of unity in $K$ we have $\om_\Fp(\ze) = \ze$. 
\par
We remark that one could have just as well done this construction with the
factor of $(1+p)^{\Z_p}$ sitting inside $\CO_{\L\Fp}^\x$ instead, and obtained
a character with infinity-type $(0,1)$, conductor $\L\Fp$, and finite-type
$\om_{\L\Fp}\1$. Actually we can recover such a character from $\al$ itself! We
simply define $\al^c : \DI_K/K^\x \to (1+\pi)^{\Z_p}$ defined by $\al^c =
\al\o\Tc$, where $\Tc : \DI_K/K^\x \to \DI_K/K^\x$ is the (evidently
continuous) automorphism on ideles induced by the nontrivial automorphism $c$
of the imaginary quadratic field $K$. On the level of ideals, $\al^c$ is given
by $\al^c(\Fa) = \al(\L\Fa)$. We will need to make use of this character
$\al^c$ in what follows.

\subsection{Families of Hecke characters}
\label{sec:FamiliesOfHeckeCharacters}

What we mean by a $\CI$-adic family of Hecke characters is a continuous
homomorphism $\Ps : \DI_K/K^\x \to \CI^\x$; given this, if we take a point $P
\in \CX(\CI;\CO_F)$ and view it as a homomorphism $\CI \to \CO_F$, the
composition $P\o\Ps$ (the ``specialization at $P$'') becomes a $p$-adic Hecke
character $\DI_K/K^\x \to \CO_F^\x \se \L\Q_p^\x$. In particular, we'll want
$\Ps$ to be such that if we specialize at $P_{m,\ep} \in \CX(\CI;\CO_F)$ we end
up with an algebraic Hecke character with an infinity type determined by $m$
(say, $(m,0)$ or $(m,-m)$). 
\par
We can use the character $\al : \DI_K/K^\x \to (1+\pi)^{\Z_p} \se \CO_F^\x$
constructed in the previous section to define a $\CI$-adic family of
characters, where we take $\CI = \CO_F\bb{\Ga'}$ for $\Ga' = (1+\pi)^{\Z_p}$.
We simply define a family $\CA : \DI_K/K^\x \to \CI^\x$ as the composition of
$\al : \DI_K/K^\x \to \Ga'$ with the tautological embedding $\Ga' \into
\CO_F\bb{\Ga'}$. Continuity is immediate, as both $\al$ and the tautological
embedding are continuous by construction. Similarly we can define $\CA^c$ as
the composition of $\al^c : \DI_K/K^\x$ with the tautological embedding, or
equivalently the precomposition of $\CA$ with the automorphism $\Tc$ of
$\DI_K/K^\x$ induced by the nontrivial automorphism of $K$.
\par
Given that we've picked a $p^e$-th root $1+\pi$ of $1+p$, we can define a
canonical extension of $P_m$ to $\CI$ by requiring $P_m(1+\pi) = (1+\pi)^m$.
For this $P_m$, we have the specialization property $P_m \o \CA = \al^m$
because $P_m$ composed with the tautological embedding gives the map $\Ga' \to
\Ga'$ defined by $x\mt x^m$. In addition to working with this canonical
extension of $P_m$ to $\CX(\CI,\CO_F)$ defined above, for the purposes of the
theory of $\CI$-adic modular forms we'll have to work with \E{all} such
extensions. It's easy to see that an arbitrary such extension is of the form
$P_{m,\ze} : \ga_\pi \mt \ze(1+\pi)^m$ for $\ze$ a $p^c$-th root of unity. Then
if we let $\ep_\ze : \Ga' \to \L\Q_p^\x$ be the character taking
$\ep_\ze(\ga_\pi) = \ze$ we have $P_{m,\ze} \o \CA = \ep'_\ze \al^m$ for
$\ep'_\ze = \ep_\ze \o \al$. Note that the characters $\ep_\ze$ are exactly the
extensions of the trivial character on $\Ga$ to $\Ga'$, and that $\ep'_\ze$ is
an unramified finite-order Hecke character (because $\al$ takes the embedded
copy of $\H\CO_K \se \DI_K$ to $\Ga$, which is killed by $\ep_\ze$). 
\par
One type of family of Hecke characters we'll want to use will be translates of
$\CA$ by a fixed Hecke character. For the purposes of defining $\CI$-adic CM
forms, we'll be interested in fixing a finite-order character $\ps$ and
defining families along the lines of $\Ps = \ps \al\1 \CA$ with specializations
$P_m\o\Ps = \ps \al^{m-1}$ of infinity-type $(m-1,0)$ and finite part $\ps
\om_\Fp^{1-m}$. We can similarly define families $\Ps' = \ps' (\al^c)\1 \CA^c$,
with specializations $P_m\o\Ps'$ of infinity-type $(0,m-1)$.
\par
A third type of family that will be relevant will be translates of the product
family $\CA(\CA^c)\1$; we see that this has specializations $\al^m
(\al^c)^{-m}$ of weight $(m,-m)$. Translates of $\CA(\CA^c)\1$ will be the
``anticyclotomic families'' $\Xi$ of characters that appear in our
interpretation of the BDP $L$-function (as in Theorem \ref{thm:BDPLambda}). The
following lemma tells us that any character $\xi$ in the set $\Si_{cc}(\FN)$
considered by \cite{BertoliniDarmonPrasanna2013} fits into a family $\Xi$ where
an arithmetic progression of specializations also lie in that set. 

\begin{lem} \label{lem:AnticyclotomicHeckeFamily}
	Let $a$ be an integer, and $\Uxi_a : \DI_K/K^\x \to \CO_F^\x$ an algebraic
	Hecke character for $K$ of weight $(a-1,-a+k+1)$. Then there is a family
	$\Xi : \DI_K/K^\x \to \CI^\x$ such that if we set $\xi_m = P_m\o\Xi$ we
	have:
	\begin{itemize}
		\item $\xi_a$ is the character $\Uxi_a$ we started with.
		\item For every $m$, $\xi_m$ is a character of weight $(m-1,-m+k+1)$. 
		\item For $m \ee a\md{p-1}$, $\xi_m$ has the same conductor and same
			finite-type as $\xi_a$; for $m\nee a\md{p-1}$ the finite-type differs
			by powers of the Teichm\"uller characters $\om_\Fp$ and $\om_{\L\Fp}$.
	\end{itemize}
\end{lem}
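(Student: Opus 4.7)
The plan is to write $\Xi$ explicitly as a twist of $\Uxi_a$ by the universal $\CI$-adic characters $\CA$ and $\CA^c$ of the previous subsection. Concretely I would set
\[ \Xi = \b( \Uxi_a \dt \al^{-a} \dt (\al^c)^{a} \e) \dt \CA \dt (\CA^c)\1, \]
where the factor $\Uxi_a \dt \al^{-a} \dt (\al^c)^{a}$ is a fixed $p$-adic Hecke character valued in $\CO_F^\x$ (pushed into $\CI^\x$ via the structural map $\CO_F \into \CI$), and $\CA \dt (\CA^c)\1$ is $\CI$-adic by construction. Both pieces are continuous homomorphisms $\DI_K/K^\x \to \CI^\x$, so their product is as well.

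Verifying the weight behavior is then routine. Using $P_m \o \CA = \al^m$ and $P_m \o \CA^c = (\al^c)^m$, one computes
\[ \xi_m := P_m \o \Xi = \Uxi_a \dt \al^{m-a} \dt (\al^c)^{a-m}. \]
Since $\al$ has weight $(1,0)$ and $\al^c$ has weight $(0,1)$, the two shift factors contribute $(m-a,0)$ and $(0,a-m)$, which combined with the weight $(a-1,-a+k+1)$ of $\Uxi_a$ yields exactly $(m-1,-m+k+1)$. Setting $m = a$ makes both shift factors trivial, recovering $\xi_a = \Uxi_a$ as required.

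The final step is to analyze the conductor and finite type. The characters $\al$ and $\al^c$ are trivial on every $\CO_{K,v}^\x$ with $v$ a finite place not above $p$, so $\xi_m$ and $\xi_a$ automatically agree away from $\Fp$ and $\L\Fp$. At $\Fp$ itself, the local component of $\al^{m-a}$ on $\CO_\Fp^\x$ is $x \mt x^{-(m-a)} \om_\Fp(x)^{m-a}$; the $x^{-(m-a)}$ factor accounts exactly for the shift in infinity type, while the Teichm\"uller twist $\om_\Fp^{m-a}$ becomes trivial precisely when $m \ee a \md{p-1}$ (recall $\om_\Fp$ has order $p-1$). The symmetric assertion at $\L\Fp$ holds for $(\al^c)^{a-m}$. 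Hence when $m \ee a \md{p-1}$ the finite-type and conductor of $\xi_m$ coincide with those of $\xi_a$, while in general the finite-type differs by $\om_\Fp^{m-a}$ at $\Fp$ and by $\om_{\L\Fp}^{a-m}$ at $\L\Fp$, as the statement demands.

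No substantive obstacle arises in this argument; the only delicate bookkeeping is correctly separating the infinity-type contributions from the Teichm\"uller contributions in the local factors of $\al$ and $\al^c$ at the primes above $p$. Packaging the construction by first translating $\Uxi_a$ so as to cancel the specialization of $\CA (\CA^c)\1$ at $m = a$ keeps this accounting transparent.
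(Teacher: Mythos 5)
Your proposal is correct and is essentially identical to the paper's proof: the paper also sets $\Xi = \Uxi_a \al^{-a} (\al^c)^a \CA (\CA^c)\1$ and notes that $P_m\o\Xi = \Uxi_a \al^{m-a}(\al^c)^{a-m}$, deducing the remaining claims from the known weights, conductors, and finite-types of $\al$ and $\al^c$. Your write-up simply makes explicit the local verification at $\Fp$ and $\L\Fp$ that the paper leaves to the reader.
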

Of course, the particular infinity-type we've chosen for $\xi_m$ is entirely a
convention that will be convenient for us later. 
\begin{proof}
	We simply set 
	\[ \Xi = \Uxi_a \al^{-a} (\al^c)^a \CA (\CA^c)\1; \]
	then by construction $P_a\o\Xi = \Uxi_a$ and more generally $P_m\o\Xi =
	\Uxi_a \al^{m-a} (\al^c)^{a-m}$. The rest of the statements follow from what
	we already know about the Hecke characters $\al$ and $\al^c$.
\end{proof}

\subsection{Families of CM forms} \label{sec:FamiliesOfCMForms}

Now that we've shown how to define families of Hecke characters for our
imaginary quadratic field $K$, we want to build from them an associated family
of CM forms. So, suppose we start with an ideal $\Fm$ prime to $p$, and a
finite-order character $\ps$ of conductor either $\Fm$ or $\Fm\Fp$. This forces
the $\Fp$-part of the finite-type of $\ps$ to be of the form $\om_\Fp^{a-1}$
for some residue class $a$ modulo $p-1$. Then we define $\Ps = \ps \al\1 \CA$,
so $\ps_m = P_m\o\Ps$ is given by $\ps \al^{m-1}$, and find that $\ps_m$ has
conductor $\Fm$ when $m\ee a\md{p-1}$, and conductor $\Fm\Fp$ otherwise.
\par
Next we need to write down the CM newform associated to the Hecke character
$\ps_m$ is. A Hecke character of type $(m-1,0)$ gives rise to a CM newform of
weight $m$ as discussed in Section \ref{sec:IchinoForCMForms}. There's a slight
wrinkle, though: we've constructed $\ps_m$ as a $p$-adic Hecke character, while
the formula above for defining classical CM forms is done in terms of complex
Hecke characters. So we need to transfer the algebraic $p$-adic character
$\ps_m : \DI_K/K^\x \to \L\Q_p^\x$ to an algebraic complex character
$\ps_{m,\C} : \DI_I/K^\x \to \C^\x$. We recall from Section
\ref{sec:HeckeCharConventions} that if $\ph$ is a $p$-adic character of weight
$(a,b)$ the relation between $\ph$ and $\ph_\C$ is
\[ \ph_\C(x) = (\io_\oo\o\io_p\1)\6(\ph(x) x_\Fp^a x_{\L\Fp}^b \6) 
		x_\oo^{-a} \Lx_\oo^{-b}. \]
It's straightforward to see that if we're given two $p$-adic Hecke characters
$\ph,\ph'$ then $(\ph\ph')_\C = \ph_\C \ph'_\C$; thus $\ps_{m,\C}$ will be
$\ps_\C \al_\C^{m-1}$. 
\par
The way that $\ps_{m,\C}$ appears in the construction of the corresponding CM
newform $g_{\ps_{m,\C}}$ is that values $\ps_{m,\C}(\Fa)$ appear in the
$q$-expansion, where $\Fa$ runs over ideals prime to the conductor of
$\ps_{m,\C}$. Here we interpret $\ps_{m,\C}(\Fa)$ to mean $\ps_{m,\C}$
evaluated on the idele $x = (x_v)$ with $x_\oo = 1$ and $x_\Fq =
\pi_\Fq^{\ord_\Fq(\Fa)}$ at finite places, where $\pi_\Fq$ is a fixed
uniformizer of the local field $(\CO_K)_\Fq$. Then we have 
\[ \ps_{m,\C}(\Fa) = \ps_\C(\Fa) \al_\C(\Fa)^{m-1}. \]
Now, $\ps$ is a finite-order character so, if we suppress the embeddings
$\io_\oo$ and $\io_p$, we can just write $\ps_\C(\Fa) = \ps(\Fa)$. For
$\al_\C$, by definition we have 
\[ \al_\C(\Fa) = (\io_\oo\o\io_p\1)\6(\al(\Fa) \pi_\Fp^{\ord_\Fp(\Fa)} \6). \]
Since $K$ is unramified at $p$, we can choose $p$ as our uniformizer $\pi_\Fp$;
this choice then forces $\al(\Fa)$ to be algebraic so (again suppressing the
embeddings) we can write $\al_\C(\Fa) = \al(\Fa) p^{\ord_\Fp(\Fa)}$. Thus
we conclude 
\[ \ph_{m,\C}(\Fa) = \ps(\Fa) \al(\Fa)^{m-1} p^{(m-1)\ord_\Fp(\Fa)}. \]
\par
Thus, the CM newform associated to $\ps_{m,\C}$ is given by 
\[ g_{\ps_m} = \s_{\Fa:(\Fa,\Fm)=1} \ps(\Fa) \al(\Fa)^{m-1} 
		p^{(m-1)\ord_\Fp(\Fa)} q^{N(\Fa)} 
			\in S_m(d N(\Fm),\ch_K\ch_\ps \om^{1-m}) \]
in the case when $m\ee a\md{p-1}$ and 
\[ g_{\ps_m} = \s_{\Fa:(\Fa,\Fm\Fp)=1} \ps(\Fa) \al(\Fa)^{m-1} q^{N(\Fa)}
		\in S_m(dN(\Fm)p,\ch_K\ch_\ps \om^{1-m}) \]
when $m\nee a\md{p-1}$. We note that in both cases the form is $p$-ordinary
(with respect to $\io_p\o\io_\oo\1$), as the coefficient of $q^p$ is
$\ps_m(\L\Fp)$ (a $p$-adic unit by construction) or $\ps_m(\L\Fp) + \ps_m(\Fp)
p^{m-1} \ee \ps_m(\L\Fp)$ in the two cases.
\par
Looking at the formula for $g_{\ps_m}$ when $m\nee a\md{p-1}$, it's easy to see
how to interpolate this: if we define 
\[ \Bg_\Ps = \s_{\Fa:(\Fa,\Fm\Fp)=1} \Ps(\Fa) q^{N(\Fa)} \in \CI\bb{q} \]
then $P_m(\Bg_\Ps) = g_{m,\ps}$ (under our embeddings) for all such $m$. For a
general extension $P_{m,\ze}$ of $P_m$, we can similarly see that
$P_{m,\ze}(\Bg_\Ps)$ is the $p$-ordinary CM modular form associated to the
Hecke character $\ep'_\ze \ps \al^{m-1}$. 
\par
It remains to check what $P_m(\Bg_\Ps)$ is when $m\ee a\md{p-1}$; Theorem
\ref{thm:LambdaAdicNewform} suggests it should be the $p$-stabilization
$g_{\ps_m}^\sh$ of the CM modular form $g_{\ps_m}$ of prime-to-$p$ level.
Indeed this is true, and can be calculated directly. The roots of the Hecke
polynomial at $p$ for $g_{\ps_m}$ are $\ps_m(\Fp)p^{m-1}$ and $\ps_m(\L\Fp)$,
respectively, and the latter one is clearly a unit while the former isn't. So
we have 
\[ g_{\ps_m}^\sh = g_{\ps,m}(z) - \ps_m(\Fp)p^{m-1} g_{\ps,m}(pz). \]
But the $q$-series for the latter term is 
\[ \ps_m(\Fp)p^{m-1} 
		\s_{\Fa:(\Fa,\Fm)=1} \ps_m(\Fa) p^{(m-1)\ord_\Fp(\Fa)} q^{N(\Fa)p} 
 	= \s_{\Fa\Fp:(\Fa,\Fm)=1} 
			\ps_m(\Fa\Fp) p^{(m-1)\ord_\Fp(\Fa\Fp)} q^{N(\Fa\Fp)}, \]
so subtracting it off from the sum defining $g_{\ps_m}$ kills all of the terms
corresponding to ideals divisible by $\Fp$ and leaves us with 
\[ g_{\ps_m}^\sh 
	= \s_{\Fa:(\Fa,\Fm\Fp)=1} \ps_m(\Fa) p^{(m-1)\ord_\Fp(\Fa)} q^{N(\Fa)}. \]
But this is $P_m(\Bg_\Ps)$ by definition. Similarly, we can see that
$P_{m,\ze}(\Bg_\Ps) = g_{\ep'_\ze\ps_m}^\sh$, which is also the
$p$-stabilization of a newform. 
\par
Thus every specialization $P_{m,\ze}(\Bg_\Ps)$ is a classical modular form, and
in fact either an ordinary newform or a $p$-stabilization of an ordinary
newform. So $\Bg_\Ps$ is a $\CI$-adic modular form, and Theorem
\ref{thm:LambdaAdicNewform} lets us conclude: 

\begin{prop} \label{prop:FamilyOfCMNewforms}
	Let $\Fm$ be an ideal coprime to $p$, and let $\ps$ be a finite-order Hecke
	character of conductor $\Fm$ or $\Fm\Fp$ such that the $\Fp$-part of the
	finite-type of $\ps$ is $\om_\Fp^{a-1}$. Then we can construct a family of
	Hecke characters $\Ps : \DI_K/K^\x \to \CI$ such that $P_m\o\Ps = \ps
	\al^{m-1}$, and an associated $\CI$-adic CM newform 
	\[ \Bg_\Ps = \s_{\Fa:(\Fa,\Fm\Fp)=1} \Ps(\Fa) p^{(m-1)\ord_\Fp(\Fa)} 
			q^{N(\Fa)} \in \BS^\ord(dN(\Fm),\ch_K\ch_\ps\om;\CI) \]
	such that $P_m(\Bg_\Ps) = g_{\ps_m}^\sh$ for $m\ee a\md{p-1}$ and
	$P_m(\Bg_\Ps) = g_{\ps_m}$ for $m \nee a\md{p-1}$. If we view $\ps_m$ as a
	classical (complex-valued) Hecke character, in the $m\ee a\md{p-1}$ case the
	roots of the Hecke polynomial are $\al_g = \ps_{m,\C}(\L\Fp)$ and $\be_g =
	\ps_{m,\C}(\Fp)$.
\end{prop}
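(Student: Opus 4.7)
The plan is to build the family $\Ps$ as an explicit twist of $\CA$, read off the associated formal $q$-expansion as a candidate $\Bg_\Ps$, verify interpolation by a direct check at each algebraic point, and then invoke Theorem \ref{thm:LambdaAdicNewform} to upgrade the result from ``$\CI$-adic form'' to ``$\CI$-adic newform.''

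First, I would set $\Ps = \ps\al\1\CA$, which is continuous as a product of continuous characters. Using the normalization $P_m(1+\pi) = (1+\pi)^m$ together with the tautological definition of $\CA$, one has $P_m \o \CA = \al^m$, so $P_m \o \Ps = \ps\al^{m-1}$. The behavior of the $\Fp$-part of the finite-type of $\al^{m-1}$ (namely $\om_\Fp^{1-m}$) combined with the hypothesis that the $\Fp$-part of $\ps$ is $\om_\Fp^{a-1}$ immediately gives the dichotomy between conductor $\Fm$ (when $m \ee a\md{p-1}$) and conductor $\Fm\Fp$ (otherwise), as well as the claimed tame character $\ch_K\ch_\ps\om^{1-m}$ for the classical theta-series attached to $\ps_m$.

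Next I would verify the interpolation property of $\Bg_\Ps$ by relating the $p$-adic and complex realizations of $\ps_m$. Using that $p$ is a uniformizer at $\Fp$, the translation formula of Section \ref{sec:HeckeCharConventions} gives $\al_\C(\Fa) = \al(\Fa)\,p^{\ord_\Fp(\Fa)}$ and hence $\ps_{m,\C}(\Fa) = \ps(\Fa)\al(\Fa)^{m-1} p^{(m-1)\ord_\Fp(\Fa)}$. For $m \nee a\md{p-1}$, the sum defining $\Bg_\Ps$ is over ideals coprime to $\Fm\Fp$, matching the $q$-expansion of $g_{\ps_m}$ exactly. For $m \ee a\md{p-1}$, I would identify the Hecke-polynomial roots of the newform $g_{\ps_m}$ of prime-to-$p$ level as $\al_g = \ps_{m,\C}(\L\Fp)$ (a $p$-adic unit, so $g_{\ps_m}$ is $p$-ordinary) and $\be_g = \ps_{m,\C}(\Fp)p^{m-1}$; then Lemma \ref{lem:StabilizationOfNewforms} tells me the $p$-stabilization is $g_{\ps_m}(z) - \be_g g_{\ps_m}(pz)$. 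A direct reindexing $\Fa \mt \Fa\Fp$ shows that this subtraction removes precisely the contributions from ideals divisible by $\Fp$, leaving the formal series given by $P_m(\Bg_\Ps)$. An identical computation with $P_{m,\ze}$ replacing $P_m$ gives the $p$-stabilized ordinary CM newform attached to the twisted character $\ep'_\ze\ps\al^{m-1}$.

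Since all arithmetic specializations are classical ordinary forms (newforms or $p$-stabilizations of newforms) of the expected level and character, $\Bg_\Ps$ is a $\CI$-adic modular form, and because the ``$p$-part of $\ep_P\ch\om^{-k(P)}$ is nontrivial'' specializations are genuine newforms, Theorem \ref{thm:LambdaAdicNewform} identifies $\Bg_\Ps$ as a $\CI$-adic newform. The main obstacle is nothing deep but rather bookkeeping: one must carefully keep track of the passage between the $p$-adic and complex incarnations of $\ps_m$ under the fixed embeddings $\io_\oo$ and $\io_p$, so that the $q$-expansion written in $\CI\bb{q}$ truly specializes (after embedding into $\C$) to the classical CM form $g_{\ps_m}$ rather than some twist of it.
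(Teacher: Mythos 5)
Your proposal follows essentially the same route as the paper: define $\Ps = \ps\al\1\CA$, transfer to the complex character via $\al_\C(\Fa) = \al(\Fa)p^{\ord_\Fp(\Fa)}$, check the specializations directly, identify the $m\equiv a\pmod{p-1}$ specializations as $p$-stabilizations by the reindexing $\Fa \mapsto \Fa\Fp$, and invoke Theorem \ref{thm:LambdaAdicNewform}. The one slip is your value $\be_g = \ps_{m,\C}(\Fp)p^{m-1}$, which conflates the complex and $p$-adic normalizations: the non-unit root is $\ps_{m,\C}(\Fp) = \ps_m(\Fp)p^{m-1}$ (as in the statement being proven), so the stabilization subtracts $\ps_m(\Fp)p^{m-1}\,g_{\ps_m}(pz)$ and your extra factor of $p^{m-1}$ should be dropped for the reindexing to close up.
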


\subsection{Complex conjugates of $p$-adic Hecke characters}
\label{sec:ComplexConjugatesHeckeChars}

For the purposes of interpolating Petersson inner products of the form
$\g{fg_\ph,g_\ps}$ (with $\ph$ and $\ps$ varying simultaneously in a $p$-adic family) we will work with the family $\Bg_\Ps$ constructed above and a
``shifted'' variant $f\Bg_\Ph$ we'll define soon. However, Ichino's formula
involves the absolute value $|\g{fg_\ph,g_\ps}|^2$, so in addition to
interpolating $\g{fg_\ph,g_\ps}$ we'll also need to interpolate the complex
conjugate 
\[ \L{\g{fg_\ph,g_\ps}} = \g{f^\rh g^\rh_\ph,g_\ps^\rh} \]
where if $f = \s a_n q^n$ then $f^\rh = \s \L{a}_n q^n$. Recall that if $f \in
S_k(N,\ch)$ then $f^\rh \in S_k(N,\Lch)$. 
\par
For a fixed $p$-adic modular form $f$ with algebraic coefficients, we can
directly define a $p$-adic modular form $f^\rh$ using our chosen embeddings
$\L\Q \into \C,\L\Q_p$. If $f$ is an eigenform for a Hecke operator $T(n)$ with
eigenvalue $a_n$, then $f^\rh$ is an eigenform with eigenvalue $\L{a}_n$.
However, if $f$ is an eigenform for the prime-to-$p$ Hecke operator $T(p)$, we
need to be a little careful with about the roots $\al_f$ and $\be_f$ of the
Hecke polynomial (as in Lemma \ref{lem:StabilizationOfNewforms}): 

\begin{lem}
	Let $f \in S_k(N_0,\ch)$ be a $p$-ordinary eigenform of the prime-to-$p$
	Hecke operator $T(p)$ of weight $k\ge 2$, and let $\al_f,\be_f$ be the roots
	of the Hecke polynomial with $|\al_f|_p = 1$ and $|\be_f|_p < 1$. Then
	$f^\rh$ is also a $p$-ordinary eigenform of $T(p)$, with $\al_{f^\rh} =
	\Lbe_f$ and $\be_{f^\rh} = \Lal_f$.
\end{lem}
In other words, when we replace $f$ by $f^\rh$ the roots $\al_f,\be_f$ are
replaced by $\Lal_f,\Lbe_f$ as we'd expect, but we need to swap them because
$\Lbe_f$ is now the one that's a $p$-adic unit.
\begin{proof}
	Recall that we know $\al_f\be_f = \ch(p) p^{k-1}$, and also that we have
	$\al_f\Lal_f = p^{k-1}$ and $\be_f\Lbe_f = p^{k-1}$ by the Ramanujan
	conjecture for newforms (proven in general by \cite{Deligne1971},
	\cite{Deligne1974}, though for our applications where $f$ is a CM form it
	can be obtained from basic properties of Hecke characters). Since $\al_f$
	and $\ch(p)$ are $p$-adic units we obtain $|\be_f|_p = |\Lal_f|_p =
	|p^{k-1}|_p < 1$, and also $|\Lbe_f|_p = |p^{k-1}|_p/|\be_f|_p = 1$. Thus
	$\Lbe_f$ is a $p$-adic unit while $\Lal_f$ is not, and $f^\rh$ is
	$p$-ordinary because we can also conclude $\L{a}_f = \Lal_f+\Lbe_f$ is a
	$p$-adic unit.
\end{proof}

However, for a $\CI$-adic family $\Bg$, there is no way to ``conjugate'' the
coefficients directly; we need to instead conjugate the specializations
$P_m(\Bg)$ and come up with a $\Bg^\rh$ that best interpolates them. For a CM
form, we have $g_\ps^\rh = g_\Lps$ where $\Lps$ is the complex conjugate of the
complex Hecke character $\ps$. So to interpolate this quantity, we need to
analyze how complex conjugates of our complex Hecke characters pass back to the
$p$-adic side. 
\par 
So suppose $\ph$ is a $p$-adic character of weight $(a,b)$, so as above we 
define the associated complex Hecke character is 
\[ \ph_\C(x) = (\io_\oo\o\io_p\1)\6(\ph(x) x_\Fp^a x_{\L\Fp}^b \6) 
		x_\oo^{-a} \Lx_\oo^{-b}. \]
Since this is a complex character we can consider its complex conjugate 
\[ \L{\ph_\C(x)} = (c\o\io_\oo\o\io_p\1)\6(\ph(x) x_\Fp^a x_{\L\Fp}^b \6) 
		x_\oo^{-b} \Lx_\oo^{-a}, \]
which is thus a Hecke character of weight $(b,a)$ (where $c : \C \to \C$ is 
complex conjugation). Transferring back we get an associated $p$-adic character
$\ph^\rh$ defined by 
\[ \ph^\rh(x) = (\io_p\o c\o\io_p\1)\6(\ph(x) x_\Fp^a x_{\L\Fp}^b \6) 
							x_\Fp^{-b} x_{\L\Fp}^{-a}, \]
where here we abuse notation and let $c : \L\Q \to \L\Q$ be the complex
conjugation induced by $\io_\oo$. This $\ph^\rh$ satisfies $\ph^\rh_\C =
\L{\ph_\C}$, as we wanted. 
\par
It's straightforward to check that $(\ph_1\ph_2)^\rh = \ph_1^\rh \ph_2^\rh$ for
any characters $\ph_1,\ph_2$, so therefore if we go to our family of characters
$\ps_m$ from above we have 
\[ \ps_m^\rh = ( \ps \al^{m-1} )^\rh = \ps^\rh (\al^\rh)^{m-1}. \] 
So to construct a family with specializations $\ps_m^\rh$, we need to
interpolate powers of $\al^\rh$. Unfortunately we cannot do this directly - to
mimic the construction of $\CA$ from $\al$ we need a character which maps onto
a subgroup $(1+\pi)^{\Z_p} \se \CO_F^\x$, and there's no reason that $\al^\rh$
should satisfy this. So we need to compare the character $\al^c$ we can
interpolate powers of to the character $\al^\rh$ we need. Since $\al$ has
infinity-type $(1,0)$ and finite-type character $\om_\Fp\1$, we see for any
ideal we have 
\[ \al(\Fa)\al^c(\Fa) = \al(\Fa\L\Fa) = \al(N(\Fa)\CO_K) 
		= \om\1_\Fp(N(\Fa)) N(\Fa), \]
and thus $\al\al^c = (\om\1\o N_\Q^K) \dt N$ as $p$-adic Hecke characters (with
$\om$ the Teichm\"uller character and $N$ the $p$-adic norm character). On the
other hand, the complex conjugate satisfies 
\[ \al_\C(\Fa) \al^\rh_\C(\Fa) = |\al_\C(\Fa)|^2 = N_\C(\Fa) \]
because $N_\C$ is the unique Hecke character of infinity-type $(1,1)$ that
takes values in positive real numbers. So $\al_\C \al^\rh_\C = N_\C$, and
converting back to $p$-adic characters we have $\al \al^\rh = N$. Comparing
these two equations we find 
\[ \al^c = (\om\1\o N_\Q^K) \al^\rh. \]
Since $\om^{p-1}$ is trivial, we can conclude that $P_m(\CA^c)$ is actually
equal to $(\al^\rh)^m$ in the arithmetic progression $m \ee 0\md{p-1}$. By
multiplying by an appropriate fixed character we can of course shift which
arithmetic progression we get:

\begin{lem}
	Suppose $\ps$ is a fixed finite-order character, and $\Ps$ is a family with
	specializations $\ps_m = P_m\o\Ps = \ps\al^{m-1}$ as considered before. Fix
	a residue class $a\md{p-1}$. Then if we define the family 
	\[ \Ps^\rh = \ps^\rh (\al^\rh)^{a-1} (\al^c)^{-a} \CA^c \]
	it satisfies $P_m \o \Ps^\rh = \ps_m^\rh$ for $m \ee a\md{p-1}$. Outside of
	this arithmetic progression, $P_m\o\Ps^\rh$ is $\ps_m^\rh \ox \om^{a-m}$.
\end{lem}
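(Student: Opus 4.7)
The plan is a direct computation: apply $P_m$ termwise to the product defining $\Ps^\rh$, invoke the specialization identity for $\CA^c$, and then use the comparison between $\al^c$ and $\al^\rh$ that was established in the paragraphs immediately preceding the lemma.

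First I would note that by construction $\CA^c$ is the composition of $\al^c : \DI_K/K^\x \to \Ga'$ with the tautological embedding $\Ga' \into \CI^\x$, and the canonical specialization $P_m : \CI \to \CO_F$ sends $1+\pi$ to $(1+\pi)^m$; hence for each idele $x$ we have $P_m(\CA^c(x)) = \al^c(x)^m$. Since $\ps^\rh$, $(\al^\rh)^{a-1}$, and $(\al^c)^{-a}$ are constant in the family (i.e.\ scalars in $\CO_F^\x$), they specialize to themselves. So multiplying the four factors:
\[ P_m \o \Ps^\rh = \ps^\rh \cdot (\al^\rh)^{a-1} \cdot (\al^c)^{-a} \cdot (\al^c)^m = \ps^\rh \cdot (\al^\rh)^{a-1} \cdot (\al^c)^{m-a}. \]

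Next I would substitute the identity $\al^c = (\om^{-1}\o N_\Q^K) \al^\rh$ derived in the discussion above (from comparing $\al\al^c = (\om^{-1}\o N_\Q^K)\cdot N$ with $\al\al^\rh = N$). Raising to the $(m-a)$-th power gives
\[ (\al^c)^{m-a} = (\om^{a-m}\o N_\Q^K) \cdot (\al^\rh)^{m-a}, \]
so the expression above collapses to $\ps^\rh (\al^\rh)^{m-1} \cdot (\om^{a-m}\o N_\Q^K)$. Recognizing $\ps^\rh(\al^\rh)^{m-1}$ as the complex-conjugate character $\ps_m^\rh$ (this follows from multiplicativity of the operation $\ph \mapsto \ph^\rh$ applied to $\ps_m = \ps \al^{m-1}$), the result is $\ps_m^\rh$ twisted by the Dirichlet character $\om^{a-m}$ pulled back through the norm, i.e.\ $\ps_m^\rh \ox \om^{a-m}$ in the notation of the lemma.

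Finally I would observe that $\om$ has order dividing $p-1$, so when $m \ee a \md{p-1}$ the character $\om^{a-m}$ is trivial and the formula reduces to $P_m \o \Ps^\rh = \ps_m^\rh$, giving the first assertion; the second assertion is exactly what was just computed for general $m$. There is no substantive obstacle here: the proof is pure bookkeeping with the three identities $P_m\o\CA^c = (\al^c)^m$, $(\ph_1\ph_2)^\rh = \ph_1^\rh \ph_2^\rh$, and $\al^c = (\om^{-1}\o N_\Q^K)\al^\rh$, all of which were verified in the preceding paragraphs. The only mild subtlety worth commenting on would be that $P_m$ here denotes the canonical extension to $\CI$ fixed by $P_m(1+\pi) = (1+\pi)^m$; for other extensions $P_{m,\ze}$ one would pick up an extra finite-order character $\ep'_\ze$, but the lemma is stated only for the canonical $P_m$.
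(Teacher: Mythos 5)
Your proposal is correct and follows exactly the route the paper intends: the paper gives no separate proof of this lemma, instead letting it follow from the immediately preceding discussion (the identities $P_m\o\CA^c = (\al^c)^m$, $(\ph_1\ph_2)^\rh = \ph_1^\rh\ph_2^\rh$, and $\al^c = (\om\1\o N_\Q^K)\al^\rh$), which is precisely the bookkeeping you carry out. Your remark about the non-canonical extensions $P_{m,\ze}$ picking up an extra $\ep'_\ze$ is also consistent with the paper's conventions.
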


Our construction of CM forms from the previous section goes through nearly
identically (keeping in mind that $\al^c$ has conductor $\L\Fp$ rather than
$\Fp$), as long as we take care with the Teichm\"uller twists we needed. For
our purposes we'll want to choose the arithmetic progression where the twist is
trivial to match up with the arithmetic progression on which $p$ doesn't divide
the level of our newforms and thus $\Bg_\Ps$ contains $p$-stabilizations.

\begin{prop} \label{prop:FamilyOfConjugateCMNewforms}
	Let $\Fm$ be an ideal coprime to $p$, and let $\ps$ be a finite-order Hecke
	character of conductor $\Fm$ or $\Fm\Fp$ such that the $\Fp$-part of the
	finite-type of $\ps$ is $\om_\Fp^{a-1}$. Let $\Ps$ be the usual family of
	Hecke characters $\Ps : \DI_K/K^\x \to \CI$ such that $\ps_m = P_m\o\Ps = \ps
	\al^{m-1}$, and let $\Ps^\rh$ be the family above such that $P_m\o\Ps^\rh =
	\ps_m^\rh$ for $m\ee a\md{p-1}$. Then there is an associated $\CI$-adic
	newform
	\[ \Bg_{\Ps^\rh} = \s_{\Fa:(\Fa,\L\Fm\L\Fp)=1} 
			\Ps^\rh(\Fa) p^{(m-1)\ord_{\L\Fp(\Fa)}} q^{N(\Fa)} 
					\in \BS^\ord(dN(\L\Fm),\ch_K\ch_\ps\1\om^{2a-1};\CI) \]
	such that $P_m(\Bg_\Ps) = (g_{\ps_m}^\rh)^\sh = (g_{\ps_m}^\ft)^\rh$ for
	$m\ee a\md{p-1}$ and $P_m(\Bg_\Ps) = (g_{\ps_m})^\rh \ox \om^{a-m}$ for $m
	\nee a\md{p-1}$. If we view $\ps_m$ as a classical (complex-valued) Hecke
	character, in the $m\ee a\md{p-1}$ case the roots of the Hecke polynomial
	are $\al_{g^\rh} = \Lps_{m,\C}(\Fp)$ and $\be_{g^\rh} = \Lps_{m,\C}(\L\Fp)$.
\end{prop}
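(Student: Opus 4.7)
The plan is to mirror the construction of $\Bg_\Ps$ in Proposition \ref{prop:FamilyOfCMNewforms}, systematically replacing $\al$ by $\al^c$ (and $\Fp$ by $\L\Fp$) and keeping careful track of the complex conjugation. The first step is to apply the preceding lemma to identify $P_m\o\Ps^\rh$: for $m\ee a\md{p-1}$ it equals the $p$-adic character $\ps_m^\rh$, which has weight $(0,m-1)$, conductor $\L\Fm$ or $\L\Fm\L\Fp$, and $\L\Fp$-part of finite type $\om_{\L\Fp}^{1-a}$; for $m\nee a\md{p-1}$ it equals $\ps_m^\rh\om^{a-m}$, which always has conductor divisible by $\L\Fp$. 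In either case we are in the setup of Proposition \ref{prop:CMForm}, so an associated CM newform exists.

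Next I would transfer from $p$-adic to complex Hecke characters using the formula for $(\ph)^\rh$ from Section \ref{sec:ComplexConjugatesHeckeChars}. For an ideal $\Fa$ coprime to $\L\Fp$, taking $\pi_{\L\Fp} = p$ as uniformizer and using that $\al^\rh$ has infinity type $(0,1)$, one computes $\ps_{m,\C}^\rh(\Fa) = \ps_m^\rh(\Fa)\,p^{(m-1)\ord_{\L\Fp}(\Fa)}$ (with the algebraic-to-complex embedding suppressed). Thus the classical CM newform is
\[ g_{\ps_m^\rh}(z) = \s_{\Fa:(\Fa,\fc(\ps_m^\rh))=1}
        \ps_m^\rh(\Fa)\,p^{(m-1)\ord_{\L\Fp}(\Fa)}\,q^{N(\Fa)}, \]
where $\fc(\ps_m^\rh)$ denotes the conductor. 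For $m\nee a\md{p-1}$ the conductor is $\L\Fm\L\Fp$, which exactly matches the summation range in the definition of $\Bg_{\Ps^\rh}$, so $P_m(\Bg_{\Ps^\rh}) = g_{\ps_m^\rh\om^{a-m}} = g_{\ps_m}^\rh\ox\om^{a-m}$ directly. For $m\ee a\md{p-1}$ the conductor is only $\L\Fm$, so $g_{\ps_m^\rh}$ sums over all $\Fa$ coprime to $\L\Fm$, while $P_m(\Bg_{\Ps^\rh})$ omits those with $\L\Fp|\Fa$. Repeating the calculation at the end of Section \ref{sec:FamiliesOfCMForms} verbatim, but with $\Fp$ replaced by $\L\Fp$, shows that the omitted terms assemble into $\ps_{m,\C}^\rh(\L\Fp)\cdot g_{\ps_m^\rh}(pz)$, exhibiting $P_m(\Bg_{\Ps^\rh})$ as the $p$-stabilization attached to the $p$-adic unit root of the Hecke polynomial of $g_{\ps_m^\rh}$.

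The final step is to identify this stabilization. By the lemma in Section \ref{sec:ComplexConjugatesHeckeChars}, conjugation swaps the unit and non-unit roots, so the $p$-adic unit root of $g_{\ps_m}^\rh$ is $\L\be_{g_{\ps_m}} = \L{\ps_{m,\C}(\Fp)} = \ps_{m,\C}^\rh(\L\Fp)$; hence $P_m(\Bg_{\Ps^\rh}) = (g_{\ps_m^\rh})^\sh$, giving the claimed roots $\al_{g^\rh} = \L\ps_{m,\C}(\Fp)$ and $\be_{g^\rh} = \L\ps_{m,\C}(\L\Fp)$. The equality $(g_{\ps_m}^\rh)^\sh = (g_{\ps_m}^\ft)^\rh$ then follows by directly conjugating the defining formula $g_{\ps_m}^\ft = g_{\ps_m} - \al_{g} g_{\ps_m}(pz)$ and noting that $\L\al_{g} = \be_{g^\rh}$ is the non-unit root, which is exactly what appears in the $\sh$-stabilization of $g_{\ps_m}^\rh$. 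Theorem \ref{thm:LambdaAdicNewform} now applies: every specialization is either an ordinary newform (at points $P_{m,\ze}$ with nontrivial $p$-part of the nebentypus) or a $p$-stabilization of one, and the level and character are read off from $\ps_m^\rh$ and the relevant Teichm\"uller twist. The main subtlety throughout is book-keeping: ensuring the $\Fp\leftrightarrow\L\Fp$ swap, the sign of the Teichm\"uller twist off the arithmetic progression, and the $\sh/\ft$ swap under conjugation are all compatible.
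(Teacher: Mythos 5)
Your proposal is correct and follows exactly the route the paper intends: the paper itself only remarks that the construction of Section \ref{sec:FamiliesOfCMForms} "goes through nearly identically" with $\al$ replaced by $\al^c$ and $\Fp$ by $\L\Fp$, and your write-up is a careful execution of precisely that mirroring (including the $\sh/\ft$ swap under conjugation and the Teichm\"uller twist off the arithmetic progression $m\ee a\md{p-1}$). No gaps; the only caveats are notational ones already present in the statement of the proposition itself (e.g.\ $\Fm$ versus $\L\Fm$, which is moot since $N(\Fm)=N(\L\Fm)$ and in the applications $\Fm$ is generated by a rational integer).
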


We remark that this is the best we could hope for - the characters of the
conjugated forms $(g_{\ps_m})^\rh$ vary as $\om^m$, but specializations
of any $\CI$-adic modular form need to vary as $\om^{-m}$, so a twist is
necessary to correct this outside of a single arithmetic progression. 

\subsection{Shifts of $\CI$-adic forms} \label{sec:ShiftsOfIAdicForms}

Finally, we take two general operations that one can do with classical modular
forms and describe explicitly how we apply these to $\CI$-adic forms. The first
is replacing a modular form $g(z)$ by $g(Mz/L)$ for $M,L$ natural numbers; if
$g$ lies in the space $S_m(N,\ch)$ then one checks $g(Mz/L)$ lies in the space
$S_m(\Ga_0(MN,L),\ch)$. The effect on $q$-expansions is easy to understand: if
$g(z) = \s a_n q^n$ then $g(Mz/L) = \s a_n q^{Mn/L}$. We take the notation that
$g|_{M/L}$ denotes the modular form $g|_{M/L}(z) = g(Mz/L)$. If $\Bg = \s A_n
q^n$ is a $\CI$-adic modular form, it's clear that to have this effect on
specializations we'd just want to use $\s A_n q^{Mn/L}$. 

\begin{lem}
	Suppose $M,L$ are positive integers that are coprime to each other. Let $\Bg
	= \s A_n q^n$ be a $\CI$-adic form in $\BS(N,\ch;\CI)$. Then if we define 
	\[ \Bg|_{M/L} = \s A_n q^{Mn/L}, \]
	we have $\Bg|_{M/L} \in \BS(\Ga_0(NM,L),\ch;\CI)$ and moreover
	$P_m(\Bg|_{M/L}) = P_m(\Bg)|_{M/L}$. 
\end{lem}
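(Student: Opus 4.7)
My plan is to verify the two claims essentially independently, as the statement naturally splits into (i) a coefficient-wise identity between specializations and (ii) a classicality/level statement that then follows from (i) plus the well-known behavior of the shift operator on ordinary modular forms.

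First I would establish the specialization identity $P_m(\Bg|_{M/L}) = P_m(\Bg)|_{M/L}$. Since $P_m : \CI \to \CO_F$ is a continuous $\CO_F$-algebra homomorphism, it acts on $q$-expansions coefficient-wise. Thus
\[ P_m(\Bg|_{M/L}) = P_m\b( \s A_n q^{Mn/L} \e) = \s P_m(A_n) q^{Mn/L}, \]
while $P_m(\Bg)|_{M/L} = \6(\s P_m(A_n)q^n\6)|_{M/L}$ equals the same thing by definition of the shift. The identical argument works if one instead evaluates at an arbitrary point $P_{k,\ep} \in \CX_\alg(\CI;\CO_F)$.

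Next I would handle the classicality. By hypothesis, for all but finitely many $m \ge 2$ and every $P'_m \in \CX_\alg(\CI,\CO_F)$ lying over $P_m$, the specialization $P'_m(\Bg)$ is a classical cusp form in $S_m(N,\ch\om^{-m};\CO_F)$. It is a standard classical fact (checked by direct calculation with the matrices $\sM{cc}{a&b\\c&d} \in \Ga_0(NM,L)$ acting on $g(Mz/L)$, using $(M,L) = 1$) that if $g \in S_m(N,\ch)$ then $g|_{M/L} \in S_m(\Ga_0(NM,L),\ch)$, with the character pulled back through the lower-right entry. Combined with the identity from the first step, this gives $P'_m(\Bg|_{M/L}) \in S_m(\Ga_0(NM,L),\ch\om^{-m};\CO_F)$ for the same cofinite set of $m$'s.

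Finally, to conclude that $\Bg|_{M/L}$ is itself a $\CI$-adic form in the asserted space, I need to know that $\Bg|_{M/L}$ lies in $\CI\bb{q^{1/L}}$ (which is the ambient $q$-expansion space for level $\Ga_0(NM,L)$). This is immediate from the definition, since the coefficient of $q^{n/L}$ is either $A_{n/M}$ (when $M | n$) or $0$. Assembling these observations against the definition of $\BS(\Ga_0(NM,L),\ch;\CI)$ yields the claim. The only mild subtlety is confirming the character is unchanged (rather than twisted by a power of $\om$) under the shift; this is clear because the shift operation does not involve the prime $p$ in any way (we assume $(M,L)=1$ but make no assumption relating $ML$ to $p$, and in our applications $ML$ will be prime to $p$), so the tame character transforms exactly as in the classical case.
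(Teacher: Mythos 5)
Your proof is correct, and it takes the only reasonable route: the paper itself gives no proof of this lemma, treating it as immediate from the preceding discussion that the shift acts coefficient-wise on $q$-expansions and that $g(Mz/L)$ lands in $S_m(\Ga_0(MN,L),\ch)$ classically. Your write-up supplies exactly the routine verifications the paper leaves implicit --- that $P_{k,\ep}$ commutes with the coefficient-wise shift, that classicality of specializations transfers via the standard conjugation computation with $\sM{cc}{M&0\\0&L}$ (which also confirms the character is untouched), and that the shifted series lives in the right ambient $q$-expansion module --- so there is nothing to add.
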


The second operation is taking a modular form $g(z)$ and replacing it with the
multiple $f(z) g(z)$, where $f$ is another fixed modular form. We want to take
a $\CI$-adic form $\Bg$ and ``shift'' it by a constant modular form $f$. We can
certainly multiply together the two formal $q$-series to get a new series such
that applying $P_m$ gives us $f \dt P_m(\Bg)$, a modular form of weight $m+k$.
\par
This is almost what we need; we just need to modify the series so that the
specializations have the correct weight. For $\La$-adic forms, we can do this 
by applying the automorphism $\si_{-k} : \La \to \La$ specified by
$\si_{-k}(1+X) = (1+X)(1+p)^{-k}$, as it's clear that $P_m\o\si_{-k} =
P_{m-k}$. To do this in the $\CI$-adic setting we need to extend $\si_{-k}$
to an automorphism of $\CI$; for the $\CI$ we construct in Section
\ref{sec:FamiliesOfHeckeCharacters} we can define $\si_{-k}$ explicitly as an
automorphism taking $\ga_\pi$ to $\ga_\pi (1+\pi)^{-k}$ (which satisfies
$P_m\o\si_{-k} = P_{m-k}$ for our canonical extensions of $P_m$ to $\CI$). 

\begin{prop}
	Suppose that we have an extension of $\si_{-k}$ to $\CI$ as discussed above.
	Let $\Bg = \s A_n q^n\in \BS(\Ga,\ch;\CI)$ be a $\CI$-adic form, and $f = \s
	b_m q^m \in M_k(\Ga_f,\ch_f;\CO_F)$ be a fixed $p$-adic modular form. Then
	the product of formal $q$-series 
	\[ f\dt \Bg = \b( \s_m b_m q^m \e) \b( \s_m \si_{-k}(A_n) q^n \e) \]
	is a $\CI$-adic form in $\BS(\Ga\i\Ga_f,\ch\ch_f;\CI)$ with specializations
	$P_m(f\dt\Bg) = f \dt P_{m-k}(\Bg)$.
\end{prop}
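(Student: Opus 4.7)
The plan is to verify the three separate claims of the proposition in turn: that $f\dt\Bg$ is a well-defined element of $\CI\bb{q}$, that its specializations at the points $P_m$ agree with the stated formula, and that these specializations are classical cusp forms of the expected weight, level, and character for all but finitely many $m$, so that $f\dt\Bg$ qualifies as a $\CI$-adic cusp form in $\BS(\Ga\i\Ga_f,\ch\ch_f;\CI)$.

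The first two claims are essentially formal. Since $\CI\bb{q}$ is a ring, the product of the two power series defining $f\dt\Bg$ is automatically a well-defined element of $\CI\bb{q}$, and (because $f$ and $\Bg$ have Fourier expansions supported in nonnegative integers) the convolution defining each coefficient is a finite sum. For the specialization formula, the $n$-th Fourier coefficient of $f\dt\Bg$ is $\s_{i+j=n} b_i\,\si_{-k}(A_j)$. Since $P_m:\CI\to\CO_F$ is a ring homomorphism, applying it term by term and invoking the defining property $P_m\o\si_{-k} = P_{m-k}$ of our chosen extension of $\si_{-k}$ to $\CI$ yields $\s_{i+j=n} b_i\,P_{m-k}(A_j)$, which is exactly the $n$-th coefficient of $f\dt P_{m-k}(\Bg)$.

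For the third claim, I would appeal directly to the defining property of $\Bg\in\BS(\Ga,\ch;\CI)$: for all but finitely many $k'\ge 1$ and every point $P'_{k'}\in\CX_\alg(\CI;\CO_F)$ lying over $P_{k'}$, the specialization $P'_{k'}(\Bg)$ is a classical cusp form in $S_{k'}(\Ga,\ch\om^{-k'};\CO_F)$. Taking $k' = m-k$ gives this for all but finitely many $m$, and the product of the fixed classical form $f\in M_k(\Ga_f,\ch_f;\CO_F)$ with a classical cusp form in $S_{m-k}(\Ga,\ch\om^{-(m-k)};\CO_F)$ is a classical cusp form of weight $m$, level $\Ga\i\Ga_f$, and character $\ch_f\ch\om^{-(m-k)}$ by standard facts about multiplication of modular forms (easily verified directly at the level of $q$-expansions and the slash-operator action). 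Matching this to the target space $\BS(\Ga\i\Ga_f,\ch\ch_f;\CI)$ amounts to absorbing a Teichm\"uller twist $\om^k$ into the convention for ``tame character,'' which is the only slightly delicate bookkeeping point.

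There is no real main obstacle: the substantive ingredients, namely the specialization property of $\si_{-k}$ and the existence of a suitable extension to $\CI$ (which has been arranged in the preceding discussion), are already in hand. The proposition is then just the formal $\CI$-adic packaging of the classical observation that multiplying a family of cusp forms by a fixed holomorphic modular form produces another such family.
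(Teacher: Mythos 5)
Your proof is correct and is essentially the argument the paper intends (the paper in fact states this proposition without proof, relying on exactly the formal observations you spell out: the ring structure of $\CI\bb{q}$, the identity $P_m\o\si_{-k}=P_{m-k}$, and classicality of products of classical forms). You also rightly flag the one genuinely loose point, namely that the product's tame character is literally $\ch\ch_f\om^{k}$ rather than $\ch\ch_f$ under the paper's normalization $P_m(\Bf)\in M_m(\cdot,\ch\om^{-m})$, a Teichm\"uller twist the paper silently absorbs.
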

 
\section{$\CI$-adic Petersson inner products}
\label{sec:PeterssonInterpolate}

Now that we've set up the basic framework of $\CI$-adic modular forms (for the
types of extensions $\CI/\La$ discussed), in this section we'll recall results
of Hida that allow us to define an element in $\CI$ that interpolates the
Petersson inner products of specializations of two $\CI$-adic cusp forms. This
will be our main tool from Hida theory that will let us take the identities we
get from Ichino's formula and turn them into an identity of $p$-adic analytic
functions.

\subsection{Hida's linear functional} 

Naively, one might want a pairing on the space of $\CI$-adic cusp forms that
interpolates the Petersson inner products, but this is too much to hope for.
Instead, if we're given an ordinary $\CI$-adic newform $\Bh$ of level $N_0$,
we can define a linear functional $\l_\Bh : \BS^\ord(N_0 p,\ch;\CI) \to \CI$
such that $\l_\Bh(\Bg)$ interpolates the Petersson inner products of $\Bh_P$
and $\Bg_P$. Hida constructs such forms in Section 7 of \cite{Hida1988b}, and
we give a construction along the same lines. It's built from the results quoted
in Section \ref{sec:ModuleOfCongruence}, in particular the existence of an
idempotent $1_\Bh$ and the congruence number $H_\Bh$. 

\begin{prop}
	Let $\Bh \in \BS^\ord(N_0 p,\ch;\CI)$ be a $\CI$-adic newform of level
	$N_0$. Then for any $r$ we can define a linear functional $\l_\Bh :
	\BS^\ord(N_0 p^r,\ch;\CI) \to \CI$ by the formula 
	\[ \l_\Bh(\Bg) = a(1,1_\Bh \Bg) H_\Bh \]
	where $H_\Bh$ is the congruence number associated to $\Bh$ and $1_\Bh$ is
	the idempotent of the Hecke algebra associated to $\Bh$ (both as defined in
	the previous section). 
\end{prop}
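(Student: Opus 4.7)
The plan is to verify that the formula $\l_\Bh(\Bg) = a(1, 1_\Bh \Bg) H_\Bh$ gives a well-defined $\CI$-linear map into $\CI$, and not merely into the fraction field $Q(\CI)$. A priori, the idempotent $1_\Bh$ lives only in $\DT^\ord(N_0 p^r,\ch;\CI) \ox Q(\CI)$, since it is the identity of the $Q(\CI)$-direct summand $\DT(\Bh)_Q$, so that $1_\Bh \Bg$ is only guaranteed to have Fourier coefficients in $Q(\CI)$. The content of the proposition is that multiplying by the congruence number $H_\Bh$ clears all denominators.

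The main step is to show that $H_\Bh \dt 1_\Bh$, originally regarded as an element of $\DT^\ord \ox Q(\CI)$, in fact lies in $\DT^\ord$ itself. For this I appeal to Theorem~\ref{thm:ExistenceOfCongruenceNumber}: since
\[ \CC_0(\Bh;\CI) = \f{\DT(\Bh)_\CI \op \DT(\Bh)_\CI^\pr}{\DT^\ord} \iso \CI/H_\Bh \CI, \]
multiplication by $H_\Bh$ annihilates this quotient. Now $1_\Bh$ is the identity of $\DT(\Bh)_Q \iso Q(\CI)$, and it already lies in the subring $\DT(\Bh)_\CI$ (as $1 = \la_\Bh(T(1))$), so the pair $(1_\Bh,0)\in \DT(\Bh)_\CI \op \DT(\Bh)_\CI^\pr$ makes sense. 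Multiplying by $H_\Bh$ gives $(H_\Bh \dt 1_\Bh,0)$, and by the annihilation statement this element lies in the image of $\DT^\ord$. Hence there is a genuine Hecke operator in $\DT^\ord$ whose two components are exactly $H_\Bh \dt 1_\Bh$ and $0$, and this operator coincides with $H_\Bh \dt 1_\Bh$ when viewed inside $\DT^\ord \ox Q(\CI)$.

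Once this is established, the rest is formal. Since $H_\Bh \dt 1_\Bh \in \DT^\ord(N_0 p^r,\ch;\CI)$ acts on $\BS^\ord(N_0 p^r,\ch;\CI)$ as a bona fide endomorphism, the form $(H_\Bh \dt 1_\Bh) \Bg \in \BS^\ord(N_0 p^r,\ch;\CI) \se \CI\bb{q}$ has all Fourier coefficients in $\CI$. Extracting the first Fourier coefficient,
\[ \l_\Bh(\Bg) \;=\; H_\Bh \dt a(1, 1_\Bh \Bg) \;=\; a(1, (H_\Bh \dt 1_\Bh) \Bg) \;\in\; \CI, \]
as desired. The $\CI$-linearity of $\l_\Bh$ is then immediate from the $\CI$-linearity of the Hecke action on $\BS^\ord$ and of the functional $h \mt a(1,h)$ on $\CI\bb{q}$.

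The chief obstacle is ensuring that this integrality argument is valid at the full level $N_0 p^r$, not only at the level $N_0 p$ where the congruence module is stated. This should be handled by checking that the natural maps between ordinary Hecke algebras at different $p$-power levels are compatible with the projections onto $\DT(\Bh)_Q$ and $\DT(\Bh)_Q^\pr$, so that the level-$N_0 p$ conclusion that $H_\Bh \dt 1_\Bh$ descends to an integral Hecke operator transfers to each level $N_0 p^r$. Once this bookkeeping between levels is carried out, the argument above goes through uniformly.
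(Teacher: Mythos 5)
Your argument is correct and is essentially the paper's own proof: the key point in both is that $1_\Bh$ lies in $\DT(\Bh)_\CI$ (being the image of the identity under the projection), so the fact that $H_\Bh$ annihilates $\CC_0(\Bh;\CI)$ forces $H_\Bh\dt 1_\Bh$ into $\DT^\ord(N_0 p^r,\ch;\CI)$, whence the first Fourier coefficient of $(H_\Bh 1_\Bh)\Bg$ lies in $\CI$. The level-compatibility bookkeeping you flag at the end is also handled in the paper (implicitly, via the decomposition of the ordinary Hecke algebra being stated for every $r$), so nothing further is needed.
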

\begin{proof}
	The assignment $\Bg \mt a(1,1_\Bh \Bg)$ is evidently linear. The only issue
	is that by definition, $1_\Bh$ is only an element of $\DT^\ord(N_0
	p^r,\ch;\CI) \ox Q(\CI)$, so we only get a linear functional
	$\BS^\ord(N_0 p^r,\ch;\CI) \to Q(\CI)$. However, by definition $H_\Bh$ is
	exactly what's needed to fix this; it annihilates the congruence module
	\[ \CC_0(\Bh;\CI) = \f{\DT(\Bh)_\CI \op \DT(\Bh)_\CI^\pr}
				{\DT^\ord(N_0 p^r,\ch;\CI)}; \]
	since $1_\Bh$ lies in $\DT(\Bh)_\CI$ by construction, $H_\Bh \dt 1_\Bh$ is
	trivial in this quotient, i.e. $H_\Bh \dt 1_\Bh$ lies in
	$\DT^\ord(N_0 p^r,\ch;\CI)$. So $H_\Bh 1_\Bh \Bg \in \BS^\ord(N_0
	p^r,\ch;\CI)$ and so its first Fourier coefficient is actually in in $\CI$. 
\end{proof}

We remark that if $r < s$ then the linear functional for level $N_0 p^s$
restricts to the linear functional for $N_0 p^r$; this follows because we can
check that the idempotent $1_\Bh$ for level $N_0 p^s$ projects to $1_\Bh$ for
level $N_0 p^r$. We can also extend this to a linear functional $\BS(N_0
p^r,\ch;\CI) \to \CI$ by precomposing it with the ordinary projector $e$; on
this larger space it's thus given by 
\[ \l_\Bh(\Bg) = a(1,e 1_\Bh \Bg) H_\Bh.  \] 
For simplicity, we'll abuse notation and let $1_\Bh \in \DT(N_0 p^r,\ch;\CI)$
denote $e 1_\Bh$, since this just amounts to taking the image of $1_\Bh \in
\DT^\ord$ under the inclusion $\DT^\ord \to \DT$ coming from multiplication by
$e$. If $P \in \CX(\CI,\CO_F)_\alg$ then we know we have 
\[ \l_\Bh(\Bg)_P = a(1,1_{\Bh_P} \Bg_P) H_{\Bh,P}. \]
and the right-hand side is computed in the appropriate space of classical cusp
forms (with $F$ coefficients). 

\subsection{Realizing the functional as a Petersson inner product}

To write the values $\l_\Bh(\Bg)_P$ explicitly, we need to compute $a(1,1_h
g) H_h$ for the classical modular forms $h = \Bh_P$ and $g = \Bg_P$. We won't
say anything about $H_h$ for now, and instead focus the term $a(1,1_h g)$. In
this section we deal with the case where $g$ is has the same level as the
newform $h$, and discuss how to work with $g$ of higher level in the next
section. Since $\Bh$ is an ordinary $\CI$-adic newform, $h$ will always be
either an ordinary newform (of level $N_0 p$) or a $p$-stabilization of an
ordinary newform (of level $N_0$); we'll need to deal with those two cases
separately. The form $g$, meanwhile, can be any cusp form of level $N_0 p$.
\par
Our first observation is that the projection $1_h$ is characterized by the
properties that $1_h h = h$, and $1_h h'= 0$ if $h'$ is an eigenform of any
Hecke operator $t$ and has a different eigenvalue from $h$. The latter property
follows from noting that if $t$ is such an operator, and $th' = \la' h'$ with
$\la_h(t) \ne \la'$, then we can compute 
\[ \la_h(t) 1_h h'= t\dt (1_h h') = 1_h \dt (th') 
		= 1_h \dt (\la' h') = \la' 1_h h' \]
which forces $1_h h' = 0$. 
\par
If we assume $h \in S_m(N_0 p,\ch;F)$ actually has coefficients in a number
field $F_0 \se F$, and restrict to elements $g \in S_k(N_0 p,\ch;F_0)$ then
we can compute $a(1,1_h g) \in F_0$ by computing this in $\C$ (where we can
pass to an element $1_h \in \DT_\C(S_k(N_0 p,\ch))$ characterized by the same
property as before). Since $g \mt a(1,1_h g)$ is then a linear functional on
$S_k(N_0 p,\ch)$, by duality for the Petersson inner product there exists a
modular form $h' \in S_k(N_0 p,\ch)$ such that $\g{g,h'} = a(1,1_h g)$ for all
$g$. In fact, it's sufficient to find $h'$ such that $g \mt \g{g,h'}$ is a
scalar multiple of $g \mt a(1,1_h g)$, and then comparing values at $h$ gives
us $a(1,1_h g) = \g{g,h'}/\g{h,h'}$. It will turn out that $h'$ is either equal
to or closely related to $h$ (and its exact form will come out from our
analysis). We start by checking: 

\begin{lem}
	Let $h \in S_m(N_0 p,\ch)$ be either a $p$-ordinary newform or the
	$p$-stabilization of an ordinary newform. Then the unique form $h' \in
	S_m(N_0 p,\ch)$ such that $\g{g,h'} = a(1,1_h g)$ lies in the prime-to-$N_0
	p$ Hecke eigenspace $U(h) \se S_m(N_0 p,\ch)$ of $h$.
\end{lem}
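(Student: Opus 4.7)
The plan is to exploit the Atkin--Lehner decomposition of $S_m(N_0 p, \ch)$ into prime-to-$N_0 p$ Hecke eigenspaces, show that this decomposition is Petersson orthogonal, and then show that the linear functional $g \mt a(1, 1_h g)$ vanishes on every summand except $U(h)$.

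First I would write down
\[ S_m(N_0 p, \ch) = \Op_{h_0} U(h_0), \]
where $h_0$ runs over newforms of level dividing $N_0 p$ with character $\ch$ and $U(h_0) \se S_m(N_0 p, \ch)$ is the corresponding oldform space (equivalently the simultaneous eigenspace for all $T(n)$ with $(n, N_0 p) = 1$ with eigenvalues $a_n(h_0)$). The given $h$ is a full $\DT$-eigenform that sits inside a single $U(h_0)$: either $h_0 = h$ (if $h$ is a newform of level $N_0 p$) or $h_0 = h^\circ$ (if $h$ is the $p$-stabilization of a newform $h^\circ$ of level $N_0$), and $U(h) = U(h_0)$ in both cases.

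Next I would verify that this decomposition is Petersson orthogonal. The key input is that for $(n, N_0 p) = 1$ the Petersson adjoint of $T(n)$ on $S_m(N_0 p, \ch)$ is $\ch(n)\1 T(n)$, combined with the identity $\L{a_n(h_0)} = \ch(n)\1 a_n(h_0)$ for newform eigenvalues. Together these force $U(h_0) \perp U(h_0')$ whenever $h_0 \ne h_0'$: strong multiplicity one produces an $n$ coprime to $N_0 p$ separating the $T(n)$-eigenvalues, and the adjoint formula then makes the Petersson pairing vanish.

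The central step is to show $\g{g, h'} = a(1, 1_h g) = 0$ for every $g \in U(h_0')$ with $h_0' \ne h_0$. Using that $1_h$ lies in $\DT$ (and so commutes with $T(n)$), I would compute
\[ T(n)(1_h g) = 1_h T(n) g = a_n(h_0')(1_h g); \]
on the other hand, $1_h g$ lies in the image of the idempotent $1_h$, which by construction is the simultaneous $\DT$-eigenspace of $h$, so $T(n)(1_h g) = a_n(h)(1_h g)$ as well. Choosing $n$ coprime to $N_0 p$ with $a_n(h) \ne a_n(h_0')$ (available by strong multiplicity one) forces $1_h g = 0$, whence $\g{g, h'} = 0$. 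Combining this vanishing with the orthogonality from the previous paragraph gives $h' \in \b( \Op_{h_0' \ne h_0} U(h_0') \e)^\perp = U(h_0) = U(h)$. The main obstacle will be bookkeeping the Petersson adjoint formula $T(n)^* = \ch(n)\1 T(n)$ against the conjugate-linearity of $\g{\cdot,\cdot}$ so that orthogonality of distinct oldform spaces reduces cleanly to the newform eigenvalue identity above; once that is in place the remaining steps are essentially formal.
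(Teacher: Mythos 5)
Your proposal is correct and follows essentially the same route as the paper: decompose $S_m(N_0 p,\ch)$ as an orthogonal direct sum of prime-to-$N_0 p$ Hecke eigenspaces $U(h_0')$, show that $1_h$ (and hence $a(1,1_h\,\cdot\,)$) annihilates every summand other than $U(h)$ via the eigenvalue-separation argument, and conclude by orthogonality that $h'$ has no component outside $U(h)$. You simply spell out the orthogonality and annihilation steps in more detail than the paper does.
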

\begin{proof}
	We know $S_k(N_0 p,\ch)$ decomposes as an orthogonal direct sum of
	prime-to-$N_0 p$ Hecke eigenspaces, and in fact this is a decomposition of
	spaces $U(f)$ as $g'$ ranges over all newforms of character $\ch$ and level
	dividing $N_0 p$. Moreover, we know $1_h$ annihilates these subspaces for
	every $f \ne h$, and thus $a(1,1_h -)$ is trivial on all of them. The only
	way that $\g{-,h'}$ can be trivial on all of them is if $h'$ has no component
	in each of these spaces $U(f)$, i.e. $h' \in U(h)$. 
\end{proof}

We now need to split up into cases based on whether $h$ is a newform or a
$p$-stabilization of one. If $h \in S_m(N_0 p,\ch)$ is actually a newform of
level $N_0 p$, he multiplicity one theorem tells us that $U(h)$ is
one-dimensional, so $h'$ must be a scalar multiple of $h$ itself. Thus we get

\begin{cor}
	Let $h \in S_m(N_0 p,\ch)$ be a $p$-ordinary newform. Then for any $g\in
	S_m(N_0 p,\ch)$ we have 
	\[ a(1,1_h g) = \f{\g{g,h}}{\g{h,h}}. \]
\end{cor}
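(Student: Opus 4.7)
The plan is to apply the preceding lemma and combine it with multiplicity one to pin down $h'$ explicitly. By the lemma, there is a unique $h' \in S_m(N_0p,\chi)$ satisfying $\langle g,h'\rangle = a(1,1_h g)$ for all $g$, and moreover $h' \in U(h)$, the prime-to-$N_0p$ Hecke eigenspace of $h$. Since $h$ is a genuine newform at level $N_0p$, the strong multiplicity one theorem (Atkin--Lehner) gives $\dim U(h) = 1$, so $h' = c\,h$ for some scalar $c \in \C$.

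To determine $c$, I would specialize the defining identity to $g = h$. On the left-hand side we have $a(1, 1_h h) = a(1,h) = 1$ by the property of $1_h$ (it fixes $h$) together with the newform normalization $a(1,h) = 1$. On the right-hand side, using that the Petersson pairing is conjugate-linear in the second slot with our conventions, we have $\langle h, ch\rangle = \bar c\,\langle h,h\rangle$. Comparing gives $\bar c = 1/\langle h,h\rangle$, and since $\langle h,h\rangle$ is a positive real number this forces $c = 1/\langle h,h\rangle$.

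Substituting $h' = h/\langle h,h\rangle$ back into the defining identity and again using conjugate-linearity (with $\langle h,h\rangle$ real) yields
\[ a(1, 1_h g) = \langle g, h'\rangle = \frac{\langle g,h\rangle}{\langle h,h\rangle}, \]
as desired. The only mild subtlety is keeping track of the conjugate-linearity convention on the Petersson inner product, but this is harmless because $\langle h,h\rangle$ is real; everything else is a direct consequence of the lemma plus multiplicity one, so I do not anticipate any real obstacle in this case. (The harder case, requiring more care, will be the companion result when $h$ is instead a $p$-stabilization of a newform of level $N_0$, where $U(h)$ is two-dimensional and one must identify $h'$ within this space.)
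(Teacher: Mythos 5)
Your proposal is correct and matches the paper's argument: both use the preceding lemma to place $h'$ in $U(h)$, invoke multiplicity one to get $h' = c\,h$, and normalize by evaluating at $g=h$ using $a(1,1_h h)=a(1,h)=1$. The remark about conjugate-linearity being harmless since $\g{h,h}$ is real is a fine (and accurate) bit of bookkeeping.
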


So, for the rest of this section we work with the more involved case of the
$p$-stabilization of a newform. For convenience we change our notation a bit
for this case, and now let $h$ denote the original $p$-ordinary newform in
$S_m(N_0,\ch)$. We know that $U(h) \se S_m(N_0 p,\ch)$ is two-dimensional,
spanned by $h(z)$ and $h(pz)$. Moreover, the operator $T(p)$ of level $N_0 p$
has two eigenforms in this space with different eigenvalues; more specifically
we take the notation that 
\[ x^2 - a_p(h) x + p^{m-1} \ch(p) = (x - \al_h)(x - \be_h) \]
with $|\al_h|_p = 1$ and $|\be_h|_p < 1$. By Lemma
\ref{lem:StabilizationOfNewforms} the two eigenforms are 
\[ h^\sh(z) = h(z) - \be_h h(pz) \qq\qq h^\ft(z) = h(z) - \al_h h(z), \]
with eigenvalues $\al_h$ and $\be_h$, respectively. Thus $h^\sh$ spans the
one-dimensional ordinary subspace of $U(h)$, and $h^\sh$ is what actually
arises as the specialization of a $\CI$-adic newform. 
\par
So, we want to compute the linear functional $a(1,1_{h^\sh}-)$ on $S_m(N_0
p,\ch)$. We can pin this down exactly because we know it's zero on the
orthogonal complement of the two-dimensional space $U(h)$, and we also know its
behavior on $U(h)$ because $1_{h^\sh} h^\sh = h^\sh$ (by definition) and $1_h
h^\ft = 0$ (since $h^\ft$ has a $T(p)$-eigenvalue different from $h^\sh$). So
$a(1,1_{h^\sh}-)$ is (up to a scalar) equal to the form $\g{-,h'}$ for some $h'
\in U(h) = \C h(z) \op \C h(pz)$ that's orthogonal to $h^\ft$. Finding the
appropriate linear combination $h'$ is still a bit delicate, though; to do this
we'll need to understand how to compute the Petersson inner products
$\g{h(z),h(pz)}$, $\g{h(pz),h(z)}$, and $\g{h(pz),h(pz)}$ in terms of
$\g{h(z),h(z)} = \g{h,h}$. The starting point is computing the last product;
this can be done in great generality by change-of-variables as follows. 

\begin{lem} \label{lem:ScalingPetersson}
	Let $f,g$ be two weight-$k$ cusp forms (of any level). Then for any prime
	$p$ and any integer $i$, we have 
	\[ \g{f(pz+i),g(pz+i)} = p^{-k} \g{f(z),g(z)}. \]
\end{lem}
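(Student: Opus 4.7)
The plan is to apply the change of variables $w = pz+i$ directly in the integral defining $\g{f(pz+i),g(pz+i)}$. Writing $w = u+iv$ one has $u = px+i$ and $v = py$, so $dx\,dy = p^{-2}du\,dv$ and $\Im(z) = v/p$. These combine to give
\[ \Im(z)^k \cdot \f{dx\,dy}{y^2} = p^{-k}\,\Im(w)^k\cdot\f{du\,dv}{v^2}, \]
so the hyperbolic measure is preserved (reflecting that $z\mapsto pz+i$ is a hyperbolic isometry) while the weight-$k$ density contributes the expected factor of $p^{-k}$.

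To handle the volume normalizations, pick congruence subgroups $\Ga,\Ga'$ such that $f,g$ are weight-$k$ invariant under $\Ga$, $f(pz+i),g(pz+i)$ are invariant under $\Ga'$, and $\si\Ga'\si\1\se\Ga$ where $\si = \M{cc}{p&i\\0&1}$. (For instance, take $\Ga = \Ga_0(N_0)$ for a common level $N_0$ of $f,g$ and $\Ga' = \Ga_0(pN_0)$; then direct matrix computation shows the lower-left entry of $\si\gamma\si\1$ for $\gamma\in\Ga'$ is divisible by $N_0$.) The change of variables sends the fundamental domain $\DH\q\Ga'$ to a fundamental domain for $\si\Ga'\si\1$, and the transformed integrand $f(w)\L{g(w)}\Im(w)^k\,du\,dv/v^2$ is $\Ga$-invariant; unfolding the integral over $\DH\q(\si\Ga'\si\1)$ against $\DH\q\Ga$ therefore produces a factor of the index $[\Ga:\si\Ga'\si\1]$.

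Finally, because $\si$ preserves the hyperbolic volume form, one has $\vol(\DH\q\Ga') = \vol(\DH\q(\si\Ga'\si\1)) = [\Ga:\si\Ga'\si\1]\vol(\DH\q\Ga)$. Thus the index from unfolding exactly cancels the ratio $\vol(\DH\q\Ga')/\vol(\DH\q\Ga)$ that arises from comparing the two volume normalizations, and combined with the $p^{-k}$ from the first paragraph this produces the desired identity. The only subtle point in the argument is this volume-and-index bookkeeping; once the M\"obius-invariance of the hyperbolic volume form is in hand, the factors cancel cleanly.
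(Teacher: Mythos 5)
Your proof is correct and follows essentially the same route as the paper's: a change of variables by the M\"obius transformation $z\mapsto pz+i$, invariance of the hyperbolic measure, and bookkeeping of fundamental domains and volume normalizations. The only cosmetic difference is that the paper works with a single common subgroup $\Ga(M)$ and observes that $\si\Ga(M)\si\1$ has the same index in $\SL_2(\Z)$, whereas you unfold over two nested subgroups and cancel the index against the volume ratio; both versions of the bookkeeping are fine (though for forms with nontrivial nebentypus your parenthetical choice of $\Ga_0$-groups should be replaced by $\Ga_1$ or $\Ga(M)$ so the integrand is genuinely invariant).
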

\begin{proof}
	We can pick some common congruence subgroup $\Ga(M)$ for which all of $f(z)$,
	$f(pz+i)$, $g(z)$, and $g(pz+i)$ are modular. We have 
	\[ \g{f(pz+i),g(pz+i)} 
		= \f{1}{[\LGa:\LGa(M)]} \S_{D(M)} f(pz+i) g(pz+i) \Im(z)^k d\mu(z) \]
	\[ = p^{-k} \f{1}{[\LGa:\LGa(M)]} 
				\S_{D(M)} f(pz+i) g(pz+i) \Im(pz+i)^k d\mu(z),\]
	where $d\mu$ is the standard hyperbolic measure on $\DH$ and $D(M)$ is a
	fundamental domain for $\Ga(M)$. Using change-of-variables on the M\"obius
	transformation $\si : z \mt pz+i$, under which $d\mu$ is invariant, gives
	\[ \g{f(pz+i),g(pz+i)} 
		= p^{-k} \f{1}{[\LGa:\LGa(M)]} \S_{\si D(M)} f(z) g(z) \Im(z)^k d\mu(z).\]
	The set $\si D(M)$ is a fundamental domain for the group $\si \LGa(M)
	\si\1$, and we can compute that this conjugate still lies in $\LGa$ and has
	index $[\LGa:\LGa(M)]$. Thus the right hand side of our equation is $p^{-k}
	\g{f(z),g(z)}$ computed on this new congruence subgroup.
\end{proof}

Returning to our situation where $h$ is an eigenform of level $N_0$ with $p\nd
N_0$, we can use the result above plus the Hecke relation for $T(p) h$ to work
out what $\g{h(z),h(pz)}$ and $\g{h(pz),h(z)}$ are.

\begin{lem} \label{lem:PairingFormWithSelfTranslate}
	Let $h \in S_m(N_0,\ch)$ be a $T(p)$-eigenform with eigenvalue $a_p$, for a
	prime $p\nd N_0$. Then we have 
	\[ \g{h(z),h(pz)} = \f{a_p}{p^{m-1}(p+1)} \g{h,h} \qq\qq
		\g{h(pz),h(z)} = \f{\Lch(p) a_p}{p^{m-1}(p+1)} \g{h,h}. \]
\end{lem}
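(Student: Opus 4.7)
The two identities are related by complex conjugation: $\g{h(pz), h(z)} = \L{\g{h(z), h(pz)}}$, and the Ramanujan relation $\al_p\be_p = \ch(p) p^{m-1}$ combined with $|\al_p| = |\be_p| = p^{(m-1)/2}$ gives $\L{a_p} = \L{\ch(p)}\, a_p$. So it suffices to prove the first identity $\g{h(z), h(pz)} = \f{a_p}{p^{m-1}(p+1)}\g{h, h}$.

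The main tool will be the standard adjointness between Petersson products at consecutive levels: for $f \in S_m(\Ga_0(N_0),\ch)$ and $g \in S_m(\Ga_0(N_0 p), \ch)$,
\[
\g{f, g}_{N_0 p} = \f{1}{p+1}\, \g{f, \mathrm{Tr}^{N_0 p}_{N_0}(g)}_{N_0},
\]
where $\mathrm{Tr}^{N_0 p}_{N_0}(g) = \s_j g|_m \ga_j$ is the trace computed using right-coset representatives $\ga_j$ of $\Ga_0(N_0 p)\backslash \Ga_0(N_0)$. By CRT we may choose representatives so that $d_j \ee 1 \md{N_0}$, which eliminates extraneous character factors in the adjointness formula. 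The problem thus reduces to computing $\mathrm{Tr}^{N_0 p}_{N_0}(h(pz))$.

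For the explicit computation, take $\ga_\infty = I$ and, for $j = 0, \ldots, p-1$, take $\ga_j \in \Ga_0(N_0)$ with lower row $(c_j, d_j) \ee (1, j) \md{p}$ and $d_j \ee 1 \md{N_0}$; these give all $p+1$ distinct cosets. The representative $\ga_\infty$ contributes $h(pz)$. For each $\ga_j$, split off an element of $\Ga_0(N_0)$ on the left of the determinant-$p$ matrix appearing in the slash action,
\[
\M{cc}{p & 0 \\ 0 & 1}\ga_j = \varepsilon_j \M{cc}{1 & j \\ 0 & p}, \qquad \varepsilon_j \in \Ga_0(N_0),
\]
where direct computation shows $\varepsilon_j$ has lower-right entry $(d_j - c_j j)/p \ee p\1 \md{N_0}$. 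Applying $h|_m \varepsilon_j = \L{\ch(p)}\, h$ followed by the slash action of $\M{cc}{1 & j \\ 0 & p}$ yields $(h(pz))|_m \ga_j = p^{-m}\L{\ch(p)}\, h((z+j)/p)$. Summing over all $p+1$ representatives and invoking the Hecke relation $\s_{j=0}^{p-1} h((z+j)/p) = p\, a_p h - \ch(p) p^m h(pz)$ produces
\[
\mathrm{Tr}^{N_0 p}_{N_0}(h(pz)) = h(pz) + p^{-m}\L{\ch(p)}\b(p\, a_p h - \ch(p) p^m h(pz)\e) = p^{1-m}\L{\ch(p)}\, a_p h.
\]
Plugging back into the adjointness formula and using $\ch(p)\L{a_p} = a_p$ produces the desired identity.

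The main obstacle is the character bookkeeping in the slash-action computation: one must split off an element of $\Ga_0(N_0)$ on the left of the determinant-$p$ matrix $\M{cc}{p & 0 \\ 0 & 1}\ga_j$ to access the modularity of $h$, and the precise factor $\L{\ch(p)}$ that emerges (from $\ch$ evaluated on $p\1 \md{N_0}$) is exactly what makes the Hecke-relation substitution cancel all the $h(pz)$ contributions on the right-hand side and leave a pure multiple of $h$.
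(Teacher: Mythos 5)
Your proof is correct, and it reaches the formula by a genuinely different route than the paper's. The paper stays entirely at level $N_0 p$ and never writes down coset representatives: it substitutes the eigenform relation $a_p h(z) = \ch(p)p^{m-1}h(pz) + \f{1}{p}\s_{i}h\pf{z+i}{p}$ into the first slot of $\g{-,h(z)}$ and then into the second slot of $\g{h(z),-}$ (using Lemma \ref{lem:ScalingPetersson} to rewrite each $\g{h(\f{z+i}{p}),h(z)}$ as $p^{m}\g{h(z),h(pz)}$, and $\L{a}_p = \Lch(p)a_p$ in the adjoint slot), which produces two linear equations in the two unknowns $\g{h(z),h(pz)}$ and $\g{h(pz),h(z)}$; solving that $2\x 2$ system gives both identities simultaneously. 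You instead prove only the first identity, deduce the second from Hermitian symmetry together with $\L{a}_p = \Lch(p)a_p$, and obtain the first by descending $h(pz)$ to level $N_0$ through the trace operator. Your bookkeeping checks out: the index $p+1$ enters through the volume normalization of the two Petersson products; the decomposition $\M{cc}{p&0\\0&1}\ga_j = \varepsilon_j\M{cc}{1&j\\0&p}$ is valid precisely because $d_j - jc_j\ee 0\md{p}$ for your representatives; the lower-right entry of $\varepsilon_j$ is indeed $\ee p\1\md{N_0}$; and the resulting $\trc(h(pz)) = p^{1-m}\Lch(p)a_ph$, combined with the antilinearity of the second slot (whence the extra conjugation you correctly absorb via $\ch(p)\L{a}_p = a_p$), gives the stated constant. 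What your approach buys is a closed form for the trace of $h(pz)$, conceptually close to the trace-operator manipulations the paper performs later in Section \ref{sec:PeterssonOfHigherLevels} and to the identification of $h^\na$ with a multiple of $h^\sh_\rh|w_{N_0p}$; what the paper's approach buys is that both inner products fall out of one symmetric computation with no need to exhibit coset representatives. One small caution: you justify $\L{a}_p = \Lch(p)a_p$ via the Ramanujan bound, which presupposes $h$ is (a translate of) a newform, whereas the lemma is stated for an arbitrary $T(p)$-eigenform; the relation holds for all eigenforms via the adjunction $\g{T(p)f,g} = \ch(p)\g{f,T(p)g}$, which is how the paper cites it, though for the CM forms to which the lemma is ultimately applied either justification suffices.
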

\begin{proof}
	To compute these, we start by noting that $f$ being an eigenform tells us 
	\[ a_p h(z) = \ch(p) p^{m-1} h(pz) + \f{1}{p} \s_{i=0}^{p-1} h\pf{z+i}{p}.\]
	Applying $\g{-,h(z)}$ to both sides and using linearity in the first
	coordinate we get 
	\[ a_p \g{h,h} = \ch(p) p^{m-1} \g{h(pz),h(z)} 
			+ \f{1}{p} \s_{i=0}^{p-1} \G{h\pf{z+i}{p},h(z)}. \] 
	The previous lemma plus invariance of $h$ under $z \mt z+i$ gives that 
	$\g{h(z),h(pz)} = p^{-m} \g{h(\f{z+i}{p}),h(z)}$ and thus
	\[ a_p \g{h,h} = \ch(p) p^{m-1} \g{h(pz),h(z)} + p^m \g{h(z),h(pz)}. \]
	An identical computation with $\g{h(z),-}$ (using antilinearity in the
	second coordinate) tells us 
	\[ \Lch(p) a_p \g{h,h} 
				= \Lch(p) p^{m-1} \g{h(z),h(pz)} + p^m \g{h(pz),h(z)}. \]
	Here we use the relation $\L{a}_p = \Lch(p) a_p$, which follows from
	the identity $T(p)^* = \ch(p) T(p)$. We can then take these two equations
	and solve for $\g{h(z),h(pz)}$ and $\g{h(pz),h(z)}$ to obtain the lemma.
\end{proof}

We can then complete our computation of $a(1,1_h g)$ in the case when we're
working with a $p$-stabilization of a newform. 

\begin{prop} \label{prop:PeterssonForPStabilization}
	Suppose $p\nd N_0$ and $h \in S_m(N_0,\ch)$ is a $p$-ordinary newform, and 
	let $\al = \al_h$ and $\be = \be_h$ be the roots of $x^2 - a_p(h) x +
	p^{m-1} \ch(p)$ with $|\al|_p = 1$ and $|\be|_p < 1$, as before. Then the
	linear functional $a(1,1_{f^\sh} -)$ is given by 
	\[ a(1,1_{h^\sh} g) = \f{\g{g,h^\na}}{\g{h^\sh,h^\na}} 
			\qq\qq h^\na = h(z) - p\be h(pz) \]
\end{prop}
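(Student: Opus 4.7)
The plan is to follow the outline already sketched in the paragraph preceding the proposition. By the previous lemma, the linear functional $g \mapsto a(1, 1_{h^\sh} g)$ on $S_m(N_0 p, \ch)$ vanishes on the orthogonal complement of $U(h) = \C h(z) \oplus \C h(pz)$. Moreover, since $h^\ft$ is a $T(p)$-eigenform with eigenvalue $\be \ne \al$, one has $1_{h^\sh} h^\ft = 0$, so the functional also vanishes on $h^\ft$. By duality of the Petersson pairing on $U(h)$, there is a scalar $\lambda \in \C$ and a form $h' \in U(h)$ orthogonal to $h^\ft$ such that $a(1, 1_{h^\sh} g) = \lambda \g{g, h'}$ for every $g \in S_m(N_0 p, \ch)$.

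The main task is to pin down $h'$ up to scalar by solving the orthogonality condition $\g{h^\ft, h'} = 0$. I would write $h' = h(z) + c\, h(pz)$ and expand, using $\g{h(z), h(z)} = \g{h,h}$, the values of $\g{h(z), h(pz)}$ and $\g{h(pz), h(z)}$ from Lemma \ref{lem:PairingFormWithSelfTranslate}, and $\g{h(pz), h(pz)} = p^{-m} \g{h,h}$ from Lemma \ref{lem:ScalingPetersson}. This produces a linear equation for $\bar c$ whose coefficients involve $\al, \be, a_p, \ch(p), p, m$. The hard part will be the algebraic simplification: using $a_p = \al + \be$ together with $\al\be = p^{m-1} \ch(p)$, and crucially the Ramanujan--Deligne bound $|\al|^2 = |\be|^2 = p^{m-1}$ (which, combined with $\al\be = p^{m-1}\ch(p)$, yields $\bar\al = \Lch(p)\be$ and $\bar\be = \Lch(p)\al$), I expect the expression to collapse to $\bar c = -p\bar\be$, giving $c = -p\be$ and hence $h' = h(z) - p\be\, h(pz) = h^\na$.

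Finally, to fix the scalar $\lambda$, I would substitute $g = h^\sh$. Since $1_{h^\sh} h^\sh = h^\sh$ and $h^\sh$ is normalized with $a(1, h^\sh) = 1$, this gives $1 = \lambda\, \g{h^\sh, h^\na}$, hence $\lambda = 1/\g{h^\sh, h^\na}$, yielding the desired formula. The nonvanishing $\g{h^\sh, h^\na} \ne 0$ follows from the fact that $h^\na \in U(h)$ is a nonzero form orthogonal to $h^\ft$, while $U(h)$ decomposes as the orthogonal sum of the eigenlines spanned by $h^\sh$ and $h^\ft$, so $h^\na$ must have a nonzero component along $h^\sh$.
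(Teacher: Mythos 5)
Your proposal is correct and follows essentially the same route as the paper: identify the functional with $\g{-,h'}$ for $h'\in U(h)$ orthogonal to $h^\ft$, solve the linear equation for the coefficient of $h(pz)$ using Lemmas \ref{lem:ScalingPetersson} and \ref{lem:PairingFormWithSelfTranslate}, invoke the Ramanujan relation $\Lal = \Lch(p)\be$ to get $h' = h^\na$, and normalize by evaluating at $h^\sh$.

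One small correction to your closing nonvanishing argument: $U(h)$ is \emph{not} the orthogonal sum of the eigenlines $\C h^\sh$ and $\C h^\ft$ — the $T(p)$-operator at level $N_0p$ is not normal, and indeed if those lines were orthogonal then $h^\na$ (being orthogonal to $h^\ft$) would be proportional to $h^\sh$, forcing $p\be = \be$. The conclusion $\g{h^\sh,h^\na}\ne 0$ still holds, but the right justification is: if $h^\na$ were orthogonal to $h^\sh$ as well as to $h^\ft$, it would be orthogonal to all of $U(h)$, in particular to itself, hence zero by positive-definiteness of the Petersson product; but $h^\na = h(z)-p\be h(pz)\ne 0$ since $h(z)$ and $h(pz)$ are linearly independent.
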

\begin{proof}
	By the above, we know we can write $a(1,1_{h^\sh} -) =
	\g{-,h'}/\g{h^\sh,h'}$ for any $h'$ in a unique one-dimensional subspace of
	this. Assuming the subspace isn't $\C h(pz)$ (which will be justified when
	we find a different subspace that works), we can take $h' = h(z) - C h(pz)$;
	since $\g{h^\sh,h'}/\g{h^\sh,h'} = 1 = a(1,1_h -)$ trivially, we just need
	to choose $C$ such that we have $\g{h^\ft,h'} = 0 = a(1,1_{h^\sh} h^\ft)$. 
	\par
	Thus, we want to solve the equation 
	\[ \g{h^\ft,h'} = \g{h(z) - \al h(pz), h(z) - C h(pz)} = 0 \]
	for $C$. Expanding this by linearity we get 
	\[ 0 = \g{h(z),h(z)} - \al \g{h(pz),h(z)} - \LC \g{h(z),h(pz)} 
			+ \al\LC \g{h(pz),h(pz)}. \]
	Substituting in the results of the previous two lemmas, we get 
	\[ 0 = \g{h,h} - \al \f{\Lch(p) a_p}{p^{m-1}(p+1)} \g{h,h} - 
		\LC \f{a_p}{p^{m-1}(p+1)} \g{h,h} + \al \LC p^{-m} \g{h,h}.\]
	Rearranging and simplifying we ultimately get $\LC = \Lch(p) \al p$. By the
	Ramanujan conjecture (which is elementary in the CM case we're interested
	in), we have $\Lch(p)\al = \Lbe$, so we get that $C = p\be$ is a solution to
	our equation, and thus $h^\na = h(z) - p\be h(pz)$ satisfies
	$\g{h^\ft,h^\na} = 0$. 
\end{proof}

We end this section with a few remarks. First of all, in Lemma
\ref{lem:StabilizationOfNewforms} we define $h^\sh$ and $h^\ft$ for any modular
form $h$ that's an eigenform of the prime-to-$p$ Hecke operator $T(p)$, so it
is reasonable to also define $h^\na = h(z) - p\be h(pz)$ for any such $h$. In
later sections we will use this notation in the case where $h(z) = h_0(Lz)$ for
$h_0$ a newform and $L$ an integer prime to $p$. Such a form remains at
prime-to-$p$ level and an eigenform for $T(p)$; note that in this case
$h_0^\sh(Lz)$ and $h^\sh(z)$ agree and likewise for $\ft$ and $\na$, so there
is no chance for notational confusion.
\par
Also, we recall that Hida makes a similar computation in Proposition 4.5 of
\cite{Hida1985}, and concludes that he can take $h^\na = h^\sh_\rh|w_{N_0p}$,
where $w_N$ is the matrix 
\[ w_N = \M{cc}{ 0 & -1 \\ N & 0 } \]
and if $h = \s a_n q^n$ we let $h_\rh = \s \L{a}_n q^n$. Comparing our results
(and using the Atkin-Lehner relation that $h_\rh|w_{N_0}$ is a multiple of $h$)
we find that if $h$ is a $p$-ordinary newform of level $N_0$ then $h^\na$ and
$h^\sh_\rh|w_{N_0p}$ are scalar multiples of each other. (Quoting Hida's result
and doing this would be a quicker way to get our formula for $h^\na$, but the
computations and techniques above will be important later on)

\subsection{Getting Petersson inner products for higher levels}
\label{sec:PeterssonOfHigherLevels}

To summarize the computations above, for a $\CI$-adic newform $\Bh \in
\BS^\ord(N_0 p,\ch;\CI)$ of level $N_0$ and an arbitrary $\CI$-adic form $\Bg
\in \BS(N_0 p,\ch;\CI)$, the element $\l_\Bh(\Bg) \in \CI$ has specialization
at $P \in \CX_\alg(\CI;\CO_F)$ given by 
\[ a(1,1_h g) = \f{\g{g(z),h(z)}}{\g{h(z),h(z)}} H_{\Bh,P} \qq\tx{or}\qq 
	a(1,1_{h^\sh} g) = \f{\g{g(z),h^\na(z)}}{\g{h^\sh(z),h^\na(z)}} H_{\Bh,P} \]
in the cases where $\Bh_P = h$ is a newform or $\Bh_P = h^\sh$ is a
$p$-stabilization of a newform, respectively (and setting $g = \Bg_P$). In this
section we want to expand this construction to allow $\Bg$ to be a $\CI$-adic
form in a space $\BS(M_0 p^r,\ch;\CI)$ of higher level, which we'll need for
our applications. 
\par
First of all, we recall that we've actually defined extensions of our linear
functional $\l_\Bh$ to spaces $\BS(N_0 p^r,\ch;\CI)$ where we allow higher
$p$-power level. So we just need to compute $\l_\Bh(\Bg)$ and thus $\l_h(g)$ on higher-level forms $g$. Fortunately for us there's a trick we can use involving
the $T(p)$ operator, as in Sections 3 and 4 of \cite{Hida1985}. On one hand
$T(p)$ is known to take $S_m(M_0 p^r,\ch)$ into $S_m(M_0 p^{r-1},\ch)$ for
$r\ge 2$. On the other, since $h$ or $h^\sh$ (as appropriate) is an eigenform
for $T(p)$ with eigenvalue $\al_h$ (the root of the Hecke polynomial which is a
$p$-adic unit under our embedding into $\L\Q_p$) we can see $1_h T(p) = 1_h
\al_h$ or $1_{h^\sh} T(p) = 1_{h^\sh} \al_h$. 
\par
Analyzing our functional in these two different ways, we find that if $g
\in S_m(M_0 p^r,\ch)$ then we have 
\[ a(1,1_h g) = \al_h^{-(r-1)} a(1,1_h T(p)^{r-1} g) 
		= \al_h^{-(r-1)} \f{\g{T(p)^{r-1} g(z),h(z)}}{\g{h(z),h(z)}} \]
or 
\[ a(1,1_{h^\sh} g) = \al_h^{-(r-1)} a(1,1_{h^\sh} T(p)^{r-1} g) 
	= \al_h^{-(r-1)} \f{\g{T(p)^{r-1} g(z),h^\na(z)}}{\g{h^\sh(z),h^\na(z)}}. \]
Finally, we note that if we view the Petersson inner products here as being
over $\Ga_0(N_0 p^r)$, we can rewrite it in a way that avoids the $T(p)$
operator, using the following lemma. 

\begin{lem} \label{lem:AdjointToTpOperator}
	Suppose $f,f'$ are two elements of $S_k(N_0 p^r,\ch)$ for $r \ge 1$. Then we
	have 
	\[ \g{T(p)f,f'} = p^k \g{f(z),f'(pz)}. \] 
\end{lem}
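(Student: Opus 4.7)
The plan is to prove this by a direct change-of-variables calculation, using the explicit $q$-expansion formula for $T(p)$ that is valid precisely when $p$ divides the level (i.e.\ when $r \ge 1$), namely
\[ T(p)f(z) \;=\; U_p f(z) \;=\; \f{1}{p} \s_{i=0}^{p-1} f\pf{z+i}{p}. \]
Writing the Petersson inner product $\g{T(p)f,f'}$ as a volume-normalized integral over a fundamental domain $D$ for $\Ga_0(N_0 p^r)$, I would substitute $z = pw-i$ in the $i$-th term. Under this substitution the imaginary part scales as $y = pv$ so the weight-$k$ factor contributes $p^k$, the hyperbolic measure $dx\,dy/y^2$ is preserved (the transformation is $SL_2(\R)$ up to scaling by $p^{-1/2}$), periodicity of $f'$ gives $f'(pw-i)=f'(pw)$, and the domain becomes $\al_i(D) = (D+i)/p$ for $\al_i = \left(\begin{smallmatrix} 1 & i \\ 0 & p \end{smallmatrix}\right)$. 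This converts $\g{T(p)f,f'}$ into $p^{k-1}$ times the sum $\sum_i \int_{\al_i(D)} f(w)\L{f'(pw)} y^k \, d\mu(w)/V_\Ga$.

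The key geometric step is to identify $\bigsqcup_{i=0}^{p-1} \al_i(D)$ as a fundamental domain for $\Ga' := \Ga_0(N_0 p^{r+1})$. I would verify this by writing $\al_i = T_{i/p}\al_0$ where $T_{i/p} = \left(\begin{smallmatrix} 1 & i/p \\ 0 & 1 \end{smallmatrix}\right)$ and observing that $\al_0 \Ga \al_0\1$ properly contains $\Ga'$ with index $p$, with coset representatives $T_{0/p},\ldots,T_{(p-1)/p}$ (one checks $S=T_{1/p}$ lies in $\al_0\Ga\al_0\1$ because $\al_0\1 S \al_0 = T_1 \in \Ga$, while $S^p = T_1 \in \Ga'$). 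Since $\al_0(D)$ is a fundamental domain for $\al_0\Ga\al_0\1$, applying the coset representatives $T_{i/p}$ produces a fundamental domain for $\Ga'$; the volumes match as $\vol(\Ga'\q\DH) = p\cdot\vol(\Ga\q\DH)$.

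Because the integrand $f(w)\L{f'(pw)}y^k$ is $\Ga'$-invariant, the sum of integrals over the pieces $\al_i(D)$ equals a single integral over the fundamental domain $\bigsqcup_i \al_i(D)$ for $\Ga'$. Normalizing by the volume of this new domain (which is $p V_\Ga = V_{\Ga'}$) then yields exactly $\g{f(z),f'(pz)}$, and tracking the powers of $p$ through gives the claimed factor of $p^k$.

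The main obstacle is the identification of the union as a fundamental domain for $\Ga'$; everything else is elementary bookkeeping of the change of variables. An alternative would be to quote the adjoint relation $(U_p)^* = p^{k-1}V_p$ (where $V_p g(z) = g(pz)$) in Hida/Shimura's form and derive the statement as a corollary, but the fundamental-domain calculation is more self-contained and fits with the manipulations of Petersson inner products used in the surrounding section.
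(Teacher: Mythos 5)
Your proof is correct and is essentially the paper's argument unpacked: the paper writes $T(p)f = p^{k/2-1}\sum_{i=0}^{p-1} f|_k\left(\begin{smallmatrix}1&i\\0&p\end{smallmatrix}\right)$ and moves each matrix across the pairing by quoting that the adjoint of a slash operator $[\al]_k$ is the slash by the adjugate $\det(\al)\al^{-1}$ — which is exactly the change-of-variables and fundamental-domain bookkeeping you carry out by hand, and your closing alternative of quoting $(U_p)^*=p^{k-1}V_p$ is precisely the paper's route. The one point worth flagging is that your key geometric step (that $\bigcup_i \al_i(D)$ is a fundamental domain for $\Ga_0(N_0p^{r+1})$ of volume $p\cdot\vol(D)$) genuinely uses $r\ge 1$, since the index $[\L\Ga_0(N_0p^r):\L\Ga_0(N_0p^{r+1})]$ is $p$ rather than $p+1$ precisely because $p$ already divides the level, so that hypothesis should be invoked there.
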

\begin{proof}
	We can write out the definition of the Hecke operator and get
	\[ \g{T(p)f,f'} = \G{ \s_{i=0}^{p-1} p^{k/2-1} f \9|_k \M{cc}{1&i\\0&p},f'}
			= p^{k/2-1} \s_{i=0}^{p-1} \G{f\9|_k \M{cc}{1&i\\0&p},f'}. \]
	Now, slash operators don't preserve individual spaces like $S_k(N_0
	p^r,\ch)$, but they can be viewed as operators on the infinite-dimensional
	space of all weight-$k$ modular forms. There, it's straightforward to check
	that if $\al$ is a matrix, the slash operator $[\al]_k$ has an adjoint given
	by $[\be]_k$ where $\be = \det(A) A\1$ is the adjugate matrix. Thus: 
	\begin{multline*}
		\g{T(p)f,f'} = p^{k/2-1} \s_{i=0}^{p-1} \G{f,f'\9|_k \M{cc}{p&-i\\0&1}}\\
			= p^{k/2-1} \s_{i=0}^{p-1} \g{f(z), p^{k/2} f'(pz-i))} 
			= p^k \g{f(z),f'(pz)}. \QED
	\end{multline*}
\end{proof}

Therefore, we conclude:

\begin{prop} \label{prop:InterpolatePeterssonHigherPPowerLevel}
	Let $\Bh$ be a $\CI$-adic newform in $\BS(N_0 p,\ch;\CI)$ with weight-$m$
	specialization $h_m$ or $h_m^\sh$ (as appropriate), and $\Bg$ any $\CI$-adic
	form in $\BS(N_0 p^r,\ch;\CI)$ with weight-$m$ specialization $g_m$. Then 
	$P_m(\l_\Bh(\Bg))$ is equal to 
	\[ \f{p^{m(r-1)}}{\al_h^{r-1}} \f{\g{g(z),h(p^{r-1} z)}}{\g{h(z),h(z)}} 
		H_{\Bh,P} \qq\tx{or}\qq \f{p^{m(r-1)}}{\al_h^{r-1}} 
			\f{\g{g(z),h^\na(p^{r-1} z)}}{\g{h^\sh(z),h^\na(z)}} H_{\Bh,P} \]
	as appropriate.  
\end{prop}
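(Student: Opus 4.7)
The plan is to execute exactly the reduction sketched in the paragraph preceding the statement: use the $T(p)$-eigenvalue of the ordinary stabilization to push the level of $g$ down to $N_0p$, apply the previous section's formula there, and finally iterate Lemma~\ref{lem:AdjointToTpOperator} to convert the remaining powers of $T(p)$ on the left into a translation $z\mapsto p^{r-1}z$ on the right. The two cases in the statement (newform vs.\ $p$-stabilization) will run identically except that $h$ or $h^\sh$ plays the role of the ``left'' form in the denominator while $h$ or $h^\na$ plays the role of the ``right'' form.

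First I would fix a point $P = P_m$, write $g = \Bg_P \in S_m(N_0p^r,\ch\om^{-m})$, and let $h^\sh$ (resp.\ $h$) denote $\Bh_P$ as appropriate. By definition, $P_m(\l_\Bh(\Bg)) = a(1,1_{\Bh_P}g)\,H_{\Bh,P}$, so the task reduces to evaluating $a(1,1_{\Bh_P}g)$. The ordinary projector $1_{\Bh_P}$ picks out a one-dimensional $T(p)$-eigenspace with eigenvalue $\al_h$, so $1_{\Bh_P}T(p) = \al_h\cdot 1_{\Bh_P}$, and I can rewrite
\[ a(1,1_{\Bh_P}g) = \al_h^{-(r-1)}\,a(1,1_{\Bh_P}T(p)^{r-1}g). \]
Since $T(p)$ carries $S_m(N_0p^i,\ch\om^{-m})$ into $S_m(N_0p^{i-1},\ch\om^{-m})$ for $i\ge 2$, the form $T(p)^{r-1}g$ lies in $S_m(N_0p,\ch\om^{-m})$; now the formulas of the previous section apply and yield
\[ a(1,1_{\Bh_P}T(p)^{r-1}g) = \f{\g{T(p)^{r-1}g(z),h'(z)}}{\g{\Bh_P(z),h'(z)}}, \]
where $h' = h$ in the newform case and $h' = h^\na$ in the $p$-stabilization case.

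The last step is to iterate Lemma~\ref{lem:AdjointToTpOperator} at common level $N_0p^r$. At stage $i$ the two arguments are $T(p)^{r-1-i}g$ and $h'(p^iz)$; since $h'(p^iz)$ has level dividing $N_0p^{i+1}\le N_0p^r$, both sit in $S_m(N_0p^r,\ch\om^{-m})$, the lemma applies, and it extracts a factor of $p^m$ while shifting $z\mapsto pz$ on the right. After $r-1$ iterations,
\[ \g{T(p)^{r-1}g(z),h'(z)} = p^{m(r-1)}\,\g{g(z),h'(p^{r-1}z)}. \]
Substituting this back and multiplying by $H_{\Bh,P}$ gives the two claimed formulas at once. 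The only subtle point is that the intermediate inner products are naturally computed at different levels; because our Petersson pairing is normalized by the volume of the fundamental domain it is invariant under passage to a smaller congruence subgroup, so this causes no extra factors. I do not expect a genuine obstacle here — the hardest part is simply keeping the character, level, and idempotent bookkeeping consistent through the iteration, which the dichotomy ``newform vs.\ $p$-stabilization'' is designed precisely to absorb into the single symbol $h'$.
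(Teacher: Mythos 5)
Your proposal is correct and follows essentially the same route as the paper: the identity $1_{\Bh_P}T(p)=\al_h 1_{\Bh_P}$ to extract $\al_h^{-(r-1)}$, the fact that $T(p)^{r-1}$ lowers the level to $N_0p$ so the previous section's formula applies, and then iterated application of Lemma \ref{lem:AdjointToTpOperator} (with all inner products viewed at level $N_0p^r$) to trade $T(p)^{r-1}$ for the factor $p^{m(r-1)}$ and the shift $h'(p^{r-1}z)$. No gaps.
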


Next we deal with raising the prime-to-$p$ part of the level. In this case
$\l_\Bh$ isn't already defined on $\BS(M_0 p^r,\ch;\CI)$, so we have to do 
some sort of construction to extend it. Here we use some \E{trace operators}. 
If $\Ga_0(M_0 p^r,L_0)$ contains $\Ga_0(N_0 p^r)$, and we want to start with a
modular form on the former group and obtain one on the latter, we can define a
linear function 
\[ \trc : S_k(\Ga_0(M_0 p^r,L_0),\ch) \to S_k(N_0 p^r,\ch)
	\qq\qq \trc(f) = \s_\ga \ch\1(\ga) f|[\ga]_k \]
where $\ga$ runs over a set of coset representatives of $\Ga_0(M_0
p^r,L_0)\q\Ga_0(N_0 p^r)$. This has the property that if $f \in S_k(\Ga_0(M_0
p^r,L_0),\ch)$ and $f' \in S_k(N_0 p^r,\ch)$ we have 
\[ \g{\trc(f),f'} = \s_\ga \ch\1(\ga) \G{f|[\ga]_k,f'} 
		= \s_\ga \G{f,\ch(\ga)f'|[\ga\1]_k} = \s_\ga \g{f,f'}. \]
Thus the resulting sum is just $[\Ga_0(M_0 p,L_0):\Ga_0(N_0 p)]$ times
$\g{f,f'}$, so we have 
\[ \g{\trc(f),f'} 
		= \g{f,f} \f{M_0 L_0}{N_0} \p_{q | M_0 L_0, q\nd N_0} (1+q\1). \]
\par
It's straightforward that this linear transformation on complex vector spaces
preserves $K_0$-rationality, so would pass to a transformation 
\[ \trc : S_k(\Ga_0(M_0 p^r,L_0),\ch;K) \to S_k(N_0 p^r,\ch;K). \]
Furthermore, in this case it preserves integrality in $K$. Since we aren't
changing the $p$-power of our level, we can choose our coset representatives to
satisfy $\ga \ee 1\md p$ and then the action of these $\ga$'s matches up with
the action of $\SL_2(\Z/M_0 p \Z)$ discussed in Section 1.IV of
\cite{Hida1988b}, which preserve the space $\LS(M_0 p;\CO_F)$. Thus, if $\ch$
is any character of $(\Z/N_0\Z)^\x$, we have a trace operator 
\[ \trc : \LS(\Ga_0(M_0,L_0);\CO_F) \to \LS(M_0;\CO_F)
	\qq\qq \trc(f) = \s_\ga \ch\1(\ga) f|[\ga]_k \]
Finally we can extend this to a $\CI$-linear operator
\[ \trc : \BS(\Ga_0(M_0 p^r,L_0),\ch;\CI) \to \BS(N_0 p^r,\ch;\CI) \]
via our measure-theoretic perspective, defining $\mu_{\trc(\Bg)}(x \mt f(x)) =
\trc(\mu_\Bg(x\mt f(x)))$. We conclude: 

\begin{prop} \label{prop:InterpolatePeterssonForHigherLevel}
	Let $\Bh$ be a $\CI$-adic newform of level $N_0$ and character $\ch$, and
	let $h_m$ be the newform associated to $P_m(\Bh)$ (so either $h_m =
	P_m(\Bh)$ is of level $N_0 p$, or $h_m$ is of level $N_0$ and $P_m(\Bh) =
	h_m^\sh$) and $\al_h$ the $T(p)$-eigenvalue of $P_m(\Bh)$. If $\Bg \in
	\BS(\Ga_0(M_0 p^r,L_0),\ch;\CI)$ is any $\CI$-adic form (for $r\ge 1$ and
	$M_0$ a multiple of $N_0$), and we let $g_m = P_m(\Bg)$, then the
	specialization of the element $\l_\Bh(\trc(\Bg)) \in \CI$ at $P_m$ is given
	by 
	\[ C \f{\g{g_m(z),h_m(p^{r-1} z)}}{\g{h_m(z),h_m(z)}} H_{\Bh,P} 
			\qq\tx{or}\qq   C 
	 	\f{\g{g_m(z),h_m^\na(p^{r-1} z)}}{\g{h_m^\sh(z),h_m^\na(z)}} H_{\Bh,P} \]
	in the cases where $P_m(\Bf)$ is $h_m$ or $h_m^\sh$, respectively, where  
	the constant is 
	\[ C = \f{p^{m(r-1)}}{\al_h^{r-1}} \f{M_0 L_0}{N_0} 
			\p_{q | M_0 L_0, q\nd N_0} (1+q\1). \]
\end{prop}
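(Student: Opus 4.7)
The plan is to reduce the claim to Proposition \ref{prop:InterpolatePeterssonHigherPPowerLevel} by running the trace operator. Since $\trc(\Bg)$ lies in $\BS(N_0 p^r,\ch;\CI)$ by construction, and the $\CI$-adic trace was defined via the measure-theoretic formalism by applying the classical $\trc$ inside the associated measure, specialization commutes with $\trc$: $P_m(\trc(\Bg)) = \trc(g_m)$. Applying Proposition \ref{prop:InterpolatePeterssonHigherPPowerLevel} to $\trc(\Bg)$ therefore gives
\[ P_m(\l_\Bh(\trc(\Bg))) = \f{p^{m(r-1)}}{\al_h^{r-1}}
		\f{\g{\trc(g_m)(z),h_m(p^{r-1}z)}}{\g{h_m(z),h_m(z)}} H_{\Bh,P} \]
in the newform case, and the analogous formula with $h_m^\na$ in the numerator and $\g{h_m^\sh(z),h_m^\na(z)}$ in the denominator in the $p$-stabilization case.

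Next I would unfold the numerator $\g{\trc(g_m)(z),h_m(p^{r-1}z)}$ (or its $h_m^\na$ analogue) using the adjointness identity $\g{\trc(f),f'} = [\Ga_0(M_0 p^r,L_0):\Ga_0(N_0 p^r)] \cdot \g{f,f'}$ derived just before the proposition. Both shifted forms $h_m(p^{r-1}z)$ and $h_m^\na(p^{r-1}z)$ are modular for $\Ga_0(N_0 p^r)$, so the identity applies and converts the numerator into the required Petersson inner product $\g{g_m(z),h_m(p^{r-1}z)}$ (or the $h_m^\na$ variant) multiplied by the index of congruence subgroups.

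Finally, I would compute the index $[\Ga_0(M_0 p^r,L_0):\Ga_0(N_0 p^r)]$ by a standard prime-by-prime analysis. Since the $p$-part of the level agrees on both sides, only primes $q \mid M_0 L_0$ with $q \nd N_0$ contribute, and one obtains $\f{M_0 L_0}{N_0} \p_{q | M_0 L_0,\, q \nd N_0}(1+q\1)$. Multiplying this by the prefactor $p^{m(r-1)}/\al_h^{r-1}$ inherited from Proposition \ref{prop:InterpolatePeterssonHigherPPowerLevel} produces exactly the constant $C$ in the statement.

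The only substantive obstacle is bookkeeping: verifying that specialization commutes with the $\CI$-adic trace operator (immediate from its measure-theoretic definition, but worth recording explicitly) and computing the congruence-subgroup index cleanly. No new analytic input beyond Proposition \ref{prop:InterpolatePeterssonHigherPPowerLevel} and the adjointness identity already established for $\trc$ is required, so the proof should be short once these two ingredients are correctly assembled.
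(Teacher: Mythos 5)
Your proof is correct and is essentially the argument the paper intends: the proposition is stated as an immediate consequence of Proposition \ref{prop:InterpolatePeterssonHigherPPowerLevel} together with the trace-operator adjointness identity and index computation established just before it, which is exactly how you assemble it. The one point worth recording (which you do) is that specialization commutes with the $\CI$-adic trace by its measure-theoretic definition.
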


\subsection{Euler factors from $p$-stabilizations}
\label{sec:EulerFactorsStabilization}

From the previous section, we've seen that our element $\l_\Bh(\Bg) \in \CI$
interpolating Petersson inner products will have specializations of the form
\[ \f{\g{g,h^\na}}{\g{h^\sh,h^\na}} \]
when we specialize at points $P$ where $\Bh_P$ is the $p$-stabilization of a
newform $h$. Similarly, $g$ may also be a $p$-stabilization of a newform (or
related to one). On the other hand, our automorphic formulas usually only
involve the newforms themselves. So we'd like to relate the ratio above to one
not involving $p$-stabilizations.
\par
More specifically, to fit the setup and notation we'll need later on, we'll
take the following setup. We will take $f,g,h$ to be modular forms of weights
$k$, $m-k$, and $m$, respectively, such that the central characters satisfy
$\ch_f \ch_g = \ch_h$. We assume the forms $g$ and $h$ are of prime-to-$p$
level, and are $p$-ordinary eigenforms of the prime-to-$p$ Hecke operator
$T(p)$. On the other hand, $f$ is not assumed to be either $p$-ordinary or to
have prime-to-$p$ level; but we do require that one of the following conditions
two holds for it:
\begin{itemize}
	\item $f$ is of prime-to-$p$ level, and an eigenform for the prime-to-$p$
		Hecke operator $T(p)$.
	\item $f$ has level exactly divisible by $p^r$ for some $r > 1$, $f$ is an
		eigenform for the $p$-level Hecke operator $T(p)$, and $f$ is new at 
		level $p^r$.
\end{itemize}
The reason we specify ``eigenforms of $T(p)$'' rather than ``newforms'' in our
setup is because we will need to allow forms like $h_0(Lz)$ where $h_0$ is an
actual newform and $L$ is a prime-to-$p$ integer; such forms do indeed remain
eigenforms for the appropriate $T(p)$. Also, for our purposes $f$ being ``new
at level $p^r$'' means that $f$ is perpendicular to $\ps(z)$ and $\ps(pz)$ for
any $\ps$ of level $N_0 p^{r-1}$. Thus if $f_0$ is a newform of level $N_0 p^r$
and $L$ is coprime to $p$, $f_0(Lz)$ is still new at $p^r$.
\par
Given this setup, we will need to compute an explicit constant $(*)$ such that
we have an equality
\[ \f{\g{f(z) g^\sh(z),h^\na(p^{r-1} z)}}{\g{h^\sh(z),h^\na(z)}}
		= (*) \f{\g{f(z)g(z),h(p^{r_0}z)}}{\g{h(z),h(z)}} \]
where $p^{r_0}$ is the exact power of $p$ dividing the level of $f$ and $r =
\max\{ 1,r_0 \}$. The constant $(*)$ will turn out to be the Euler factor at
$p$ which needs to be removed from the $p$-adic $L$-function we're
constructing. It will be described in terms of the roots of the Hecke
polynomial at $p$ for $f$, $g$, and $h$. Such roots will always include
$\al_g,\be_g,\al_h,\be_h$ in the notation of Lemma
\ref{lem:StabilizationOfNewforms}. For $f$, we will have $\al_f,\be_f$
determined similarly if $r_0 = 0$ (though neither will necessarily be a
$p$-adic unit), or just $a_f$ if $r_0 \ge 1$ (which will satisfy $a_f = 0$ if
$r_0 \ge 2$ in the cases we're considering).
\par
We break the computation of $(*)$ up into three parts: one relating the
denominators on both sides, the second relating the numerators if $p$ does not
divide the level of $f$, and the last relating the numerators when $p$ does
divide the level of $f$. This computation is related to the one carried out in
Section 4 of \cite{DarmonRotger2014}.

\subsubsection{Relating $\g{h^\sh,h^\na}$ to $\g{h,h}$} 
We start by working with the denominator of the expression, since here the
computation is an extension of what we've already done in the previous section.
Expanding out the definition of $h^\sh$ and $h^\na$ tells us that
$\g{h^\sh,h^\na}$ equals
\[ \g{h(z),h(z)} - \be_h \g{h(pz),h(z)} - p\Lbe_h \g{h(z),h(pz)} 
		+ p\be_h\Lbe_h \g{h(pz),h(pz)}. \]
We can use Lemmas \ref{lem:ScalingPetersson} and
\ref{lem:PairingFormWithSelfTranslate} to express each inner product in terms
of $\g{h,h} = \g{h(z),h(z)}$ to obtain
\[ \g{h^\sh,h^\na} = \g{h,h} 
		\b( 1 - \f{\Lch(p) \be_h a_p(f)}{p^{k-1}(1+p)} 
				- \f{p\Lbe_h a_p(f)}{p^{k-1}(1+p)} + \f{p\be_h\Lbe_h}{p^k} \e). \]
Now, we can get rid of the complex conjugates via the Ramanujan conjecture: we
know $\Lbe_h = \Lch(p) \al_h$ and also $|\be_h|^2 = p^{k-1}$ (so the rightmost
term is identically 1). Thus the term in parentheses simplifies to 
\[ 2 - \f{\Lch(p)(\be_h + p \al_h)a_p(f)}{p^{k-1}(1+p)}. \]
Using that $p^{k-1} = \Lch(p) \al_h \be_h$, that $a_p(f) = \al_h+\be_h$, and
collecting everything with a common denominator gives 
\[ \f{ 2(1+p)\al_h\be_h - (\be_h + p\al_h)(\al_h+\be_h)}{\al_h\be_h(1+p)}
	= \f{\al_h\be_h + p\al_h\be_h - \be_h^2 - p\al_h^2}{\al_h\be_h(1+p)} \]
Cancelling and factoring gives
\[ \f{1 + p - \be_h/\al_h - p\al_h/\be_h}{1+p}
	= \f{(1 - \be_h/\al_h)(1 - p\al_h/\be_h)}{(1+p)}. \]
For our purposes later it's helpful to rearrange this as 

\begin{prop} \label{prop:EulerFactorDenominator}
	With our setup as above (with $h$ a newform of prime-to-$p$ level), we have
	\[ \g{h^\sh,h^\na} = \g{h,h} 
	\dt \f{(-\al_h/\be_h)(1 - \be_h/\al_h)(1 - p\1 \be_h/\al_h)}{(1+p\1)}. \]
\end{prop}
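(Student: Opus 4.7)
The plan is to expand $\g{h^\sh,h^\na}$ by sesquilinearity, reduce each of the four resulting inner products to $\g{h,h}$ using the two lemmas just proved, eliminate complex conjugates via the Ramanujan relations, and then recognize the resulting rational expression as the claimed product factorization.

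First I would expand, using that $h^\sh(z) = h(z) - \be_h h(pz)$ and $h^\na(z) = h(z) - p\be_h h(pz)$ and that the Petersson pairing is antilinear in the second slot:
\[ \g{h^\sh,h^\na} = \g{h(z),h(z)} - p\Lbe_h\g{h(z),h(pz)} - \be_h\g{h(pz),h(z)} + p|\be_h|^2\g{h(pz),h(pz)}. \]
I would then apply Lemma \ref{lem:ScalingPetersson} to get $\g{h(pz),h(pz)} = p^{-m}\g{h,h}$, and Lemma \ref{lem:PairingFormWithSelfTranslate} to express the two mixed pairings as explicit multiples of $\g{h,h}$ involving $a_p(h) = \al_h+\be_h$ (using that $h$ is a $p$-ordinary newform of prime-to-$p$ level and weight $m$).

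Next I would eliminate the complex conjugates. Because $h$ is a $p$-ordinary (CM) newform, Ramanujan is elementary and gives $|\al_h|^2 = |\be_h|^2 = p^{m-1}$ along with $\al_h\be_h = \ch_h(p)p^{m-1}$, which together imply $\Lbe_h = \Lch_h(p)\al_h$ and $\Lch_h(p)a_p(h) = \La_p(h) = \Lal_h+\Lbe_h$. The term $p|\be_h|^2 \cdot p^{-m}$ collapses to $1$, so everything reduces to a purely algebraic expression in $\al_h,\be_h,p$. Putting everything over the common denominator $\al_h\be_h(1+p)$ and simplifying gives
\[ \g{h^\sh,h^\na} = \g{h,h} \dt \f{\al_h\be_h + p\al_h\be_h - \be_h^2 - p\al_h^2}{\al_h\be_h(1+p)} = \g{h,h} \dt \f{1 + p - \be_h/\al_h - p\al_h/\be_h}{1+p}. \]

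The final step is the algebraic identity $1 + p - \be_h/\al_h - p\al_h/\be_h = (1 - \be_h/\al_h)(1 - p\al_h/\be_h)$, verified by expanding the right side. Writing $1 - p\al_h/\be_h = (-p\al_h/\be_h)(1 - p\1\be_h/\al_h)$ and using $1/(1+p) = p\1/(1+p\1)$ absorbs the extra factor of $p\1$ and yields the form stated in the proposition. The main obstacle is purely notational: tracking complex conjugates cleanly through the sesquilinear expansion and then spotting the factorization of the numerator; once Ramanujan is invoked the computation is a routine algebraic manipulation.
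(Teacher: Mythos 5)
Your proposal is correct and follows essentially the same route as the paper: expand $\g{h^\sh,h^\na}$ sesquilinearly, reduce the four terms via Lemma \ref{lem:ScalingPetersson} and Lemma \ref{lem:PairingFormWithSelfTranslate}, eliminate conjugates with the Ramanujan relations $|\be_h|^2 = p^{m-1}$ and $\Lbe_h = \Lch(p)\al_h$, and factor the numerator $1+p-\be_h/\al_h-p\al_h/\be_h$ as $(1-\be_h/\al_h)(1-p\al_h/\be_h)$ before rearranging to the stated form. No gaps.
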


\subsubsection{Relating $\g{fg^\sh,h^\na}$ to $\g{fg,h}$ when the level of $f$
is prime to $p$} 
To simplify notation in this case, we set $\g{fg,h} = \g{f(z) g(z), h(z)}$,
$\g{fg_p,h} = \g{f(z) g(pz),h(z)}$, $\g{fg_p,h_p} = \g{f(z)g(pz),h(pz)}$, and
so on; with this convention our goal is to express
\[ \g{fg^\sh,h^\na} = \g{fg,h} - \be_g \g{fg_p,h} - p\Lbe_h \g{fg,h_p} 
		+ p \be_g \Lbe_h \g{fg_p,h_p} \]
as a constant times $\g{fg,h}$. 
\par
To express $\g{fg_p,h_p}$ in terms of $\g{fg,h}$ we will obtain two equations
that are linear in the quantities $\g{fg_p,h_p}$, $\g{f_pg,h}$, and $\g{fg,h}$
and solve. The first equation comes from the Hecke eigenvalue relation for $f$: 

\begin{lem}
	In our setup, we have 
	\[ (\al_f+\be_f)\g{fg,h} = \al_f\be_f \g{f_p g,h} + p^m \g{fg_p,h_p}. \]
\end{lem}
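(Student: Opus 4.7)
The plan is to derive this identity by starting from the Hecke eigenvalue equation for $f$ at $p$, multiplying by $g(z)$, and pairing against $h(z)$.

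Since $f$ is an eigenform of the prime-to-$p$ Hecke operator $T(p)$ with eigenvalue $a_p(f) = \al_f+\be_f$, and since $\al_f\be_f = \ch_f(p) p^{k-1}$, the standard expansion of $T(p)f$ gives
\[ (\al_f+\be_f) f(z) = \al_f\be_f\, f(pz) + \f{1}{p} \s_{i=0}^{p-1} f\pf{z+i}{p}, \]
exactly as in the proof of Lemma \ref{lem:PairingFormWithSelfTranslate}. Multiplying both sides by $g(z)$ and taking the Petersson inner product with $h(z)$ (on a suitably deep common congruence subgroup for all the forms appearing) gives by linearity
\[ (\al_f+\be_f)\g{fg,h} = \al_f\be_f \g{f_p g,h}
		+ \f{1}{p}\s_{i=0}^{p-1} \G{f\pf{z+i}{p} g(z),h(z)}. \]

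The main step is to evaluate each term in the final sum by a change of variables in the spirit of Lemma \ref{lem:ScalingPetersson}. Substituting $z = pw-i$ sends $(z+i)/p$ to $w$, while $g(z) = g(pw-i) = g(pw)$ and $h(z) = h(pw-i) = h(pw)$ by periodicity (since $i \in \Z$). The hyperbolic measure $dx\,dy/y^2$ is invariant under the M\"obius transformation $w\mt pw-i$, while $\Im(z)^m = \Im(pw-i)^m = p^m \Im(w)^m$. As in the proof of Lemma \ref{lem:ScalingPetersson}, the fundamental domain $D(M)$ gets replaced by the fundamental domain $\si D(M)$ for the conjugate congruence subgroup, which has the same index in $\LGa$; the resulting change-of-variables identity reads
\[ \G{f\pf{z+i}{p} g(z), h(z)} = p^m \g{f(z)g(pz),h(pz)} = p^m\g{fg_p,h_p}. \]
This value is independent of $i$, so the sum over $i = 0,\ldots,p-1$ contributes $p \cdot p^m \g{fg_p,h_p}$. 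Dividing by the $p$ in front yields the desired term $p^m\g{fg_p,h_p}$, completing the proof.

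The only real subtlety is justifying the change of variable at the level of integrals: one must fix a common congruence subgroup contained in $\Ga_0(N)$ on which all of the functions $f((z+i)/p)g(z)\overline{h(z)}$ and $f(z)g(pz)\overline{h(pz)}$ are modular, and track that the normalizing volumes match under the conjugation $\Ga \mt \si\Ga\si^{-1}$ with $\si = \M{cc}{p&-i\\0&1}$. This is routine and parallel to what was already done in Lemmas \ref{lem:ScalingPetersson} and \ref{lem:PairingFormWithSelfTranslate}.
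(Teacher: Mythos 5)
Your proof is correct and follows essentially the same route as the paper: expand the Hecke eigenvalue relation $a_p(f) f(z) = \ch_f(p)p^{k-1}f(pz) + \tfrac{1}{p}\sum_i f(\tfrac{z+i}{p})$, pair against $h$ after multiplying by $g$, and evaluate each summand as $p^m\g{fg_p,h_p}$ via the substitution $z\mt pz-i$ as in Lemma \ref{lem:ScalingPetersson}. The bookkeeping of the factor $p^m$ (the weight of the product $fg$ and of $h$) and the cancellation of the $1/p$ against the $p$ identical terms are both handled correctly.
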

\begin{proof}
	This follows from starting with the equation $a_f f(z) = T(p) f(z)$;
	expanding out the definition of the Hecke operator we can write this as 
	\[ (\al_f + \be_f) f(z) 
			= \al_f \be_f f(pz) + \f{1}{p} \s_{i=0}^{p-1} f\pf{z+i}{p}. \]
	We then apply the linear functional $\g{-g,h}$ to both sides of this. Since
	we can compute $\g{f(\f{z+i}{p})g(z),h(z)} = p^m \g{f(z)g(pz),h(pz)}$ by
	substituting $z \mt pz-i$ and applying Lemma \ref{lem:ScalingPetersson}, 
	we obtain the desired identity. 
\end{proof}

To get a second equation we will use the fact that the Petersson inner product
is invariant under applying a slash operator to both sides simultaneously; in
particular we will apply an Atkin-Lehner operator defined by a matrix 
\[ \om_p = \M{cc}{ p & 1 \\ Npc & pd } = \M{cc}{ 1 & 1 \\ Nc & pd } 
		\M{cc}{ p & 0 \\ 0 & 1 } = \ga_p \de_p \]
where $N$ is the common level of $f,g,h$ (prime to $p$ by assumption) and $c,d$
chosen so this has determinant $p$ (which forces $pd \ee 1 \md N$; we follow
the conventions of \cite{Asai1976}). Then:

\begin{lem} \label{lem:AtkinLehnerToPetersson}
	In our setup, we have $\g{f_p g,h} = \Lch_f(p) p^{m-k} \g{fg_p,h_p}$.
\end{lem}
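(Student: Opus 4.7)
The plan is to exploit the invariance of the Petersson inner product under the slash operator: for any $\gamma \in \mathrm{GL}_2^+(\mathbb{Q})$ (on an appropriate common congruence subgroup), we have $\langle \varphi|[\gamma]_m, \psi|[\gamma]_m \rangle = \langle \varphi, \psi \rangle$. Applying this to $\gamma = \omega_p$, $\varphi = f_p g$ (of total weight $m$), and $\psi = h$ reduces the problem to computing the slash action of $[\omega_p]$ explicitly on each of $f_p$, $g$, and $h$, using the factorization $\omega_p = \gamma_p \delta_p$ given in the excerpt.

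For $h$, I would note that $\gamma_p \in \Gamma_0(N)$ has lower-right entry $pd \equiv 1 \pmod{N}$, so $h|[\gamma_p]_m = \chi_h(pd) h = h$; combined with $h|[\delta_p]_m(z) = p^{m/2} h(pz)$, this gives $h|[\omega_p]_m = p^{m/2} h_p$. An identical argument at weight $m-k$ with character $\chi_g$ yields $g|[\omega_p]_{m-k} = p^{(m-k)/2} g_p$. For $f_p$ the computation is more interesting: I would rewrite $f_p = p^{-k/2} f|[\delta_p]_k$, so that $f_p|[\omega_p]_k = p^{-k/2} f|[\delta_p \omega_p]_k$. A direct matrix multiplication gives $\delta_p \omega_p = p \cdot \alpha$ where $\alpha = \begin{pmatrix} p & 1 \\ Nc & d \end{pmatrix} \in \Gamma_0(N)$ has determinant $1$; since scalar matrices act trivially under the slash operator, $f|[\delta_p \omega_p]_k = \chi_f(d) f$. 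Now $pd \equiv 1 \pmod{N}$ and $\chi_f(p)$ is a root of unity (using $p \nmid N$), so $\chi_f(d) = \chi_f(p)^{-1} = \overline{\chi_f}(p)$, giving $f_p|[\omega_p]_k = \overline{\chi_f}(p)\, p^{-k/2} f$.

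Assembling the three computations via multiplicativity $(f_p g)|[\omega_p]_m = f_p|[\omega_p]_k \cdot g|[\omega_p]_{m-k}$, and remembering that $\langle \cdot, \cdot \rangle$ is conjugate-linear in the second argument (the real scalar $p^{m/2}$ on the right passes through unchanged), the Petersson identity becomes
\[ \langle f_p g, h \rangle = \overline{\chi_f}(p)\, p^{(m-2k)/2} \cdot p^{m/2} \langle f g_p, h_p \rangle = \overline{\chi_f}(p)\, p^{m-k} \langle f g_p, h_p \rangle, \]
as desired. The only real subtlety—and thus the main bookkeeping obstacle—is the appearance of $\overline{\chi_f}(p)$ in place of $\chi_f(p)$ or $1$: it arises entirely from rewriting $\chi_f(d)$ via the congruence $pd \equiv 1 \pmod{N}$. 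Everything else reduces to tracking determinant factors in the slash operator and exploiting the particular form of the Atkin--Lehner matrix $\omega_p$.
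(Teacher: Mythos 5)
Your proof is correct and follows essentially the same route as the paper: invariance of the Petersson product under $\om_p$, the factorization $\om_p = \ga_p\de_p$ to handle $g$ and $h$, and the matrix identity $\de_p\om_p = p\cdot\sm{p&1\\Nc&d}$ (a determinant-one element of $\Ga_0(N)$ times a scalar) to extract the factor $\Lch_f(p)p^{-k/2}$ from $f_p$. The bookkeeping of the determinant powers and the conjugate-linearity in the second slot is handled correctly, so nothing further is needed.
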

\begin{proof}
	By invariance of the Petersson inner product we have $\g{f_p g,h} =
	\g{f_p|[\om_p]_kg|[\om_p]_{m-k}, h|[\om_p]_m}$ so we just need to determine
	what these translates are. This is straightforward; $h$ is invariant under
	$\ga_p \in \Ga_1(N)$ so we get $h|[\om_p]_m = h|[\de_p]_m = p^{m/2} h(pz)$
	and similarly for $g$. Meanwhile writing $f_p = p^{-k/2} f|[\de_p]_k$ and
	computing 
	\[ \de_p \ga_p \de_p = \M{cc}{ ap & b \\ Nc & d } \M{cc}{ p & 0 \\ 0 & p }\]
	we find $f_p|[\om_p]_k = p^{-k/2} \Lch_f(p) f(z)$ as the first matrix is in
	$\Ga_0(N)$ so transforms $f$ by $\ch_f(d) = \Lch_f(p)$ and the second fixes
	any modular form. 
\end{proof}

Combining the equations in the two lemmas allows us to conclude 
\[ \g{fg_p,h_p} = \f{\al_f + \be_f}{p^{m-1}(1 + p)} \g{fg,h}. \]
We can then carry out the process of the first lemma for the eigenform
equations for $g$ and $h$, and the process of the second lemma for $\g{fg_p,h}$
and $\g{fg,h_p}$ and obtain 
\[ \g{f g_p,h} = \Lch_g(p) \f{\al_g + \be_g}{p^{m-k-1}(1 + p)} \g{fg,h}, 
\qq\qq
 	\g{f g,h_p} = \ch_h(p) \f{\Lal_h + \Lbe_h}{p^{m-1}(1 + p)} \g{fg,h}. \]
\par
We now can substitute these terms to our expansion of $\g{fg^\sh,h^\na}$ above
and obtain 
\[ \g{fg,h} \b( 1 
		- \f{\be_g \Lch_g(p) (\al_g + \be_g)}{p^{m-k-1}(1 + p)} 
		- \f{p \Lbe_h \ch_h(p) (\Lal_h + \Lbe_h)}{p^{m-1}(1 + p)} 
		+ \f{ p \be_g \Lbe_h (\al_f + \be_f)}{p^{m-1}(1 + p)} \e). \]
The term in parenthesis needs to be manipulated and factored. This tedious but
elementary computation is omitted here, but is carried out in detail in Section 3.4.2 of the author's thesis \cite{Collins2015}.

\begin{prop} \label{prop:EulerFactorNumeratorPrimeToP}
	Suppose we're in the above setup (in particular with the level of $f$
	coprime to $p$). Then we have
	\[ \g{f(z) g^\sh(z),h^\na(z)} = \g{f(z)g(z),h(z)} 
			\f{(-\al_h/\be_h)}{1+p\1} 
		\b( 1 - \f{\be_g\al_f}{\al_h} \e) \b( 1 - \f{\be_g\be_f}{\al_h} \e). \]
\end{prop}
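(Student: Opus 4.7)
The plan is to start from the expression already displayed in the excerpt,
\[ \g{fg,h} \b( 1
		- \f{\be_g \Lch_g(p) (\al_g + \be_g)}{p^{m-k-1}(1 + p)}
		- \f{p \Lbe_h \ch_h(p) (\Lal_h + \Lbe_h)}{p^{m-1}(1 + p)}
		+ \f{ p \be_g \Lbe_h (\al_f + \be_f)}{p^{m-1}(1 + p)} \e), \]
and simplify the bracketed factor until it matches the asserted form. The only tools needed are the identity $\al\be = \ch(p) p^{\ka-1}$ for the roots of the Hecke polynomial of a newform of weight $\ka$ together with Ramanujan ($\al\Lal = \be\Lbe = p^{\ka-1}$, and hence $\Lbe = \Lch(p)\al$); these hold for $g$ and $h$ by hypothesis (in the CM case of real interest Ramanujan is elementary).

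First I would clean up each of the three correction terms. Using $\Lch_g(p)/p^{m-k-1} = 1/(\al_g\be_g)$ and $\Lbe_h = \Lch_h(p)\al_h$ together with $p^{m-1} = \Lch_h(p)\al_h\be_h$, the three terms collapse respectively to
\[ \f{1}{1+p}\b(1+\f{\be_g}{\al_g}\e),\qquad \f{p}{1+p}\b(1+\f{\al_h}{\be_h}\e),\qquad \f{p\be_g(\al_f+\be_f)}{\be_h(1+p)}. \]
Combining everything over the common denominator $\be_h(1+p)$, the bare constants $(1+p) - 1 - p$ cancel, and the bracketed factor reduces to
\[ \f{1}{\be_h(1+p)}\b( -\f{\be_g\be_h}{\al_g} - p\al_h + p\be_g(\al_f+\be_f) \e). \]

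Next I would expand the target expression to check it agrees with this. Writing $(-\al_h/\be_h)/(1+p\1) = -p\al_h/[\be_h(1+p)]$ and multiplying out $(1-\be_g\al_f/\al_h)(1-\be_g\be_f/\al_h) = 1 - \be_g(\al_f+\be_f)/\al_h + \be_g^2\al_f\be_f/\al_h^2$, using $\al_f\be_f = \ch_f(p)p^{k-1}$, the target becomes
\[ \f{1}{\be_h(1+p)}\b(-p\al_h + p\be_g(\al_f+\be_f) - \f{p^k\ch_f(p)\be_g^2}{\al_h}\e). \]
Comparing with what I obtained, the two agree precisely when $\be_g\be_h/\al_g = p^k\ch_f(p)\be_g^2/\al_h$, i.e.\ $\al_h\be_h = p^k\ch_f(p)\,\al_g\be_g$. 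This is exactly the product of $\al_h\be_h = \ch_h(p)p^{m-1}$ and $\al_g\be_g = \ch_g(p)p^{m-k-1}$ combined with the standing hypothesis $\ch_f\ch_g = \ch_h$.

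The main obstacle is nothing conceptual but the bookkeeping: one must track complex conjugates carefully and resist the temptation to prematurely cancel. The strategically important observation is the last one, since it is precisely where the central-character constraint $\ch_f\ch_g = \ch_h$ (inherited from the automorphic triple-product setup) enters; without it the two expressions would not match, which retroactively shows that this hypothesis is essential to the clean factorization of the Euler factor at $p$.
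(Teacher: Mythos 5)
Your computation is correct and follows exactly the route the paper takes: it starts from the displayed bracketed expression, simplifies each term via $\al\be = \ch(p)p^{\ka-1}$ and Ramanujan, and matches it against the expanded target, with the central-character relation $\ch_f\ch_g=\ch_h$ entering at the final comparison. In fact you have correctly supplied the "tedious but elementary" factorization that the paper omits and defers to the author's thesis.
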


\subsubsection{Relating $\g{fg^\sh,h^\na}$ to $\g{fg,h}$ when $p$ divides the
level of $f$} 
In this case $f$ has level divisible by some power $p^r$ with $r\ge 1$, and our
goal is to relate the quantity $\g{f(z) g^\sh(z),h^\na(p^{r-1} z)}$ to the
quantity $\g{f(z) g(z),h(p^r z)}$ which appears in our automorphic theory. Here
we follow the same approach of writing 
\[ \g{fg^\sh,h^\na} = \g{fg,h} - \be_g \g{fg_p,h} - p\Lbe_h \g{fg,h_p} 
		+ p \be_g \Lbe_h \g{fg_p,h_p}, \]
though here $h$ and $h_p$ denote $h(p^{r-1}z)$ and $h(p^rz)$ respectively. In
this situation several of the terms will be zero, with the underlying reason
being that $f$ is new at $p$ while $g$ and $h$ are not: 

\begin{lem} \label{lem:PairNewformWithLowerLevel}
	Suppose $f$ is a modular form of weight $k$ and level $M_0 p^r$ which is new
	at $p^r$, and $g_0,h_0$ are any modular forms of weight $m-k$ and $m$,
	respectively, which are modular for $M_0 p^{r-1}$. Then
	$\g{f(z)g_0(z),h_0(z)} = 0$.
\end{lem}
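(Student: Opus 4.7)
The plan is to use the trace operator from level $M_0 p^r$ to level $M_0 p^{r-1}$ to push the entire Petersson inner product down to the lower level, and then invoke the hypothesis that $f$ is new at $p^r$.

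Set $\Gamma = \Gamma_0(M_0 p^r)$ and $\Gamma' = \Gamma_0(M_0 p^{r-1})$, so $\Gamma \subset \Gamma'$ and let $\{\gamma_i\}$ be a set of coset representatives for the (left) $\Gamma$-cosets in $\Gamma'$. Write $\chi_f, \chi_{g_0}, \chi_{h_0}$ for the characters of the three forms; we may assume $\chi_f \chi_{g_0} = \chi_{h_0}$ on the nose, since otherwise the inner product vanishes for trivial reasons.

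First, I would express the Petersson inner product as
\[
\langle f(z) g_0(z), h_0(z)\rangle \;=\; \frac{1}{\vol(\Gamma\backslash\mathbb{H})}\int_{D} f(z)\,g_0(z)\,\overline{h_0(z)}\,\Im(z)^m\,\frac{dx\,dy}{y^2},
\]
where $D$ is a fundamental domain for $\Gamma$. Decomposing $D = \bigsqcup_i \gamma_i^{-1} D'$ for a fundamental domain $D'$ of $\Gamma'$, I would change variables $z \mapsto \gamma_i z$ on each piece. The key computation is that, because $g_0$ and $h_0$ are modular for $\Gamma'$ (of weights $m-k$ and $m$ and characters $\chi_{g_0}, \chi_{h_0}$) and because $\Im(z)^m\,dx\,dy/y^2$ transforms as weight $(m,m)$ under any element of $\mathrm{SL}_2(\R)$, the factors $(cz+d)$ coming from each of the four pieces combine to yield exactly $(cz+d)^{-k}$, i.e.\ produce the slash-operator action $f|_k \gamma_i$. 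The character factors $\chi_{g_0}(d_i)\overline{\chi_{h_0}(d_i)} = \chi_f^{-1}(d_i)$ match the weight $(\chi_f^{-1}(\gamma_i))$ in the definition of the trace operator from Section \ref{sec:PeterssonOfHigherLevels}. Summing over $i$ turns $f$ into $\mathrm{Tr}_{\Gamma'}^{\Gamma}(f)$, giving
\[
\int_{D} f\,g_0\,\overline{h_0}\,y^m\,\tfrac{dx\,dy}{y^2} \;=\; \int_{D'} \mathrm{Tr}_{\Gamma'}^{\Gamma}(f)\cdot g_0\,\overline{h_0}\,y^m\,\tfrac{dx\,dy}{y^2}.
\]

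Finally, I would observe that the hypothesis ``$f$ is new at $p^r$'' means by definition that $f$ is orthogonal to every form of level $M_0 p^{r-1}$ (embedded in level $M_0 p^r$ via $\psi(z)$ and $\psi(pz)$). By adjointness of the inclusion $S_k(M_0 p^{r-1},\chi_f)\hookrightarrow S_k(M_0 p^r,\chi_f)$ and the trace, this is equivalent to $\mathrm{Tr}_{\Gamma'}^{\Gamma}(f) = 0$ in $S_k(M_0 p^{r-1},\chi_f)$. Hence the right-hand integral vanishes identically and so does $\langle f g_0, h_0\rangle$.

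The main thing to double-check is the exact bookkeeping in step two: one must verify that the automorphy factors from $\phi\mapsto\phi(\gamma_i z)$ for the three forms combine cleanly with the $|cz+d|^{-2m}$ from $\Im(\gamma_i z)^m$ to reproduce precisely the slash action $|_k$ on $f$, and that the character factors assemble into the $\chi_f^{-1}(\gamma_i)$ used in the paper's definition of $\mathrm{Tr}$; this is where the hypothesis $\chi_f\chi_{g_0}=\chi_{h_0}$ is used.
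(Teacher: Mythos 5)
Your proposal is correct and follows essentially the same route as the paper: both arguments hinge on showing that the trace of $f$ down to level $M_0p^{r-1}$ vanishes (because newness at $p^r$ makes $f$ orthogonal to everything of lower level, and the trace is adjoint to inclusion), and then identifying $\langle f g_0, h_0\rangle$ with a nonzero multiple of $\langle \mathrm{Tr}(f)\,g_0, h_0\rangle$. The only cosmetic difference is that you establish this identification by unfolding fundamental domains and tracking automorphy factors directly, whereas the paper obtains the same proportionality by moving the slash operators $[\gamma^{-1}]$ across the Petersson pairing onto $g_0$ and $h_0$; these are the same computation.
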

We remark that it's straightforward to check (using trace operators as in the
proof here) that condition of ``being new at $p^r$'' does not depend on the
prime-to-$p$ level. Thus it doesn't matter if the $M_0$ in the statement of the
lemma is larger than the original prime-to-$p$ level of $f$.
\begin{proof}
	Similar to what we discussed in Section \ref{sec:PeterssonOfHigherLevels},
	we can construct a trace operator from modular forms of level $M_0 p^r$ to
	$M_0 p^{r-1}$, taking a form $\ps$ to the sum of all translates $\ch\1(\ga)
	\ps|[\ga]$ for $\ga$ running over coset representatives of $\Ga_0(M_0 p^r)
	\q \Ga_0(M_0 p^{r-1})$. Furthermore, we have that if $\ps$ is modular of
	level $M_0 p^{r-1}$ then $\g{\trc(f),\ps}$ is a scalar multiple of
	$\g{f,\ps}$. 
	\par
	Since $f$ being new at $p^r$ means it's perpendicular to every $\ps$ of
	level $M_0 p^{r-1}$, we can thus conclude $\g{\trc(f),\ps} = 0$ for every
	such $\ps$. Since $\trc(f)$ lies in this space of modular forms $M_0
	p^{r-1}$ we conclude that $\trc(f) = 0$.
	\par
	Thus we obviously have $0 = \g{\trc(f)g_0,h_0}$. But we can also carry out a
	similar manipulation for this Petersson inner product: 
	\[ \g{\trc(f)g_0,h_0} = \s_\ga \g{\ch_f\1(\ga) f|[\ga]_k g_0, h_0} 
			= \s_\ga \g{\ch_f\1(\ga) f g_0|[\ga\1]_{m-k}, h_0|[\ga\1]_m}, \]
	and using the transformation laws for $\ga \in \Ga_0(M_0 p^{r-1})$ on
	$h_0,g_0$ we conclude that $\g{\trc(f)g_0,h_0}$ is a scalar multiple of
	$\g{fg_0,h_0}$, and thus that $\g{fg_0,h_0} = 0$.
\end{proof}

This lemma directly implies $\g{fg,h} = \g{f(z)g(z),h(p^{r-1}z)}$ vanishes. 
A further argument allows it to handle $\g{fg_p,h_p}$ as well. 

\begin{lem}
	We have $\g{fg_p,h_p} = 0$. 
\end{lem}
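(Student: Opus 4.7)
The plan is to combine Lemma \ref{lem:AdjointToTpOperator} (the $T(p)$-adjoint identity) with the Hecke eigenvalue relation for $f$, thereby reducing the claim to the already-established vanishing of $\langle fg, h\rangle$.

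First I would apply Lemma \ref{lem:AdjointToTpOperator} at level $M_0 p^r$. Both $fg_p$ and $h_{p^{r-1}}$ lie in $S_m(\Gamma_0(M_0 p^r), \chi_h)$, using $\chi_f \chi_g = \chi_h$ and that $p$ divides $M_0 p^r$, so the lemma gives
\[
 \langle T(p)(fg_p), h_{p^{r-1}}\rangle
    \;=\; p^m\,\langle fg_p(z), h_{p^{r-1}}(pz)\rangle
    \;=\; p^m\,\langle fg_p, h_{p^r}\rangle.
\]

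Next I would compute $T(p)(fg_p)$ explicitly. Since $p$ divides the level, $T(p)F(z) = p^{-1}\sum_{i=0}^{p-1} F((z+i)/p)$. Because $g$ is modular for $\Gamma_0(M_0)$ it is $\mathbb{Z}$-periodic, so $g(z+i) = g(z)$ and the sum factors:
\[
 T(p)(fg_p)(z) \;=\; \frac{g(z)}{p}\sum_{i=0}^{p-1} f((z+i)/p)
    \;=\; g(z)\cdot T(p)f(z) \;=\; a_f\, f(z)g(z) \;=\; a_f\, fg.
\]
Substituting back, $p^m\,\langle fg_p, h_{p^r}\rangle = a_f\,\langle fg, h_{p^{r-1}}\rangle = a_f\,\langle fg, h\rangle$. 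The right-hand side vanishes by the observation just before the statement of the lemma (namely that Lemma \ref{lem:PairNewformWithLowerLevel} applies with $g_0 = g(z)$ and $h_0 = h(p^{r-1}z)$, both modular for $M_0 p^{r-1}$). Since $p^m \neq 0$, this gives $\langle fg_p, h_p\rangle = 0$.

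There is no serious obstacle. The only points requiring care are verifying the common level/character hypothesis for Lemma \ref{lem:AdjointToTpOperator} (immediate from $\chi_f\chi_g = \chi_h$ and that $h_{p^{r-1}}$ has level dividing $M_0 p^r$), and observing that the $\mathbb{Z}$-periodicity of $g$ lets one pull $g$ out of the $T(p)$ sum so that $T(p)$ effectively acts on $f$ alone. Notably the argument is uniform in $r$: when $r \geq 2$ the factor $a_f$ is itself zero, and when $r = 1$ the factor $\langle fg, h\rangle$ is zero, so in either case the product vanishes.
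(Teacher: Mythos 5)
Your proof is correct and follows essentially the same route as the paper: move $T(p)$ onto $fg_p$ via Lemma \ref{lem:AdjointToTpOperator}, observe that $T(p)(f(z)g(pz)) = a_f\, f(z)g(z)$ (you do this via the coset sum and $\Z$-periodicity of $g$, the paper via $q$-expansions — an equivalent computation), and then invoke the vanishing of $\g{fg,h}$ from Lemma \ref{lem:PairNewformWithLowerLevel}. Your bookkeeping of the power of $p$ is in fact more careful than the paper's, though it is irrelevant since the product vanishes.
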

\begin{proof}
	By Lemma \ref{lem:AdjointToTpOperator}, we have $\g{f(z)g(pz),h(p^r z)} =
	p^m \g{T(p)(f(z)g(pz)),h(p^{r-1}z)}$. If we write if $f = \s a_n q^n$ and $g
	= \s b_n q^n$ then we have 
	\[ T(p)(f(z)g(pz))
		= T(p)\b( \s_{n=0}^\oo \8( \s_{i+pj = n} a_i b_j \8) q^n \e) 
		= \s_{n=0}^\oo \8( \s_{i+pj = pn} a_i b_j \8) q^n
		= \s_{n=0}^\oo \8( \s_{i'+j = n} a_{pi'} b_j \8) q^n, \]
	which is $T(p)(f(z)) \dt g(z) = a_f f(z) g(z)$. Thus $\g{fg_p,h_p} = a_f p^k
	\g{fg,h} = 0$. 
\end{proof}

At this point we've reduced our equation to 
\[ \g{fg^\sh,h^\na} = -\be_g \g{fg_p,h} - p\Lbe_h \g{fg,h_p}. \]
If $r > 1$ then we can in fact also apply Lemma
\ref{lem:PairNewformWithLowerLevel} to $\g{fg_p,h_p}$ and conclude: 

\begin{prop} \label{prop:EulerFactorNumeratorPPartGe2}
	Suppose that $f$ is new of level $p^r$ with $r\ge 2$. Then we have 
	\[ \g{f(z) g^\sh(z),h^\na(p^{r-1} z)} = - p\Lbe_h \g{f(z)g(z),h(p^r z)}.\]
\end{prop}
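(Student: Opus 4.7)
The plan is to pick up exactly where the discussion preceding the proposition leaves off. At that point, combining the two preceding lemmas (the vanishing of $\g{fg,h}$ and of $\g{fg_p,h_p}$) has already reduced the expansion of $\g{fg^\sh,h^\na}$ to
\[ \g{f(z)g^\sh(z),h^\na(p^{r-1}z)} = -\be_g\g{f(z)g(pz),h(p^{r-1}z)} - p\Lbe_h\g{f(z)g(z),h(p^r z)}. \]
So the only thing left to show is that the first term on the right vanishes when $r\ge 2$, because the second term is already the desired right-hand side.

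To dispatch the remaining term, I would apply Lemma \ref{lem:PairNewformWithLowerLevel} one more time, this time to $\g{f(z)g(pz),h(p^{r-1}z)}$. The key observation is that the assumption $r\ge 2$ gives us one extra power of $p$ in the level to play with. Concretely, $g$ has prime-to-$p$ level dividing $M_0$, so $g(pz)$ is modular of level dividing $M_0 p$, which divides $M_0 p^{r-1}$ precisely because $r\ge 2$. Similarly $h(p^{r-1}z)$ is modular of level dividing $N_0 p^{r-1}$, hence of level $M_0 p^{r-1}$. Thus both $g_0:=g(pz)$ and $h_0:=h(p^{r-1}z)$ are modular for $\Ga_0(M_0 p^{r-1})$, while $f$ is new at $p^r$; the lemma therefore applies and yields $\g{f\cdot g_0,h_0}=0$.

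Substituting this vanishing back into the reduced equation gives exactly
\[ \g{f(z)g^\sh(z),h^\na(p^{r-1}z)} = -p\Lbe_h\g{f(z)g(z),h(p^r z)}, \]
which is the claim. There is no real obstacle here — the entire content of the proposition sits in the hypothesis $r\ge 2$, whose role is simply to guarantee that after multiplying $g$ by $p z$ we have not yet exhausted the $p$-power level available, so that Lemma \ref{lem:PairNewformWithLowerLevel} still applies to the cross term $\g{fg_p,h}$. The $r=1$ case (to be treated separately) is exactly the case in which this cross term can no longer be killed, which is why an additional Euler factor appears there.
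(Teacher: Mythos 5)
Your proposal is correct and is essentially identical to the paper's argument: after the two vanishing lemmas reduce the expansion to $-\be_g\g{fg_p,h} - p\Lbe_h\g{fg,h_p}$, the paper likewise invokes Lemma \ref{lem:PairNewformWithLowerLevel} once more (using $r\ge 2$ so that $g(pz)$ and $h(p^{r-1}z)$ are both modular at level $M_0p^{r-1}$) to kill the cross term. Your explanation of why $r=1$ fails is also the right one.
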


If $r = 1$ then $\g{fg_p,h}$ will generally not be zero. Instead we can relate
it to $\g{fg,h_p}$ by using the Atkin-Lehner operator $\om_p$ as in  Lemma
\ref{lem:AtkinLehnerToPetersson}. 

\begin{lem} 
	In the case $r=1$, we have $\g{f_p g,h} = -p a_f \Lch_h(p) \g{fg_p,h_p}$. 
\end{lem}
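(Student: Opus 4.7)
The plan is to parallel the proof of Lemma \ref{lem:AtkinLehnerToPetersson}, applying the invariance of the Petersson inner product under the slash by the Atkin-Lehner matrix $\omega_p = \begin{pmatrix} p & 1 \\ Npc & pd \end{pmatrix}$ (with $pd - Nc = 1$ and $\det\omega_p = p$), but now accounting for the fact that $f$ itself has level divisible by $p$.

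First, I would compute the translates of $g$ and $h$ using the factorization $\omega_p = \gamma_p \delta_p$ with $\gamma_p = \begin{pmatrix} 1 & 1 \\ Nc & pd \end{pmatrix} \in \Gamma_0(N)$, whose lower-right entry $pd$ reduces to $1$ modulo $N$. This yields $g|[\omega_p]_{m-k} = p^{(m-k)/2} g_p$ and $h|[\omega_p]_m = p^{m/2} h_p$, exactly as in Lemma \ref{lem:AtkinLehnerToPetersson}.

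The new ingredient, not present in the prior lemma, is the action of $[\omega_p]_k$ on $f$ itself. Whereas before the corresponding slash only produced a character value (since $f$ lived at prime-to-$p$ level), here $f$ is a newform of level $Np$ new at $p$ with trivial central character, i.e.\ of Steinberg type at $p$. The standard Atkin-Lehner eigenvalue relation for such a newform (see \cite{Asai1976}) gives
\[ f|[\omega_p]_k = -a_f p^{1-k/2} f. \]
This follows from $\omega_p^2$ being a scalar matrix for an appropriate choice of $c, d$ (so $f|[\omega_p]_k$ is an eigenvector with eigenvalue $\pm 1$), combined with the Steinberg identity $a_f^2 = p^{k-2}$ for trivial character, which forces the sign.

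Assembling these computations in the Petersson inner product, the product of the three constants $-a_f p^{1-k/2}$, $p^{(m-k)/2}$, and $p^{m/2}$ (with appropriate complex conjugation of the translate of $h$, which is real-valued in the $p$-power factor) collapses to $-p a_f$. The character factor coming from $g_p|[\omega_p]_{m-k}$ contributes $\bar{\chi}_g(p)$, which equals $\bar{\chi}_h(p)$ under the hypothesis $\chi_f \chi_g = \chi_h$ combined with $\chi_f$ trivial. This yields the stated constant $-p a_f \bar{\chi}_h(p)$.

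The hard part will be the verification of the precise normalization for the Atkin-Lehner eigenvalue of $f$ under our slash-operator conventions, making sure no stray factors of $p^{\pm 1/2}$ are lost; the final constant is however pinned down by the requirement that $|{-a_f p^{1-k/2}}| = 1$ (since $|a_f| = p^{(k-2)/2}$), consistent with $[\omega_p]_k$ being an isometry on the new subspace.
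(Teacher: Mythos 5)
Your overall strategy is the same as the paper's: apply the Atkin--Lehner matrix $\omega_p$ to all three factors of the Petersson inner product and quote Asai for the pseudo-eigenvalue of the $p$-new form $f$. But the execution has a genuine error in the power-of-$p$ bookkeeping, and it stems from slashing the wrong form. First, note that the statement as printed contains typos: the identity actually needed (and used in the substitution that follows it) is $\langle fg_p,h\rangle = -p\,a_f\,\overline{\chi_h}(p)\,\langle fg,h_p\rangle$; the right-hand side as literally printed, $\langle fg_p,h_p\rangle$, was shown to vanish in the immediately preceding lemma, so that version would be vacuous. The factor carrying the shift on the left is therefore $g_p$, and one must compute $g_p|[\omega_p]_{m-k} = \overline{\chi_g}(p)\,p^{-(m-k)/2}\,g$, which sends $g_p$ back down to $g$ (and simultaneously $h$ up to $h_p$), converting one ``mixed'' inner product into the other. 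You instead carried over $g|[\omega_p]_{m-k} = p^{(m-k)/2}g_p$ from the earlier lemma, where the roles were reversed; keeping $f$ fixed and sending $g\mapsto g_p$, $h\mapsto h_p$ relates $\langle fg,h\rangle$ to $\langle fg_p,h_p\rangle$, both of which are zero, so it cannot produce the nontrivial relation required. Moreover, with the three constants you actually wrote down, the product is $-a_f\,p^{1-k/2}\cdot p^{(m-k)/2}\cdot p^{m/2} = -a_f\,p^{1+m-k}$, not $-p\,a_f$: the asserted ``collapse'' does not happen. With the correct factor $p^{-(m-k)/2}$ the exponents do cancel to $p^1$, and the character $\overline{\chi_g}(p)$ combines with the $\overline{\chi_f}(p)$ hidden in $\overline{a_p} = \overline{\chi_f}(p)\,a_f$ (the Asai eigenvalue is $-p^{1-k/2}\,\overline{a_p}$, not $-p^{1-k/2}a_f$ in general) to give $\overline{\chi_h}(p)$; your route to the character factor instead assumes $\chi_f$ is trivial outright, which is not among the hypotheses of this section (only $\chi_f\chi_g=\chi_h$, whence triviality of the $p$-part of $\chi_f$, is assumed).

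Since the sole content of this lemma is the exact constant --- it feeds directly into the removed Euler factor $e_p(fg_{m-k},h_m)$ in the $r_0=1$ case --- arriving at the right answer from ingredients that demonstrably do not produce it is a real gap rather than a cosmetic one. Your heuristic that the sign of the Atkin--Lehner eigenvalue is ``forced'' by $|a_f| = p^{(k-2)/2}$ is also not a proof: that absolute value is consistent with either sign, and pinning down the minus sign is precisely the content of the Atkin--Lehner/Asai result you cite.
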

\begin{proof}
	As in Lemma \ref{lem:AtkinLehnerToPetersson} we may write $\g{fg_p,h} =
	\g{f|[\om_p]_kg_p|[\om_p]_{m-k}, h|[\om_p]_m}$, and the arguments of that
	lemma give $g_p|[\om_p]_{m-k} = \Lch_g(p) p^{-(m-k)/2} g$ and $h|[\om_p]_m =
	p^{m/2} h_p$. Meanwhile, one can compute $f|[\om_p]_k = -p^{1-k/2} \L{a}_p f$
	arguing with Atkin-Lehner operators as in \cite{Asai1976} or Section 4.6 of
	\cite{MiyakeModForms}; in particular, the arguments of Theorem 1 and Lemma 3
	of \cite{Asai1976} go through in our case (specifically because $f$ is an
	eigenform of $T(p)$ with eigenvalue $a_f$, $p$ exactly divides the level of
	$f$, and the character $\ch_f$ has trivial $p$-part due to the equation
	$\ch_f\ch_g = \ch_h$). Combining these equations and using $\L{a}_p =
	\Lch_f(p) a_f$ and $\Lch_f(p)\Lch_g(p) = \Lch_h(p)$ gives the lemma.
\end{proof}

Substituting, we have
\[ \g{fg^\sh,h^\na} = ( p a_f \be_g \Lch_h(p) - p\Lbe_h ) \g{fg,h_p}. \]
Using the identity $\Lch_h(p) = \Lbe_h / \al_h$ gives: 

\begin{prop} \label{prop:EulerFactorNumeratorPPart1}
	Suppose that $f$ is new of level $p^r$ with $r = 1$. Then we have 
	\[ \g{f(z) g^\sh(z),h^\na(p^{r-1} z)} = - p\Lbe_h 
			\b( 1 - \f{\be_g a_f}{\al_h} \e) \g{f(z)g(z),h(p^r z)}.\]
\end{prop}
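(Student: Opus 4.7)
The plan is to expand $\g{fg^\sh,h^\na}$ into four terms using the definitions $g^\sh = g(z) - \be_g g(pz)$ and $h^\na = h(z) - p\be_h h(pz)$ (where, since $r=1$, the ``$h$'' in the expansion denotes $h(p^{r-1}z) = h(z)$), dispose of two of them by the ``newness'' vanishing results already in place, and relate the remaining two by an Atkin-Lehner involution at $p$. Expanding and applying Lemma \ref{lem:PairNewformWithLowerLevel} kills $\g{fg,h}$ directly, since $g$ and $h$ are modular at prime-to-$p$ level while $f$ is new at $p$; and combining Lemma \ref{lem:AdjointToTpOperator} with the computation $T(p)(f(z)g(pz)) = a_f f(z)g(z)$ reduces $\g{fg_p,h_p}$ to a scalar multiple of $\g{fg,h}$, so it also vanishes. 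This leaves
\[ \g{fg^\sh,h^\na} = -\be_g\g{fg_p,h} - p\Lbe_h\g{fg,h_p}. \]

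The crucial step is to rewrite $\g{fg_p,h}$ as a scalar multiple of $\g{fg,h_p}$ by simultaneously slashing both entries by the Atkin-Lehner matrix $\om_p$, exploiting invariance of the Petersson inner product as in Lemma \ref{lem:AtkinLehnerToPetersson}. Two of the three slash computations ($g_p|[\om_p]_{m-k}$ and $h|[\om_p]_m$) are immediate from that lemma, while the delicate one is $f|[\om_p]_k$ for a $p$-new eigenform of level exactly $p$. I expect this to be the main obstacle; it should be handled by the Atkin-Lehner theory of \cite{Asai1976} (applicable because $\ch_f\ch_g = \ch_h$ together with the prime-to-$p$ levels of $g,h$ forces $\ch_f$ to be unramified at $p$), giving $f|[\om_p]_k = -p^{1-k/2}\Lch_f(p) a_f f$. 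Assembling the three slash identities and using $\Lch_f(p)\Lch_g(p) = \Lch_h(p)$ then yields $\g{fg_p,h} = -pa_f\Lch_h(p)\g{fg,h_p}$.

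Substituting back produces $\g{fg^\sh,h^\na} = \b(pa_f\be_g\Lch_h(p) - p\Lbe_h\e)\g{fg,h_p}$. To reach the stated factorization I would use the identity $\Lch_h(p) = \Lbe_h/\al_h$, which follows directly from combining $\al_h\be_h = p^{m-1}\ch_h(p)$ with the Ramanujan relation $\al_h\Lal_h = p^{m-1}$; factoring out $-p\Lbe_h$ and recalling that $h_p = h(p^r z)$ since $r = 1$ then produces the claimed formula. Apart from the Atkin-Lehner slash computation on $f$, all remaining steps are routine algebra.
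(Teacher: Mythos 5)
Your proposal follows the paper's proof essentially verbatim: the same four-term expansion, the same two vanishing arguments (newness at $p$ for $\g{fg,h}$ and the $T(p)$ computation for $\g{fg_p,h_p}$), the same Atkin--Lehner manipulation relating $\g{fg_p,h}$ to $\g{fg,h_p}$ via the slash computation $f|[\om_p]_k = -p^{1-k/2}\L{a}_p f$ from \cite{Asai1976}, and the same final factorization using $\Lch_h(p) = \Lbe_h/\al_h$. The argument is correct as proposed.
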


We remark that we have carried out numerical computations to verify the
formulas obtained in in Propositions \ref{prop:EulerFactorDenominator},
\ref{prop:EulerFactorNumeratorPrimeToP},
\ref{prop:EulerFactorNumeratorPPartGe2}, and
\ref{prop:EulerFactorNumeratorPPart1}. This is described in Section 3.4 of our
paper \cite{Collins2018}. 

\subsection{The module of congruences for a CM family}
\label{sec:CongruenceModuleCM}

Finally, the last bit of $\La$-adic theory we'll need to use is an explicit
computation of the congruence number $H_\Bg$ in the case $\Bg = \Bg_\Ps$ is a
$\CI$-adic CM newform as constructed in the previous section. As we've
mentioned, this computation is essentially a consequence of the main conjecture
of Iwasawa theory for imaginary quadratic fields, which was proven by Rubin
\cite{Rubin1991}. The result is that $H_\Bg$ is essentially a $p$-adic
$L$-function associated to an anticyclotomic family of Hecke characters closely
related to $\Ps$; the connection between the main conjecture and the
computation of $H_\Bg$ is studied by Hida and Tilouine in
\cite{HidaTilouine1993} and \cite{HidaTilouine1994}. 
\par
A number of constructions have been given for these \E{$p$-adic Hecke
$L$-functions}, for instance by Manin-Vishik \cite{VisikManin1974}, by Katz
\cite{Katz1976} and \cite{Katz1978}, and by Coates-Wiles
\cite{CoatesWiles1978}; see also expositions in Chapter II.4 of
\cite{ShalitCMEllipticCurves} and Chapter 9 of \cite{HidaArithInvariants}.
We'll give the necessary setup to state the special case of these results that
we'll actually use.  
\par
The main thing we need to do is to define the appropriate periods for
algebraicity results. We actually define a pair $(\Om_\oo,\Om_p)$ of a complex
and a $p$-adic period. The idea is to fix an auxiliary prime-to-$p$ conductor
$c$, and take $A_0$ to be the elliptic curve defined by $A_0(\C) = \C/\CO_c$,
which we can show is defined over a number field. We then pick a
nowhere-vanishing differential $\om$ defined over this number field. Then,
working over $\C$ we have a standard differential $\om_\oo$ coming from the
standard coordinate on $\C$ passed to $\C/\CO_c$. Working over $\CO_{\C_p}$ we
can get an isomorphism between $\H\DG_m$ and the formal completion $\H\CA_0$ of
a good integral model $\CA_0$ of $A_0$, well-defined up to $p$-adic units; from
this we get a differential $\om_p$ induced by the standard period $dt/t$ on
$\H\DG_m$. We can then define our periods: 

\begin{defn} \label{defn:HeckePeriods} 
	Given a fixed differential $\om$ defined over $\CO_c$ as above, we define
	two periods $\Om_\oo \in \C$ and $\Om_p \in \C_p$ by 
	\[ \om = \Om_\oo \om_\oo \qq\qq \om = \Om_p \om_p. \]
\end{defn}

The pair $(\Om_\oo,\Om_p)$ is evidently well-defined up to algebraic scalars.
We usually take the convention that we choose our scalar so that $\Om_p$ is a
$p$-adic unit; thus $(\Om_\oo,\Om_p)$ is well-defined up to $p$-adic units of
$\L\Q$ (via the embedding $\L\Q \into \L\Q_p$ that we've fixed since the
beginning). Also, we note that while we started with a conductor $c$ (and
defined our elliptic curve $A_0$ as the one having CM by the order $\CO_c$ of
$\CO_K$), the dependence on $c$ is entirely illusory: since there's a
prime-to-$p$ isogeny between the resulting curves for any two such choices of
$c$, we can choose the same pair of periods $(\Om_\oo,\Om_p)$ to arise from
both cases.
\par
With our periods constructed, we can state the interpolation property that
characterizes the $p$-adic Hecke $L$-function, and quote the existence result of
this $L$-function. In full generality, it can be defined to
interpolate special values of Hecke $L$-functions where we can vary the
character over all things in a certain range of conductors; we will be
interested in a specialization that gives us something defined on a domain
similar to $\La$, which interpolates the special values in a particular
anticyclotomic family of characters. In particular we'll consider a family $\Ph
: \DI_K \to \CI$ constructed along the lines of what we did in Section \ref{sec:FamiliesOfHeckeCharacters}, in particular satisfying $P_m\o\Ph =
\ph_m = \ph \al^{m-1} (\al^c)^{-(m-1)}$. More specifically, we'll ask that the
finite-order character $\ph$ here has prime-to-$p$ conductor $\Fc$, and that
the $p$-part of its finite-type is $\om_\Fp^a \om_{\L\Fp}^{-a}$ for a residue
class $a$ mod $p-1$. Thus $\ph_m$ will be unramified at $p$ (and have conductor
exactly $\Fc$) for $m \ee a\md{p-1}$. 
\par
So, the interpolation property we want is at points $P_m$ for $m\ee a\md{p-1}$.
There is a slight additional wrinkle: even though our family $\Ph$ of Hecke
characters are defined over $\CI$, the $p$-adic $L$-function will need to be
defined over a larger ring; to deal with the periods we need to extend scalars
to $\CO_F^\uR$, the integer ring of the maximal unramified extension $F^\uR/F$.
So we define $\CI^\uR = \CI\ox_{\CO_F} \CO_F^\uR = \CO_F^\uR\bb{\Ga'}$. On this
larger ring, our distinguished specializations $P_m : \CI \to \CO_F$ have a
unique $\CO_F^\uR$-linear extension to $\CI^\uR \to \CO_F^\uR$ that we also
denote $P_m$, characterized in the usual way (by mapping the distinguished
topological generator $\ga_\pi$ to $(1+\pi)^m$). Then: 

\begin{thm} \label{thm:PAdicHeckeLFunction}
	Suppose $\Ph = \ph \CA \al\1 (\CA^c)\1 \al^c$ is an anticyclotomic family as
	described above, so that for $m\ee a\md{p-1}$ we have that $\ph_m$ has
	infinity-type $(m-1,-m+1)$ and conductor $\Fc$ coprime to $p$. Then, for
	$m\ge 3$ satisfying $m\ee a\md{p-1}$ we can define $L_\alg(\ph_m,1) \in
	\L\Q$ and $L_p(\ph_m,1) \in \C_p$ satisfying
	\[ \f{L_p(\ph_m,1)}
			{\Om_p^{2m-2} (1-\ph_m\1(\L\Fp)/p) (1-\ph_m(\Fp))} 
			= L_\alg(\ph_m,1) = \f{(m-1)!\pi^{m-2}}{2^{m-2} \rt{d}^{m-2}} 
					\f{L(\ph_m,1)}{\Om_\oo^{2m-2}}, \]
	(with these equalities really being under our embeddings $\io_\oo$ and
	$\io_p$) and there exists an element $\CL_p(\Ph) \in \CI^{ur}$ such that
	specializing at $P_m$ for such $m$ gives $L_p(\ph_m,1)$. 
\end{thm}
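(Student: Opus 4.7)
The plan is to deduce this theorem from the general construction of a two-variable $p$-adic Hecke $L$-function for the imaginary quadratic field $K$, due originally to Katz \cite{Katz1976}, \cite{Katz1978}, with related work by Manin--Vishik and Coates--Wiles. Once that construction is in hand, the statement here is obtained by specializing along a one-parameter anticyclotomic slice.

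First, I would recall Katz's construction. Fixing the auxiliary prime-to-$p$ conductor $\Fc$, Katz constructs a measure $\mu$ on the profinite Galois group $G_{\Fc p^\oo} = \Gal(K(\Fc p^\oo)/K)$, valued in $\CO_{F^\uR}$, with the following interpolation property: for any algebraic $p$-adic Hecke character $\chi$ of infinity-type $(a,b)$ (with $a \ge 1$ and $b \le 0$, say) and conductor dividing $\Fc p^\oo$, the integral $\int \chi\, d\mu$ equals an explicit modification of $L(\chi,0)$ (or $L(\chi,1)$ in the shifted convention used here) involving the CM periods $\Om_\oo^{a-b}$, $\Om_p^{a-b}$, the $\Ga$-factor $(a-1)! \pi^{a-1}/\rt{d}^{a-1}$ up to elementary constants, and the Euler factors at $\Fp$ and $\L\Fp$ of the stated form. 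The passage from measures on $G_{\Fc p^\oo}$ to elements of an Iwasawa algebra $\CO_{F^\uR}\bb{G_{\Fc p^\oo}}$ is standard.

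Second, I would use the family $\Ph : \DI_K/K^\x \to \CI^\x$ to produce a map of Iwasawa algebras. By class field theory $\Ph$ factors through the Galois group $G_{\Fc p^\oo}$ (since its finite part has conductor dividing $\Fc$ and its $p$-adic behavior is governed by $\CA, \CA^c$, both of which factor through the $p$-part), giving a continuous homomorphism $G_{\Fc p^\oo} \to (\CI^\uR)^\x$ and thus an $\CO_{F^\uR}$-algebra homomorphism $\Ph_* : \CO_{F^\uR}\bb{G_{\Fc p^\oo}} \to \CI^\uR$. Define $\CL_p(\Ph) = \Ph_*(\mu)$. For $m \ee a\md{p-1}$, the composition $P_m \o \Ph = \ph_m$ is precisely an unramified-at-$p$ character of infinity-type $(m-1,-m+1)$ and conductor $\Fc$; hence $P_m(\CL_p(\Ph)) = \int \ph_m\, d\mu$, which by Katz's interpolation property is $L_p(\ph_m,1)$ as defined by the equation in the theorem.

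The main obstacle is bookkeeping: Katz's formulas appear in the literature under varying sign and normalization conventions for infinity-types, for the measure on $K$ (the $\rt{d}$ in the archimedean factor comes from the discriminant), for the unit chosen in the pair $(\Om_\oo, \Om_p)$, and for which of $\Fp, \L\Fp$ is identified with $\io_p$. One must verify that with our choices (our convention for infinity-types, the normalization $(1+\pi)$ built into $\CA$, and the identification $\Fp = \ker(\io_p)$) the modifying Euler factors come out as $(1-\ph_m\1(\L\Fp)/p)(1-\ph_m(\Fp))$ rather than one of several variant shapes, and that the archimedean factor takes the displayed form. Once these normalizations are matched, no further analytic input is needed, and the element $\CL_p(\Ph)$ is determined uniquely by Lemma \ref{lem:EqualityOfSpecializations} since the interpolation set is infinite.
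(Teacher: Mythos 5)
Your proposal is correct and matches the paper's treatment: the paper explicitly "quotes the existence result" from the constructions of Katz, Manin--Vishik, and Coates--Wiles rather than proving it, and the intended derivation is exactly your pushforward of the Katz measure along the homomorphism $\CO_{F^\uR}\bb{G_{\Fc p^\oo}} \to \CI^\uR$ induced by $\Ph$, followed by the convention-matching you describe. Nothing further is needed.
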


The theorem holds (with the same interpolation formula) for families of the
form $\ph \CA^c (\al^c)\1 \CA\1 \al$ as well. 
\par
Next, we connect this back to the problem of finding the congruence number 
associated to a $\CI$-adic family $\Bg_\Ps$ where $\Ps$ is a family with
$P_m\o\Ps = \ps_m = \ps \al^{m-1}$ of infinity-type $(m-1,0)$ (where here
$\al$ has infinity-type $(1,0)$ and finite-type $\om_\Fp\1$). As we've said,
the Iwasawa main conjecture for the imaginary quadratic field $K$ is enough to
compute this. By a general argument in deformation theory (see Section 3.2 of
\cite{Hida1996a}, for instance), the congruence module $\CC(\Bf,\CI)$ can be
identified with (the Pontryagin dual of) a Selmer group for the adjoint Galois
representation $\ad(\Bf)$. In the case where $\Bf = \Bg_\Ps$ is a CM family,
this adjoint Galois representation decomposes as a direct sum of
$\Ind(\Ps/\Ps^c)$ and $\ch_K$, and thus the Selmer group breaks up similarly,
and finally the Selmer group of $\Ind(\Ps/\Ps^c)$ is related to
$\CL_p(\Ps/\Ps^c)$ by the main conjecture. 
\par
The precise relation between $H_{\Bg_\Ps}$ to $\CL_p(\Ps/\Ps^c)$ is worked out
\cite{HidaTilouine1993} and \cite{HidaTilouine1994}, and we quote a version of
their result. 

\begin{thm} \label{thm:HeckeLFunctionIsCongruenceNumber}
	Suppose that our family is such that for $m\ee a\md{p-1}$, the character
	$\ps_m$ has conductor $\Fc$ which is a nonempty product of inert primes.
	Then the CM forms $g_{\ps_m}$ have residually irreducible $p$-adic Galois
	representations, and the element 
	\[ \b( \p_{q|N(\Fc)}(1+q\1)(1-q^{-2})\e) h_K \CL_p(\Ps/\Ps^c) \in \CI^{ur}\]
	is equal (up to a unit in $\CI^{ur}$) to the congruence number $H_{\Bg_\Ps}
	\in \CI$. Here $h_K$ is the class number of $K$.
\end{thm}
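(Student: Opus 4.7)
The strategy is to use the identification of the congruence module with an adjoint Selmer group, exploit the CM decomposition of the adjoint Galois representation, and then apply Rubin's Iwasawa main conjecture for imaginary quadratic fields to identify the relevant Selmer group piece with the anticyclotomic Hecke $L$-function $\CL_p(\Ps/\Ps^c)$. The Gorenstein hypothesis needed to invoke Theorem~\ref{thm:ExistenceOfCongruenceNumber} is guaranteed by residual irreducibility, which is the first thing I would verify: because $\Fc$ is a nonempty product of inert primes, the character $\ps_m$ is not congruent modulo $p$ to any self-conjugate (i.e.\ $\ps_m^c\equiv\ps_m$) character, so the residual representation of $\Ind_K^\Q(\ps_m)$ is absolutely irreducible, and this irreducibility propagates to the whole family $\Bg_\Ps$.

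With residual irreducibility in hand, the general deformation-theoretic formalism (as developed by Mazur and exploited in \cite{Hida1988b}, Chapter~5 of \cite{Hida1996a}, and \cite{HidaTilouine1993},~\cite{HidaTilouine1994}) identifies the congruence module $\CC(\Bg_\Ps;\CI)$ with the Pontryagin dual of a Selmer group $\Sel(\CI;\ad^0\Bg_\Ps)$ attached to the trace-zero adjoint. For a CM family, Mackey's formula gives the decomposition of Galois representations
\[ \ad^0\!\Ind_K^\Q(\Ps) \;\iso\; \ch_K \;\op\; \Ind_K^\Q(\Ps/\Ps^c), \]
and this decomposition is compatible with the local conditions defining the Selmer group. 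Consequently the characteristic ideal of $\CC(\Bg_\Ps;\CI)\ox_\CI\CI^{ur}$ factors as a product of the characteristic ideals of the Selmer groups of $\ch_K$ and of $\Ind_K^\Q(\Ps/\Ps^c)$.

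The first factor, coming from $\ch_K$, is a finite module whose order is governed by the $p$-part of the class group of $K$; extracting it precisely (as in \cite{HidaTilouine1994}) accounts for the factor of $h_K$ in the statement. The second factor is precisely the subject of Rubin's proof of the two-variable main conjecture \cite{Rubin1991} for $K$ (specialized along the family $\Ph = \Ps/\Ps^c$): its characteristic ideal is generated by $\CL_p(\Ps/\Ps^c)$ up to a unit. Finally, the product $\p_{q|N(\Fc)}(1+q\1)(1-q^{-2})$ encodes the difference between the imprimitive local conditions at primes dividing $\Fc$ appearing naturally on the Selmer side and the Euler factors actually removed in the interpolation formula of Theorem~\ref{thm:PAdicHeckeLFunction}; these are computed place-by-place as in Section~2 of \cite{HidaTilouine1994}.

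The main obstacle is verifying that Rubin's main conjecture can be specialized to the one-variable anticyclotomic family $\Ps/\Ps^c$ and aligning conventions (infinity-types, periods $\Om_\oo$ and $\Om_p$, choice of ideal class generators) so that the constants on both sides match up to units rather than just up to an unspecified element of $\CI^{ur}$. In particular, the local computation at the primes dividing $\Fc$---where the imprimitive Selmer local condition differs from the Euler factor appearing in the Hida--Tilouine formulation---is what produces the explicit $(1+q\1)(1-q^{-2})$ factors, and getting this computation exactly right (rather than merely up to $p$-adic units) is the delicate bookkeeping step of the argument.
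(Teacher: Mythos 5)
Your proposal follows essentially the same route the paper takes: the paper does not prove this theorem from scratch but quotes it from Hida--Tilouine, preceded by exactly the chain of reasoning you describe (congruence module $=$ dual adjoint Selmer group, the CM decomposition $\ad \iso \ch_K \op \Ind(\Ps/\Ps^c)$, Rubin's main conjecture for the second factor, and the Gorenstein input from modularity lifting). The only cosmetic difference is your justification of residual irreducibility via $\bar\ps_m \ne \bar\ps_m^c$ globally, where the paper argues locally that the induced representation at a ramified inert prime is supercuspidal and hence residually irreducible; both are sketches of the same fact.
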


Similarly, the congruence number $H_{\Bg_{\Ps^\rh}}$ (for the conjugate
$\La$-adic family of Proposition \ref{prop:FamilyOfConjugateCMNewforms}) is 
\[ \b( \p_{q|N(\Fc)}(1+q\1)(1-q^{-2})\e) h_K \CL_p(\Ps^c/\Ps). \]
\par
We remark that the hypothesis about residually irreducible Galois
representations is automatically satisfied if $\Ps$ is ramified at any inert
prime, since then even the induced local representation at that prime is
residually irreducible (since it corresponds to a supercuspidal). Also, along
this chain of reasoning we implicitly use the fact that the local ring of the
Hecke algebra through which $\la_{\Bg_\Ps}$ factors is Gorenstein (as needed in
Theorem \ref{thm:ExistenceOfCongruenceNumber} to conclude the existence of
$H_{\Bg_\Ps}$ in the first place); this Gorenstein condition follows from
modularity lifting work, e.g. from \cite{Wiles1995}, Corollary 2 to Theorem
2.1.
\par
Finally, it is useful to link the values $h_K L_\alg(\ps_m/\ps_m^c,1)$ that
arises from this congruence number computation back to $L(\ad(g_{\ps_m}),1)$
and then to the related Petersson inner product $\g{g_{\ps_m},g_{\ps_m}}$ - we
can roughly think about it as the ``algebraic part'' of these quantities. The
relation between the adjoint $L$-value and the Petersson inner product is
stated in Theorem \ref{thm:AdjointVsPetersson}. We then want to specialize to
the case of $f = g_\ps$ and relate $L_N(\ad g_\ps,1)$ to $L(\ps/\ps^c,1)$, the
$L$-value we're interpolating. On the level of automorphic $L$-functions we
have an identity 
\[ L(\ad g_\ps,1) = L(\ps/\ps^c,1) L(\ch_K,1) \]
coming from a corresponding decomposition of Weil-Deligne groups. To get an
identity involving $L_N(\ad g_\ps,1)$ we need to work out the Euler factors of
$L(\ps/\ps^c,1) L(\ch_K,1)$ at places dividing $N$. For our purposes we have: 

\begin{prop}
	Let $K = \Q(\rt{-d})$ be an imaginary quadratic field (with $-d$ an odd
	fundamental discriminant) and $\ps$ a Hecke character for $K$ which has
	conductor $\Fc$ divisible only by inert primes of $K$. Then we have 
	\[ L(\ps/\ps^c,1) L(\ch_K,1)  = L_N(\ad g_\ps,1) 
			\p_{q|d} (1-q\1)\1 \p_{q|N(\Fc)} (1 - q^{-2})\1 (1+q\1)\1 \] 
\end{prop}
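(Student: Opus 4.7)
The plan is to reduce the identity to a local computation of Euler factors at the primes dividing $N = d\cdot N(\Fc)$, starting from the global decomposition
\[ L(\ad g_\ps, s) = L(\ps/\ps^c, s) L(\ch_K, s) \]
quoted above the proposition (which reflects the Weil--Deligne decomposition $\ad(\Ind_{W_K}^{W_\Q}\ps) = \ch_K \oplus \Ind_{W_K}^{W_\Q}(\ps/\ps^c)$). Since the Euler factors on both sides agree at every prime $q \nmid N$, the proposition is equivalent, after interpreting $L_N(\ad g_\ps, 1)$ as the incomplete $L$-value with Euler factors at $q\mid N$ removed, to a product identity of local factors at the bad primes.

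At each prime $q \mid d$ (ramified in $K$), my plan is: because $\Fc$ is coprime to $d$, $\ps$ is unramified at the unique prime $\Fq$ of $K$ above $q$. A local uniformizer $\pi_\Fq$ (for instance $\rt{-d}$) satisfies $c(\pi_\Fq) = u\pi_\Fq$ for some unit $u$, and the unramifiedness of $\ps_\Fq$ forces $\ps_\Fq(u) = 1$, so $(\ps/\ps^c)(\Fq) = 1$; using $N(\Fq) = q$ this gives $L_\Fq(\ps/\ps^c,1) = (1-q\1)\1$. Combined with $L_q(\ch_K,1) = 1$ (since $q$ divides the conductor of $\ch_K$), the local contribution is $(1-q\1)\1$, matching the proposition's factor.

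At each prime $q \mid N(\Fc)$ (inert in $K$), the uniformizer $q$ of $K_\Fq$ is fixed by $c$, so $(\ps/\ps^c)(q) = 1$ is automatic. The key sub-claim is that $(\ps/\ps^c)_\Fq$ is \emph{unramified} at $\Fq$, i.e. that $\ps_\Fq$ is Galois-invariant on $\CO_{K_\Fq}^\x$. Granting this, $L_\Fq(\ps/\ps^c,1) = (1-q^{-2})\1$ (using $N(\Fq) = q^2$); together with $L_q(\ch_K,1) = (1+q\1)\1$ (from $\ch_K(q) = -1$), this yields $(1-q^{-2})\1 (1+q\1)\1$, as required.

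The main obstacle is the unramifiedness sub-claim at inert primes in the conductor: this is not automatic from ``conductor divisible only by inert primes'' alone, and appears to require additional structure on $\ps$ from the surrounding setup (for example, a trivial central character so that $\ps|_{\Z_q^\x}$ is trivial, combined with the fact that $c$ acts trivially on $\Z_q^\x$ and as Frobenius on $\CO_{K_\Fq}^\x/\Z_q^\x$, which together pin down the relevant Galois-invariance on the unit quotients). Once this local fact is verified in the intended setting, multiplying the local contributions at $q \mid d$ and $q \mid N(\Fc)$ assembles the full product appearing on the right-hand side and completes the proof.
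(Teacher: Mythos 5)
Your reduction of the identity to a comparison of Euler factors at the primes $q\mid N$ (reading $L_N$ as the $L$-value with those factors removed) is the right strategy --- the paper gives no argument for this proposition, and this is the only sensible route --- and your computation at the ramified primes $q\mid d$ is correct. The problem is precisely the sub-claim you flag at the inert primes $q\mid N(\Fc)$, and it cannot be repaired in the way you suggest: the extra structure available in the paper's setting implies the \emph{opposite} of what you need. If $\ps_\Fq$ is trivial on $\Z_q^\x$ (trivial central character), then for $x\in\CO_\Fq^\x$ the product $\ps_\Fq(x)\,\ps_\Fq^c(x)$ is $\ps_\Fq$ evaluated at the local norm of $x$, which lies in $\Z_q^\x$; hence $\ps_\Fq^c=\ps_\Fq\1$ on units and $(\ps/\ps^c)_\Fq=\ps_\Fq^2$ there. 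Equivalently, conjugation acts by inversion (not trivially) on $\CO_\Fq^\x/\Z_q^\x$, so a character trivial on $\Z_q^\x$ is $c$-invariant on units only if it is quadratic. But the hypotheses in Section \ref{sec:IchinoForCMForms} and Proposition \ref{prop:LocalIntegralTwoSupercuspidals} explicitly require the local components at $\l$ \emph{not} to be quadratic, so in the intended application $\ps/\ps^c$ is ramified at $\Fq$ and $L_\Fq(\ps/\ps^c,1)=1$, not $(1-q^{-2})\1$.

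Consequently your method, carried to completion, yields the local contribution $(1+q\1)\1$ at each $q\mid N(\Fc)$ rather than $(1-q^{-2})\1(1+q\1)\1$: the displayed identity holds with the factors $(1-q^{-2})\1$ deleted, or else requires the additional hypothesis that $\ps_\Fq$ factor through the local norm on units at each $\Fq\mid\Fc$, which is incompatible with the rest of the setup. So this is a genuine gap, and not one you could have closed by being more careful: the single nontrivial local input was neither verified nor verifiable, and the statement itself appears to need a correction at exactly that point before a proof along these (otherwise correct) lines can be finished.
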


Combining this, Theorem \ref{thm:AdjointVsPetersson}, and the analytic class
number formula $h_K = (w_K\rt{d}/2\pi) L(\ch_K,1)$ lets us conclude

\begin{cor} \label{cor:AlgebraicPartCongruenceNumber}
	Let $K = \Q(\rt{-d})$ be an imaginary quadratic field (with $-d$ an odd
	fundamental discriminant) and $\ps$ a Hecke character of infinity-type
	$(m-1,0)$ for $K$ which has conductor $\Fc$ divisible only by inert primes
	of $K$. Then we have 
	\[ \b( \p_{q|N(\Fc)} (1 + q\1) (1-q^{-2}) \e) h_K L_\alg(\ps/\ps^c,1) 
		= \f{2^m \pi^{2m-1} w_K}{3 \rt{d}^{m-3}} 
			\f{\g{g_\ps,g_\ps}}{ \Om_\oo^{2m-2} } \p_{q|dN(\Fc)} (1+q\1). \]
\end{cor}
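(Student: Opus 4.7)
The plan is to chain together the four inputs already assembled in this section: the preceding proposition expressing $L(\ps/\ps^c,1) L(\ch_K,1)$ in terms of an adjoint $L$-value, Theorem \ref{thm:AdjointVsPetersson} relating the adjoint $L$-value to the Petersson norm, the analytic class number formula $L(\ch_K,1) = 2\pi h_K/(w_K \rt{d})$, and the algebraicity normalization from Theorem \ref{thm:PAdicHeckeLFunction}.

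First I would apply the proposition immediately above the corollary to rewrite $L(\ps/\ps^c,1) L(\ch_K,1)$ as $L_N(\ad g_\ps,1)$ times the explicit product of Euler factors at primes dividing $d$ and $N(\Fc)$. The key observation that makes this step usable is that the truncation $L_N(\ad g_\ps,1)$ here agrees with Hida's $L^H(\ad g_\ps,1)$ from Theorem \ref{thm:AdjointVsPetersson}: the weight $m$ CM newform $g_\ps$ has character $\ch_K \ch_\ps$ of conductor $d \dt N_{\ch_\ps}$, $d$ is squarefree, and at each inert prime $q\mid N(\Fc)$ the local representation is supercuspidal, so the case analysis of Theorem \ref{thm:AdjointVsPetersson} selects exactly the $(1+q\1)\1$ or $(1-q^{-2})\1$ factors that match the proposition.

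Second, I would substitute Theorem \ref{thm:AdjointVsPetersson} with $\ka = m$ to replace $L^H(\ad g_\ps,1)$ by $\f{\pi^2}{6} \f{(4\pi)^m}{(m-1)!} \g{g_\ps,g_\ps}$. Third, I would use the class number formula to write $L(\ch_K,1) = 2\pi h_K/(w_K \rt{d})$ and divide through, isolating $h_K L(\ps/\ps^c,1)$. Finally, since $\ps/\ps^c$ has infinity-type $(m-1,-m+1)$ — precisely the type handled by Theorem \ref{thm:PAdicHeckeLFunction} — I would apply the normalization
\[ L(\ps/\ps^c,1) = \f{2^{m-2} \rt{d}^{m-2} \Om_\oo^{2m-2}}{(m-1)! \pi^{m-2}} L_\alg(\ps/\ps^c,1). \]
After multiplying both sides by the correction $\p_{q|N(\Fc)}(1+q\1)(1-q^{-2})$ from the corollary statement, the $(m-1)!$ factors cancel between the adjoint-Petersson identity and the $L_\alg$ normalization, and collecting powers produces the desired constant $2^m \pi^{2m-1} w_K /(3 \rt{d}^{m-3})$ (using $(4\pi)^m = 2^{2m} \pi^m$ and $2^{2m}/(6 \dt 2^{m-2} \dt 2) = 2^m/3$).

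The routine calculation is the bookkeeping of the transcendental factors; the only technical obstacle is verifying that the two sets of Euler factors combine to give the clean product $\p_{q|dN(\Fc)}(1+q\1)$ on the right-hand side. At each inert prime $q \mid N(\Fc)$ the factor $(1-q^{-2})\1(1+q\1)\1$ from the proposition cancels the correction $(1+q\1)(1-q^{-2})$ exactly, leaving $(1+q\1)$ once the class number formula is applied. At each ramified prime $q\mid d$ the proposition contributes $(1-q\1)\1$, which must be traced through together with the residual factors from the class-number and adjoint-Petersson identities to produce the single $(1+q\1)$ at such $q$; this is the step demanding the most care, but it reduces to a one-prime identity that can be checked place by place using squarefreeness of $d$ and the case distinction in Theorem \ref{thm:AdjointVsPetersson} for $q\|N$, $q\mid N_\ch$.
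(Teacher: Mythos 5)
Your overall chain of inputs is the paper's, and your transcendental bookkeeping ($2^m/3$, $\pi^{2m-1}$, $\rt{d}^{3-m}$, the cancellation of $(m-1)!$) is correct, but the step on which the Euler-factor accounting turns is wrong: $L_N(\ad g_\ps,1)$ in the proposition is the genuine partial $L$-function $\p_{q\nd N} L_q(\ps/\ps^c,1)L_q(\ch_K,1)$, and it does \emph{not} agree with Hida's $L^H(\ad g_\ps,1)$. The proposition's factors $(1-q\1)\1$ at $q|d$ and $(1-q^{-2})\1(1+q\1)\1$ at $q|N(\Fc)$ are the genuine local adjoint factors being stripped off, whereas the case analysis of Theorem \ref{thm:AdjointVsPetersson} replaces them by $(1-q^{-2})\1$ at $q|d$ (where $q\|N$ but $q|N_\ch$ because of $\ch_K$) and by $(1+q\1)\1$ at $q|N(\Fc)$ (where $q^2|N$ and, for $\ch_\ps$ unramified at $q$, $q|(N/N_\ch)$). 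The correct conversion is therefore
\[ L_N(\ad g_\ps,1) = L^H(\ad g_\ps,1)\dt \p_{q|d}(1-q^{-2}) \p_{q|N(\Fc)}(1+q\1), \]
and this discrepancy is exactly what supplies the Euler factors on the right-hand side of the corollary: combined with the proposition it gives $L(\ps/\ps^c,1)L(\ch_K,1) = L^H(\ad g_\ps,1)\p_{q|d}(1+q\1)\p_{q|N(\Fc)}(1-q^{-2})\1$, after which your remaining steps (Theorem \ref{thm:AdjointVsPetersson}, the class number formula, the $L_\alg$ normalization, and multiplication by $\p_{q|N(\Fc)}(1+q\1)(1-q^{-2})$) land precisely on $\p_{q|dN(\Fc)}(1+q\1)$.

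With your identification $L_N=L^H$ the computation instead closes on $\p_{q|d}(1-q\1)\1$ and on \emph{nothing} at $q|N(\Fc)$, which is not the stated right-hand side. Your final paragraph betrays the gap: ``cancels \dots exactly, leaving $(1+q\1)$'' is self-contradictory, and neither the class number formula nor Theorem \ref{thm:AdjointVsPetersson} (a global identity for $L^H$) has any ``residual'' local factors left to convert $(1-q\1)\1$ into $(1+q\1)$ at $q|d$. The missing factors must come from the $L_N$ versus $L^H$ comparison above; once that is inserted the argument goes through.
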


\subsection{Summarizing our results}
\label{sec:SummarizingPeterssonInterpolation}

We now restate the constructions of this section in a way that mirrors the
way the BDP $p$-adic $L$-functions is built up (see Section
\ref{sec:BDPLfunction}): passing from a Petersson inner product $\g{fg,h}$ to
an algebraic value $\g{fg,h}_\alg$ and then to a $p$-adic value $\g{fh,h}_p$,
and realizing these $p$-adic values as specializations of a $p$-adic function
$\g{f\Bg,\Bh} \in \CI^\uR$. We work with the following setup: 
\begin{itemize}
	\item $f$ is a fixed newform of weight $k$, level $N_f p^{r_0}$ and
		character $\ch_f$, with $N_f$ prime to $p$. 
	\item $\Bg$ is a $\CI$-adic family of newforms, such that for a particular
		residue class $m-k \ee a\md{p-1}$ we have $g_{m-k}$ is of prime-to-$p$
		level $N_g$ and character $\ch_g$.
	\item $\Bh$ is a $\CI$-adic family of newforms, such that for $m \ee
		a\md{p-1}$ (the same $a$ as above) we have $h_m$ is of prime-to-$p$ level
		$N_g$ and character $\ch_h = \ch_f \ch_g$.
	\item We want to interpolate the Petersson inner products of the form
		\[ \g{f|_{M_f}\dt g_{m-k}|_{M_g}, h_m|_{M_h p^{r_0}}}  
				= \g{f(M_f z) \dt g_{m-k}(M_g z), h_m(M_h p^{r_0} z)} \]
		for $m \ee a\md{p-1}$, for some prime-to-$p$ integers $M_f,M_g,M_h$. 
\end{itemize}

For convenience we'll sometimes suppress the constants $M_f,M_g,M_h p^{r_0}$ in
the above notation and simply write $\g{fg_{m-k},h_m}$ for the inner product
there. This inner product can be computed on $\Ga_0(N_{fgh} p^r)$, where 
\[ N_{fgh} = \lcm(N_f M_f,N_g M_g,N_h M_g) \]
is the common prime-to-$p$ level of the three modular forms involved and $r =
\max\{ r,1 \}$. 
\par
To define $\g{fg_{m-k},h_m}_\alg$, we want to use $a(1,1_{h_m}
fg_{m-k})$, which we know is an algebraic number that's equal to the ratio of
$\g{fg_{m-k},h_m}$ by $\g{h_m,h_m} = \g{h_m(z),h_m(z)}$; this will be an
algebraic number corresponding to the orthogonal projection of $fg_{m-k}$ onto
$h_m$ in the space of cusp forms. We also need to multiply this by the
algebraic part of the congruence number $H_{g_m}$. Since we've only discussed
congruence numbers for CM forms, we restrict to that situation: 
\begin{itemize}
	\item The $\CI$-adic form $\Bh$ is actually a CM form $\Bg_\Ps$, where $\Ps$
		is a family of Hecke characters for $K = \Q(\rt{-d})$ such that $\ps_m$
		has prime-to-$p$ conductor $\Bc$ (so $N_h = d N(\Fc)$), and also such
		that the Galois representations associated to $g_{\ps_m}$ are residually
		irreducible.
\end{itemize}
This covers both the ``usual'' families $\Bg_\Ps$ of Section
\ref{sec:FamiliesOfCMForms} and the ``conjugate'' families $\Bg_{\Ps^\rh}$ of
Section \ref{sec:ComplexConjugatesHeckeChars}. Of course, if one wants to have
some other $\CI$-adic newform $\Bh$ and works out the congruence ideal $H_\Bh$,
it's easy to modify what follows for that situation.
\par
We then define 
\begin{align*}
	\g{fg_{m-k},h_m}_\alg &= 
		\f{ \g{f|_{M_f}\dt g_{m-k}|_{M_g}, h_m|_{M_h p^{r_0}}} }{\g{h_m,h_m}} \\
		&\qq\qq\dt h_K L_\alg\6(\ps_m/\ps_m^c,1\6) 
			\b( \p_{q|N(\Fc)} (1 + q\1) (1-q^{-2}) \e) \\
		&\qq\qq\dt \f{p^{m(r-1)}}{\al_{h_m}^{r-1}} \f{N_{fgh} M_h^{m+1}}{N_h} 
			\b( \p_{q | N_{fgh}, q\nd N_h} (1+q\1) \e)  
\end{align*}
and 
\begin{align*}
	\g{fg_{m-k},h_m}_p &= 
		\f{ \g{f|_{M_f}\dt g^\sh_{m-k}|_{M_g}, h^\na|_{M_h p^{r_0-1}}} } 
			{\g{h^\sh_m,h^\na_m}} \\ 
		&\qq\qq\dt h_K L_p\6(\ps_m/\ps_m^c,1\6) 
			\b( \p_{q|N(\Fc)} (1 + q\1) (1-q^{-2}) \e) \\
		&\qq\qq\dt \f{p^{m(r-1)}}{\al_{h_m}^{r-1}} \f{N_{fgh} M_h^{m+1}}{N_h} 
			\b( \p_{q | N_{fgh}, q\nd N_h} (1+q\1) \e)  
\end{align*}
This is set up so that if we define 
\[ \g{f\Bg,\Bh} = \l_\Bh(\trc(f|_{M_f/M_h} \dt \Bg|_{M_g/M_h})) \in \CI^\uR \]
(using the shifting and scaling constructions of Section
\ref{sec:ShiftsOfIAdicForms} and the trace operator $\trc : \BS(\Ga_0(N_{fgh}
p^r,M_h),\ch;\CI) \to \BS(N_h p^r,\ch;\CI)$ defined in Section
\ref{sec:PeterssonOfHigherLevels}, and taking the congruence number $H_\Bh$ to
actually be the element of $\CI^\uR$ written in Theorem
\ref{thm:HeckeLFunctionIsCongruenceNumber}), then we have $P_m(\g{f\Bg,\Bh}) =
\g{fg_{m-k},h_m}_p$. We can see this by applying Proposition
\ref{prop:InterpolatePeterssonForHigherLevel} and unravelling the definitions;
we get 
\begin{align*}
	P_m\7( \l_\Bh(\trc(f|_{M_f/M_h} \dt \Bg|_{M_f/M_h})) \7) &= 
		\f{\g{f|_{M_f/M_h} \dt g_{m-k}^\sh|_{M_g/M_h}, h_m^\na|_{p^{r-1}}}}
				{\g{h_m^\sh,h_m^\na}} \dt H_{\Bh,P_m} \\ 
		& \qq \dt \f{p^{m(r-1)}}{\al_{h_m}^{r-1}} \f{N_{fgh} M_h}{N_h} 
			\p_{q | N_{fgh}, q\nd N_h} (1+q\1)
\end{align*}
Making the change of variables $z \mt M_h z$ (by Lemma
\ref{lem:ScalingPetersson}) we further see that this is equal to 
\[ \f{\g{f|_{M_f} \dt g_{m-k}^\sh|_{M_g}, h_m^\na|_{M_h p^{r-1}}}}
				{\g{h_m^\sh,h_m^\na}} \dt H_{\Bh,P_m} 
		\dt \f{p^{m(r-1)}}{\al_{h_m}^{r-1}} \f{N_{fgh} M_h^{m+1} }{N_h} 
			\p_{q | N_{fgh}, q\nd N_h} (1+q\1). \]
Since $H_\Bh$ is realized by $h_k \CL_p(\Ps(\Ps^c)\1)$ by Theorem
\ref{thm:HeckeLFunctionIsCongruenceNumber}, we conclude that this is exactly
the value of $\g{fg_{m-k},h_m}_p$: 

\begin{thm} \label{thm:PeterssonCMInterpolate}
	Suppose that we have $f$, $\Bg$, and $\Bh = \Bg_\Ps$ satisfying the
	conditions listed above for a residue class $a$. Then the element
	$\g{f\Bg,\Bh} \in \CI$ defined above satisfies $P_m(\g{f\Bg,\Bh}) =
	\g{fg_{m-k},h_m}_p$ for all $m\ee a\md{p-1}$. 
\end{thm}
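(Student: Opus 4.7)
The plan is to prove the theorem by unwinding the definition of $\g{f\Bg,\Bh}$ and verifying that its specialization at $P_m$ matches the defining formula for $\g{fg_{m-k},h_m}_p$ term by term. Since essentially all of the hard work has been done in the preceding sections, the proof reduces to assembling the previously established pieces in the correct order.

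First, I would observe that the $\CI$-adic form $f|_{M_f/M_h} \dt \Bg|_{M_g/M_h}$ lies in $\BS(\Ga_0(N_{fgh} p^r,M_h),\ch_f\ch_g;\CI)$ by the two results of Section \ref{sec:ShiftsOfIAdicForms} (scaling and multiplication by a fixed form), and its specialization at $P_m$ is $f|_{M_f/M_h} \dt g_{m-k}|_{M_g/M_h}$. This puts us in the exact setup of Proposition \ref{prop:InterpolatePeterssonForHigherLevel}, which (in the $p$-stabilization case, which is the one relevant for $m \ee a\md{p-1}$) computes $P_m(\l_\Bh(\trc(-)))$ as a Petersson ratio involving $h_m^\sh$ and $h_m^\na$, multiplied by the congruence number $H_{\Bh,P_m}$ and by the explicit constant
\[ \f{p^{m(r-1)}}{\al_{h_m}^{r-1}} \f{N_{fgh} M_h}{N_h} \p_{q|N_{fgh}, q\nd N_h}(1+q\1). \]

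Next I would apply Lemma \ref{lem:ScalingPetersson} to the change of variables $z\mt M_h z$ in the numerator. This converts the inner product $\g{f|_{M_f/M_h}\dt g_{m-k}^\sh|_{M_g/M_h},\, h_m^\na|_{p^{r-1}}}$ into $M_h^m$ times $\g{f|_{M_f}\dt g_{m-k}^\sh|_{M_g}, h_m^\na|_{M_h p^{r-1}}}$, which accounts for the extra power of $M_h^m$ (producing the $M_h^{m+1}$ in the final formula when combined with the $M_h$ from the trace constant). At this stage the expression matches the one written out in the paragraph immediately preceding the theorem statement.

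Finally, I would substitute Theorem \ref{thm:HeckeLFunctionIsCongruenceNumber}, which (under our hypothesis that $\Ps$ has residually irreducible associated Galois representations) identifies $H_\Bh$ up to a unit with $\bigl(\prod_{q|N(\Fc)}(1+q\1)(1-q^{-2})\bigr) h_K \CL_p(\Ps/\Ps^c) \in \CI^\uR$. Specializing this element at $P_m$ via Theorem \ref{thm:PAdicHeckeLFunction} produces exactly $h_K L_p(\ps_m/\ps_m^c,1)$ times the local factors. Collecting all terms yields the three-line formula defining $\g{fg_{m-k},h_m}_p$, which completes the proof.

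The main obstacle, such as it is, is purely bookkeeping: making sure the powers of $M_h$, the factors $N_{fgh}/N_h$, and the Euler-factor products $\prod (1+q\1)$ all land with the correct exponents after the change of variables and the substitution of the congruence number. Conceptually, no new ideas are needed beyond the constructions of Sections \ref{sec:PeterssonOfHigherLevels} and \ref{sec:CongruenceModuleCM}; the theorem is essentially a compatibility statement asserting that the definitions of $\g{f\Bg,\Bh}$ and $\g{fg_{m-k},h_m}_p$ were designed precisely so that this interpolation holds.
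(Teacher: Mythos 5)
Your proposal is correct and follows essentially the same route as the paper's own argument: apply Proposition \ref{prop:InterpolatePeterssonForHigherLevel} to the shifted form, rescale by $z\mapsto M_h z$ via Lemma \ref{lem:ScalingPetersson} to pick up the factor $M_h^m$, and substitute the congruence number from Theorem \ref{thm:HeckeLFunctionIsCongruenceNumber}. The bookkeeping of the powers of $M_h$ and the Euler-factor products is handled exactly as in the text preceding the theorem, so there is nothing to add.
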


To properly \E{use} this theorem, we'll want formulas more directly relating
our initial complex-analytic value $\g{fg_{m-k},h_m}$ to the algebraic value
$\g{fg_{m-k},h_m}_\alg$ and then the $p$-adic value $\g{fg_{m-k},h_m}_p$. We 
do this as follows. (Remember that our notation suppresses the shifts
$|_{M_f}$ and so on). 

\begin{prop} \label{prop:PeterssonAlgebraicPAdicParts} 
	Suppose that we have $f$, $\Bg$, and $\Bh = \Bg_\Ps$ satisfying the
	conditions listed above for a residue class $a$. Then we have 
	\[ \g{fg_{m-k},h_m}_\alg = 
			\f{\g{fg_{m-k},h_m}}{\Om_\oo^{2m-2}} \dt 
			\f{2^m w_K \pi^{2m-1}}{3 \rt{d}^{m-3}} \f{p^{m(r-1)}}{\al_{h_m}^{r-1}}
			\f{N_{fgh} M_h^{m+1}}{N_h} \p_{q | N_{fgh}} (1+q\1) \]
	and also 
	\[ \g{fg_{m-k},h_m}_p = e_p(fg_{m-k},h_m)\Om_p^{2m-2}\g{fg_{m-k},h_m}_\alg\] 
	where $e_p(fg_{m-k},h_m)$ is a removed ``Euler factor'' given by 
	\begin{itemize}
		\item $e_p(fg_{m-k},h_m) =  (1-\f{\be_g\al_f}{\al_h})
			(1-\f{\be_g\be_f}{\al_h})$ if $r_0 = 0$,
		\item $e_p(fg_{m-k},h_m) = p\Lal_h (1+p\1) 
			(1-\f{\be_g a_f}{\al_h})$ if $r_0 = 1$,
		\item $e_p(fg_{m-k},h_m) = p\Lal_h (1+p\1)$ if $r_0 \ge 2$.
	\end{itemize}
	Here $r_0$ is the exact power of $p$ dividing the level of $f$. 
\end{prop}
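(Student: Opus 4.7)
The plan is to establish the two formulas in turn, first the formula for the algebraic part and then the one for the $p$-adic part, in both cases by directly unwinding the definitions given just before the proposition and applying results from Section~\ref{sec:CongruenceModuleCM}.

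For the algebraic-part formula, I start from the defining expression for $\g{fg_{m-k},h_m}_\alg$ and focus on the factor
\[ \frac{h_K\, L_\alg(\ps_m/\ps_m^c,1)\,\prod_{q\mid N(\Fc)}(1+q\1)(1-q^{-2})}{\g{h_m,h_m}}. \]
Because $h_m=g_{\ps_m}$ and $N_h=d\,N(\Fc)$, Corollary~\ref{cor:AlgebraicPartCongruenceNumber} says exactly that this ratio equals
\[ \frac{1}{\Om_\oo^{2m-2}}\cdot\frac{2^m w_K\pi^{2m-1}}{3\sqrt{d}^{\,m-3}}\cdot\prod_{q\mid N_h}(1+q\1). \]
Substituting and using that $N_h\mid N_{fgh}$ together with the identity $\prod_{q\mid N_h}(1+q\1)\cdot\prod_{q\mid N_{fgh},\,q\nmid N_h}(1+q\1)=\prod_{q\mid N_{fgh}}(1+q\1)$ collapses the two separate product terms into the single product appearing in the claimed formula; the remaining constants $\frac{p^{m(r-1)}}{\al_{h_m}^{r-1}}\cdot\frac{N_{fgh}M_h^{m+1}}{N_h}$ are left untouched. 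This handles the first formula.

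For the $p$-adic-part formula I compare the definitions of $\g{fg_{m-k},h_m}_p$ and $\g{fg_{m-k},h_m}_\alg$: the two differ only in (i) the appearance of the stabilizations $g_{m-k}^\sh$, $h_m^\na$, and the shift $h_m^\na|_{p^{r_0-1}}$ in place of $h_m|_{p^{r_0}}$, and (ii) the replacement of $L_\alg(\ps_m/\ps_m^c,1)$ by $L_p(\ps_m/\ps_m^c,1)$. Using Proposition~\ref{prop:FamilyOfCMNewforms} to identify $\al_{h_m}=\ps_{m,\C}(\L\Fp)$ and $\be_{h_m}=\ps_{m,\C}(\Fp)$, one computes
\[ (\ps_m/\ps_m^c)(\Fp)=\be_h/\al_h,\qquad (\ps_m/\ps_m^c)\1(\L\Fp)=\be_h/\al_h, \]
so Theorem~\ref{thm:PAdicHeckeLFunction} gives
\[ L_p(\ps_m/\ps_m^c,1)=\Om_p^{2m-2}(1-p\1\be_h/\al_h)(1-\be_h/\al_h)\,L_\alg(\ps_m/\ps_m^c,1). \]
Meanwhile, the Petersson ratio changes by
\[ \frac{\g{fg_{m-k}^\sh|_{M_g},h_m^\na|_{M_h p^{r_0-1}}}/\g{h_m^\sh,h_m^\na}}{\g{f|_{M_f}g_{m-k}|_{M_g},h_m|_{M_h p^{r_0}}}/\g{h_m,h_m}}, \]
which breaks into three cases according to $r_0$. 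In each case the denominator ratio $\g{h_m^\sh,h_m^\na}/\g{h_m,h_m}$ is given by Proposition~\ref{prop:EulerFactorDenominator}, and the numerator ratio by Proposition~\ref{prop:EulerFactorNumeratorPrimeToP}, \ref{prop:EulerFactorNumeratorPPartGe2}, or \ref{prop:EulerFactorNumeratorPPart1}, respectively.

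The key observation, and really the main thing to check, is that the Euler factors $(1-p\1\be_h/\al_h)(1-\be_h/\al_h)$ coming from Theorem~\ref{thm:PAdicHeckeLFunction} cancel exactly against the corresponding factors appearing in the denominator ratio from Proposition~\ref{prop:EulerFactorDenominator}. Once this cancellation is made, what remains from the numerator ratio in the case $r_0=0$ is precisely $(1-\be_g\al_f/\al_h)(1-\be_g\be_f/\al_h)$; in the cases $r_0\ge 1$ one is left with the leftover factor $-p\L\be_h$ (together with $1-\be_g a_f/\al_h$ when $r_0=1$) multiplied by the reciprocal $(1+p\1)/(-\al_h/\be_h)$ of what remains of the denominator ratio. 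Using the Ramanujan-style identity $\L\be_h\cdot\be_h/\al_h=\L\al_h$ (which in turn follows from $|\al_h|^2=|\be_h|^2=p^{m-1}$ together with $\al_h\be_h=\ch_h(p)p^{m-1}$, exactly as employed in the proof of Proposition~\ref{prop:PeterssonForPStabilization}), this simplifies to $p\L\al_h(1+p\1)$ (times the extra $(1-\be_g a_f/\al_h)$ when $r_0=1$). In all three cases the product of Petersson-ratio change and $L_p/L_\alg$ ratio equals $\Om_p^{2m-2}\cdot e_p(fg_{m-k},h_m)$ with $e_p$ exactly as stated, completing the proof. The only real obstacle in executing the plan is the bookkeeping of this Euler-factor cancellation; the structural fact that the stabilization Euler factors at $p$ on the modular side match the interpolation Euler factors of the $p$-adic Hecke $L$-function is precisely why the two sides line up.
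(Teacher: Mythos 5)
Your proposal is correct and follows essentially the same route as the paper's proof: the algebraic-part formula by substituting Corollary \ref{cor:AlgebraicPartCongruenceNumber} into the definition and merging the two products over $q\mid N_h$ and $q\mid N_{fgh},\,q\nmid N_h$, and the $p$-adic-part formula by factoring the ratio into the $L_p/L_\alg$ interpolation factors, the denominator ratio of Proposition \ref{prop:EulerFactorDenominator}, and the numerator ratios of Propositions \ref{prop:EulerFactorNumeratorPrimeToP}, \ref{prop:EulerFactorNumeratorPPart1}, and \ref{prop:EulerFactorNumeratorPPartGe2}. If anything, you spell out more carefully than the paper the cancellation of the $(-\al_h/\be_h)$ factors and the identity $\Lbe_h\be_h/\al_h=\Lal_h$ needed in the $r_0\ge 1$ cases.
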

\begin{proof}
	The equation for $\g{fg_{m-k},h_m}_\alg$ follows from taking the definition
	and plugging in Corollary \ref{cor:AlgebraicPartCongruenceNumber} and 
	noting that $N_h = d N(\Fc)$ so the $\p_{q | N_{fgh}, q\nd N_h}$ and $\p_{q
	| dN(\Fc)}$ combine to a single product.
	\par
	The equation for $\g{fg_{m-k},h_m}_p$ follows from comparing the definitions
	and noting we have 
	\[ \g{fg_{m-k},h_m}_p 
			= \f{\g{fg^\sh,h^\na}}{\g{fg,h}} \f{\g{h,h}}{\g{h^\sh,h^\na}} 
				\f{L_p(\ps_m/\ps_m^c,1)}{L_\alg(\ps_m/\ps_m^c,1)} 
					\g{fg_{m-k},h_m}_p. \] 
	The ratio $L_p/L_\alg$ is 
	\[ \Om_p^{2m-2} \b(1 - \f{\ps_m^c}{\ps_m}(\L\Fp)p\1\e)
			\b(1 - \f{\ps_m}{\ps_m^c}(\Fp)\e)
			= \Om_p^{2m-2} \b(1 - \f{\be_h}{\al_h}p\1\e)
				\b(1 - \f{\be_h}{\al_h}\e) \]
	by definition, which multiplies together with the ratio 
	\[ \f{\g{h,h}}{\g{h^\sh,h^\na}} = 
		\f{(1+p\1)}{(-\al_h/\be_h)(1 - \be_h/\al_h)(1 - p\1 \be_h/\al_h)} \]
	given by Proposition \ref{prop:EulerFactorDenominator} to give $\Om_p^{2m-2}
	(1+p\1)$. The specified equations for $e_p(fg_{m-k},h_m)$ then follow from
	the computations of $\g{fg^\sh,h^\na}/\g{fg,h}$ in Propositions
	\ref{prop:EulerFactorNumeratorPrimeToP}, 
	\ref{prop:EulerFactorNumeratorPPart1}, and
	\ref{prop:EulerFactorNumeratorPPartGe2}, respectively.
\end{proof}
 
\section{Constructing the $p$-adic $L$-function} 

We now proceed with the main result of this paper, the construction of the
anticyclotomic $p$-adic $L$-function $\CL_p(f,\Xi\1)$, by combining our
explicit version of Ichino's formula from Theorem \ref{thm:ExplicitIchinoCM}
with the $\La$-adic modular form theory we've developed in the previous
sections.

\subsection{The $L$-function of Bertolini-Darmon-Prasanna} 
\label{sec:BDPLfunction}

We begin in this section by recalling previous work on the $p$-adic
$L$-functions we're interested in studying. Given a classical modular form $f$
and a Hecke character $\xi$ associated to an imaginary quadratic field $K$, $f$
determines an automorphic representation $\pi_f$ for $\GL_2/\Q$ and $\xi$
determines an automorphic representation for $\GL_1/K$ which can be induced to
$\pi_\xi$ for $\GL_2/\Q$. We are then interested in studying the central value
$L(\pi_f\x\pi_\xi,1/2)$ of the $\GL_2\x\GL_2$ $L$-function corresponding to
this pair.
\par
Hida \cite{Hida1988b} constructed $p$-adic $L$-functions for $\GL_2\x\GL_2$ in
a great deal of generality. However, the starting point for our work is the
more recent paper of Bertolini, Darmon, and Prasanna
\cite{BertoliniDarmonPrasanna2013} which gave a different construction of a
particular such $p$-adic $L$-function (with $f$ fixed and $\xi$ varying
``anticyclotomically'', in a way we'll make precise shortly). From the formula
they used in their construction, they were able to obtain special value
formulas connecting their $p$-adic $L$-function to certain algebraic cycles. 
\par
To set up the construction of the BDP $L$-function precisely, let us fix a
classical newform $f = \s a_n q^n \in S_k(N,\ch_f)$, along with our imaginary
quadratic field $K = \Q(\rt{-d})$ with fundamental discriminant $-d$. We then
look at the classical Rankin-Selberg $L$-function associated to $f$ and the
theta function of $\xi$; at good primes $q$ the Euler factor is 
\[ L_{(q)}(f,\xi,s)\1 
	= \p_{i,j=1}^2 (1 - \al_{f,i}(q) \al_{\xi,i}(q) q^{-s}) \] 
where the coefficients $\al$ are the roots of the appropriate Hecke
polynomials: 
\[ L_{(q)}(f,s)\1 = (1-a_q q^{-s} + \ch_f(q) q^{k-1} q^{-2s}) 
		= {\ts\p_{i=1}^2} (1 - \al_{f,i}(q) q^{-s}), \]
\[ L_{(q)}(\xi,s)\1 = {\ts\p_{\Fq|q}} (1 - \xi(\Fq) N(\Fq)^{-s})  
		= {\ts\p_{i=1}^2} (1 - \al_{\xi,i}(q) q^{-s}). \]
(We use the notation $L_{(q)}$ to denote Euler factors, since $L_p$ is already
our notation for $p$-adic $L$-functions). At primes where $f$, $\xi$, or $K$
has ramification the correct local factor comes from the local representation
theory and is more difficult to write down directly from the coefficients of
$f$ and the values of $\xi$; such local factors are worked out case-by-case in
\cite{JacquetAutForms}. 
\par
The paper \cite{BertoliniDarmonPrasanna2013} studies the special value
$L(f,\xi\1,0)$ for certain characters $\xi$. They study the set
$\Si_{cc}^{(2)}$ of all ``central critical characters'' of infinity-type
$(j+k,-j)$ for $j \ge k$, and then focus on a subset $\Si_{cc}^{(2)}(\FN,c)$ of
characters with a certain prescribed ramification. For characters $\xi$ in this
set they suitably define $L_\alg^{BDP}(f,\xi\1,0) \in \L\Q$ and then
$L_p^{BDP}(f,\xi\1) \in \C_p$ from the resulting $L$-value, and prove the
following interpolation result: 

\begin{thm}
	Given the data above (under some hypotheses), the map $\Si_{cc}^{(2)}(\FN,c)
	\to \C_p$ defined by $\xi \mt L_p^{BDP}(f,\xi\1)$ is uniformly continuous
	(relative to a certain function space topology on the domain) and thus
	extends to the completion of $\Si_{cc}^{(2)}(\FN)$ (which includes other
	functions such as the characters in $\Si_{cc}^{(1)}(\FN)$).
\end{thm}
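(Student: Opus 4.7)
The plan is to realize $L_p^{BDP}(f,\xi\1)$ as the value of a single $p$-adic modular form at a CM point varying with $\xi$, thereby reducing uniform $p$-adic continuity in $\xi$ to continuous variation of $p$-adic modular forms and of CM points in a family. This is essentially the strategy of \cite{BertoliniDarmonPrasanna2013}, and the proof breaks naturally into three pieces.

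First, on the complex-analytic side, I would establish (via a Rankin-Selberg / Waldspurger-type integral representation) an explicit formula realizing $L_\alg^{BDP}(f,\xi\1,0)$, for $\xi$ of infinity-type $(k+j,-j)$ with $j \ge 0$, as an explicit nonzero period times $(\delta_k^j f)(A_\xi)$. Here $\delta_k$ is the Maass-Shimura weight-raising operator (iterated $j$ times, carrying weight $k$ to weight $k+2j$) and $A_\xi$ is a CM elliptic curve together with canonical trivializations attached to $\xi$. The appearance of $\delta_k^j$ is the mechanism by which the infinity-type of $\xi$ enters the formula.

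Second, I would transport this formula to the $p$-adic side using Katz's theory of $p$-adic modular forms. On the ordinary locus, the unit-root splitting identifies $\delta_k^j$, modulo nonholomorphic corrections that vanish at CM points, with the Atkin-Serre theta operator $\theta = q\,d/dq$ acting on the $p$-depletion $f^{(p)}$ of $f$. Thus $L_p^{BDP}(f,\xi\1)$ is, up to explicit $p$-adic periods and a removed Euler factor at $p$, equal to $(\theta^j f^{(p)})(A_\xi)$, where the CM point carries the $p$-adic trivializations dictated by the unit root splitting.

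Third, for continuity I would invoke Serre's theorem that after $p$-depletion the iterates $\theta^j f^{(p)}$ extend by $p$-adic continuity in the exponent from $j \in \Z_{\ge 0}$ to all $j \in \Z_p$, as $p$-adic modular forms on the ordinary locus. Combined with continuous dependence of $(A_\xi, \omega_\xi)$ on $\xi$ in Serre-Tate coordinates on the ordinary Igusa tower, this yields uniform continuity of $\xi \mt L_p^{BDP}(f,\xi\1)$ with respect to the natural topology on $\Si_{cc}^{(2)}(\FN,c)$, which is inherited from the Banach space of continuous $\C_p$-valued functions on the underlying compact profinite group. The completion then includes all continuous characters with the prescribed local ramification (in particular $\Si_{cc}^{(1)}(\FN)$), and the extension is forced uniquely by continuity. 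The main obstacle is really the second step above: carefully aligning the Maass-Shimura operator, the Atkin-Serre theta operator, and the unit-root splitting at CM points, and choosing the $p$-depletion so as to isolate the precise removed Euler factor at $p$, is the technical heart of the Bertolini-Darmon-Prasanna construction.
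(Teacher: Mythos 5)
The paper does not actually prove this theorem---it is quoted directly from \cite{BertoliniDarmonPrasanna2013}---and your sketch is a faithful reconstruction of the argument given there: a Waldspurger-type formula expressing the algebraic part via Maass--Shimura derivatives at CM points, the unit-root splitting identifying these with the Atkin--Serre operator applied to the $p$-depletion, and Serre's $p$-adic continuity of $j \mapsto \theta^j f^{(p)}$ in the exponent to obtain uniform continuity in $\xi$. The only inaccuracy is cosmetic: the BDP formula expresses the central value as the square of a finite sum over class-group representatives of $\xi$-twisted values of $\delta_k^j f$ at the corresponding CM points, rather than the value at a single CM point $A_\xi$, but this does not affect the continuity argument.
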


We would now like to rephrase the results to view $L_p^{BDP}(f,\xi)$ as an
analytic function of the parameter $\xi$, rather than just a continuous one. Of
course, doing this requires putting some sort of analytic structure on the
domain $\Si_{cc}^{(2)}(\FN)$. The correct general framework for formalizing
this is rigid analytic geometry. 
\par
For our purposes, we can get away working entirely algebraically, by focusing
on the formal power series ring $\La = \CO_F\bb{X}$ (where $\CO_F$ is the ring
of integers of a finite extension $F/\Q_p$). In rigid geometry, $\La$ is the
coordinate ring of the open unit disc, and the points of the disc correspond to
homomorphisms $P : \La \to \CO_F$. So our goal will be to construct our
$p$-adic $L$-function $\CL$ as an element of $\La$, which is a ``function'' in
the sense that it associates a value $P(\CL)$ to each point $P$. For technical
reasons, we'll actually need to work with rings $\CI$ that are extensions of
$\La$ - so geometrically, with certain covers of the open disk - and then
extend our scalars to the maximal unramified extension $F^\uR/F$. This will
leave us with the ring $\CI^\uR = \CO_{F^\uR}\ox_{\CO_F}\CI$.
\par
To formulate the interpolation properties used to characterize the $p$-adic
$L$-function, for each integer $m$ we define a distinguished point $P_m : \La
\to \CO_F$ by $P_m(X) = (1+p)^m - 1$ (and lift these to points $P_m : \CI \to
\CO_F$, as discussed in Section \ref{sec:CIadicModularForms}, and then further
extend linearly to $P_m : \CI^\uR \to \CO_{F^\uR}$). We then want our element
$\CL$ to satisfy $P_m(\CL) = L_p^{BDP}(f,\xi_m\1)$ for $m$ in a certain
arithmetic progression, where $\xi_m$ ``varies $\La$-adically''. We note that
specifying values of $\CL$ on any infinite set of points is enough to specify
it uniquely by the following lemma.

\begin{lem} \label{lem:EqualityOfSpecializations}
	If $A,B\in\CI^\uR$ are two elements such that $P(A) = P(B)$ for infinitely
	many points $P \in \CX(\CI^\uR;\CO_F^\uR) = \Hom_\cont(\CI^\uR,\CO_F^\uR)$,
	then $A = B$.
\end{lem}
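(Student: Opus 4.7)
Setting $C = A - B$, the claim reduces to showing that a nonzero element of $\CI^\uR$ cannot vanish at infinitely many continuous points $P : \CI^\uR \to \CO_F^\uR$. The plan is to push this question from $\CI^\uR$ down to $\La^\uR := \CO_F^\uR \ox_{\CO_F} \La$, using the fact that $\CI^\uR$ is module-finite over $\La^\uR$, and then to apply Weierstrass preparation on $\La^\uR$.

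Since $\CI$ is a finite extension of $\La$, so is $\CI^\uR$ over $\La^\uR$. Viewing $\CI^\uR$ as a finitely generated $\La^\uR$-module, I would form the characteristic polynomial
\[ \chi_C(T) = T^n + a_{n-1} T^{n-1} + \cdots + a_0 \in \La^\uR[T] \]
of the multiplication-by-$C$ endomorphism. Cayley--Hamilton gives $\chi_C(C) = 0$ in $\CI^\uR$, so any $P$ with $P(C) = 0$ automatically satisfies $P(a_0) = 0$. In particular, if $C$ vanishes at infinitely many points then the norm-like element $a_0 = \pm N_{\CI^\uR/\La^\uR}(C) \in \La^\uR$ also vanishes at infinitely many points of the maximal ideal of $\CO_F^\uR$.

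Next, I would argue that any nonzero element of $\La^\uR$ has only finitely many such zeros. A given element of $\La^\uR$ already lies in $\CO_{F'}\bb{X}$ for some finite unramified subextension $F'/F$, and the classical Weierstrass preparation theorem over the complete DVR $\CO_{F'}$ writes it as $p^\mu u(X) P(X)$ with $u$ a unit and $P(X)$ a distinguished polynomial, whose roots in the maximal ideal of $\CO_F^\uR$ are finite in number. Therefore $a_0 = 0$, so $C$ is a zero divisor in $\CI^\uR$. When $\CI^\uR$ is a domain --- as expected in the Hida-theoretic setting, where $\CI$ is an integrally closed domain and $\CO_F \to \CO_F^\uR$ is ind-\'etale --- this gives $C = 0$ directly.

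The main technical obstacle I foresee is the final step when $\CI^\uR$ happens not to be a domain after base change. The fix is to work one minimal prime at a time: because $\CO_F^\uR$ is a domain, each continuous $P$ factors through the quotient $\CI^\uR / \Fq$ by a unique minimal prime $\Fq$, so infinitely many zeros of $C$ concentrate in some such quotient, which is itself a domain finite over the corresponding quotient of $\La^\uR$; running the previous argument there gives $C \in \Fq$. Ranging over all minimal primes together with reducedness of $\CI^\uR$ then forces $C = 0$. The delicate point is verifying that the base change preserves the needed reducedness and finiteness properties, which rests on the unramified nature of $F^\uR/F$ and the separability of the fraction field of $\CI$ over that of $\La$.
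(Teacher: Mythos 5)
Your argument is correct for the rings the paper actually uses, but it is considerably more elaborate than the paper's proof, which simply observes that $\CI = \CO_F\bb{\Ga'}$ with $\Ga'\iso\Z_p$ is abstractly a one-variable power series ring, so $\CI^\uR$ is (a filtered union of) rings $\CO_{F'}\bb{X}$ and the Weierstrass Preparation Theorem applies \emph{directly} to $C = A-B$: a nonzero $p^\mu u(X)P(X)$ has only the finitely many roots of the distinguished polynomial $P$. Your detour through the characteristic polynomial of multiplication by $C$ and the norm to $\La^\uR$ buys genuine extra generality --- it would treat an arbitrary finite free extension $\CI/\La$ rather than relying on $\CI$ itself being a power series ring --- at the cost of needing $\CI^\uR$ to be a domain at the end (and of checking that infinitely many points of $\CX(\CI^\uR;\CO_F^\uR)$ restrict to infinitely many distinct points of $\La^\uR$, which holds because the fibers of $\CX(\CI^\uR)\to\CX(\La^\uR)$ are finite). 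One caveat on your last paragraph: the minimal-prime rescue does not close the gap you identify, because concentrating infinitely many zeros in one quotient $\CI^\uR/\Fq$ only yields $C\in\Fq$ for that single minimal prime, and indeed the statement is simply false for a non-domain such as a product of two power series rings (take $C=(0,X)$). So the domain hypothesis is genuinely needed; fortunately it holds here, since $\CI^\uR$ is a filtered union of the domains $\CO_{F'}\bb{X}$ with injective transition maps.
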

\begin{proof}
	In our case this will follow from the Weierstrass Preparation Theorem
	because the rings $\CI^\uR$ we use will be abstractly isomorphic to
	$\CO_F^\uR\bb{X}$. Alternatively it can be proven by commutative algebra
	using that $\CI^\uR$ is Noetherian of dimension 2 and that the points $P$
	correspond to prime ideals of height 1. 
\end{proof}

To formulate the idea of $\La$-adically varying the characters $\xi_m$, we
define a $\La$-adic family of Hecke characters to be a continuous homomorphism
$\Xi : \DI_K/K^\x \to \La^\x$ such that $\xi_m = P_m\o\Xi$ has weight
$(m-1,-m+k+1)$; in Section \ref{sec:FamiliesOfHeckeCharacters} we'll construct
such families, and see that any $\xi \in \Si_{cc}(\FN)$ fits into a family
$\Xi$ with infinitely many specializations also lying in $\Si_{cc}(\FN)$. With
this formalism, we can reinterpret the BDP $p$-adic $L$-function as follows. 

\begin{thm} \label{thm:BDPLambda}
	Fix a Hecke character $\xi_m \in \Si_{cc}(\FN,c)$ of infinity-type
	$(a-1,-a+k+1)$, and put it into a family $\Xi : \DI_K/K^\x \to \CI^\x$. Then
	there exists an element $\CL^{BDP}_p(f,\Xi\1) \in \CI \ox \CO_F^\uR$ that
	satisfies 
	\[ P_m(\CL^{BDP}_p(f,\Xi\1)) = L_p^{BDP}(f,\xi_m\1) \]
	for all $m\ge k$ satisfying $m\ee a\md{p-1}$.
\end{thm}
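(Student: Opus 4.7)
The plan is to lift the BDP continuous function to a $\La$-adic element by pulling back along the family $\Xi$.

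First, I would strengthen the cited BDP interpolation theorem to realize $\CL^{BDP}_p(f)$ as an element of an Iwasawa algebra, not merely as a continuous function on characters. The construction in \cite{BertoliniDarmonPrasanna2013} in fact produces a $p$-adic measure on a compact $p$-adic Lie group $\mathcal{G}$ of anticyclotomic Hecke characters with the prescribed ramification away from $p$ and conductor $c$; equivalently, an element of $\La^{BDP} \ox \CO_F^\uR$, where $\La^{BDP}$ is the completed group algebra of $\mathcal{G}$. By universality, continuous characters $\mathcal{G} \to R^\x$ correspond to continuous ring homomorphisms $\La^{BDP} \to R$.

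Second, the family $\Xi : \DI_K/K^\x \to \CI^\x$ constructed in Section \ref{sec:FamiliesOfHeckeCharacters} is specifically designed so that every specialization $\xi_m := P_m \circ \Xi$ with $m \ge k$ and $m \ee a\md{p-1}$ lies in $\Si_{cc}^{(2)}(\FN,c)$. This forces $\Xi$, restricted to the appropriate subgroup of ideles, to factor through $\mathcal{G}$, yielding a continuous homomorphism $\mathcal{G} \to \CI^\x$ and hence a continuous ring map $\Xi^\ast : \La^{BDP} \to \CI$. I would then define
\[ \CL^{BDP}_p(f,\Xi\1) := (\Xi^\ast \ox \id_{\CO_F^\uR})\b( \CL^{BDP}_p(f) \e) \in \CI \ox \CO_F^\uR. \]

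Third, the interpolation property is a functoriality check: applying $P_m$ to $\CL^{BDP}_p(f,\Xi\1)$ corresponds under the universal property of $\La^{BDP}$ to evaluating the BDP element at the character $P_m \circ \Xi = \xi_m$, which by the cited interpolation statement equals $L_p^{BDP}(f,\xi_m\1)$. Uniqueness of $\CL^{BDP}_p(f,\Xi\1)$ is then immediate from Lemma \ref{lem:EqualityOfSpecializations}, since the infinite set of points $\{P_m\}$ already pins it down.

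The main obstacle is the first step. As quoted, the BDP theorem provides only uniform continuity of $\xi \mapsto L_p^{BDP}(f,\xi\1)$ and its extension to the completion of $\Si_{cc}^{(2)}(\FN)$; to promote this to a genuine element of $\CI \ox \CO_F^\uR$ rather than merely a continuous $\CO_F^\uR$-valued function on points, one must unpack the fact that the underlying BDP construction yields a $p$-adic measure on $\mathcal{G}$. An alternative is to bypass this entirely and construct $\CL^{BDP}_p(f,\Xi\1)$ directly as a $\La$-adic object from scratch via Hida theory and the Ichino formula, which is in fact the strategy adopted in the remainder of this paper.
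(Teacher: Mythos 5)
Your proposal is consistent with what the paper does: the paper gives no proof of Theorem \ref{thm:BDPLambda} at all, remarking only that the statement ``does not follow directly from the statement of the theorems in \cite{BertoliniDarmonPrasanna2013}, but it can be obtained from working with the proofs,'' and citing Hsieh \cite{Hsieh2014} for a more general result. Your outline --- realize the BDP construction as a measure on (equivalently, an element of the completed group algebra of) the relevant group of anticyclotomic characters, pull back along the continuous homomorphism induced by the family $\Xi$, and check interpolation by functoriality of specialization --- is exactly the standard mechanism behind that remark, and you have correctly isolated the one substantive step (promoting uniform continuity to a genuine Iwasawa-algebra element) as the part requiring one to open up the BDP construction rather than quote its stated theorem. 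Two small cautions: you should verify that the subgroup of ideles on which $\Xi$ must factor through $\mathcal{G}$ really matches the ramification conditions defining $\Si_{cc}^{(2)}(\FN,c)$ (conductor at $c$, behavior at primes dividing $\FN$), since a mismatch there would break the factorization $\Xi^\ast$; and note that the paper only asserts existence, with uniqueness following as you say from Lemma \ref{lem:EqualityOfSpecializations}. Your closing observation is also accurate: the remainder of the paper deliberately avoids relying on this theorem by reconstructing the $p$-adic $L$-function via Ichino's formula and Hida theory.
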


The result in \cite{BertoliniDarmonPrasanna2013} is actually stated to produce
$\CL^{BDP}$ as a continuous function on a space of characters, but the formula
for $L_p(f,\ch)$ in Theorem 5.9 of their paper (which they use to prove uniform
continuity on the full space of characters) can be used instead to give an
explicit construction of for $\CL^{BDP}_p(f,\Xi\1)$ for an analytic family
$\Xi$. A similar result (again based on Waldspurger's special value formula,
but using a different method to interpolate the $p$-adic values) was proven
independently by Brako{\aV{c}}evi{\'c} \cite{Brakocevic2011}. A more general
result of the form we've stated, under a different set of hypotheses (and
extending the situation of an imaginary quadratic field $K/\Q$ to a CM
extension of a totally real field $K/F$) is proven by Hsieh \cite{Hsieh2014}. 

\subsection{Our normalizations of $L$-values}
\label{sec:NormalizationsOfLFunctions}

In this section we describe the constants and other factors used in
constructing our $p$-adic $L$-functions. We start with a central critical
$L$-value $L(f,\xi\1,0)$ which is (presumably) a transcendental complex number,
and define the corresponding ``algebraic value'' and then the ``$p$-adic
value'', which is what will actually be interpolated by the $L$-function. There
is no canonical choice for this (at best everything is only defined up to a
$p$-adic unit anyway), so we choose our precise definitions to be what's most
convenient for our formulas. Thus our definitions of $L_\alg(f,\xi\1,0)$ and
$L_p(f,\xi\1,0)$ are not exactly the same as those in other papers, but they 
are of a very similar form. 
\par
So suppose $f$ is a weight $k$ modular form with character $\ch_f$, and let $K
= \Q(\rt{-d})$ be an imaginary quadratic field of odd fundamental discriminant
$-d$. We will consider Hecke characters $\xi$ on $K$ with central character
equal to $\ch_f$, of weight $(m-1,k+1-m)$ (matching the convention we'll be
using later in the paper). Let $c$ denote the norm of the conductor of $\xi$,
which we assume is coprime to $p$. We define the \E{algebraic part} of the
$L$-value $L(f,\xi\1,0)$ as 
\[ L_\alg(f,\xi\1,0) = \f{L(f,\xi\1,0)}{\Om_\oo^{4m-2k-4}} w_K (m-2)! (m-k-1)! 
		\f{c^{2m-k+2} \pi^{2m-k-3}}{\rt{d}^{2m-k-3} 2^{2m-k-3}}. \]
Here $w_K$ is the order of the group of units in $\CO_K$ (so is $6$ if $d = 3$
and $2$ otherwise) and $\Om_\oo$ is a real period which we'll define explicitly
in Section \ref{sec:CongruenceModuleCM}. An important case of this is when
$\et$ is a Hecke character of infinity-type $(k,0)$, so $m = k+1$ and thus 
\[ L_\alg(f,\et\1,0) = \f{L(f,\et\1,0)}{\Om_\oo^{2k}} w_K (k-1)! 
		\f{c^{k+4} \pi^{k-1}}{\rt{d}^{k-1} 2^{k-1}} \in \L\Q. \]
To $p$-adically interpolate these values as $\xi$ varies in an anticyclotomic
family as described before, we embed the algebraic value $L(f,\xi\1,0)$ into
$\L\Q_p$ via our embeddings $\io_\oo$ and $\io_p$, and define a modified
\E{$p$-adic part} as 
\[ L_p(f,\xi\1,0) = e_p(f,\xi\1) \Om_p^{4m-2k-4} L_\alg(f,\xi\1,0) \] 
where $\Om_p$ is a $p$-adic period defined in parallel with $\Om_\oo$ in
Section \ref{sec:CongruenceModuleCM}, and $e_p(f,\xi\1)$ is an Euler factor at
$p$ that modified the $L$-value so it interpolates $p$-adically. The specific
form of $e_p(f,\et\1)$ depends on the local behavior of $f$ at the prime $p$;
for this paper (where $f$ has trivial central character) it will be one of the
following three cases based on how $p$ divides the level $N$ of $f$ and
involving the $p$-th Fourier coefficient $a_p$ of $f$:
\begin{itemize}
	\item If $p\nd N$ then $e_p(f,\xi\1) 
			= (1 - a_p \xi\1(\L\Fp) + \xi^{-2}(\L\Fp) p^{k-1} )^2$.
	\item If $p \| N$ then $e_p(f,\xi\1) = p^{k/2} \xi_m(\L\Fp)\1 
		(1 - a_p \xi\1(\L\Fp))^2$
	\item If $p^r \| N$ with $r \ge 2$ then $e_p(f,\xi\1) = (p^{k/2}
		\xi_m(\L\Fp)\1)^r$.
\end{itemize}
This modified Euler factor is only relevant when $\xi$ is varying in a family
and we want to interpolate it; for fixed values (which will occur for a
character $\et$ of weight $(k,0)$ in our ultimate formula) it's useful to
simply define  
\[ L_p^*(f,\xi\1,0) = \Om_p^{4m-2k-4} L_\alg(f,\xi\1,0). \]
Since $\Om_p$ is generally taken to be a $p$-adic unit, $L_p^*$ has the same
$p$-adic valuation as $L_\alg$.

\subsection{Precise setup for our calculation} 

Our version of Ichino's formula relates $|\g{f(z)g_\ph(z),g_\ps(c^2 N z)}|^2$
to the product of $L(f,\ph\ps\1,0)$ and $L(f,\ps\1\ph\1 N^{m-k-1},0)$. With
this in hand, our strategy for constructing $\CL_p(f,\Xi\1)$ is as follows: let
the characters $\ph,\ps$ vary in $\CI$-adic families $\Ph,\Ps$ such that $\Xi =
\Ps\Ph N^{-m+k+1}$ is the family we're aiming for, while $\Ph\Ps\1$ is the
constant family for some character $\et$. Rearranging, we get a family of
equations relating $L(f,\xi_m\1,0)$ to a product of things we can $p$-adically
interpolate: various explicit constants and terms with mild dependence on $m$
(which can either be $p$-adically interpolated or folded into the definition of
the algebraic part of the $L$-value), the reciprocal of a single $L$-value
$L(f,\et\1,0)$, and finally a product of Petersson inner products. Carrying
this out will give us our $p$-adic $L$-function. 
\par
We first of all need to specify exactly what our input data will be, and our
hypotheses on it. To start off, we fix the following things (where the
hypotheses listed are fundamental to the method we're using): 
\begin{itemize}
	\item A holomorphic newform $f$ of some weight $k$, level $N = N_0 p^{r_0}$,
		and character $\ch_f$.
	\item An imaginary quadratic field $K = \Q(\rt{-d})$ (where $-d$ is the
		fundamental discriminant).
	\item An odd prime $p$ which splits in $K$.
	\item A Hecke character $\Uxi_a$ of weight $(a-1,-a+k+1)$ for some
		integer $a$ satisfying $a-1 \ee k\md 2$, with central character $\ch_\xi
		= \Uxi_{a,\fin}|_\Z$ equal to $\ch_f$. This determines a $\CI$-adic
		family $\Xi = \Uxi_a \al^{-a} (\al^c)^a \CA (\CA^c)\1$ as in Lemma
		\ref{lem:AnticyclotomicHeckeFamily}. 
\end{itemize}
In addition we impose the following hypotheses on these objects. For the most
part these assumptions come from our version of Ichino's formula, Theorem
\ref{thm:ExplicitIchinoCM}. In principle one could eliminate most or all
of these by working out more cases of the local integrals arising in Ichino's
formula - so these hypotheses should not be fundamental roadblocks to carrying
out the arguments of this paper. 
\begin{itemize}
	\item The character $\ch_f$ of the newform $f$ is assumed to be trivial.
		Note that this forces the weight $k$ and thus the integer $a$ to be
		even. 
	\item The fundamental discriminant $-d$ is odd (i.e. $d$ is squarefree and 
		satisfies $d\ee 3\md 4$), and $d$ is assumed coprime to $N$.
	\item The Hecke character $\Uxi_a$ has conductor $(c)$ for $c$ some
		$c\in\Z$ coprime to $p$, $d$, and $N$.
\end{itemize}
Given this setup, we want to define families of characters $\Ph$ and $\Ps$
which give us $\CI$-adic CM forms $\Dg_\Ph$ and $\Dg_\Ps$ so that we can carry
out the computation outlined in the previous section. More precisely, we want
to construct finite-order characters $\ph,\ps$ and take the associated families
$\Ph = \ph \al\1 \CA$ and $\Ps = \ps \al\1 \CA$ with specializations $\ph_{m-k}
= \ph \al^{m-k-1}$ and $\ps_m = \ps \al^{m-1}$. If we look at the right-hand
side of Ichino's formula, the $L$-values involved are
\[ L(f,\ph\ps\1 \al^{-k},0) L(f,\ps\1\ph\1 \al^{-(2m-k-2)} N^{m-k-1},0). \]
So we want to define $\ps$ and $\ph$ such that the product $\ps\1\ph\1
\al^{2m-2} N^{m-k-1}$ equals $\xi_m\1$ for $m \ee a\md{p-1}$, and such that
$\ph\ps\1 \al^{-k}$ is a ``well-behaved'' auxiliary character. 
\par
For applications, it will be important to be able to have flexibility in what
we're allowed to choose as $\ph\ps\1$, and also the Hida theory we want to
apply demands that our CM newform $\Dg_\Ps$ has residually irreducible Galois
representations. We can deal with both of these issues by introducing a
character $\nu$ mod $\l$ for any prime $\l$; if we modify $\ps$ by $\nu$
and $\ph$ by $\nu\1$, then the product $\ps\ph$ (which corresponds to the
family $\Xi$ we've already fixed) doesn't change while $\ps\ph\1$ is modified
by $\nu^2$. Accordingly, we add in the following auxiliary data to our setup:
\begin{itemize}
	\item An auxiliary prime $\l$ not dividing $2pcdN$ that is inert in $K$, and
		an auxiliary character $\nu$ of $(\CO_K/\l^{c_\l}\CO_K)^\x$ of conductor
		$\l^{c_\l}$ and which is trivial on $(\Z/\l^{c_\l}\Z)^\x$. 
\end{itemize}
Also, since the field $K = \Q(\rt{-3})$ has some extra roots of unity, in that case we need to add some additional hypotheses. 
\begin{itemize}
	\item If $K = \Q(\rt{-3})$ we also require $\nu$ is trivial on $\CO_K^\x$
		(this is automatic in all other cases), and that $a-1 \ee 0 \md 6$.
\end{itemize}
We can then explicitly construct a Hecke character satisfying the properties we
want. 

\begin{lem} \label{lem:ConstructPhiPsi}
	Suppose we have the data and hypotheses specified above. Then we can define
	finite-order Hecke characters $\ph$ and $\ps$ of $K$ (giving rise to
	families $\Ph,\Ps$ and $\La$-adic CM forms $\Dg_\Ph,\Dg_\Ps$) with the
	following properties
	\begin{enumerate}
		\item The conductor of $\ph$ is either $c \l^{c_\l}$ or $\Fp c
			\l^{c_\l}$, and for $m \ee a-1\md{p-1}$ the character $\ph_{m-k}$ has
			conductor $\l^{c_\l}$ and finite-type $\xi_{a,fin} \nu\1$.
		\item The conductor of $\ps$ is either $\l^{c_\l}$ or $\Fp \l^{c_\l}$, 
			and for $m \ee a-1\md{p-1}$ the character $\ps_m$ has conductor 
			$\l^{c_\l}$ and finite-type $\nu$.
		\item The product $\ph_{m-k}\1 \ps_m = \ph\1 \ps \al^k$ is independent
			of $m$ and has conductor $c\l^{c_\l}$ and finite-type $\xi_{a,fin}\1
			\nu^2$. 
		\item For $m\ee a\md{p-1}$, the product 
			\[ \ph_{m-k} \ps_m N^{-(m-k-1)}
				= \ph \ps \al^{2m-k-2} N^{-(m-k-1)} \] 
			is equal to $\xi_m$.
	\end{enumerate}
\end{lem}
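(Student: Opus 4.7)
The strategy is to translate condition~(4) into a single identity pinning down the product $\ph\ps$ as a finite-order Hecke character, and then to split this product by making use of the auxiliary character $\nu$ to introduce ramification at $\l$.

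First I would rearrange condition~(4). Using $\ph_{m-k} = \ph\al^{m-k-1}$, $\ps_m = \ps\al^{m-1}$, and $\xi_m = \Uxi_a\al^{m-a}(\al^c)^{a-m}$ from Lemma~\ref{lem:AnticyclotomicHeckeFamily}, condition~(4) becomes an equation for $\ph\ps$ involving $N^{m-k-1}$. Applying $N = \al\al^c(\om\o N_{K/\Q})$ from Section~\ref{sec:ComplexConjugatesHeckeChars} to trade the norm character for powers of $\al$, $\al^c$, and a Teichm\"uller term, and noting that $(\om\o N)^{m-k-1}$ is constant on the progression $m\ee a\md{p-1}$ since $\om^{p-1}=1$, this reduces to
\[ \ph\ps = \Uxi_a\al^{1-a}(\al^c)^{a-k-1}(\om\o N_{K/\Q})^{a-k-1} =: \et_0. \]
A direct check shows $\et_0$ has infinity-type $(0,0)$, finite-type equal to $\Uxi_{a,\fin}$ at primes dividing $c$, to $\om_\Fp^{2a-k-2}$ at $\Fp$, and trivial elsewhere.

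Next I would split $\et_0 = \ph\ps$ by prescribing $\ps$ through its local components ($\om_\Fp^{a-1}$ at $\Fp$, $\nu$ at $\l$, trivial at $\L\Fp$, at all other finite places, and at infinity), and then defining $\ph := \et_0\ps\1$. The only nontrivial step is verifying that $\ps$ extends to a global Hecke character, which amounts to the product of its prescribed local components vanishing on $\CO_K^\x$ diagonally embedded. When $K\ne\Q(\rt{-3})$, $\CO_K^\x = \{\pm 1\}$ and the check is automatic because every prescribed component sends $-1$ to $1$ ($\nu$ is trivial on $\Z/\l^{c_\l}$ by hypothesis, and all other residue characteristics are odd). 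When $K = \Q(\rt{-3})$, the hypotheses $a-1\ee 0\md 6$ and $\nu$ trivial on $\CO_K^\x$ in the setup are exactly what is needed to ensure compatibility on the sixth roots of unity.

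The remaining verifications are bookkeeping with local characters. Conditions~(1) and~(2) follow because the $\Fp$-components $\ph_\Fp\om_\Fp^{-(m-k-1)}$ of $\ph_{m-k}$ and $\ps_\Fp\om_\Fp^{-(m-1)}$ of $\ps_m$ both become trivial precisely when $m\ee a\md{p-1}$, leaving conductors $c\l^{c_\l}$ and $\l^{c_\l}$ respectively (the $c$-part for $\ph_{m-k}$ inherited from the $\Uxi_{a,\fin}$ component of $\et_0$). Condition~(3) is immediate since $\ph\1\ps\al^k = \ps^2\et_0\1\al^k$ is manifestly independent of $m$, and a quick check shows its $\Fp$-components cancel, its $\l$-part is $\nu^2$, and its part at primes dividing $c$ is $\Uxi_{a,\fin}\1\nu^2$; condition~(4) is the defining identity for $\et_0$. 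The main obstacle throughout is the existence check in the previous paragraph: ensuring that the prescribed local data for $\ps$ assemble into a global Hecke character, which is precisely where the carefully chosen hypotheses on $a$, $\nu$, and the case $d=3$ enter.
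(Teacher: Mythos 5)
Your proposal is correct and follows essentially the same route as the paper: both arguments come down to prescribing the finite-type of $\ps$ as $\om_\Fp^{a-1}\nu$ (and of $\ph$ as $\om_\Fp^{a-k-1}\xi_{a,\fin}\nu\1$), checking triviality on $\CO_K^\x$ using exactly the hypotheses on $\nu$ and on $a-1\bmod w_K$, defining one character from the other so that the product is the one forced by (4), and then propagating (4) along the progression $m\ee a\md{p-1}$ via $\al\al^c=(\om\1\o N_\Q^K)N$ and $\om^{p-1}=1$. Making the target product $\et_0$ explicit up front is a mild reorganization of the paper's "extend $\ph$ arbitrarily, then set $\ps=\xi_a\ph\1\al^{-2a+k+2}N^{a-k-1}$," not a different method.
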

\begin{proof}
	We want to define $\ph$ and $\ps$ to have finite-type parts
	\[ \ph_\fin(x) = \om_\Fp(x)^{a-k-1} \xi_{a,\fin}(x) \nu\1(x) \qq\qq 
			\ps_\fin(x) = \om_\Fp(x)^{a-1} \nu(x). \]
	For these to define finite-order Hecke characters we just need to check that
	the right-hand sides of these characters are trivial for $x$ in the unit
	group $\CO_K^\x$. This follows from the following the following three facts: 
	\begin{itemize}
		\item We have $\nu(x) = 1$ because we've specified $\nu$ is trivial on
			units (as an explicit hypothesis for $\Q(\rt{-3})$, and as a
			consequence of it being trivial on $\Z$ in all other cases). 
		\item We have $a-1 \ee 0\md{|\CO_K|^\x}$ and thus $\om_\Fp(x)^{a-1}
			= 1$ (as an explicit hypothesis for $\Q(\rt{-3})$, and as a
			consequence of $a-1 \ee k\ee 0\md 2$ in the other cases). Thus 
			$\ps_\fin(x) = 1$ for all $x \in \CO_K^\x$. 
		\item We have $\xi_{a,\fin}(x) = x^{-(2a-k-2)}$ (because $\xi_a$ is 
			a Hecke character of weight $(a-1,-a+k+1)$) and $\om_\Fp(x)^{a-1} =
			x^{a-1}$ (by construction of the Teichm\"uller character and because
			$x$ must be a root of unity of order prime to $p$), so $\ph_\fin(x) =
			x^{-(a-1)} = 1$ because $a-1\ee 0\md{|\CO_K|^\x}$. 
	\end{itemize}
	So $\ph_\fin,\ps_\fin$ give us characters on $K^\x/\CO_K^\x$; embedding this
	into the group of ideals we can extend these (in $|\Cl_K|$ different ways)
	to finite-order Hecke characters $\ph,\ps$. In fact, to make (4) true we 
	extend $\ph$ arbitrarily and then define $\ps$ by $\ps = \xi_a \ph\1
	\al^{-2a+k+2} N^{m-k-1}$ (which we can check has the correct finite-type and
	infinity-type). 
	\par
	So we've constructed $\ps$ and $\ph$ in such a way that (1) and (2) are
	true, (3) is immediate, and (4) is true at least for $m = a$. To prove (4)
	for general $m\ee a\md{p-1}$ we remember $\xi_m$ is defined as $\xi_a
	\al^{m-a} (\al^c)^{a-m}$ and use the fact that $\al \al^c$ is a twist of $N$
	by a Teichm\"uller character $\om\1\o N_\Q^K$ and thus $(\al\al^c)^{p-1} =
	N^{p-1}$ (see Section \ref{sec:ComplexConjugatesHeckeChars}). 
\end{proof}

So, for the rest of this section we fix the characters $\ps,\ph$ and the
associated families $\Ps,\Ph$ as constructed in this lemma. We also take the
notation that $\et = \ph\1 \ps \al^k$ is the constant Hecke character (of
infinity-type $(k,0)$). Then, for every positive weight $m\ee a\md{p-1}$,
Theorem \ref{thm:ExplicitIchinoCM} tells us we have an identity
\begin{multline*}
	\g{fg_{\ph,m-k},g_{\ps,m}|_{c^2 N}} 
		\g{f^\rh g^\rh_{\ph,m-k},g^\rh_{\ps,m}|_{c^2 N}} \\
			= \f{3^2 (m-2)! (k-1)! (m-k-1)!}
						{\pi^{2m+2} 2^{4m-2} d \l^{2c_\l} (c^2 N)^{m+1}} 
			\dt \p_{q|c N d\l} (1+q\1)^{-2} \dt (*)_{f,\l} 
				 \dt L(f,\et\1,0) L(f,\xi_m\1,0) 
\end{multline*}
where we set 
\[ (*)_{f,\l} = \b( \s_{i=0}^{c_\l} \pf{\al}{\l^{(k-1)/2}}^{2i-c_\l} 
			- \f{1}{\l} \s_{i=i}^{c_\l-1} \pf{\al}{\l^{(k-1)/2}}^{2i-c_\l} \e). \]
with $\al_{f,\l}$ one of the roots of the Hecke polynomial for $f$ at $\l$.

\subsection{The equation for algebraic and $p$-adic values}

To use the above family of equations to produce a $p$-adic $L$-function, we
first need to manipulate it to give equations relating the ``algebraic parts''
of Petersson inner products (as defined in Section
\ref{sec:SummarizingPeterssonInterpolation}) to those of $L$-values (as defined
in section \ref{sec:NormalizationsOfLFunctions}), and then equations of the
corresponding ``$p$-adic parts''. 
\par
Following the notation of Section \ref{sec:SummarizingPeterssonInterpolation},
we suppress the shift by $c^2 N$ and write $\g{fg_{\ph,m-k},g_{\ps,m}}$
in place of $\g{fg_{\ph,m-k},g_{\ps,m}|_{c^2 N}}$. Then using Proposition
\ref{prop:PeterssonAlgebraicPAdicParts} we write 
\[ \g{fg_{\ph,m-k},g_{\ps,m}}_\alg \g{f^\rh g_{\ph,m-k}^\rh,g_{\ps,m}^\rh}_\alg
		= C_1 \f{\g{fg_{\ph,m-k},g_{\ps,m}}}{\Om_\oo^{2m-2}} 
		\f{\g{f^\rh g_{\ph,m-k}^\rh,g_{\ps,m}^\rh}}{\Om_\oo^{2m-2}} \] 
for
\[ C_1 = \f{2^{2m} w_K^2 \pi^{4m-2}}{3^2 d^{m-3}} 
	c^{4m+8} N_0^{2m+4} \f{p^{2m(r-1)}}{\ps_m(\L\Fp)^{2(r-1)}} 
					\p_{q | cN_0 d\l} (1+q\1)^2, \]
noting that the original formula gives us powers of $\al_{g_{\ps,m}} =
\ps_m(\L\Fp)$ and $\al_{g_{\ps,m}^\rh} = \Lbe_{g_{\ps,m}} = \Lps_m(\Fp) =
\ps_m(\L\Fp)$ in the denominator. Similarly, using
our definitions in Section \ref{sec:NormalizationsOfLFunctions} we have 
\[ L_\alg(f,\xi_m\1,0) L_\alg(f,\et\1,0) 
	= C_2 \f{L(f,\xi_m\1,0)}{\Om_\oo^{4m-4-2k}} \f{L(f,\et\1,0)}{\Om_\oo^{2k}}\]
where 
\[ C_2 = w_K^2 (m-2)! (m-k-1)! (k-1)! 
		\f{c^{2m+6} \l^{2c_\l(k+4)} \pi^{2m-4}}{d^{m-2} 2^{2m-4}}. \]
Taking our explicit Ichino's formula and manipulating it to get the constants
$C_1$ and $C_2$ on each side, we obtain 
\[ \g{fg_{\ph,m-k},g_{\ps,m}}_\alg \g{f^\rh g_{\ph,m-k}^\rh,g_{\ps,m}^\rh}_\alg
		= C_3 \dt L_\alg(f,\xi_m\1,0) L_\alg(f,\et\1,0) \]
for 
\[ C_3 = \f{2^2 N_0^{m+3}}{\l^{2c_\l(k+5)}} (*)_{f,\l} 
		\dt \f{p^{(m-1)(r-1)}}{\ps_m(\L\Fp)^{2(r-1)}} 
				\7( (1+p\1)^{2} p^{(m+1)} \7)^{r-r_0-1}; \]
note that $1+r_0-r$ is $0$ if $p$ does not divide the level of $f$ and $-1$ if
$p$ does divide the level of $f$, so the parenthesized term on the right
is $1$ in the former case and is $(1+p\1)^{-2} p^{-(m+1)}$ in the latter case.
\par
Next we pass to an equation of $p$-adic values (except for $L_\alg(f,\et\1,0)$
which we will be treating as a constant). Again, going back to Proposition
\ref{prop:PeterssonAlgebraicPAdicParts} and Section
\ref{sec:NormalizationsOfLFunctions} we see that there are removed Euler
factors on each side - $e_p(fg_{\ph,m-k},g_{\ps,m}) e_p(f^\rh
g_{\ph,m-k}^\rh,g_{\ps,m}^\rh)$ on the left and $e_p(f,\xi\1)$ on the right.
\par
To compare these, we start by considering the case $r_0 = 0$. Here we have 
\[ e_p(fg_{\ph,m-k},g_{\ps,m}) 
		= \b(1-\f{\ph_{m-k}(\Fp)\al_f}{\ps_m(\L\Fp)}\e)
				\b(1-\f{\ph_{m-k}(\Fp)\be_f}{\ps_m(\L\Fp)}\e) \]
from our computations of $\al$ and $\be$ for a CM form in Section
\ref{sec:FamiliesOfCMForms}. We can also check that $\xi_m(\L\Fp) =
\ph_{m-k}(\L\Fp) \ps_m(\L\Fp) N(\Fp)^{-(m-k-1)} = \ps_m(\L\Fp)/\ph_{m-k}(\Fp)$,
and therefore conclude 
\[ e_p(fg_{\ph,m-k},g_{\ps,m}) 
	= (1-\xi_m\1(\L\Fp)\al_f)(1-\xi_m\1(\L\Fp)\be_f)
		= (1 - a_p \xi\1(\L\Fp) + \xi^{-2}(\L\Fp) p^{k-1} ); \]
this matches up with (the square root of) $e_p(f,\xi\1)$. 
\par
We can similarly analyze $e_p(f^\rh g_{\ph,m-k}^\rh,g_{\ps,m}^\rh)$, recalling
from Section \ref{sec:ComplexConjugatesHeckeChars} that $\al_{h^\rh} = \Lbe_h$
and $\be_{h^\rh} = \Lal_h$; we get 
\[ e_p(f^\rh g_{\ph,m-k}^\rh,g_{\ps,m}^\rh)
 	= \b( 1-\f{\Lal_g\Lbe_f}{\Lbe_h} \e) \b( 1-\f{\Lal_g\Lal_f}{\Lbe_h} \e). \]
By comparing the equalities $\al_f\be_f = \ch_f(p)p^{k-1}$ (by definition) and
$\al_f\Lal_f = p^{k-1}$ (by the Ramanujan conjecture) we know $\Lal_f =
\Lch_f(p)\be_f$, and we get similar identities for all of the other conjugates;
and since $\ch_f\ch_g = \ch_h$ the character values we have cancel and we
conclude 
\[ e_p(f^\rh g_{\ph,m-k}^\rh,g_{\ps,m}^\rh)
 		= \b( 1-\f{\be_g\al_f}{\al_h} \e) \b( 1-\f{\be_g\be_f}{\al_h} \e). \]
So for $r_0 = 0$ we get an equality 
\[ e_p(fg_{\ph,m-k},g_{\ps,m}) e_p(f^\rh g_{\ph,m-k}^\rh,g_{\ps,m}^\rh)
		= e_p(f,\xi\1). \]
\par
For $r_0 \ge 2$, we directly calculate 
\[ e_p(fg_{\ph,m-k},g_{\ps,m}) e_p(f^\rh g_{\ph,m-k}^\rh,g_{\ps,m}^\rh)
		= p^2 (1+p\1)^2 \ps_m(\Fp)^2, \]
and for $r_0 = 1$ similar arguments give 
\[ e_p(fg_{\ph,m-k},g_{\ps,m}) e_p(f^\rh g_{\ph,m-k}^\rh,g_{\ps,m}^\rh)
		= p^2 (1+p\1)^2 \ps_m(\Fp)^2 (1-\xi_m\1(\L\Fp)a_f)^2. \]
\par
On the other hand, we've defined $e_p(f,\xi\1)$ on a case-by-case basis to
consist of exactly the Euler factors that appear here, times an additional
factor of $(p^{k/2} \xi_m(\L\Fp)\1)^r$. Further manipulations of our formula 
result in

\begin{prop} \label{prop:PAdicPartIchino}
	We have
	\[ \g{fg_{\ph,m-k},g_{\ps,m}}_p \g{f^\rh g_{\ph,m-k}^\rh,g_{\ps,m}^\rh}_p
			= C_4 \dt L_p^*(f,\et\1,0)  L_p(f,\xi_m\1,0) \]
	for 
	\[ C_4 = \et(\L\Fp)^{-r} (*)_{f,\l} \f{2^2 N_0^{m+3}}{\l^{2c_\l(k+5)}} , \]
	where we recall $L_p^*(f,\et\1,0)$ is the value $\Om_p^{2k}
	L_\alg(f,\et\1,0)$ without an Euler factor removed. 
\end{prop}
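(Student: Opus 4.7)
The starting point is the equation for algebraic parts already derived in the preceding paragraphs, namely
\[ \g{fg_{\ph,m-k},g_{\ps,m}}_\alg \g{f^\rh g_{\ph,m-k}^\rh,g_{\ps,m}^\rh}_\alg
		= C_3 \dt L_\alg(f,\xi_m\1,0) L_\alg(f,\et\1,0). \]
The plan is to multiply both sides by appropriate powers of $\Om_p$ and removed Euler factors so that each algebraic value converts to its $p$-adic counterpart, using the definitions of Sections \ref{sec:NormalizationsOfLFunctions} and \ref{sec:SummarizingPeterssonInterpolation}. The period exponents balance perfectly: on the left, each Petersson $p$-adic value contributes $\Om_p^{2m-2}$ for a total of $\Om_p^{4m-4}$, while on the right $L_p(f,\xi_m\1,0)$ contributes $\Om_p^{4m-2k-4}$ and $L_p^*(f,\et\1,0)$ contributes $\Om_p^{2k}$, also summing to $\Om_p^{4m-4}$. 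After multiplying by this period and by the relevant Euler factors, the displayed identity becomes the claim with
\[ C_4 = C_3 \cdot \f{e_p(fg_{\ph,m-k},g_{\ps,m}) \dt e_p(f^\rh g_{\ph,m-k}^\rh,g_{\ps,m}^\rh)}{e_p(f,\xi_m\1)}, \]
so the task reduces to evaluating this Euler factor ratio.

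The verification splits into three cases according to $r_0$, the exponent of $p$ dividing the level of $f$. In each case I substitute the explicit Petersson Euler factors from Proposition \ref{prop:PeterssonAlgebraicPAdicParts} and the formula for $e_p(f,\xi_m\1)$ from Section \ref{sec:NormalizationsOfLFunctions}. The key character-theoretic identities are (i) $\ph_{m-k}(\Fp)/\ps_m(\L\Fp) = \xi_m\1(\L\Fp)$, which follows from combining the Hecke relation $\ph_{m-k}(\Fp)\ph_{m-k}(\L\Fp) = p^{m-k-1}$ (valid since $\ch_\ph(p) = 1$) with the definition $\xi_m = \ph_{m-k}\ps_m N^{-(m-k-1)}$; (ii) the analogous identity for the conjugate pair, obtained via the Ramanujan relation $\al_h\Lal_h = p^{m-1}$; and (iii) $\ps_m(\L\Fp)^2 = \et(\L\Fp)\xi_m(\L\Fp)p^{m-k-1}$, which arises from $\et\xi_m = \ps_m^2 N^{-(m-k-1)}$.

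The cleanest case is $r_0 = 0$, where $r=1$ and the extra factors in $C_3$ (namely $p^{(m-1)(r-1)}/\ps_m(\L\Fp)^{2(r-1)}$ and the term raised to $r-r_0-1 = 0$) are all trivial, and each Petersson Euler factor equals a single copy of the square root of $e_p(f,\xi_m\1)$ via identity (i); the ratio then reduces to $\et(\L\Fp)^{-r}$ after applying (iii). The main obstacle will be the cases $r_0 \geq 1$, where $r = r_0$ and $C_3$ carries the nontrivial factor $((1+p\1)^2 p^{m+1})^{-1}$; one must carefully combine the Petersson factors $p\Lal_h(1+p\1)$ (one for each of the two products), the normalization $(p^{k/2}\xi_m(\L\Fp)\1)^r$ inside $e_p(f,\xi_m\1)$, and, in the $r_0 = 1$ case, the additional factor $(1-\be_g a_f/\al_h)$ which matches $(1 - a_p\xi_m\1(\L\Fp))$ via identity (i). After applying identities (ii) and (iii) to eliminate the character values, all of the auxiliary powers of $p$ and character values should collapse to exactly $\et(\L\Fp)^{-r}$, yielding the stated form of $C_4$.
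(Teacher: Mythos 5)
Your proposal follows the paper's own derivation essentially step for step: the paper likewise starts from the algebraic-parts identity with $C_3$, converts each side to $p$-adic parts via the matching powers of $\Om_p$ and the removed Euler factors, and reduces everything to evaluating the ratio $e_p(fg_{\ph,m-k},g_{\ps,m})\,e_p(f^\rh g_{\ph,m-k}^\rh,g_{\ps,m}^\rh)/e_p(f,\xi_m\1)$ case-by-case in $r_0$, using exactly your identities (i) (the relation $\xi_m\1(\L\Fp)=\ph_{m-k}(\Fp)/\ps_m(\L\Fp)$), (ii) (the Ramanujan relations to handle the conjugate factors), and the bookkeeping your identity (iii) encodes. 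The paper is no more explicit than you are about the final collapse to $\et(\L\Fp)^{-r}$ (it concludes with ``further manipulations of our formula result in'' the proposition), so your proposal matches it in both method and level of detail.
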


\subsection{The $p$-adic $L$-function} 

Finally, we can use our final equation of the previous section to finish our
construction of $\CL(f,\Xi\1)$ in terms of the element
\[ \g{f\Bg_\Ph,\Bg_\Ps} 
		= \l_{\Bg_\Ps}(\trc(f|_{1/c^2 N} \dt \Bg_\Ph|_{1/c^2 N})) \in \CI^\uR \]
and the conjugate element $\g{f^\rh\Bg_{\Ph^\rh},\Bg_{\Ps^\rh}} \in \CI^\uR$,
as constructed in Chapters \ref{sec:FamiliesOfCharsCMForms} and
\ref{sec:PeterssonInterpolate}. 
\par
In our constant $C_4$, there's only one term that depends on $m$, the parameter
that varies in our $p$-adic family, the factor of $N_0^{m+3}$. This is
straightforward to interpolate explicitly (see e.g. Section 7.1 of
\cite{HidaEisSeries}), so we can find an element $\CC \in \La^\x$ satisfying 
\[ P_m(\CC) = \et(\L\Fp)^r \f{2^2 \l^{2c_\l(k+5)}}{N_0^{m+3}}. \]
We can then conclude our main theorem: 

\begin{thm}
	We have the following identity in $\CI^\uR$:
	\[ \g{f\Bg_\Ph,\Bg_\Ps} \g{f^\rh\Bg_{\Ph^\rh},\Bg_{\Ps^\rh}} 
		= (*)_{f,\l} \dt L_p^*(f,\et\1,0) \dt \CC\1 \dt 
				\CL(f,\Xi\1). \]
\end{thm}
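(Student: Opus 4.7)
The plan is to establish the identity by a specialization argument: we will show that both sides agree after applying $P_m$ for every $m$ in the arithmetic progression $m \equiv a \pmod{p-1}$ with $m > k$, and then invoke Lemma \ref{lem:EqualityOfSpecializations} (since this is an infinite set of points) to conclude equality in $\CI^\uR$. All the preparatory work is already in hand; the theorem is essentially a packaging of results we have already proved.

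First I would compute $P_m$ of the left-hand side. By construction of $\g{f\Bg_\Ph,\Bg_\Ps}$ and $\g{f^\rh\Bg_{\Ph^\rh},\Bg_{\Ps^\rh}}$ as $\l_{\Bg_\Ps}(\cdots)$ and its conjugate analogue, Theorem \ref{thm:PeterssonCMInterpolate} tells us that specialization at $P_m$ gives exactly $\g{fg_{\ph,m-k},g_{\ps,m}}_p$ and $\g{f^\rh g^\rh_{\ph,m-k},g^\rh_{\ps,m}}_p$, respectively. Here I need to check that the hypotheses of that theorem are satisfied: that $\Ps$ (respectively $\Ps^\rh$) defines a CM family whose specializations have residually irreducible Galois representations, and that the levels and characters match up for $m\equiv a\pmod{p-1}$. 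The residual irreducibility holds because by Lemma \ref{lem:ConstructPhiPsi} the conductor of $\ps_m$ is divisible by the inert prime $\l$, forcing the local representation at $\l$ to be supercuspidal; the level/character compatibility is built into the construction of $\Ph,\Ps$ in Lemma \ref{lem:ConstructPhiPsi}.

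Next I would compute $P_m$ of the right-hand side. By construction $P_m(\CC)$ is the explicit quantity stated just before the theorem, so $P_m(\CC^{-1})$ is its reciprocal, and by the definition of $\CL(f,\Xi^{-1})$ we have $P_m(\CL(f,\Xi^{-1})) = L_p(f,\xi_m^{-1},0)$. The terms $(*)_{f,\l}$ and $L_p^*(f,\et^{-1},0)$ are constants in $\CI^\uR$ (they do not depend on $m$), so they pass through $P_m$ unchanged. Multiplying these together gives exactly the right-hand side of Proposition \ref{prop:PAdicPartIchino}, once I substitute the explicit value of $P_m(\CC^{-1})$ to match the constant $C_4$ defined there.

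Putting the two computations together, the identity after applying $P_m$ is precisely Proposition \ref{prop:PAdicPartIchino}, which was derived from the explicit classical Ichino formula (Theorem \ref{thm:ExplicitIchinoCM}) together with the algebraic/$p$-adic normalizations of Petersson products and $L$-values. The main work was already done there; the only new ingredient in this final theorem is the $\CI^\uR$-adic packaging, which the specialization lemma delivers for free once we verify the identity holds on an infinite arithmetic progression of arithmetic points. The main subtlety I would pay attention to is making sure the bookkeeping of $C_4$ against $P_m(\CC^{-1})$ is consistent across the cases $r_0 = 0$, $r_0 = 1$, and $r_0 \ge 2$ (reflected in the three cases of the removed Euler factor $e_p(f,\xi^{-1})$ and the power $(p^{k/2}\xi_m(\bar{\mathfrak{p}})^{-1})^r$ inside it), but this has already been checked in the derivation of Proposition \ref{prop:PAdicPartIchino}. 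Hence no new computation is needed, and the theorem follows.
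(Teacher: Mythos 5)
Your proposal is correct and is essentially the paper's own argument: both specialize at the infinite set of points $P_m$ with $m \equiv a \pmod{p-1}$, $m > k$, identify the resulting identity with Proposition \ref{prop:PAdicPartIchino}, and conclude by Lemma \ref{lem:EqualityOfSpecializations}. The extra bookkeeping you flag (residual irreducibility via the inert prime $\l$, and matching $C_4$ against $P_m(\CC^{-1})$ across the cases of $r_0$) is exactly what the paper's one-line proof implicitly relies on.
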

\begin{proof}
	For the infinite set of $m > k$ satisfying $m\ee a\md{m-1}$, the
	specializations of both sides of the equation are equal by Proposition
	\ref{prop:PAdicPartIchino}; thus the elements of $\CI^\uR$ themselves are
	equal by Lemma \ref{lem:EqualityOfSpecializations}. 
\end{proof}

We can then rearrange to get an explicit expression for $\CL(f,\Xi\1)$ (as 
stated in Theorem \ref{thm:MainTheorem}), assuming the constants don't make the
equation identically zero. If the constants are in fact units mod $p$, then our
rearrangement genuinely gives an expression occurring in $\CI^\uR$. 
 


\bibliographystyle{amsalpha}
\addcontentsline{toc}{section}{References}
\bibliography{AnticyclotomicIchino_bib}

\end{document}